\begin{document}

\newcommand{\iii}{{\mathrm i}}
\newcommand{\Ht}{H}
\newcommand{\data}{\mathcal{D}}
\newcommand{\Irr}{\operatorname{Irr}}
\newcommand{\fctr}[1]{G_{#1}}                
\newcommand{\PW}{\mathcal{PW}}
\newcommand{\PB}{polynomially bounded}
\newcommand{\Casimir}{\Omega}
\newcommand{\der}{\operatorname{der}}
\newcommand{\gen}[1]{\left<{#1}\right>}
\newcommand{\primi}{\mathfrak{p}}
\newcommand{\unip}{\operatorname{unip}}
\newcommand{\F}{\mathbb{F}}
\newcommand{\FFF}{{\mathcal F}}
\newcommand{\Hecke}{\mathcal{H}}
\newcommand{\types}{\mathcal{F}}
\newcommand{\prpr}[1]{\hat #1}
\newcommand{\dmin}{{d_{\min}}}
\newcommand{\TWN}{(TWN)}
\newcommand{\BD}{(BD)}
\newcommand{\ucirc}{\mathbf{S^1}}
\newcommand{\param}{\Lambda}
\newcommand{\C}{{\mathbb C}}
\newcommand{\N}{{\mathbb N}}
\newcommand{\R}{{\mathbb R}}
\newcommand{\Z}{{\mathbb Z}}
\newcommand{\Q}{{\mathbb Q}}
\newcommand{\Id}{\operatorname{Id}}
\newcommand{\A}{{\mathbb A}}
\newcommand{\srts}{\Delta}
\newcommand{\levis}{{\mathcal L}}
\newcommand{\AF}{{\mathcal A}}
\newcommand{\K}{\mathbf{K}}
\newcommand{\plnch}{\operatorname{pl}}
\newcommand{\bs}{\backslash}
\newcommand{\temp}{\operatorname{temp}}
\newcommand{\disc}{\operatorname{disc}}
\newcommand{\cusp}{\operatorname{cusp}}
\newcommand{\spec}{\operatorname{spec}}
\newcommand{\sprod}[2]{\left\langle#1,#2\right\rangle}
\renewcommand{\Im}{\operatorname{Im}}
\renewcommand{\Re}{\operatorname{Re}}
\newcommand{\Ind}{\operatorname{Ind}}
\newcommand{\tr}{\operatorname{tr}}
\newcommand{\Ad}{\operatorname{Ad}}
\newcommand{\Lie}{\operatorname{Lie}}
\newcommand{\Hom}{\operatorname{Hom}}
\newcommand{\Ker}{\operatorname{Ker}}
\newcommand{\vol}{\operatorname{vol}}
\newcommand{\Area}{\operatorname{Area}}
\newcommand{\SL}{\operatorname{SL}}
\newcommand{\GL}{\operatorname{GL}}
\newcommand{\card}[1]{\lvert#1\rvert}
\newcommand{\abs}[1]{\left|#1\right|}
\newcommand{\sobnorm}[1]{\lVert#1\rVert}
\newcommand{\norm}[1]{\lVert#1\rVert}
\newcommand{\one}{\mathbf 1}
\newcommand{\aaa}{\mathfrak{a}}
\newcommand{\eps}{\epsilon}
\newcommand{\ad}{\operatorname{ad}}
\newcommand{\proj}{\operatorname{proj}}
\newcommand{\rest}{\big|}
\newcommand{\dsum}{\oplus}
\newcommand{\dtup}{\mathcal{X}}
\newcommand{\bases}{\mathfrak{B}}
\newcommand{\PPP}{\mathcal{P}}
\newcommand{\rts}{\Sigma}
\newcommand{\bss}{\underline{\beta}}
\newcommand{\univ}{\mathcal{U}}
\newcommand{\modulus}{\delta}
\newcommand{\AAA}{A}
\newcommand{\LieG}{\mathfrak{g}}
\newcommand{\zzz}{\mathfrak{z}}
\newcommand{\swrz}{\mathcal{C}}
\newcommand{\inorm}{\operatorname{N}}
\newcommand{\nnn}{\mathfrak{n}}
\newcommand{\fin}{\operatorname{fin}}
\newcommand{\uuu}{\mathfrak{u}}
\newcommand{\level}{\operatorname{level}}
\newcommand{\SC}{\operatorname{sc}}
\newcommand{\scprj}{p^{\SC}}
\newcommand{\spltrs}{\mathbf{S}}

\newcommand{\sm}[4]{\left(\begin{smallmatrix}{#1}&{#2}\\{#3}&{#4}\end{smallmatrix}\right)}

\newtheorem{theorem}{Theorem}[section]
\newtheorem{lemma}[theorem]{Lemma}
\newtheorem{proposition}[theorem]{Proposition}
\newtheorem{remark}[theorem]{Remark}
\newtheorem{conjecture}[theorem]{Conjecture}
\newtheorem{definition}[theorem]{Definition}
\newtheorem{corollary}[theorem]{Corollary}
\newtheorem{example}[theorem]{Example}

\title[Limit multiplicities]{Limit multiplicities for principal congruence subgroups of $\GL (n)$ and $\SL(n)$}
\author{Tobias Finis}
\address{Freie Universit\"at Berlin, Institut f\"ur Mathematik, Arnimallee 3, D-14195 Berlin,
Germany}
\thanks{Authors partially sponsored by grant \# 964-107.6/2007 from the German-Israeli Foundation for Scientific Research and Development.
First named author supported by DFG Heisenberg grant \# FI 1795/1-1.}
\email{finis@math.fu-berlin.de}
\author{Erez Lapid}
\address{Einstein Institute of Mathematics, The Hebrew University of Jerusalem, Jerusalem, 91904, Israel,
\and Department of Mathematics, The Weizmann Institute of Science, Rehovot 76100, Israel}
\email{erez.m.lapid@gmail.com}
\author{Werner M\"{u}ller}
\address{Mathematisches Institut, Rheinische Friedrich-Wilhelms-Universit\"{a}t Bonn, Endenicher Allee 60, D-53115 Bonn, Germany}
\email{mueller@math.uni-bonn.de}
\date{\today}

\keywords{Limit multiplicities, lattices in Lie groups, trace formula}
\subjclass[2010]{Primary 11F72; Secondary 22E40, 22E55, 11F70}

\begin{abstract}
We study the limiting behavior of the discrete spectra associated to the principal
congruence subgroups of a reductive group over a number field.
While this problem is well understood in the cocompact case (i.e., when the
group is anisotropic modulo the center), we treat groups of unbounded rank.
For the groups $\GL(n)$ and $\SL(n)$ we show that the
suitably normalized spectra converge to the Plancherel measure
(the limit multiplicity property). For general reductive groups we obtain
a substantial reduction of the problem.
Our main tool is the recent refinement of the spectral side of Arthur's
trace formula obtained in \cite{MR2811597, MR2811598}, which allows us to show
that for $\GL(n)$ and $\SL(n)$ the contribution of the continuous spectrum is negligible in the limit.
\end{abstract}

\maketitle

\setcounter{tocdepth}{1}
\tableofcontents

\section{Introduction} \label{SectionIntro}
Let (for now) $G$ be a connected linear semisimple Lie group with a fixed choice of a Haar measure.
Since the group $G$ is of type I, we can write
unitary representations of $G$ on separable Hilbert spaces as direct integrals (with multiplicities) over
the unitary dual $\Pi (G)$, the set of isomorphism classes
of irreducible unitary representations of $G$ with the Fell topology (cf.~\cite{MR0246136}).
An important case is the regular representation of $G \times G$ on $L^2(G)$, which
can be decomposed as the direct integral of the tensor products $\pi \otimes \pi^\ast$ against the
\emph{Plancherel measure} $\mu_{\plnch}$ on $\Pi(G)$.
The support of the Plancherel measure is called the \emph{tempered dual} $\Pi(G)_{\temp} \subset \Pi(G)$.

Other basic objects of interest are the regular representations $R_\Gamma$ of $G$ on $L^2(\Gamma\bs G)$ for lattices $\Gamma$ in $G$.
We will focus on the discrete part $L^2_{\disc}(\Gamma\bs G)$ of $L^2(\Gamma\bs G)$, namely the sum of all irreducible
subrepresentations, and we denote by $R_{\Gamma,\disc}$ the corresponding restriction of $R_\Gamma$.
For any $\pi\in\Pi(G)$ let $m_\Gamma(\pi)$ be the multiplicity of $\pi$ in $L^2(\Gamma\bs G)$. Thus,
\[
m_\Gamma(\pi)=\dim\Hom_G(\pi,R_\Gamma)=\dim\Hom_G(\pi,R_{\Gamma,\disc}).
\]
These multiplicities are known to be finite \cite[Theorem 3.3]{MR643242}, at least under a weak reduction-theoretic assumption on $G$ and $\Gamma$
[ibid., p. 62], which is satisfied if $G$ has no compact factors or if $\Gamma$ is arithmetic.
We define the discrete spectral measure on $\Pi(G)$ with respect to $\Gamma$ by
\[
\mu_\Gamma=\frac1{\vol(\Gamma\bs G)}\sum_{\pi\in\Pi(G)}m_\Gamma(\pi)\delta_\pi,
\]
where $\delta_\pi$ is the Dirac measure at $\pi$.
While one cannot hope to describe the multiplicity functions $m_\Gamma$ on $\Pi (G)$ explicitly
(apart from certain special cases, for example when $\pi$ belongs to the discrete series),
it is feasible and interesting to study
asymptotic questions. The limit multiplicity problem concerns the asymptotic behavior of $\mu_\Gamma$ as
$\vol (\Gamma \bs G) \to \infty$.

To make this more explicit, we recall that up to a closed subset of Plancherel measure zero, the topological space
$\Pi(G)_{\temp}$ is homeomorphic to a countable union of
Euclidean spaces of bounded dimensions, and that under this homeomorphism the Plancherel density is given by a continuous function.
(The same is true in the case of $p$-adic reductive groups considered below.)\footnote{See \cite[\S 2.3]{MR860667} for the archimedean case
and \cite{MR1989693} for the $p$-adic case.}
We call the relatively quasi-compact subsets of $\Pi(G)$ \emph{bounded} (see \S \ref{sec: reduction} below for a more explicit description).
Note that $\mu_\Gamma (A) < \infty$ for bounded sets $A \subset \Pi (G)$
under the reduction-theoretic assumption on $G$ and $\Gamma$ mentioned above \cite{MR701563}.
By definition, a Jordan measurable subset $A$ of $\Pi(G)_{\temp}$ is a bounded set such that $\mu_{\plnch}(\partial A)=0$,
where $\partial A=\bar A-A^\circ$ is the boundary of $A$ in $\Pi(G)_{\temp}$.
A Riemann integrable function on $\Pi(G)_{\temp}$ is a bounded, compactly supported function
which is continuous almost everywhere with respect to the Plancherel measure.

Let $\Gamma_1,\Gamma_2,\dots$ be a sequence of lattices in $G$.
We say that the sequence $(\Gamma_n)$ has the limit multiplicity property if the following two conditions are satisfied:
\begin{enumerate}
\item For any Jordan measurable set $A\subset\Pi(G)_{\temp}$ we have
\[
\mu_{\Gamma_n}(A)\rightarrow\mu_{\plnch}(A)\ \ \text{as }n\rightarrow\infty.
\]
\item For any bounded set $A\subset\Pi(G)\setminus\Pi(G)_{\temp}$ we have
\[
\mu_{\Gamma_n}(A)\rightarrow0\ \ \text{as }n\rightarrow\infty.
\]
\end{enumerate}
Note that we can rephrase the first condition by requiring that
\[
\lim_{n\rightarrow\infty}\mu_{\Gamma_n}(f) = \mu_{\plnch}(f)
\]
for any Riemann integrable function (or alternatively, for any continuous compactly supported
function) $f$ on $\Pi(G)_{\temp}$.



%

A great deal is known about the limit multiplicity problem for uniform lattices, where $R_\Gamma$ decomposes discretely.
The first results in this direction were proved by DeGeorge--Wallach \cite{MR0492077, MR534759, MR562677}
for normal towers, i.e., descending sequences of finite index normal subgroups of a given uniform lattice
with trivial intersection. Subsequently, Delorme \cite{MR860667} completely resolved the limit multiplicity problem
for this case in the affirmative.
Recently, there has been a big progress in proving limit multiplicity for much more general sequences of uniform lattices \cite{MR2835886, ABBGNRS}.
In particular, families of non-commensurable lattices were considered for the first time.

In the case of non-compact quotients $\Gamma \bs G$, where the spectrum also contains a continuous part, much less is known.
Here, the limit multiplicity problem has been solved for normal towers of arithmetic lattices and
discrete series $L$-packets $A \subset\Pi(G)$ (with regular parameters) by Rohlfs--Speh \cite{MR883670}. Building on this
work, the case of singleton sets $A$ and
normal towers of congruence subgroups has been solved by Savin (\cite{MR969416}, cf.~also \cite{MR1082961}). Earlier results on the discrete series
had been obtained by DeGeorge \cite{MR671316} and Barbasch--Moscovici \cite{MR722507} for groups of
real rank one, and by Clozel \cite{MR818353} for general groups (but with a weaker statement).
The limit multiplicity problem for the entire unitary dual has been solved for the principal
congruence subgroups of $\SL_2 (\Z)$ by Sarnak \cite{Saroldnote} (cf.~\cite[p. 173]{MR803367}, \cite[\S 5]{MR1718672}).
Also, a refined quantitative version
of the limit multiplicity property for the non-tempered spectrum of the
subgroups $\Gamma_0 (N)$ has been proven by Iwaniec \cite{MR1067982}.\footnote{Recall that by Selberg's eigenvalue conjecture
the non-tempered spectrum should consist only of the trivial representation in this case.}
A partial result for certain normal towers of congruence arithmetic lattices
defined by groups of $\Q$-rank one has been shown by Deitmar and Hoffmann in \cite{MR1718672}.
Finally, generalizations to the distribution of Hecke eigenvalues have been obtained by Sauvageot \cite{MR1468833},
Shin \cite{MR3004076} and Shin--Templier \cite{Shin-Temp}.

In this paper we embark upon a general analysis of the case of non-compact quotients.
We consider the entire unitary dual and groups of unbounded rank.
The main problem is to show that the contribution
of the continuous spectrum is negligible in the limit.
This was known up to now only in the case of $\GL (2)$ (or implicitly in the very special
situation considered in \cite{MR883670} and \cite{MR969416}).
Our approach is based on a careful study of the spectral side of
Arthur's trace formula in the recent form given in \cite{MR2811597, MR2811598}.
As we shall see, this form is crucial for the analysis.
Our results are unconditional only for the groups $\GL(n)$ and $\SL(n)$,
but we obtain a substantial reduction of the problem in the general case.

Before stating our main result we shift to an adelic setting which allows one to incorporate Hecke operators into the picture
(i.e., to consider the equidistribution of Hecke eigenvalues).
Thus, let now $G$ be a reductive group defined over a number field $F$ and $S$ a finite set of places of $F$ containing the
set $S_\infty$ of all archimedean places. Let $F_S$ be the product of the completions $F_v$ for $v \in S$, $\A^S$ the
restricted product of the $F_v$ for $v \notin S$, and $\A = F_S\times\A^S$ the ring of adeles of $F$.
In the special case $S=S_\infty$ we write $F_\infty = F_{S_\infty}$ and $\A_{\fin} = \A^{S_\infty}$.
As usual, $G(F_S)^1$ denotes the intersection of the kernels of the homomorphisms
$\abs{\chi}: G (F_S) \to \R^{>0}$ where $\chi$ ranges over the $F$-rational characters of $G$ and $\abs{\cdot}$
denotes the normalized absolute value on $F^*_S$. Similarly, we define the normal subgroup $G (\A)^1$ of $G(\A)$.
Fix a Haar measure on $G(\A)$.
For any open compact subgroup $K$ of $G(\A^S)$ let $\mu_K = \mu^{G,S}_K$ be the measure on $\Pi(G(F_S)^1)$
given by
\begin{align*}
\mu_K & = \frac{1}{\vol(G(F)\bs G(\A)^1/K)} \sum_{\pi\in\Pi(G(F_S)^1)}\dim\Hom_{G(F_S)^1}(\pi,L^2(G(F)\bs G(\A)^1/K)) \, \delta_\pi \\
& = \frac{\vol(K)}{\vol(G(F)\bs G(\A)^1)} \sum_{\pi\in\Pi (G(\A)^1)} \dim\Hom_{G(\A)^1}(\pi,L^2(G(F)\bs G(\A)^1)) \, \dim (\pi^S)^K \,  \delta_{\pi_S}.
\end{align*}

We say that a collection $\mathcal{K}$ of open compact subgroups of $G (\A^S)$
has the limit multiplicity property if $\mu_K\rightarrow\mu_{\plnch}$ for $K \in \mathcal{K}$ in the sense that
\begin{enumerate}
\item for any Jordan measurable subset $A\subset\Pi(G(F_S)^1)_{\temp}$ we have
$\mu_K(A)\rightarrow\mu_{\plnch}(A)$, $K \in \mathcal{K}$, and,
\item for any bounded subset $A\subset\Pi(G(F_S)^1)\setminus\Pi(G(F_S)^1)_{\temp}$ we have
$\mu_K(A)\rightarrow0$, $K \in \mathcal{K}$.
\end{enumerate}
Here, we write for example $\mu_K(A)\rightarrow\mu_{\plnch}(A)$ to mean that for every $\epsilon>0$
there are only finitely many subgroups $K \in \mathcal{K}$ such that $\abs{\mu_K(A) - \mu_{\plnch}(A)} \ge \epsilon$.
Once again, we can rephrase the first condition by saying that for any Riemann integrable function $f$
on $\Pi(G(F_S)^1)_{\temp}$ we have
\[
\mu_K(f)\rightarrow\mu_{\plnch}(f), \quad K \in \mathcal{K}.
\]

\begin{remark}
By a well-known result of Wallach, for any reductive group $G$, any
local factor of an irreducible representation which occurs in the residual spectrum (i.e., the non-cuspidal discrete spectrum)
is necessarily non-tempered (see \cite[Theorem 4.3]{MR733320} for the archimedean and \cite[Proposition 4.10]{MR1253624} for the $p$-adic case).
Therefore, once the limit multiplicity property is established, it automatically holds for the cuspidal spectrum as well.
\end{remark}

We note that when $G$ satisfies the strong approximation property with
respect to $S_\infty$ (which is tantamount to saying that $G$ is semisimple,
simply connected and without any $F$-simple factors $H$ for which
$H(F_\infty)$ is compact \cite[Theorem 7.12]{MR1278263}) and $K$ is an open compact subgroup
of $G(\A_{\fin})$, we have
\[
G (F) \bs G (\A) / K \simeq \Gamma_K \bs G (F_\infty)
\]
for the lattice $\Gamma_K = G (F) \cap K$ in the connected semisimple Lie group $G(F_\infty)$.
Thus, these lattices are incorporated in the adelic setting.
A similar connection can be made for general $G$,
where however a single subgroup $K$ will correspond to a finite set of lattices in $G(F_\infty)$.

An important step in the analysis of the limit multiplicity problem is to reduce it to a question about the trace formula.
This is non-trivial not the least because of the complicated nature of the unitary dual.
This step was carried out by Delorme in the case where $S$ consists of the archimedean
places \cite{MR860667}. His argument was subsequently extended by Sauvageot to the general case \cite{MR1468833},
where he also axiomatized the essential property as a ``density principle'' (see \S\ref{sec: reduction} below).
Using the result of Sauvageot, we can recast the limit multiplicity problem as follows.
Let $\Hecke(G(F_S)^1)$ be the algebra of smooth, compactly supported bi-$K_S$-finite functions on $G(F_S)^1$.
For any $h\in \Hecke (G(F_S)^1)$ let $\hat h$ be the function on $\Pi(G(F_S)^1)$ given by
$\hat h(\pi)=\tr\pi(h)$.
Note that we have
\[
\mu_K (\hat{h}) = \frac{1}{\vol(G(F)\bs G(\A)^1)} \tr R_{\disc}(h\otimes\one_{K})
\]
and
\[
\mu_{\plnch} (\hat{h}) = h (1).
\]
Then we have the following theorem.

\begin{theorem}[Sauvageot] \label{thm: reduction}
Suppose that the collection $\mathcal{K}$ has the property that for any
function $h\in\Hecke(G(F_S)^1)$ we have
\begin{equation} \label{eq: main1}
\mu_K (\hat{h}) \rightarrow h(1), \quad K \in \mathcal{K}.
\end{equation}
Then limit multiplicity holds for $\mathcal{K}$.
\end{theorem}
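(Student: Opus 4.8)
The plan is to deduce the two conditions in the definition of limit multiplicity from the trace-formula hypothesis \eqref{eq: main1} by invoking Sauvageot's density principle, which asserts that the linear span of the functions $\hat h$, $h \in \Hecke(G(F_S)^1)$, is dense in an appropriate sense in the space of Riemann integrable functions on $\Pi(G(F_S)^1)_{\temp}$ and simultaneously controls the non-tempered part. Concretely, first I would record the two evaluations already noted in the excerpt, namely $\mu_K(\hat h) = \vol(G(F)\bs G(\A)^1)^{-1}\tr R_{\disc}(h \otimes \one_K)$ and $\mu_{\plnch}(\hat h) = h(1)$, so that hypothesis \eqref{eq: main1} literally says $\mu_K(\hat h) \to \mu_{\plnch}(\hat h)$ for every $h \in \Hecke(G(F_S)^1)$. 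The content of the theorem is then the passage from convergence tested against the special functions $\hat h$ to convergence tested against arbitrary Jordan measurable sets (for the tempered part) and arbitrary bounded sets (for the non-tempered part).

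Next, for the tempered condition, I would fix a Jordan measurable set $A \subset \Pi(G(F_S)^1)_{\temp}$ and, given $\eps > 0$, use Sauvageot's result (the archimedean case being Delorme's) to produce functions $h_1, h_2 \in \Hecke(G(F_S)^1)$ such that $\Re\hat h_1 \le \one_A \le \Re\hat h_2$ on the tempered dual, with $\mu_{\plnch}(\hat h_2) - \mu_{\plnch}(\hat h_1)$ and a suitable majorant of the non-tempered contributions all bounded by $\eps$; here one exploits that $\mu_{\plnch}(\partial A) = 0$ to approximate $\one_A$ from above and below by continuous compactly supported functions, which are then approximated by the $\hat h_i$. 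Because all the measures $\mu_K$ are positive, sandwiching gives $\mu_K(\Re\hat h_1) \le \mu_K(A) + (\text{non-tempered error}) $ and similarly from above; applying \eqref{eq: main1} to $h_1$ and $h_2$ and the non-tempered bound (the second bullet, handled below, or the crude uniform bound supplied by Sauvageot) then forces $\limsup_K |\mu_K(A) - \mu_{\plnch}(A)| \le C\eps$, and letting $\eps \to 0$ finishes this case. For the non-tempered condition, given a bounded set $A \subset \Pi(G(F_S)^1) \setminus \Pi(G(F_S)^1)_{\temp}$, one uses the density principle to majorize $\one_A$ by $\Re\hat h$ for some $h \in \Hecke(G(F_S)^1)$ with $\mu_{\plnch}(\hat h) = h(1) \le \eps$ — this is possible precisely because $A$ is disjoint from the support of the Plancherel measure — and positivity of $\mu_K$ together with \eqref{eq: main1} gives $\limsup_K \mu_K(A) \le \eps$.

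The main obstacle, and the reason this is a genuine theorem rather than a formality, is the construction in Sauvageot's density principle itself: one must approximate the crude indicator functions of Jordan measurable and bounded subsets of the unitary dual — a space whose topology and Plancherel-theoretic structure are intricate, mixing Euclidean strata of varying dimension with a complicated non-tempered part — by functions of the very special form $\pi \mapsto \tr\pi(h)$, uniformly enough that the positive measures $\mu_K$ can be squeezed, and with simultaneous control on the spurious mass that such $\hat h$ necessarily put on the non-tempered locus. However, since this density principle is exactly the input we are permitted to assume (it is Sauvageot's theorem, extending Delorme's), in the present write-up this step is cited rather than reproved, and the argument above reduces to the sandwiching bookkeeping together with the two elementary identities for $\mu_K(\hat h)$ and $\mu_{\plnch}(\hat h)$.
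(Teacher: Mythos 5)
Your proposal is correct and follows essentially the same route as the paper: cite Sauvageot's density principle (Th\'eor\`eme of \cite{MR1468833}, stated in the paper as Theorem \ref{thm: density principle}), use positivity of the measures $\mu_K$ together with the identities $\mu_K(\hat h)=\vol(G(F)\bs G(\A)^1)^{-1}\tr R_{\disc}(h\otimes\one_K)$ and $\mu_{\plnch}(\hat h)=h(1)$, and conclude by the sandwiching/triangle-inequality bookkeeping. The only cosmetic difference is that the paper tests against arbitrary Riemann integrable $f$ via the form $\abs{f-\hat h_1}\le\hat h_2$ on the whole unitary dual (which already supplies the non-tempered control you defer to Sauvageot), rather than sandwiching the indicator of a Jordan measurable set.
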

We will recall how to obtain this result from Sauvageot's density principle in \S\ref{sec: reduction}.

Given this reduction, it is natural to attack assertion \eqref{eq: main1} via the trace formula.
In the cocompact case (i.e., when $G/Z(G)$ is anisotropic over $F$) one can use the Selberg trace formula.
In the general case we use Arthur's (non-invariant) trace formula which expresses a certain distribution $h\mapsto J(h)$
on $C_c^\infty(G(\A)^1)$ geometrically and spectrally \cite{MR518111, MR558260, MR681737, MR681738, MR828844, MR835041}.
The distribution $J$ depends on the choice of a maximal $F$-split torus $\spltrs_0$ of $G$ and
a suitable maximal compact subgroup $\K = \K_S \K^S$ of $G(\A)$ (cf.~\S\ref{SectionTraceFormula} below).
The main terms on the geometric side are the elliptic orbital integrals, most notably the contribution $\vol(G(F)\bs G(\A)^1) h(1)$
of the identity element.
The main term on the spectral side is $\tr R_{\disc}(h)$.

The relation \eqref{eq: main1} can now be broken down into the following two statements:
\begin{equation} \label{eq: mainspectral}
\text{For any }h\in\Hecke(G(F_S)^1)\text{ we have } J (h \otimes \one_K) - \tr R_{\disc} (h \otimes \one_K) \rightarrow0,
\end{equation}
and,
\begin{equation} \label{eq: maingeometric}
\text{for any }h\in\Hecke(G(F_S)^1)\text{ we have } J (h \otimes \one_K) \rightarrow\vol(G(F)\bs G(\A)^1) h(1).
\end{equation}
We call these assertions the \emph{spectral} and \emph{geometric limit properties}, respectively.

In the cocompact case the spectral limit property is trivial since $J (h) = \tr R_{\disc} (h)$.
Also, in this case it is easy to see that for any tower $\mathcal{K}$ of normal subgroups $K$ of $\K^S$
and for every $h\in\Hecke(G(F_S)^1)$ we have in fact
$J (h \otimes \one_K) = \vol(G(F)\bs G(\A)^1) h(1)$ for almost all $K \in \mathcal{K}$. This is Sauvageot's proof of the
limit multiplicity property in this case.

In general both properties are nontrivial. In this paper we consider
only the simplest collection of normal subgroups of $\K^S$, namely the
\emph{principal} congruence subgroups $\K^S (\mathfrak{n})$ of $\K^S$
for non-zero ideals $\mathfrak{n}$ of $\mathfrak{o}_F$ prime to $S$ (see \S \ref{SectionGeom}).
In this case, the geometric limit property is a consequence of Arthur's analysis of the unipotent contribution
to the trace formula in \cite{MR828844} (see \S \ref{SectionGeom}, in particular Corollary \ref{corgeometriclimit}).
The main task is to prove the spectral limit property for this collection of subgroups. We are able to do this unconditionally for the
groups $\GL(n)$ and $\SL(n)$, and consequently obtain the following as our main result.

\begin{theorem} \label{thm: main}
Let $G$ be either $\GL(n)$ or $\SL(n)$ over a number field $F$.
Then limit multiplicity holds for the collection of all principal congruence subgroups $\K^S (\mathfrak{n})$ of $\K^S$.
\end{theorem}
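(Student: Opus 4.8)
The plan is to deduce the theorem from Theorem~\ref{thm: reduction}: it suffices to establish \eqref{eq: main1} for the collection of all principal congruence subgroups $\K^S(\mathfrak{n})$ with $\mathfrak{n}$ prime to $S$. As explained above, \eqref{eq: main1} is equivalent to the conjunction of the geometric limit property \eqref{eq: maingeometric} and the spectral limit property \eqref{eq: mainspectral}. The geometric limit property holds for \emph{arbitrary} reductive $G$ and this particular collection of subgroups: it is a consequence of Arthur's bounds on the unipotent contribution to the trace formula \cite{MR828844}, since for a fixed $h$ every geometric term attached to $h\otimes\one_{\K^S(\mathfrak{n})}$ other than the identity contribution $\vol(G(F)\bs G(\A)^1)h(1)$ becomes negligible once $\mathfrak{n}$ is deep enough (see \S\ref{SectionGeom}, in particular Corollary~\ref{corgeometriclimit}). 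Thus the whole content of the theorem is the spectral limit property
\[
J\bigl(h\otimes\one_{\K^S(\mathfrak{n})}\bigr)-\tr R_{\disc}\bigl(h\otimes\one_{\K^S(\mathfrak{n})}\bigr)\longrightarrow 0 \qquad (N\mathfrak{n}\to\infty)
\]
for every fixed $h\in\Hecke(G(F_S)^1)$, and it is here that the special structure of $\GL(n)$ and $\SL(n)$ enters.

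To prove the spectral limit property I would insert the refined form of the spectral side of Arthur's trace formula from \cite{MR2811597, MR2811598}. In that form the difference $J(f)-\tr R_{\disc}(f)$ is expressed as a finite sum, over conjugacy classes of proper Levi subgroups $M$ of $G$ together with the associated Weyl group elements, of absolutely convergent integrals over $i\aaa_M^*$ whose integrands are traces of the operator by which $f$ acts on the space of automorphic forms induced from the discrete spectrum of $M$, composed with products of logarithmic derivatives of normalized intertwining operators. Taking $f=h\otimes\one_{\K^S(\mathfrak{n})}$, it then suffices to bound each proper-$M$ term and to show that it tends to $0$.

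The estimate of a fixed proper-$M$ term combines three inputs. First, the portion of the discrete spectrum of $M$ of level dividing $\mathfrak{n}$ that can contribute, counted with multiplicities and with the dimension of the $\K^S(\mathfrak{n})$-invariants of the induced representation, has size bounded by a fixed power of $N\mathfrak{n}$, a power determined by $\dim M$ and $\dim G$. Second, $\vol(\K^S(\mathfrak{n}))$ decays like $N\mathfrak{n}^{-\dim G}$ up to a bounded factor. Third, and most importantly, the contribution of the normalized intertwining operators and their logarithmic derivatives, integrated over $i\aaa_M^*$, is bounded polynomially in $N\mathfrak{n}$ with a controlled exponent; when this is combined with the first two inputs the resulting total power of $N\mathfrak{n}$ is strictly negative, the gap $\dim G-\dim M>0$ supplying the necessary margin. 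This third bound is precisely where the properties \TWN\ (polynomial control of the winding numbers of the normalizing factors along $i\aaa_M^*$) and \BD\ (bounded degree of the normalized operators) are used; they hold unconditionally for $\GL(n)$ and $\SL(n)$ by \cite{MR2811598}, through the normalization of the intertwining operators by Rankin--Selberg $L$-functions and the analytic properties of the latter. Assembling the three inputs, each proper-$M$ term is $O(N\mathfrak{n}^{-\epsilon})$ for some $\epsilon>0$, and summing the finitely many of them gives \eqref{eq: mainspectral}, hence the theorem. For $\SL(n)$ one can also pass through $\GL(n)$, since the relevant intertwining operators and their normalizing factors are inherited by restriction.

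I expect the third input to be the main obstacle, namely the uniform control of the normalized intertwining operators simultaneously in the spectral parameter $\lambda\in i\aaa_M^*$ and in the level $\mathfrak{n}$: a priori the logarithmic derivatives occurring on the refined spectral side can be large, and even the absolute convergence of the integrals is delicate without \TWN\ and \BD. This is also the only step not available for a general reductive group $G$; for such $G$ the same argument reduces the limit multiplicity property for the principal congruence subgroups to \TWN\ and \BD\ for the normalized intertwining operators (the geometric limit property already being known), which is the ``substantial reduction'' of the general problem announced in the introduction.
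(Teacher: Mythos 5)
Your overall strategy coincides with the paper's: reduce via Theorem \ref{thm: reduction} to \eqref{eq: main1}, split into the geometric limit property (Corollary \ref{corgeometriclimit}) and the spectral limit property, and attack the latter through the refined spectral expansion of Theorem \ref{thm: specexpand} together with properties \TWN\ and \BD, which hold for $\GL(n)$ and $\SL(n)$. However, there is a genuine gap in what you call the ``first input''. You assert that the discrete spectrum of a proper Levi $M$ of level dividing $\mathfrak{n}$, weighted by multiplicities and archimedean parameters, ``has size bounded by a fixed power of $N\mathfrak{n}$'' as if this were a routine count. It is not: what is actually needed is the bound
\[
\vol(\K_M(\nnn))\sum_{\pi\in\Pi_{\disc}(M(\A))^{\types_M}} (1+\abs{\lambda_{\pi_\infty}})^{-N}\, m_\pi \dim\pi_{\fin}^{\K_M(\nnn)} \ll 1
\]
uniformly in $\nnn$, i.e.\ that the collection of measures $\{\mu^{M,S_\infty}_{\K_M(\nnn)}\}$ is polynomially bounded in the sense of Definition \ref{DefinitionBounded}. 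This concerns the \emph{full} discrete spectrum of $M$ (including the residual spectrum), for which no a priori elementary dimension count is available; only the cuspidal analogue was known (Deitmar--Hoffmann). The paper devotes all of \S\ref{SectionPB} and a good part of \S\ref{sec: spectralside} to exactly this point: Proposition \ref{Delormeprop}, which rests on the Clozel--Delorme Paley--Wiener theorem and Vogan's classification, converts such counting bounds into continuity of seminorms $f\mapsto\sup_\nnn\abs{\mu_{\K_M(\nnn)}(\hat f)}$, and Lemma \ref{BoundedLemma} then establishes polynomial boundedness by induction on the semisimple rank, applying the trace formula \emph{for $M$ itself} (geometric side controlled by Proposition \ref{unipproposition}, proper Levi terms of $M$ by the induction hypothesis and Corollary \ref{cor: SpectralEstimate}). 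Without this inductive loop your three-input estimate for a proper-$M$ term is unjustified, since the very quantity you need to bound is a limit-multiplicity-type statement for $M$.

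Two smaller points: the cancellation of exponents is slightly more delicate than ``the gap $\dim G-\dim M$ supplies the margin''; in the paper the factor $\vol(\K(\nnn))$ from Lemma \ref{lem: mainestimate} cancels against $\dim\Ind^{G(\A_{\fin})}_{P(\A_{\fin})}(\pi_{\fin})^{\K(\nnn)}$, and the surviving decay is $\inorm(\nnn)^{-\dim U+\epsilon}=\inorm(\nnn)^{(\dim M-\dim G)/2+\epsilon}$, which is only available once the $M$-spectral sum has been absorbed by polynomial boundedness. Also, \TWN\ and \BD\ for $\GL(n)$ and $\SL(n)$ do not come from \cite{MR2811598}: \TWN\ follows from analytic properties of Rankin--Selberg $L$-functions (Proposition \ref{prop: mainglobal}, via \cite{MR2276771}, \cite{MR1462836}), and \BD\ from \cite{MR3001800} (Theorem \ref{TheoremLocalGLn}); the archimedean ingredient is \cite[Appendix]{MR2053600}.
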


As explained above, for $G=\SL(n)$ over $F$ and (for simplicity) $S = S_\infty$ the strong approximation theorem \cite[Theorem 7.12]{MR1278263} allows for the
following reformulation of this result in terms of lattices in the semisimple Lie groups $\SL (n,F_\infty)$.

\begin{corollary}
Limit multiplicity holds for the collection of the principal congruence subgroups
\[
\Gamma (\mathfrak{n}) = \{ \gamma \in \SL (n,\mathfrak{o}_F) : \gamma \equiv 1 \pmod{\mathfrak{n}} \}
\]
of the lattice $\SL (n,\mathfrak{o}_F)$ in the semisimple Lie group $\SL (n,F_\infty)$.
\end{corollary}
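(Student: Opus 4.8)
The plan is to deduce the corollary directly from Theorem~\ref{thm: main} by unwinding the strong approximation dictionary already indicated above. First I would apply Theorem~\ref{thm: main} to $G=\SL(n)$ over $F$ (with $n\ge 2$) with the choice $S=S_\infty$; this yields the limit multiplicity property for the collection $\mathcal{K}$ of all principal congruence subgroups $\K^{S_\infty}(\mathfrak{n})\subset\K^{S_\infty}\subset G(\A_{\fin})$, indexed by the nonzero ideals $\mathfrak{n}$ of $\mathfrak{o}_F$. Since $\SL(n)$ is semisimple, simply connected and $F$-simple with $\SL(n,F_\infty)=\prod_{v\mid\infty}\SL(n,F_v)$ non-compact, strong approximation with respect to $S_\infty$ applies \cite[Theorem 7.12]{MR1278263}, so the reduction map gives homeomorphisms $G(F)\bs G(\A)/\K^{S_\infty}(\mathfrak{n})\simeq\Gamma(\mathfrak{n})\bs\SL(n,F_\infty)$, where $\Gamma(\mathfrak{n})=G(F)\cap\K^{S_\infty}(\mathfrak{n})$ is exactly the principal congruence subgroup in the statement.

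Next I would check that the adelic spectral measures $\mu_{\K^{S_\infty}(\mathfrak{n})}$ coincide with the measures $\mu_{\Gamma(\mathfrak{n})}$ of the introduction under these identifications. Because $\SL(n)$ has no nontrivial $F$-rational characters, $G(\A)^1=G(\A)$ and $G(F_{S_\infty})^1=\SL(n,F_\infty)$, so $\Pi(G(F_S)^1)=\Pi(\SL(n,F_\infty))$; the homeomorphism above identifies $L^2(G(F)\bs G(\A)/\K^{S_\infty}(\mathfrak{n}))$ with $L^2(\Gamma(\mathfrak{n})\bs\SL(n,F_\infty))$ as $\SL(n,F_\infty)$-representations, whence the multiplicity functions match, and the volume normalizations $\vol(G(F)\bs G(\A)^1/\K^{S_\infty}(\mathfrak{n}))$ and $\vol(\Gamma(\mathfrak{n})\bs\SL(n,F_\infty))$ agree once compatible Haar measures are fixed. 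The Plancherel measure $\mu_{\plnch}$ on $\Pi(\SL(n,F_\infty))$ is the same object on both sides, and the notions of bounded set, Jordan measurable set, and Riemann integrable function depend only on $\SL(n,F_\infty)$.

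Finally I would translate the formulation of the limit multiplicity property. In the adelic setting it asserts that for every $\epsilon>0$ only finitely many $K\in\mathcal{K}$ violate the relevant inequalities; since for each bound $X$ there are only finitely many ideals $\mathfrak{n}$ with $\inorm(\mathfrak{n})\le X$, and $\vol(\Gamma(\mathfrak{n})\bs\SL(n,F_\infty))\to\infty$ as $\inorm(\mathfrak{n})\to\infty$, this is precisely the assertion that $\mu_{\Gamma(\mathfrak{n})}(A)\to\mu_{\plnch}(A)$ for Jordan measurable $A\subset\Pi(\SL(n,F_\infty))_{\temp}$ and $\mu_{\Gamma(\mathfrak{n})}(A)\to 0$ for bounded $A$ disjoint from the tempered dual. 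There is no genuine obstacle here — the argument is entirely a matter of unwinding definitions — and the only point requiring a little care is the bookkeeping of Haar measure normalizations that makes the adelic and classical spectral measures literally coincide rather than merely be proportional.
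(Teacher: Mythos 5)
Your proposal is correct and follows essentially the same route as the paper: the corollary is obtained by applying Theorem \ref{thm: main} to $G=\SL(n)$ with $S=S_\infty$ and invoking the strong approximation identification $G(F)\bs G(\A)/\K^{S_\infty}(\mathfrak{n})\simeq\Gamma(\mathfrak{n})\bs\SL(n,F_\infty)$ already set up in the introduction, under which the adelic measures $\mu_{\K^{S_\infty}(\mathfrak{n})}$ become the lattice measures $\mu_{\Gamma(\mathfrak{n})}$. Your additional bookkeeping (triviality of $F$-rational characters, matching of multiplicities and volume normalizations, and the translation of the finiteness formulation) is exactly the definition-unwinding the paper leaves implicit.
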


The key input for our approach to the spectral limit property
is the refinement of the spectral expansion of Arthur's trace formula established in \cite{MR2811597} (cf.~Theorem \ref{thm: specexpand} below).
This result enables us to set up an inductive argument which relies on two conjectural properties, one global and one local,
which we call \TWN\ (tempered winding numbers) and \BD\ (bounded degree), respectively.
They are stated in \S \ref{SectionBounds} and are expected to hold for any reductive group $G$ over a number field.
Theorem \ref{thm: main} is proved for any group $G$ satisfying these properties (see Theorem \ref{MainTheorem}).

The global property \TWN\ is a uniform estimate on the winding number of the normalizing scalars
of the intertwining operators in the co-rank one case.
For $\GL(n)$ and $\SL(n)$ this property follows from known, but delicate, properties of the Rankin--Selberg $L$-functions (Proposition \ref{prop: mainglobal}).
In order to describe the local property \BD, recall that in the non-archimedean case the matrix coefficients of the local intertwining operators
are rational functions of $q^{-s}$, where $q$ is the cardinality of the residue field,
and that the degrees of the denominators are bounded in terms of $G$ only.
Property \BD\ gives an upper bound on the degree of the numerator in terms of the level.
This property was studied in \cite{MR3001800}, where among other things it was proved for the groups $G=\GL (n)$
(and implicitly also for $\SL(n)$).
The import of property \BD\ is that it yields a good bound for integrals of logarithmic derivatives of normalized intertwining operators
(Proposition \ref{prop: mainlocal}).
The archimedean analog of property \BD\ (for a general real reductive group) had been established in \cite[Appendix]{MR2053600}.

The analysis is carried out by induction on the semisimple rank of $G$.
Actually, for the induction step it is necessary to verify that the collection of measures
$\{ \mu^{G, S_\infty}_{\K (\nnn)} \}$ is \emph{\PB} in the sense
of Definition \ref{DefinitionBounded}, a property that already shows up in Delorme's work \cite{MR860667}.
This property is analyzed in \S \ref{SectionPB}, where we prove Proposition \ref{Delormeprop},
a result on real reductive Lie groups which generalizes a part of Delorme's argument,
and is (like Delorme's work) based on the Paley--Wiener theorem of Clozel--Delorme \cite{MR1046496}.
Once we have that the collections $\{ \mu^{M, S_\infty}_{\K_M (\nnn)} \}$ are {\PB}
for all proper Levi subgroups $M$ of $G$, we can deduce the spectral limit property for $G$ (Corollary \ref{corspectrallimit}).

We end this introduction with a few remarks on possible extensions of Theorems \ref{thm: main} and \ref{MainTheorem}.
For general sequences $(\Gamma_n)$ of distinct irreducible lattices in a semisimple Lie group $G$,
there is an obvious obstruction to the limit multiplicity property, namely the possibility that
the lattices $\Gamma_n$ (or an infinite subsequence thereof) all contain a non-trivial subgroup $\Delta$ of the
center of $G$, which forces the corresponding representations $R_{\Gamma_n}$ to be $\Delta$-invariant.
By passing to the quotient $G / \Delta$, we can assume that this is not the case.
A less obvious obstruction is that the members of an infinite subsequence of $(\Gamma_n)$ all contain a non-central normal subgroup of $\Gamma_1$
(necessarily of infinite index).
In such a case the analog of the geometric limit property \eqref{eq: maingeometric} fails.
Indeed, for $G = \SL_2 (\R)$ we can find a descending sequence of finite index normal subgroups $\Gamma_n$ of $\Gamma=\SL_2(\Z)$
such that for all $n$ the multiplicity in $L^2(\Gamma_n\bs G)$ of either one of the two lowest discrete series representations of $G$
(or equivalently, the genus of the corresponding Riemann surface) is equal to one \cite{MR0163966}.
Similarly, one can find a descending sequence of finite index normal subgroups $\Gamma_n$ of $\SL_2(\Z)$ such that the limiting measure of the sequence $(\mu_{\Gamma_n})$
has a strictly positive density on the entire complementary spectrum $\Pi (G) \setminus \Pi (G)_{\temp}$ \cite{MR1091611}.\footnote{
It follows from \cite{MR883670} (or alternatively by direct calculation)
that the limit multiplicity property holds for the discrete series of $\SL_2 (\R)$ and arbitrary normal towers of subgroups of $\SL_2 (\Z)$,
i.e., when the intersection of the normal subgroups is trivial.}
By Margulis's normal subgroup theorem, non-central normal subgroups of infinite index do not exist for irreducible lattices $\Gamma$ in
semisimple Lie groups $G$ of real rank at least two and without compact factors (\cite[p. 4, Theorem 4']{MR1090825}, cf.~also [ibid., IX.6.14]).
(The paper \cite{ABBGNRS} is a major outgrowth of the Margulis normal subgroup theorem.)
One expects that for irreducible arithmetic lattices, the limit multiplicity property holds at least for any sequence of distinct \emph{congruence} subgroups not
containing non-trivial central elements.
In the adelic setting, let $G$ be a reductive group defined over a number field $F$ such that
the derived group $G^{\der}$ of $G$ is $F$-simple and simply connected.
Then we expect the limit multiplicity property to be true for a collection $\mathcal{K}$ of open compact subgroups
of $G (\A^S)$, if $\vol (K \cap G^{\der} (\A^S)) \to 0$ for $K \in \mathcal{K}$ and every non-trivial
element of the center of $G(F)$ is contained in only finitely many members of $\mathcal{K}$.
For this, a good understanding of the structure
of these subgroups seems to be necessary both to deal with the geometric and with the spectral side.
We hope to return to this problem in a future paper.

We thank the Centre Interfacultaire Bernoulli, Lausanne,
and the Max Planck Institute for Mathematics, Bonn, where a part of this paper was worked out.
We thank Joseph Bernstein, Laurent Clozel, Wee Teck Gan and Peter Sarnak for useful discussions.

\section{Sauvageot's density principle} \label{sec: reduction}

In this section we recall the results of Sauvageot \cite{MR1468833} and the proof of Theorem \ref{thm: reduction}, providing a close link between
the limit multiplicity problem and the trace formula. We continue to use the notation introduced before Theorem \ref{thm: reduction}.
Recall that a bounded subset $A \subset \Pi (G(F_S)^1)$ is a relatively quasi-compact subset. Equivalently,
$A \subset \Pi (G(F_S)^1)$ is bounded if the archimedean
infinitesimal characters $\chi_{\pi_\infty}$ of the elements $\pi \in A$ are bounded and
there exists an open compact subgroup $K \subset G(F_{S - S_\infty})$ such that every $\pi \in A$ contains
a non-trivial $K$-fixed vector.

The main result of \cite{MR1468833} (Corollaire 6.2 and Th\'{e}or\`{e}me 7.3) is the following.\footnote{See the appendix of \cite{MR3004076} for important corrections.}

\begin{theorem}[Sauvageot] \label{thm: density principle}
Let $\epsilon>0$ be arbitrary. Then
\begin{enumerate}
\item For any bounded set $A\subset\Pi(G(F_S)^1)\setminus\Pi_{\temp}(G(F_S)^1)$
there exists $h\in\Hecke(G(F_S)^1)$ such that
\begin{enumerate}
\item $\hat h(\pi)\ge0$ for all $\pi\in\Pi(G(F_S)^1)$,
\item $\hat h(\pi)\ge1$ for all $\pi\in A$,
\item $h(1)=\mu_{\plnch}(\hat h)<\epsilon$.
\end{enumerate}
\item For any Riemann integrable function $f$ on $\Pi_{\temp}(G(F_S)^1)$
there exist $h_1,h_2\in\Hecke(G(F_S)^1)$ such that
\begin{enumerate}
\item $\abs{f(\pi)-\hat h_1(\pi)}\le\hat h_2(\pi)$ for all $\pi\in\Pi(G(F_S)^1)$,
where we extend $f$ by zero to the entire unitary dual $\Pi (G(F_S)^1)$.
\item $h_2(1) = \mu_{\plnch}(\hat{h}_2) <\epsilon$.
\end{enumerate}
\end{enumerate}
\end{theorem}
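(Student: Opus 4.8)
The plan is to deduce both statements from the construction of suitable elements of $\Hecke(G(F_S)^1)$ whose operator-valued Fourier transforms have prescribed positivity, localization, and Plancherel mass. Three inputs enter. First, the Paley--Wiener theorem --- Clozel--Delorme in the archimedean direction together with its $p$-adic counterpart --- which identifies $\{\hat h:h\in\Hecke(G(F_S)^1)\}$ with an explicit space $\PW$ of functions on the complexified tempered dual. Second, the description of $\Pi(G(F_S)^1)_{\temp}$ as a countable disjoint union of components $\mathcal O_{M,\sigma}$, indexed by a Levi subgroup $M$ and a discrete series representation $\sigma$ of $M(F_S)^1$, each a real-analytic manifold on which the Plancherel density is the restriction of an explicit meromorphic function of a complexified parameter $\lambda\in\aaa_{M,\C}^\ast$ (with the unitary axis, corresponding to $\lambda$ imaginary, carrying the tempered representations). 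Third, the Plancherel formula $\mu_{\plnch}(\hat h)=h(1)$. The basic device for positivity is that if $h=g^{\ast}\ast g$ then $\pi(h)=\pi(g)^{\ast}\pi(g)\ge0$ for every unitary $\pi$, so $\hat h=\|\pi(g)\|_{\mathrm{HS}}^2\ge0$ on all of $\Pi(G(F_S)^1)$, while $h(1)=\mu_{\plnch}(\hat h)=\|g\|_{L^2}^2$ automatically.

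For (1), I would use that every non-tempered $\pi$ is a Langlands quotient, hence corresponds to a point with nonzero real part in the complexification of some component $\mathcal O_{M,\sigma}$. Since $A$ is bounded, it meets only finitely many components, and in each the corresponding parameters fill a bounded set whose closure meets the unitary axis only in the finite --- hence Plancherel-null --- set of points $\Ind_P^G(\sigma,0)$ attached to the finitely many relevant $\sigma$. Fix a neighbourhood $U$ of this accumulation locus with $\mu_{\plnch}(U)<\epsilon/2$; off $U$ the parameters of $A$ stay at positive distance from the unitary axis. I would then choose $g_1$ supported, after symmetrization under the finite identifications forced by the compatibility conditions defining $\PW$, on the relevant components, and built from one-variable Paley--Wiener weight functions in the parameter directions; by the exponential growth of Paley--Wiener functions away from the unitary axis these can be made much larger on the fixed compact non-tempered piece $A\setminus U$ than on the relevant part of the unitary axis, so that after rescaling $\widehat{g_1^{\ast}\ast g_1}\ge1$ on $A\setminus U$ with $\|g_1\|_{L^2}^2<\epsilon/2$. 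Separately, since $U$ is a small neighbourhood of a Plancherel-null set, a routine construction yields $g_2$ with $\widehat{g_2^{\ast}\ast g_2}\ge1$ on $U$ and $\|g_2\|_{L^2}^2<\epsilon/2$. Then $h=g_1^{\ast}\ast g_1+g_2^{\ast}\ast g_2$ satisfies (a)--(c).

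For (2), I would first split $f=f_+-f_-$ and treat each part, so assume $f\ge0$. Since $f$ is compactly supported it lives on finitely many components, within a precompact region of each; using Riemann integrability, partition that region into small cells $Q_i$ with $\mu_{\plnch}(\partial Q_i)=0$ so that, outside an exceptional family of cells of total Plancherel mass $<\epsilon$, the oscillation of $f$ on each $Q_i$ is $<\epsilon$. As in (1), I would construct nonnegative $b_i=\widehat{g_i^{\ast}\ast g_i}\in\PW$ approximating $\one_{Q_i}$ on the tempered dual with $\mu_{\plnch}(b_i)$ close to $\mu_{\plnch}(Q_i)$ and with uniformly bounded overlap, and set $\hat h_1=\sum_i f(\nu_i)\,b_i$; this approximates $f$ on the tempered dual, with error controlled by the oscillation bound, the small-mass exceptional cells, and the (thin, hence small-mass) transition regions near the $\partial Q_i$. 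It remains to dominate $\abs{f-\hat h_1}$ on the tempered dual and $\abs{\hat h_1}$ on the (bounded) non-tempered part by a single $\hat h_2$ with $h_2(1)<\epsilon$: the first contribution is absorbed by narrow bumps of small total Plancherel mass, and the second --- using that $\abs{\hat h_1}$ is bounded on the relevant bounded non-tempered piece by Paley--Wiener estimates --- by a rescaled construction exactly as in part (1). Adding these produces $h_2$.

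The hard part will be the actual construction of the test functions: one has to exhibit genuine elements of $\PW$, i.e.\ Fourier transforms of compactly supported bi-$K_S$-finite functions, having the stated localization and positivity. This needs the full strength of the Paley--Wiener theorems, care with the compatibility relations among components (handled by symmetrization under the relevant finite groups) and, in the archimedean direction, with $K$-finiteness and the growth conditions, all kept uniform enough to force $h(1)=\mu_{\plnch}(\hat h)<\epsilon$. The second delicate point, visible already above, is that a bounded subset of the non-tempered spectrum can accumulate onto the tempered dual; one must show that this accumulation is supported on a Plancherel-negligible set and can be absorbed into a small correction term, as in the treatment of the locus $\Ind_P^G(\sigma,0)$ above.
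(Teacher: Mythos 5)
The paper itself contains no proof of this statement: it is quoted directly from Sauvageot \cite{MR1468833} (Corollaire 6.2 and Th\'eor\`eme 7.3), together with the corrections given in the appendix of \cite{MR3004076}, so your proposal can only be measured against Sauvageot's argument rather than against anything in this paper. Your overall architecture is in the right spirit: the Paley--Wiener description of $\{\hat h : h\in\Hecke(G(F_S)^1)\}$, positivity via $h=g^{\ast}\ast g$ together with $\mu_{\plnch}(\hat h)=h(1)=\norm{g}_{L^2}^2$, a cell decomposition with oscillation control for Riemann integrable $f$, and the recognition that the accumulation of the bounded non-tempered spectrum onto a Plancherel-negligible part of the tempered dual is the delicate point are all genuine ingredients of any proof of this kind.

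There is, however, a real gap at the central step of part (1). You propose to force $\widehat{g_1^{\ast}\ast g_1}\ge 1$ on the non-tempered set $A\setminus U$ by exploiting the exponential growth of Paley--Wiener functions away from the unitary axis. But for a non-tempered unitary $\pi$, $\hat h(\pi)=\tr\pi(h)$ is the trace on the \emph{irreducible} Langlands quotient, whereas the value of the Paley--Wiener data of $h$ at the corresponding complexified parameter $(\sigma,\lambda)$ is the trace of the full standard module $\Ind_P^G(\sigma,\lambda)(h)$; the two differ by a signed combination of traces of the remaining constituents, which are in general not unitarizable, so the positivity $\tr\tau(g^{\ast}\ast g)\ge 0$ is unavailable for them and largeness of the standard character does not transfer to $\hat h(\pi)$. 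Obtaining uniform lower (and upper) bounds for $\tr\pi(h)$ over a bounded family of non-tempered unitary representations --- i.e., controlling the passage between standard and irreducible characters uniformly, together with the continuity properties of $\pi\mapsto\hat h(\pi)$ in the Fell topology --- is precisely the hard content of Sauvageot's density principle, and is the locus of the errors corrected in the appendix of \cite{MR3004076}; your sketch assumes it rather than proves it. A secondary inaccuracy: the closure of the parameter set of a bounded non-tempered family need not meet the unitary axis in a \emph{finite} set (already for a product of two rank-one groups one obtains positive-dimensional accumulation loci); what is true, and what you actually need, is that this accumulation locus is Plancherel-negligible, and that assertion also requires an argument.
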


As in \cite{MR1468833}, this result easily implies Theorem \ref{thm: reduction}.
We recall the argument.
Let $A\subset\Pi(G(F_S)^1)\setminus\Pi_{\temp}(G(F_S)^1)$ be a bounded set.
For any $\epsilon>0$ let $h\in\Hecke(G(F_S)^1)$ be as in the first part of Theorem \ref{thm: density principle}.
By assumption we have $\abs{\mu_K(\hat{h})-h(1)}<\epsilon$ for all but finitely many $K \in \mathcal{K}$.
For all such $K$ we have
\[
\mu_K (A)\le\mu_K(\hat h)\le\abs{\mu_K(\hat h)-h(1)}+h(1)<2\epsilon.
\]

Similarly, let $f$ be a Riemann integrable function on $\Pi_{\temp}(G(F_S)^1)$. For any $\epsilon>0$ let
$h_1$ and $h_2$ be as in the second part of Theorem \ref{thm: density principle}.
By assumption, for all but finitely many $K \in \mathcal{K}$ we have $\abs{\mu_K(\hat{h}_i)-h_i(1)}<\epsilon$, $i = 1$, $2$.
Using $\mu_{\plnch} (\hat{h}_i) = h_i (1)$, we obtain
\begin{align*}
\abs{\mu_K(f)-\mu_{\plnch}(f)} & \le
\abs{\mu_K(f)-\mu_K(\hat h_1)}+\abs{\mu_K(\hat h_1)-h_1(1)}+\abs{h_1(1)-\mu_{\plnch}(f)}\\
&\le \abs{\mu_K(\hat h_1)-h_1(1)}+\mu_K(\hat h_2)+h_2(1)\\
&\le\abs{\mu_K(\hat h_1)-h_1(1)}+\abs{\mu_K(\hat h_2)-h_2(1)}+2h_2(1)<4\epsilon.
\end{align*}
Theorem \ref{thm: reduction} follows.

\section{The geometric limit property} \label{SectionGeom}

In this section we prove the geometric limit property for the principal congruence subgroups $\K^S (\nnn)$,
where $S$ is a finite set of places of $F$ containing $S_\infty$.
In fact, we obtain a somewhat more precise estimate (cf.~Proposition \ref{unipproposition} below), which
will be useful in the inductive argument of \S \ref{sec: spectralside}.

\subsection{Notation} \label{subsecnotationgeom}
We will mostly use the notation of \cite{MR2811597}.
As before, $G$ is a reductive group defined over a number field $F$ and $\A$ is the ring of adeles of $F$. Denote the adele norm
on $\A^\times$ by $\abs{\cdot}_{\A^\times}$.
For a finite place $v$ of $F$ let $q_v$ be the cardinality of the residue field of $v$.
We write $F_\infty=F\otimes\R$ and $\A_{\fin}$ for the ring of finite adeles.
As above, we fix a maximal compact subgroup $\K=\prod_v \K_v = \K_\infty \K_{\fin}$ of
$G(\A)=G(F_\infty)G(\A_{\fin})$. Let $G (\A)^1$ be the intersection
of the subgroups $\ker \abs{\chi}_{\A^\times}$ of $G (\A)$ as
$\chi$ ranges over the $F$-rational characters of $G$.

Fix once and for all a faithful $F$-rational representation $\rho: G \to \GL (V)$ and an $\mathfrak{o}_F$-lattice $\Lambda$ in the representation space
$V$ such that the stabilizer of $\hat{\Lambda} = \hat{\mathfrak{o}}_F \otimes \Lambda \subset \A_{\fin} \otimes V$ in $G(\A_{\fin})$ is the group $\K_{\fin}$.
(Since the maximal compact subgroups of $\GL (\A_{\fin} \otimes V)$ are precisely the stabilizers of lattices, it is easy to see that such a lattice exists.)
For any non-zero ideal $\nnn$ of $\mathfrak{o}_F$ let
\[
\K (\nnn) = \K_G(\nnn)=\{ g \in G (\A_{\fin}) \, : \, \rho (g) v \equiv v \pmod{\nnn \hat{\Lambda}}, \quad v \in \hat{\Lambda} \}
\]
be the principal congruence subgroup of level $\nnn$, a factorizable normal open subgroup of $\K_{\fin}$.
The groups $\K (\nnn)$ form a neighborhood base of the identity element in $G(\A_{\fin})$.
We denote by $\inorm (\nnn) = [ \mathfrak{o}_F : \nnn]$ the ideal norm of $\nnn$.
Similarly, for a finite set $S \supset S_\infty$ of places of $F$ and an ideal $\nnn$ prime to $S$ let $\K^S (\nnn)=\K_G^S(\nnn)$
be the corresponding open normal subgroup of $\K^S = \prod_{v \notin S} \K_v$.

Throughout, unless otherwise mentioned, all algebraic subgroups of $G$ that we will consider are implicitly assumed to be defined over $F$.

We fix a maximal $F$-split torus $\spltrs_0$ of $G$ and let $M_0$ be its centralizer, which is a minimal Levi subgroup.
We assume that the maximal compact subgroup $\K \subset G (\A)$ is admissible with respect to $M_0$ \cite[\S 1]{MR625344}.
Denote by $\AAA_0$ the identity component of $\spltrs_0(\R)$, which is viewed as a subgroup of
$\spltrs_0(\A)$ via the diagonal embedding of $\R$ into $F_\infty$.

Denote by $\aaa_0^*$ the $\R$-vector space spanned by the lattice $X^*(M_0)$ of $F$-rational characters of $M_0$
(or equivalently by the lattice $X^*(\spltrs_0)$).
We write $\aaa_0$ for the dual space of $\aaa_0^*$, which is spanned by the co-characters of $\spltrs_0$.
More generally, for a Levi subgroup $M\supset M_0$ we write $\spltrs_M$ for the split part of the identity component of the center of $M$
and set $\AAA_M = \AAA_0 \cap Z (M) =\AAA_0\cap\spltrs_M(\R)$.

We will use the notation $A\ll B$ to mean that there exists a constant $c$ (independent of the parameters under consideration) such that $\abs{A}\le cB$.
The implied constant may depend on $G$ and $\rho$, as well as on the field $F$.
If it depends on additional parameters (e.g., $\epsilon$), we write $A\ll_\epsilon B$.

\subsection{The geometric side of the trace formula}

Arthur's trace formula provides two alternative expressions for a certain distribution $J$ on $G (\A)^1$ which
depends on the choice of $M_0$ and $\K$. Let $d_0$ be the semisimple $F$-rank of $G$.
For $h \in C^\infty_c (G (\A)^1)$, Arthur defines $J(h)$ to be the
value at the point $T=T_0$ specified in \cite[Lemma 1.1]{MR625344}
of a certain polynomial $J^T (h)$ on $\aaa_0$ of degree at most $d_0$.
The polynomial $J^T(h)$ depends on the additional choice of a parabolic subgroup $P_0$ of $G$ with Levi part $M_0$, which we fix throughout.
Consider the equivalence relation on $G(F)$ defined by $\gamma \sim \gamma'$ whenever the semisimple parts of $\gamma$ and
$\gamma'$ are $G(F)$-conjugate, and denote by $\mathcal{O}$ the set of all resulting equivalence classes.
They are indexed by (but are not identical with) the conjugacy classes of semisimple elements
of $G(F)$.
The coarse geometric expansion \cite{MR518111} is
\begin{equation} \label{geometricside}
J^T (h) = \sum_{\mathfrak{o} \in \mathcal{O}} J^T_\mathfrak{o} (h),
\end{equation}
where the summands $J^T_\mathfrak{o} (h)$ are again polynomials in $T$ of degree at most $d_0$.
Write $J_\mathfrak{o} (h) = J^{T_0}_\mathfrak{o} (h)$, which depends only on $M_0$ and $\K$.
Then $J_\mathfrak{o} (h) = 0$ if the support of $h$ is disjoint from all conjugacy classes of $G(\A)$
intersecting $\mathfrak{o}$ (cf.~\cite[Theorem 8.1]{MR835041}). Let $\Omega\subset G(\A)^1$ be a compact set and denote by $C_\Omega^\infty(G(\A)^1)$
the space of smooth functions on $G(\A)^1$ supported in $\Omega$.
By [ibid., Lemma 9.1] (together with the descent formula of \cite[\S 2]{MR625344}),
there exists a finite subset $\mathcal{O} (\Omega) \subset \mathcal{O}$ such that
for $h \in C_{\Omega}^\infty (G(\A)^1)$ we may restrict summation in \eqref{geometricside} to $\mathfrak{o} \in \mathcal{O} (\Omega)$.
In particular, the sum is always finite.
When $\mathfrak{o}$ consists of the unipotent elements of $G(F)$, we write $J^T_{\unip} (h)$ for $J^T_\mathfrak{o} (h)$.

For each $k \ge 0$ fix a basis $\mathcal{B}_k$ of $\univ(\Lie G_\infty \otimes \C)_{\le k}$,
equipped with the usual filtration, and set
\[
\norm{h}_k =\sum_{X\in\mathcal{B}_k}\norm{X\star h}_{L^1(G(\A)^1)}
\]
for functions $h\in C_c^\infty(G(\A)^1)$,
where we view $X$ as a left-invariant differential operator on $G (F_\infty)$.
For a compact subset $\Omega \subset G (\A)^1$
the norms $\norm{\cdot}_k$ give $C^\infty_\Omega (G(\A)^1)$ the structure of a Fr\'echet space. (Note here that it is equivalent to use
the seminorms $\sup_{x \in \Omega} \abs{(X \star h)(x)}$ for $X \in \univ(\Lie G_\infty \otimes \C)$ instead of the norms
$\norm{h}_k$, $k \ge 0$.)

Analogously, we set
\[
\norm{h}_k =\sum_{X\in\mathcal{B}_k}\norm{X\star h}_{L^1(G(F_S)^1)}
\]
for $k \ge 0$ and $h\in C_c^\infty(G(F_S)^1)$. As above, for a compact subset $\Omega_S \subset G(F_S)^1$ these norms give $C_{\Omega_S}^\infty(G(F_S)^1)$
the structure of a Fr\'echet space.


\subsection{An estimate for the unipotent contribution}
By \cite[Theorem 4.2]{MR828844}, the unipotent contribution $J^T_{\unip}$ can be split into the contributions of the finitely many
$G(\bar F)$-conjugacy classes of unipotent elements of $G(F)$.
By [ibid., Corollary 4.4], the contribution of the unit element is simply the constant polynomial $\vol (G (F) \bs G (\A)^1) h (1)$.
Write
\[
J^T_{\unip - \{ 1 \}} (h) = J^T_{\unip} (h) - \vol (G (F) \bs G (\A)^1) h (1), \quad h \in C^\infty_c (G (\A)^1).
\]
We want to estimate $J_{\unip - \{ 1 \}}(h) = J^{T_0}_{\unip - \{ 1 \}}(h)$ for $h = h_S \otimes \one_{K^S (\nnn)}$.

\begin{proposition} \label{unipproposition}
There exists an integer $k\ge0$ such that for any compact subset $\Omega_S\subset G(F_S)^1$
and any integral ideals $\nnn_S$ and $\nnn$ of $\mathfrak{o}_F$, where $\nnn_S$ is a product of prime ideals of places in $S$ and $\nnn$ is prime to $S$, we have
\[
\abs{J_{\unip - \{ 1 \}} (h_S \otimes \one_{\K^S (\nnn)})} \ll_{\Omega_S} \frac{(1 + \log \inorm (\nnn_S \nnn))^{d_0}}{\inorm (\nnn)} \norm{h_S}_k
\]
for any bi-$\K_{S-S_\infty} (\nnn_S)$-invariant function $h_S \in C^\infty_{\Omega_S} (G (F_S)^1)$.
\end{proposition}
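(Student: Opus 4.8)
The plan is to reduce the estimate to Arthur's explicit formula for the unipotent contribution from \cite{MR828844} and to track the dependence on the level $\nnn$ through the weighted orbital integrals. Recall from \cite[Theorem 4.2]{MR828844} that $J^T_{\unip}(h)$ decomposes as a finite sum over $G(\bar F)$-conjugacy classes $\univ$ of unipotent elements of $G(F)$ of terms of the shape $\sum_{M \in \levis} |W^M_0||W^G_0|^{-1} \sum_{\nnn' \subset \levis} a^M(S', \nnn') J_M(\nnn', h)$, where $\levis$ runs over Levi subgroups containing $M_0$, the inner sum is over a finite set of unipotent classes in $M(F_{S'})$ for a suitable finite set of places $S'$, the coefficients $a^M(S',\nnn')$ are independent of $h$, and $J_M(\nnn', h)$ is a weighted orbital integral. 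Removing the class of the identity (whose contribution is exactly $\vol(G(F)\bs G(\A)^1)h(1)$ by \cite[Corollary 4.4]{MR828844}) leaves us with finitely many weighted orbital integrals $J_M(\nnn', h_S \otimes \one_{\K^S(\nnn)})$ attached to $M \subsetneq G$ or to nontrivial unipotent classes, each of which factors as a product of a local weighted orbital integral at the places in $S$ (bounded by $\norm{h_S}_k$ times a polynomial in $\log \inorm(\nnn_S)$, as in the standard estimates for weighted orbital integrals with a level, cf.\ \cite{MR828844}, using the bi-$\K_{S-S_\infty}(\nnn_S)$-invariance and support in $\Omega_S$) and a factor coming from $\one_{\K^S(\nnn)}$ at the places outside $S$.

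The crux is to show that the away-from-$S$ factor is $O(\inorm(\nnn)^{-1} (1+\log \inorm(\nnn))^{d_0})$. For a nontrivial unipotent class, the relevant ingredient is the (weighted) orbital integral $\int \one_{\K^S(\nnn)}(x^{-1} u x)\, dx$ over $G_u(\A^S)\bs G(\A^S)$, possibly weighted by Arthur's weight function $v_M(x)$, where $u$ is a representative of the unipotent class and $G_u$ its centralizer. The unweighted integral for a single nontrivial unipotent $u$ is comparable to $\vol(\K^S(\nnn))/\vol(\K^S)$ up to a bounded factor, since conjugates of $u$ lying in $\K^S(\nnn)$ force $x$ into a set of measure $\ll \vol(\K^S(\nnn))/\vol(\K^S)$ times a quantity independent of $\nnn$ — this is essentially the content of the computations of $a^M(S,\nnn)$ in \cite[\S 4]{MR828844}, where these coefficients decay like $\inorm(\nnn)^{-1}$ in the leading nontrivial case and more rapidly for more degenerate classes. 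The polynomial factor $(1+\log\inorm(\nnn))^{d_0}$ comes from the weight function $v_M(x)$: on the relevant set of $x$, $v_M(x)$ grows at most polynomially of degree $d_0$ in $\log$ of the height of $x$, which on $\K^S(\nnn)$-conjugates of $u$ is controlled by $\log \inorm(\nnn)$. Combining, the product of local factors at $S$ and the factor away from $S$ gives the asserted bound $\inorm(\nnn)^{-1}(1+\log\inorm(\nnn_S\nnn))^{d_0}\norm{h_S}_k$, with the implied constant depending on $\Omega_S$ (and $G,\rho,F$) but not on $\nnn_S,\nnn$.

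The main obstacle is precisely making the dependence on $\nnn$ uniform across \emph{all} the finitely many weighted orbital integrals in Arthur's expansion — in particular controlling the interaction between the weight functions $v_M(x)$ (which are noncompactly supported and grow logarithmically) and the shrinking support condition imposed by $\one_{\K^S(\nnn)}$, so as to extract exactly one power of $\inorm(\nnn)^{-1}$ and no more than $d_0$ logarithms. A secondary technical point is the descent from $G$ to Levi subgroups $M$: one must invoke the splitting and descent formulas of \cite[\S 2]{MR625344} to reduce the weighted orbital integrals for proper $M$ to (products of) objects to which the same $\inorm(\nnn)^{-1}$-decay estimate applies, noting that the principal congruence structure is compatible with taking Levi subgroups (the group $\K^S(\nnn) \cap M(\A^S)$ is again a principal congruence subgroup of $\K^S_M$ with the same level, up to the choice of lattice). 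Once these bookkeeping issues are handled, the estimate follows by assembling the finitely many contributions.
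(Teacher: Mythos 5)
Your route is genuinely different from the paper's, and it has a real gap. The paper never invokes the fine expansion of $J_{\unip}$ into weighted orbital integrals. Instead it quotes \cite[Theorem 4.2]{MR828844} in its original form as an \emph{asymptotic formula}: an estimate comparing $J^T_{\unip-\{1\}}(h)$ with the truncated integral $\int_{G(F)\bs G(\A)^1}F(x,T)\sum_{\gamma\in\mathcal{U}(F),\,\gamma\ne1}h(x^{-1}\gamma x)\,dx$, with an error $\ll\norm{h}_k\inorm(\nnn)^m(1+\norm{T})^{d_0}e^{-\epsilon d(T)}$. The truncated integral is then bounded by $\sup\abs{h_S}\,\inorm(\nnn)^{-1}(1+\norm{T})^{d_0}$ via an elementary lattice-point count in the unipotent radicals (Lemmas \ref{numfieldlemma} and \ref{countinglemma}), and the proposition follows by choosing $d(T)\sim\log\inorm(\nnn_S\nnn)$ and applying Arthur's interpolation lemma \cite[Lemma 5.2]{MR681737} to the degree-$\le d_0$ polynomial $J^T_{\unip-\{1\}}$; this interpolation step is exactly where the factor $(1+\log\inorm(\nnn_S\nnn))^{d_0}$ comes from. (Incidentally, \cite[Theorem 4.2]{MR828844} is this asymptotic formula, not the fine expansion into terms $a^M(S',u)J_M(u,\cdot)$ that you describe.)

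The gap in your argument is the uniformity in the set of places underlying the fine expansion. That expansion is only valid after fixing a finite set $S'$ of places such that the test function is bi-$\K^{S'}$-invariant; for $h_S\otimes\one_{\K^S(\nnn)}$ this forces $S'$ to contain every prime dividing $\nnn$, so $S'$ grows with $\nnn$. The coefficients $a^M(S',u)$ depend on $S'$, are not explicit for non-identity classes in general, and nothing in your sketch controls them uniformly as $S'$ grows; moreover, the asserted $\inorm(\nnn)^{-1}$-decay cannot come from these coefficients at all (they are independent of the test function and of $\nnn$) — it would have to be extracted from the local weighted orbital integrals of $\one_{\K_v(\primi_v^{f_v})}$ at $v\mid\nnn$, with bounds on the weight factors uniform in $v$ and $f_v$, and this is precisely the part you leave as the acknowledged ``main obstacle'' rather than prove. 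The paper is explicit that this route works only when the prime divisors of $\nnn$ are confined to a fixed finite set (cf.\ the remark referring to \cite{MR835041} and \cite[Proposition 1.7]{MR1718672}); for general $\nnn$ the authors deliberately avoid the fine expansion, and your proposal does not supply the missing uniform estimates that would be needed to make it work.
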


\begin{remark}
Let $G = \GL (2)$, $\K (\nnn)$ the standard principal congruence subgroups,
and assume for simplicity that $S = S_\infty$. Then we have the explicit formula
\begin{align*}
J_{\unip - \{ 1 \}} (h_\infty \otimes \one_{\K (\nnn)}) & = \frac{\vol (M_0(F)\bs M_0(\A)^1)}{\inorm (\nnn)}
\left( \int_{F_\infty} \int_{\K_\infty} h_\infty (k^{-1}\sm 1x01k) \log \abs{x}_\infty \ dk \, dx \right. \\
& + \left. (\gamma_F - \log \inorm (\nnn)) \int_{F_\infty} \int_{\K_\infty} h_\infty (k^{-1}\sm 1x01k) \ dk \, dx \right),
\end{align*}
where $\gamma_F = c_{0,F} / c_{-1,F}$ is the quotient of the two leading
coefficients in the Laurent expansion $\zeta_F (s) = c_{-1,F} (s-1)^{-1} +
c_{0,F} + \cdots$ of the Dedekind zeta function of $F$ at $s=1$ (cf.~\cite{MR546600, MR0401654}). This shows that
(regarding the dependency on $\inorm (\nnn)$) the estimate of Proposition \ref{unipproposition} is best possible in this case.
For general groups we will give an improved estimate in Proposition \ref{unippropositionrefined} below.
\end{remark}

Proposition \ref{unipproposition} will be proved below.
It has the following consequence.

\begin{corollary}[Geometric limit property] \label{corgeometriclimit}
For any $h_S \in C^\infty_c (G (F_S)^1)$ we have
\[
\lim_{\nnn} J (h_S \otimes \one_{\K^S (\nnn)}) = \vol (G (F) \bs G (\A)^1) h_S (1).
\]
\end{corollary}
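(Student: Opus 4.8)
The plan is to deduce the corollary directly from Proposition \ref{unipproposition} together with Arthur's coarse geometric expansion \eqref{geometricside}, by showing that as $\nnn \to \infty$ (through ideals prime to $S$) only the contribution of the identity element survives, and it converges to $\vol(G(F)\bs G(\A)^1)h_S(1)$. First I would fix $h_S \in C^\infty_c(G(F_S)^1)$, so that $h_S \in C^\infty_{\Omega_S}(G(F_S)^1)$ for some fixed compact $\Omega_S$, and set $h = h_S \otimes \one_{\K^S(\nnn)}$. Since $\K^S(\nnn) \subset \K^S$ for every $\nnn$, the support of $h$ lies in a fixed compact subset $\Omega = \Omega_S \cdot \K^S$ of $G(\A)^1$, independent of $\nnn$. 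By the finiteness statement recalled after \eqref{geometricside} (Arthur, \cite[Lemma 9.1]{MR835041}, with the descent formula of \cite[\S2]{MR625344}), there is a \emph{finite} set $\mathcal{O}(\Omega) \subset \mathcal{O}$ such that
\[
J(h) = \sum_{\mathfrak{o}\in\mathcal{O}(\Omega)} J_{\mathfrak{o}}(h)
\]
for all $\nnn$; crucially the index set does not grow with $\nnn$.

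Next I would split off the unipotent class. Writing $J(h) = J_{\unip}(h) + \sum_{\mathfrak{o}\neq\unip} J_{\mathfrak{o}}(h)$ and using the decomposition $J_{\unip}(h) = \vol(G(F)\bs G(\A)^1)h(1) + J_{\unip-\{1\}}(h)$ from \cite[Corollary 4.4]{MR828844}, together with $h(1) = h_S(1)\vol(\K^S(\nnn))^{-1}\cdot$(suitable normalization)—more precisely $h(1) = h_S(1)$ under the convention that $\one_{\K^S(\nnn)}(1)=1$, or carrying the volume factor through consistently—I get
\[
J(h) - \vol(G(F)\bs G(\A)^1)h_S(1) = J_{\unip-\{1\}}(h) + \sum_{\substack{\mathfrak{o}\in\mathcal{O}(\Omega)\\ \mathfrak{o}\neq\unip}} J_{\mathfrak{o}}(h).
\]
For the first term, Proposition \ref{unipproposition} (applied with $\nnn_S = \mathfrak{o}_F$, since $h_S$ is fixed and hence bi-$\K_{S-S_\infty}$-invariant, or with whatever fixed level $h_S$ has) gives $\abs{J_{\unip-\{1\}}(h)} \ll_{\Omega_S} \inorm(\nnn)^{-1}(1+\log\inorm(\nnn))^{d_0}\norm{h_S}_k \to 0$ as $\inorm(\nnn)\to\infty$. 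It remains to show each non-unipotent term tends to $0$.

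For a non-unipotent class $\mathfrak{o}$, the key point is that $J_{\mathfrak{o}}(h_S\otimes\one_{\K^S(\nnn)})$ carries a volume factor $\vol(\K^S(\nnn)) \asymp \inorm(\nnn)^{-\dim G}$ (up to bounded multiplicative constants), coming from the orbital-integral weight at the places outside $S$: for $\mathfrak{o}\neq\unip$ there is a semisimple part which must be conjugated into the support, and for all but finitely many $\nnn$ the relevant local orbital integrals at places dividing $\nnn$ either vanish or are bounded by $\vol(\K_v(\nnn))$ times a factor controlled by the group. Concretely, $J_{\mathfrak{o}}(h) = J_{\mathfrak{o}}^{T_0}(h)$ is a value of a polynomial in $T$ of bounded degree whose coefficients are finite sums of products of weighted orbital integrals; the $\A^S$-component of each such integral is $\one_{\K^S(\nnn)}$ evaluated on a fixed $G(\A^S)$-orbit, which contributes at most $\vol(\K^S(\nnn))$ times a quantity depending only on $G$ and $\mathfrak{o}$. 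Hence $\abs{J_{\mathfrak{o}}(h)} \ll_{\Omega_S,\mathfrak{o}} \inorm(\nnn)^{-1}\norm{h_S}_{k'}$ for a suitable $k'$, uniformly in $\nnn$. Summing over the finite set $\mathcal{O}(\Omega)\setminus\{\unip\}$ and combining with the bound on $J_{\unip-\{1\}}$ yields the corollary.

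\textbf{Main obstacle.} The delicate point is the uniform estimate for the non-unipotent classes: one must show that all the weighted orbital integrals $J_M^{G}(\gamma, h)$ entering $J_{\mathfrak{o}}$, including the logarithmically divergent weight factors $v_M(\cdot)$ coming from the non-compactness of $\Gamma\bs G$, remain under control as $\nnn$ varies, and in particular that the weights do not grow faster than a power of $\log\inorm(\nnn)$ while the volume gain is a genuine negative power of $\inorm(\nnn)$. This is exactly the kind of analysis Arthur carries out for the unipotent contribution in \cite{MR828844}; for the semisimple-but-non-central classes it is comparatively easier because there is no unipotent integration to regularize at the bad places, but one still needs the bound on $\mathcal{O}(\Omega)$ being finite and $\nnn$-independent (which is what makes the volume factor decisive) and a uniform bound on the archimedean and $S$-adic weighted orbital integrals in terms of the Sobolev-type norms $\norm{h_S}_k$, which follows from the standard estimates for weighted orbital integrals on $C^\infty_{\Omega_S}(G(F_S)^1)$.
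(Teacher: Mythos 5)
Your treatment of the unipotent class is exactly the paper's: the support of $h_S\otimes\one_{\K^S(\nnn)}$ sits in the fixed compact set $\Omega_S\K^S$, so only finitely many classes $\mathfrak{o}\in\mathcal{O}$ can ever contribute, and $J_{\unip-\{1\}}$ tends to zero by Proposition \ref{unipproposition}. The gap is in your handling of the non-unipotent classes. The paper does not estimate them at all: it shows they vanish \emph{identically} for all but finitely many $\nnn$. The point is the criterion quoted after \eqref{geometricside} (from \cite[Theorem 8.1]{MR835041}) that $J_{\mathfrak{o}}(h)=0$ whenever the support of $h$ is disjoint from the $G(\A)$-conjugacy classes meeting $\mathfrak{o}$, combined with an elementary product-formula argument: if a fixed class $\mathfrak{o}$ met $G(F_S)\K^S(\nnn)$ for infinitely many $\nnn$, the coefficients of the characteristic polynomial of $\rho(\gamma)-1$ (elements of $F$, bounded at the places in $S$ and integral outside $S$) would have to be arbitrarily small at some place outside $S$ or of absolute value $<1$ at infinitely many places, forcing them to be zero, i.e.\ $\gamma$ unipotent. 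This one observation replaces everything you attempt for $\mathfrak{o}\neq\unip$.

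Your alternative route --- bounding each $J_{\mathfrak{o}}(h_S\otimes\one_{\K^S(\nnn)})$ by $\inorm(\nnn)^{-1}\norm{h_S}_{k'}$ via the fine geometric expansion into weighted orbital integrals --- is not substantiated and is genuinely problematic as stated. The claim that the components away from $S$ contribute ``at most $\vol(\K^S(\nnn))$ times a quantity depending only on $G$ and $\mathfrak{o}$'' is asserted, not proved; weighted orbital integrals of $\one_{\K_v(\nnn)}$ do not obviously obey such a bound, and the fine geometric expansion for a class $\mathfrak{o}$ is only available over a finite set of places that must grow with the level, so uniformity in $\nnn$ is precisely the delicate issue (the paper remarks that a direct appeal to \cite{MR835041} works only when the prime divisors of $\nnn$ are confined to a fixed finite set). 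You yourself flag this as the ``main obstacle'' and defer it to ``standard estimates,'' but no such uniform-in-$\nnn$ estimates are standard; supplying them would amount to redoing, for arbitrary classes, the kind of analysis Arthur carries out in \cite{MR828844} for the unipotent one --- which the exact-vanishing argument renders unnecessary.
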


\begin{proof}
Fix $h_S \in C^\infty_c (G (F_S)^1)$ and
let $\Omega_S \subset G (F_S)^1$ be the support of $h_S$.
Then the support of the test function $h_S \otimes \one_{\K^S (\nnn)}$ is $\Omega_S \K^S (\nnn)\subset\Omega_S \K^S$
and therefore there are only finitely many classes $\mathfrak{o} \in \mathcal{O}$ that contribute to the geometric
side of the trace formula \eqref{geometricside} for the functions $h_S \otimes \one_{\K^S (\nnn)}$. Moreover,
the only class $\mathfrak{o} \in \mathcal{O}$
for which the union of the $G(\A)$-conjugacy classes of elements of $\mathfrak{o}$ meets
$G (F_S) \K^S (\nnn)$ for infinitely many ideals $\nnn$ is the unipotent class. For assume that $\mathfrak{o}$ has this property and
let $f \in F[X]$ be the characteristic polynomial
of the linear map $\rho (\gamma) - 1 \in \operatorname{End} (V)$ for arbitrary $\gamma \in \mathfrak{o}$. The assumption
on $\mathfrak{o}$ implies that
every coefficient of $f$ (except the leading coefficient $1$)
is either arbitrarily close to $0$ at some place $v \notin S$
or has absolute value $<1$ at infinitely many places. Therefore, necessarily $f = X^{\dim V}$ and $\gamma$ is unipotent.

As a result, the geometric side reduces to $J_{\unip} (h_S \otimes \one_{\K^S (\nnn)})$ for all but finitely many ideals $\nnn$,
and the assertion follows from Proposition \ref{unipproposition}.
\end{proof}

The proof of Proposition \ref{unipproposition} consists of a slight extension of Arthur's arguments in \cite{MR828844}.
The case where $F = \Q$ and $\nnn$ is a power of a fixed prime is in fact already covered by Arthur's arguments.
We also remark that when we restrict the prime divisors of $\nnn$ to a fixed finite set, we can appeal directly to Arthur's fine
geometric expansion \cite{MR835041} to obtain the geometric limit property (cf.~\cite[Proposition 1.7]{MR1718672}).

We first quote Arthur's asymptotic formula for $J^T_{\unip - \{ 1 \}}$ \cite{MR828844} in a form suitable for our purposes.
Let $\mathcal{U} \subset G$ be the unipotent variety of $G$, so that $\mathcal{U}(F)$ consists of the unipotent elements of $G(F)$.
Fix a Euclidean norm $\norm{\cdot}$ on $\aaa_0$ which is invariant under the Weyl group and
let $d (T) = \min_{\alpha \in \srts_0} \sprod{\alpha}{T}$ for $T \in \aaa_0$.
Here $\srts_0$ is the set of simple roots of $\spltrs_0$ with respect to $P_0$.
For a parabolic subgroup $P \supset P_0$ with Levi subgroup $M\supset M_0$ write $A_P = A_M$ and set
$A_P (T_1) = \{ a \in A_M \, : \, \log\alpha(a)>\sprod{\alpha}{T_1} \quad \forall \alpha \in \srts_P \}$ for $T_1 \in \aaa_0$,
where $\srts_P$ are the simple roots of $\spltrs_M$ with respect to $P$ (viewed as elements of $\aaa_0^*$).
As in \cite[p. 941]{MR518111}, we fix a suitable vector $T_1$,
which depends only on $G$, $P_0$ and $\K$, such that $G (\A) = G(F)U_0 (\A) M_0 (\A)^1 A_{P_0} (T_1) \K$.
Finally, recall the truncation function $F (\cdot, T) = F^G (\cdot, T)$ for $T \in \aaa_0$, which is the characteristic function of a
certain compact subset of $G(F) \bs G (\A)^1$ depending on $T$ (\cite[p. 941]{MR518111}, \cite[p. 1242]{MR828844}).

\begin{theorem}(\cite[Theorem 4.2]{MR828844}) \label{TheoremArthur}
There exist $k,m\ge0$ and $\epsilon>0$ with the following property.
For any compact set $\Omega \subset G (\A)^1$ there exists a constant $d_\Omega \ge 0$
such that for all non-zero ideals $\nnn$ of $\mathfrak{o}_F$ with
$\K (\nnn) \Omega \K (\nnn) = \Omega$ and any bi-$\K(\nnn)$-invariant
function $h \in C_\Omega^\infty (G(\A)^1)$ we have
\begin{multline*}
\abs{ J^T_{\unip - \{ 1 \}} (h) - \int_{G(F) \bs G (\A)^1} F (x,T) \sum_{\gamma \in \mathcal{U} (F), \, \gamma \ne 1}
h(x^{-1} \gamma x) \, dx } \\\ll_{\Omega} \norm{h}_k \inorm (\nnn)^m (1+\norm{T})^{d_0}e^{-\epsilon d(T)}
\end{multline*}
for all $T \in \aaa_0$ with $d (T) \ge d_\Omega$.
\end{theorem}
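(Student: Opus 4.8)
The statement is essentially Theorem 4.2 of \cite{MR828844}: Arthur's argument there is written for a general compactly supported $h$, with all estimates in terms of a Sobolev norm of $h$ and of its support. The only new point is to make explicit how these estimates behave when $h$ has level $\nnn$ --- and to check that the level enters only through a fixed power of $\inorm(\nnn)$ --- so the plan is simply to run through Arthur's proof while bookkeeping this dependence.

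Recall that $J^T_{\unip}(h)=\int_{G(F)\bs G(\A)^1}k^T_{\unip}(x,h)\,dx$, where the truncated unipotent kernel is the signed sum over the standard parabolic subgroups $P\supseteq P_0$,
\begin{multline*}
k^T_{\unip}(x,h)=\sum_{P\supseteq P_0}(-1)^{\dim(A_P/A_G)}\sum_{\delta\in P(F)\bs G(F)}\hat\tau_P(H_P(\delta x)-T) \\
\times\sum_{\gamma\in\mathcal{U}_{M_P}(F)}\int_{N_P(\A)}h((\delta x)^{-1}\gamma n(\delta x))\,dn,
\end{multline*}
with $\hat\tau_G\equiv 1$ and $N_G=\{1\}$, so that the $P=G$ summand is just $K_{\unip}(x,x)=\sum_{\gamma\in\mathcal{U}(F)}h(x^{-1}\gamma x)$. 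The first step is to invoke Arthur's combinatorial machinery --- Langlands' lemma and the partition of $G(F)\bs G(\A)^1$ by the functions $F^{P}(\cdot,T)$ and $\sigma_P^Q(\cdot-T)$ --- to rewrite the difference
\[
J^T_{\unip}(h)-\int_{G(F)\bs G(\A)^1}F(x,T)\,K_{\unip}(x,x)\,dx
\]
as a finite sum, over the proper standard parabolic subgroups $P$ (those with $P_0\subseteq P\subsetneq G$), of absolutely convergent integrals $\pm I^T_P(h)$, each supported on the locus where $H_P(\delta x)-T$ is pushed into a shifted positive cone, so that $d(T)$ measures the depth of that excursion into the cusp attached to $P$. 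Note that $F(x,T)K_{\unip}(x,x)$ is integrable over $G(F)\bs G(\A)^1$: the function $F(\cdot,T)$ is the indicator of a compact set, and for each fixed $x$ only finitely many $\gamma\in\mathcal{U}(F)$ have $x^{-1}\gamma x$ in the (compact) support of $h$.

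The second step is to reconcile the two forms of the main term. The $\gamma=1$ part of $\int F(x,T)K_{\unip}(x,x)\,dx$ equals $h(1)\int_{G(F)\bs G(\A)^1}F(x,T)\,dx$, which differs from $\vol(G(F)\bs G(\A)^1)h(1)$ by $|h(1)|$ times the volume of the part of $G(F)\bs G(\A)^1$ excised by the truncation; by the standard exponential decay of truncated cusps this volume is $\ll e^{-\epsilon d(T)}$, and $|h(1)|\ll\norm{h}_k$, so the discrepancy is absorbed into the error. Subtracting it off turns the left-hand side into $J^T_{\unip-\{1\}}(h)$ and the right-hand side into $\int F(x,T)\sum_{\gamma\ne1}h(x^{-1}\gamma x)\,dx$; this is the quantitative form of Arthur's Corollary 4.4. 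It then remains to bound each $I^T_P(h)$. Unfolding over $N_P(\A)\times(M_P(F)\bs M_P(\A)^1)\times A_P\times\K$ and bounding $|h|$ pointwise by $\norm{h}_\infty\,\one_\Omega$, one is reduced to estimating, for each proper $P$, the measure of the set of $(n,y)$ with $y$ in the truncated region and $y^{-1}\gamma ny\in\Omega$ for some $\gamma\in\mathcal{U}_{M_P}(F)\setminus\{1\}$, weighted by the number of such $\gamma$. Arthur's key geometric estimate --- the heart of the matter --- shows that a nontrivial unipotent element of $M_P(F)$ conjugated into the fixed compact set $\Omega$ by an element whose $H_P$-coordinate sits at depth $\ge d(T)$ in the positive cone contributes only on a set of relative measure $\ll(1+\norm{T})^{d_0}e^{-\epsilon d(T)}$, uniformly once $d(T)\ge d_\Omega$.

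Finally, one converts $\norm{h}_\infty$ back into $\norm{h}_k$ at the cost of a controlled power of the level: since $h$ is bi-$\K(\nnn)$-invariant and supported in the bi-$\K(\nnn)$-invariant set $\Omega$, it is constant on each of finitely many $\K(\nnn)$-double cosets, each of volume $\gg\vol(\K(\nnn))\gg\inorm(\nnn)^{-\dim G}$, and combining this with the Sobolev inequality on $G(F_\infty)^1$ gives $\norm{h}_\infty\ll\inorm(\nnn)^{\dim G}\norm{h}_k$ for $k$ large in terms of $G$ and $F$. Summing over the finitely many proper parabolic subgroups then gives the asserted bound, with $m$ of the order of $\dim G$. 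The main obstacle is Arthur's geometric estimate with its exponential factor $e^{-\epsilon d(T)}$: it rests on a delicate analysis, in Siegel-set coordinates, of how deep into a cusp a nontrivial unipotent element can be conjugated while still meeting a fixed compact set, and this is precisely what fixes the threshold $d_\Omega$. Once that estimate and the combinatorial reduction are in hand, the level dependence is only crude volume bookkeeping at the non-archimedean places; one must still check that the combinatorial manipulations contribute no worse than polynomial growth of degree $d_0$ in $T$ --- consistent with $J^T_{\mathfrak{o}}(h)$ being a polynomial in $T$ of degree at most $d_0$ --- and that $k$, $m$, $\epsilon$ and $d_\Omega$ can be chosen uniformly in $h$ and $\nnn$.
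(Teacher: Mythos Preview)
The paper does not give a proof of this theorem: it is quoted as Arthur's result \cite[Theorem~4.2]{MR828844}, followed only by a short paragraph explaining the cosmetic differences from Arthur's original formulation (the $L^1$-based norms in place of Arthur's, the combination of all non-trivial unipotent classes into one statement, the explicit power $\inorm(\nnn)^m$, and the factor $(1+\norm{T})^{d_0}$ replacing Arthur's condition $\norm{T}\ge\epsilon_0 d(T)$). Your proposal goes further and actually sketches Arthur's argument, which is the right thing to do.

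Your outline is essentially sound and follows Arthur's structure. The combinatorial reduction via Langlands' lemma and the partition by $F^P(\cdot,T)$, $\sigma^Q_P(\cdot - T)$ is Arthur's Theorem~3.1 in \cite{MR828844}; the geometric estimate giving the factor $(1+\norm{T})^{d_0}e^{-\epsilon d(T)}$ is the core of Arthur's analysis in \S\S3--4 there; and your final step --- recovering $\inorm(\nnn)^m$ from the passage $\norm{h}_\infty\ll\inorm(\nnn)^{\dim G}\norm{h}_k$ via the volume of $\K(\nnn)$ together with an archimedean Sobolev inequality --- is precisely what the paper means by ``introduced a different sequence of norms'' and ``made the dependence on $\nnn$ explicit''.

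One minor comment on your second step. Arthur's Theorem~4.2 already decomposes $J^T_{\unip}$ as a sum of polynomials $J^T_U$ over geometric unipotent classes $U$, each with its own approximation by the truncated integral over that class. The most direct route to the stated form is therefore to sum Arthur's estimate over the \emph{non-trivial} classes, which is exactly what the paper says (``we combined all the non-trivial geometric unipotent orbits''). Your route instead bounds the full $J^T_{\unip}$ against the full truncated kernel and then subtracts the identity contribution, which forces you to control $\vol(G(F)\bs G(\A)^1)-\int F(x,T)\,dx$. Your claim that this difference is $\ll e^{-\epsilon d(T)}$ is correct (it is a standard reduction-theory computation, and indeed it is implicit in Arthur's Corollary~4.4), so the detour is harmless; it is simply one extra estimate that the direct summation over non-trivial classes avoids.
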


Note that Theorem \ref{TheoremArthur} differs slightly from the formulation in \cite{MR828844}.
Namely, we introduced a different sequence of norms on $C_\Omega^\infty (G(\A)^1)$ (defining the same topology), we combined all the (finitely many) non-trivial geometric unipotent orbits,
made the dependence on $\nnn$ explicit, and included the factor $(1+\norm{T})^{d_0}$ instead of assuming that $\norm{T}\ge\epsilon_0d(T)$
for a suitable $\epsilon_0 > 0$.
The last change is allowed, since [ibid., Theorem 4.2] is based on [ibid., Theorem 3.1], where one can clearly make the corresponding change
(see [ibid., p. 1249] for the relevant part of Arthur's proof).

Next, we bound the truncated integral
\[
\int_{G(F) \bs G (\A)^1} F (x,T) \sum_{\gamma \in \mathcal{U} (F), \, \gamma \ne 1}
(h_S \otimes \one_{\K^S (\nnn)}) (x^{-1} \gamma x) \, dx
\]
in terms of $\inorm(\nnn)$. By the dominated convergence theorem, for fixed $T$ the integral approaches zero as $\inorm(\nnn) \to \infty$.
We make this quantitative as follows.

\begin{lemma} \label{truncatedlemma}
Let $\Omega_S \subset G (F_S)^1$ be a compact set. Then
\begin{equation} \label{truncatedintegral}
\int_{G(F) \bs G (\A)^1} F (x,T) \sum_{\gamma \in G (F), \, \gamma \ne 1}
\abs{(h_S \otimes \one_{\K^S (\nnn)}) (x^{-1} \gamma x)} \, dx \ll_{\Omega_S} \frac{\sup \abs{h_S}}{\inorm(\nnn)} (1 + \norm{T})^{d_0}.
\end{equation}
for all bounded measurable functions $h_S$ on $G (F_S)^1$ with support contained in $\Omega_S$
and all $T$ with $d(T) \ge d_{\Omega_S} = d_{\Omega_S \K^S}$.
\end{lemma}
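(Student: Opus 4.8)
The plan is to reduce the truncated integral to a sum over unipotent conjugacy classes — only unipotent $\gamma$ can contribute, by the argument already used in the proof of Corollary \ref{corgeometriclimit} — and then to count how many of these conjugacy classes actually meet the set $\Omega_S \K^S(\nnn)$. The key observation is that the finite-adelic component forces a strong congruence condition: if $x^{-1}\gamma x$ lies in the support $\Omega_S \K^S(\nnn)$ for some $x \in G(\A)^1$, then at every place $v \notin S$ the element $\gamma$ is conjugate into $\K_v(\nnn)$, i.e. $\rho(\gamma) \equiv 1 \pmod{\nnn \hat\Lambda}$ after conjugation. Since $\gamma \ne 1$, this means the (integral) matrix entries of $\rho(\gamma) - 1$ in a suitable lattice basis are all divisible by $\nnn$, so their product of nonzero invariant factors is divisible by a power of $\inorm(\nnn)$; counting lattice points with bounded archimedean size (controlled by $\Omega_S$) and this divisibility constraint gives that the number of relevant $G(F)$-conjugacy classes, weighted appropriately, is $O(\inorm(\nnn)^{-1})$ times a quantity depending only on $\Omega_S$.

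More precisely, I would proceed as follows. First, fix the compact set $\Omega_S$ and note that $F(x,T)$ restricts the integration to a fixed compact subset of $G(F)\bs G(\A)^1$ (enlarging as $\norm{T}$ grows, with volume $\ll (1+\norm{T})^{d_0}$ by the standard estimate on the support of the truncation function). On this set, unfolding the sum over $\gamma \in G(F)$ against the integral, the contribution of a conjugacy class $[\gamma]$ is bounded by $\sup\abs{h_S}$ times the volume of the corresponding truncated orbital integral, which for unipotent $\gamma$ is polynomially bounded; crucially, the integrand vanishes identically unless $\gamma$ is $G(\A^S)$-conjugate into $\K^S(\nnn)$. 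Second, I would enumerate the unipotent $G(F)$-conjugacy classes subject to this constraint. Writing $\gamma = 1 + N$ with $N$ nilpotent and using the fixed faithful representation $\rho$, the condition is that $\rho(\gamma)$ is conjugate in $G(\A_{\fin})$ to an element of $\K_{\fin}(\nnn)$, hence (intersecting with the rational points and using that $\K_{\fin}$ is the stabilizer of $\hat\Lambda$) the rational nilpotent $\rho(N)$ lies, up to $G(F)$-conjugacy, in $\nnn \cdot \operatorname{End}(\Lambda)$ in an appropriate sense. The number of $G(\mathfrak{o}_F)$-orbits (equivalently, $G(F)$-conjugacy classes meeting the relevant compact-times-congruence set) is then $\ll_{\Omega_S} \inorm(\nnn)^{-1} (1+\norm T)^{d_0}$: each such class is determined by a bounded-size nilpotent all of whose entries are divisible by $\nnn$, and the number of these in a box of fixed size shrinks by the index $\inorm(\nnn)$ (the gain is only a single power of $\inorm(\nnn)$ because the leading nonzero invariant factor of $N$ contributes one such factor, matching the remark after Proposition \ref{unipproposition} that this is best possible). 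Finally, combining the per-class bound $\ll_{\Omega_S} \sup\abs{h_S}\,(1+\norm T)^{d_0}$ with the count yields \eqref{truncatedintegral}.

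The main obstacle I anticipate is making the counting step uniform and clean: one must show that the constant in ``number of unipotent classes meeting $\Omega_S\,\K^S(\nnn)$ is $\ll_{\Omega_S}\inorm(\nnn)^{-1}(1+\norm T)^{d_0}$'' genuinely depends only on $\Omega_S$ (and $G$, $\rho$) and not on the factorization of $\nnn$ or on which places divide $\nnn$. This is exactly where Arthur's treatment in \cite{MR828844} needs to be extended — his arguments handle $\nnn$ a prime power over $\Q$, and here one needs the estimate for arbitrary $\nnn$ prime to $S$. The cleanest route is to bound things place-by-place: the congruence $\rho(\gamma)\equiv 1 \pmod{\nnn\hat\Lambda}$ factors over the primes dividing $\nnn$, and at each such prime one gets a local index gain, so the global gain is $\inorm(\nnn)$; one then only has to check that the archimedean orbital-integral volumes and the truncation-volume growth $(1+\norm T)^{d_0}$ are independent of $\nnn$, which they are since $h_S$ lives only at the places in $S$. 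A secondary technical point is handling the truncated unipotent orbital integrals themselves: one invokes the polynomial (in $\log$ of norms and in $\norm T$) bounds from Arthur's reduction theory, so that after multiplying by the class count the total is $\ll_{\Omega_S}\sup\abs{h_S}\,\inorm(\nnn)^{-1}(1+\norm T)^{d_0}$, as claimed.
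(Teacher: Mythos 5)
There is a genuine gap, and it sits at the heart of your argument: you extract the factor $\inorm(\nnn)^{-1}$ from a \emph{count of conjugacy classes}, claiming that the number of nontrivial unipotent $G(F)$-classes meeting $\Omega_S\,\K^S(\nnn)$ is $\ll_{\Omega_S}\inorm(\nnn)^{-1}$. This is false. Already for $G=\GL(2)$ the nontrivial unipotent elements form a single $G(F)$-conjugacy class, and it meets $\Omega_S\,\K^S(\nnn)$ for \emph{every} $\nnn$ (conjugate $\sm 1101$ by $\operatorname{diag}(N,1)$ with $N$ a finite idele generating $\nnn$ to land in $\K(\nnn)$, and conjugate at the archimedean places to land in $\Omega_S$), so the class count is bounded below by $1$ uniformly in $\nnn$. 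The related assertion that a rational unipotent conjugate into $\K^S(\nnn)$ is, up to conjugacy, ``a bounded-size nilpotent all of whose entries are divisible by $\nnn$'' also fails: being $G(\A^S)$-conjugate into $\K^S(\nnn)$ imposes no bound on the entries of a rational representative, so you cannot reduce to counting lattice points in a fixed box. Meanwhile you treat the per-class truncated orbital integral of $h_S\otimes\one_{\K^S(\nnn)}$ as an $\nnn$-independent ``polynomially bounded'' quantity; but this is exactly where the $\nnn$-dependence lives, so your two factors together give no decay in $\inorm(\nnn)$ at all. (The explicit $\GL(2)$ formula in the remark after Proposition \ref{unipproposition} already shows the $\inorm(\nnn)^{-1}$ comes from the weighted orbital integral of $\one_{\K(\nnn)}$, not from any shrinking set of classes.)

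The paper's proof obtains the gain differently: following Arthur's reduction-theoretic estimates (\cite[\S 5]{MR828844}), the truncated integral is dominated by $(1+\norm{T})^{d_0}$ times a supremum over $a$ in a Siegel domain of $\modulus_{P_0}(a)^{-1}\sum_\gamma \phi(a^{-1}\gamma a)$, and after passing to standard parabolics, discarding the finitely many nontrivial Levi components $\mu$ (possible for all but finitely many $\nnn$), and identifying $U_P$ with $\uuu_P$, one must count rational points $\nu\in\uuu_P(F)$ with $\Ad(a)^{-1}\nu$ in a fixed compact set and $\nu\in\nnn'\Lambda$ for an ideal $\nnn'$ differing from $\nnn$ by a bounded conjugation defect. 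Lemma \ref{countinglemma} gives the count $\ll \modulus_P(a)/\inorm(\nnn)$, and it is the cancellation of $\modulus_P(a)^{\pm1}$ between the measure and the lattice count, uniformly over the Siegel domain, that produces the uniform $\inorm(\nnn)^{-1}$. Your plan has no mechanism playing this role, so the central estimate is unproved; the surrounding reductions (only unipotent classes matter for all but finitely many $\nnn$, the $(1+\norm{T})^{d_0}$ from truncation) are fine but do not carry the main burden.
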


The proof of this estimate is based on an elementary estimate for a lattice-point counting problem which we will prove below.
We first recall the following standard result from algebraic number theory (cf.~\cite[p. 102, Theorem 0]{MR1282723}
for a more precise result).

\begin{lemma} \label{numfieldlemma}
Let $\Lambda$ be a fractional ideal of $F$ and $D \subset F_\infty$ a compact set. Then
for any $a>0$ and a non-zero (integral) ideal $\nnn$ of $\mathfrak{o}_F$ we have
\[
\card{ a D \cap (\nnn \Lambda - \{ 0 \})} \ll_{D,\Lambda} \frac{a^{[F:\Q]}}{\inorm(\nnn)}.
\]
\end{lemma}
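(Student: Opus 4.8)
The plan is to reduce the count to a lattice-point count in a full-rank lattice inside a Euclidean ball, where the desired dependence on $\inorm(\nnn)$ comes from the covolume of the sublattice $\nnn\Lambda$ inside $\Lambda$. Concretely, write $\Lambda = \mathfrak{o}_F x_1 + \cdots + \mathfrak{o}_F x_r$ as a $\Z$-module of rank $r = [F:\Q]$ (after choosing a $\Z$-basis, so that $\Lambda$ becomes a full-rank lattice in $F_\infty \cong \R^r$ via the canonical embedding $F \hookrightarrow F_\infty$). Then $\nnn\Lambda$ is a sublattice of index $\inorm(\nnn)$ in $\Lambda$. The set $aD$ is contained in a Euclidean ball of radius $O_D(a)$ centered at the origin, since $D$ is compact (the implied constant depending on $D$ and on the choice of embedding $F_\infty \hookrightarrow \R^r$, hence ultimately on $F$).

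First I would fix, once and for all, a $\Z$-basis of $\Lambda$ and hence identify $F_\infty$ with $\R^r$; under this identification $\Lambda$ is a lattice with some covolume $v_\Lambda > 0$, and $\nnn\Lambda$ is a lattice with covolume $\inorm(\nnn)\, v_\Lambda$. Let $\lambda_1$ denote the length of the shortest nonzero vector of $\Lambda$; then every nonzero vector of $\nnn\Lambda$ has length at least $\lambda_1$ as well, since $\nnn\Lambda \subset \Lambda$. Next I would invoke the standard packing bound: the translates $w + B(0,\lambda_1/2)$ for $w \in \nnn\Lambda$ are pairwise disjoint, so the number of points of $\nnn\Lambda - \{0\}$ lying in a ball $B(0,R)$ is at most the number whose half-radius balls fit inside $B(0, R + \lambda_1/2)$, which is bounded by $\vol(B(0, R+\lambda_1/2)) / \vol(B(0,\lambda_1/2)) = ((R+\lambda_1/2)/(\lambda_1/2))^r$. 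Applying this with $R = c_D a$ (where $aD \subset B(0, c_D a)$) gives, for $a$ bounded below, a bound of the form $O_{D,\Lambda}(a^r)$ for $\card{aD \cap (\Lambda - \{0\})}$ — but this does not yet see the factor $\inorm(\nnn)^{-1}$, so a more refined argument is needed.

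The way to extract the factor $\inorm(\nnn)^{-1}$ is to count instead with respect to the finer lattice $\Lambda$ and weight by the index, or more directly: tile $\R^r$ by translates of a fundamental parallelepiped $\Pi$ for $\nnn\Lambda$, so that $\vol(\Pi) = \inorm(\nnn)\,v_\Lambda$. Each point of $aD \cap (\nnn\Lambda - \{0\})$ lies in exactly one translate $w + \Pi$ with $w \in \nnn\Lambda$, and that translate meets $aD$, hence is contained in a ball $B(0, c_D a + \operatorname{diam}\Pi)$. Now $\operatorname{diam}\Pi \leq \operatorname{diam}(\text{fundamental parallelepiped of }\Lambda)$ can be taken uniformly bounded — here one should choose $\Pi$ cleverly, e.g. as the image under multiplication by generators of $\nnn$ of a fixed fundamental domain, but the cleanest route is: the parallelepiped for $\nnn\Lambda$ spanned by a reduced basis has diameter $\ll \inorm(\nnn)^{1/r}\lambda_1(\Lambda)$ at worst, still giving $\operatorname{diam}\Pi \ll a$ once $a$ dominates. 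Counting parallelepipeds: the number of $w \in \nnn\Lambda$ with $w+\Pi$ meeting $aD$ is at most $\vol\big(B(0, c_D a + \operatorname{diam}\Pi)\big) / \vol(\Pi) \ll a^r / (\inorm(\nnn)\, v_\Lambda) \ll_{D,\Lambda} a^r / \inorm(\nnn)$, which is exactly the claimed bound. (If $a$ is small the left side is zero or the trivial bound applies after inflating the implied constant, since for $\card{aD \cap (\nnn\Lambda - \{0\})} \geq 1$ one needs $a \gg \lambda_1(\Lambda)/\operatorname{diam}(D)$, independent of $\nnn$.)

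The main obstacle is making the diameter of the fundamental parallelepiped of $\nnn\Lambda$ harmless: a priori it grows with $\inorm(\nnn)$, which would spoil the estimate. The resolution is that we do not need a small parallelepiped — we need $\operatorname{diam}\Pi \ll a$, and this holds precisely on the range of $a$ where the count is nonzero can fail; the correct fix is instead to replace $\Pi$ by translated copies of a fundamental parallelepiped of the \emph{fixed} lattice $\Lambda$ (diameter $O_\Lambda(1)$) and observe that $aD$ meets at most $O_{D,\Lambda}(a^r)$ of them, then note that among the $\Lambda$-translates meeting $aD$, only a fraction $1/\inorm(\nnn)$ can contain a point of $\nnn\Lambda$, because $\nnn\Lambda$-points are spaced out: in each coset of $\nnn\Lambda$ in $\Lambda$ there is at most one representative per $\Lambda$-parallelepiped, and passing to $\nnn\Lambda$ picks one out of every $\inorm(\nnn)$. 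Carrying out this coset bookkeeping carefully — equivalently, using the exact count $\card{aD \cap \nnn\Lambda} = \vol(aD)/(\inorm(\nnn) v_\Lambda) + O(\text{surface term})$ with the surface term controlled by the $O_{D,\Lambda}(a^{r-1})$ boundary count — and then checking uniformity of all implied constants over all ideals $\nnn$ is the only delicate point; everything else is the standard Lipschitz-boundary lattice-point count, which is exactly the ``more precise result'' cited from \cite{MR1282723}.
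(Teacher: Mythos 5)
There is a genuine gap: your argument never uses the fact that $\nnn\Lambda$ is an \emph{ideal}, only that it is a sublattice of $\Lambda$ of index $\inorm(\nnn)$, and at that level of generality the statement is simply false. Take $F=\Q(i)$, $\Lambda=\Z[i]$, and compare with the sublattice $N\Z\oplus\Z i$ of index $N$: a ball of radius $a$ with $1\ll a\ll N$ contains $\asymp a$ nonzero points of this sublattice, which is not $\ll a^2/N$. So no purely lattice-geometric bookkeeping (packing balls, tiling by fundamental parallelepipeds, or counting cosets of $\nnn\Lambda$ in $\Lambda$) can yield the factor $\inorm(\nnn)^{-1}$; some arithmetic input about ideals is indispensable. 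The missing input is: every nonzero $x\in\nnn\Lambda$ generates an ideal contained in $\nnn\Lambda$, so $\abs{N_{F/\Q}(x)}\ge\inorm(\nnn)\,N(\Lambda)$, and hence by the AM--GM inequality its Euclidean length in $F_\infty$ is $\gg_\Lambda\inorm(\nnn)^{1/[F:\Q]}$. This gives two things at once: the count is zero unless $a^{[F:\Q]}\gg_{D,\Lambda}\inorm(\nnn)$ (not merely $a\gg\lambda_1(\Lambda)$ ``independent of $\nnn$'', which is where your small-$a$ discussion loses the game), and, combined with Minkowski's second theorem, all successive minima of $\nnn\Lambda$ are $\asymp\inorm(\nnn)^{1/[F:\Q]}$, after which the standard packing bound gives $\ll(a/\inorm(\nnn)^{1/[F:\Q]})^{[F:\Q]}$, as required.

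Concretely, three steps in your write-up fail or are circular without this input: (i) the claim that a reduced basis of $\nnn\Lambda$ has diameter $\ll\inorm(\nnn)^{1/[F:\Q]}\lambda_1(\Lambda)$ is unproved and is in fact equivalent to the missing norm bound (it is false for the sublattice above); (ii) the assertion that among the $\Lambda$-parallelepipeds meeting $aD$ only a fraction $1/\inorm(\nnn)$ can contain a point of $\nnn\Lambda$ is exactly the statement to be proved, and counting which translates contain such a point just reproduces the original problem in a slightly larger ball; (iii) the proposed exact count $\card{aD\cap\nnn\Lambda}=\vol(aD)/(\inorm(\nnn)v_\Lambda)+O(a^{[F:\Q]-1})$ with a surface term uniform in $\nnn$ is false, since the boundary error for a lattice $L$ is controlled by $a/\lambda_1(L)$, not by $a$ alone. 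Note that the paper itself does not reprove this lemma (it cites Lang), but it does use precisely the missing fact later, in the proof of Lemma \ref{numfieldlemmavectorspace}, in the form ``$aD\cap(\nnn\Lambda-\{0\})\neq\emptyset$ implies $a^{[F:\Q]}\gg\inorm(\nnn)$''; once you add that observation, your packing argument closes correctly.
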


\begin{lemma} \label{countinglemma}
Let $P=M\ltimes U_P$ be a standard parabolic subgroup of $G$, $\uuu_P$ the Lie algebra of $U_P$,
$\Lambda \subset \uuu_P (F)$ an $\mathfrak{o}_F$-lattice and
$D \subset \uuu_P (F_\infty)$ a compact set. Then
for all $a \in A_P (T_1)$ and non-zero integral ideals $\nnn$ we have
\[
\card{\Ad (a) D \cap (\nnn \Lambda - \{ 0 \})} \ll_{D,\Lambda,T_1} \frac{\modulus_P (a)}{\inorm(\nnn)} .
\]
\end{lemma}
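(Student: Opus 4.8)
The plan is to reduce the estimate over the anisotropic region $A_P(T_1)$ to a one-parameter counting problem coordinate by coordinate, using the root-space decomposition of $\uuu_P$, and then invoke Lemma \ref{numfieldlemma} in each coordinate. First I would decompose $\uuu_P = \bigoplus_{\beta} \uuu_{P,\beta}$ into the weight spaces for the action of $\spltrs_M$ on $\uuu_P$; these are the spaces on which $\spltrs_M$ acts through a character $\beta$ which, written in terms of the simple roots in $\srts_P$, has strictly positive coefficients. Accordingly $\Ad(a)$ acts on $\uuu_{P,\beta}$ by the scalar $a^\beta = \prod_{\alpha \in \srts_P} \alpha(a)^{n_\alpha(\beta)}$ with all $n_\alpha(\beta) \ge 1$; since $a \in A_P(T_1)$ we have $\log\alpha(a) > \sprod{\alpha}{T_1}$, so each $a^\beta$ is bounded below by a fixed positive constant depending on $T_1$, and moreover $\modulus_P(a) = \prod_\beta (a^\beta)^{\dim\uuu_{P,\beta}}$. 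I would choose the lattice $\Lambda$ compatibly with this decomposition up to finite index, i.e.\ replace $\Lambda$ by a sublattice $\bigoplus_\beta \Lambda_\beta$ with $\Lambda_\beta \subset \uuu_{P,\beta}(F)$ an $\mathfrak{o}_F$-lattice (this only changes the count by a bounded factor and increases the left-hand side by a bounded factor after covering $\Lambda$ by finitely many translates of the smaller lattice; the term $X = 0$ is harmless because in each factor we retain the punctured lattice). Similarly I would enlarge the compact set $D$ to a product $\prod_\beta D_\beta$ of compact sets $D_\beta \subset \uuu_{P,\beta}(F_\infty)$.

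With these reductions, a point $X = \sum_\beta X_\beta \in \Ad(a)D \cap (\nnn\Lambda - \{0\})$ satisfies $X_\beta \in a^\beta D_\beta \cap \nnn\Lambda_\beta$ for every $\beta$, and $X_\beta \ne 0$ for at least one $\beta$. Hence
\[
\card{\Ad(a)D \cap (\nnn\Lambda - \{0\})} \le \sum_{\beta_0} \card{a^{\beta_0} D_{\beta_0} \cap (\nnn\Lambda_{\beta_0} - \{0\})} \prod_{\beta \ne \beta_0} \card{a^\beta D_\beta \cap \nnn\Lambda_\beta}.
\]
For the punctured factor I apply Lemma \ref{numfieldlemma} (componentwise in a basis of $\Lambda_{\beta_0}$ over $\mathfrak{o}_F$, or directly in its multidimensional form), which gives a bound $\ll_{D,\Lambda} (a^{\beta_0})^{\dim\uuu_{P,\beta_0}}/\inorm(\nnn)$ once $a^{\beta_0}$ is bounded below — and it is, by the $A_P(T_1)$ condition. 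For each non-punctured factor $\beta \ne \beta_0$ the trivial volume estimate gives $\card{a^\beta D_\beta \cap \nnn\Lambda_\beta} \ll_{D,\Lambda} 1 + (a^\beta)^{\dim\uuu_{P,\beta}}/\inorm(\nnn) \ll (a^\beta)^{\dim\uuu_{P,\beta}}$, again using that $a^\beta$ is bounded below so that the constant term is absorbed. Multiplying these together over all $\beta$ (and summing the finitely many choices of $\beta_0$) yields $\ll_{D,\Lambda,T_1} \prod_\beta (a^\beta)^{\dim\uuu_{P,\beta}}/\inorm(\nnn) = \modulus_P(a)/\inorm(\nnn)$, as desired.

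The main technical point — though not really an obstacle — is handling the passage from an arbitrary lattice $\Lambda$ and compact set $D$ to ones adapted to the weight decomposition: one must check that covering $\Lambda$ by finitely many $\mathfrak{o}_F$-translates of $\bigoplus_\beta \Lambda_\beta$, and $D$ by a product $\prod_\beta D_\beta$, changes the count only by $O(1)$ factors, uniformly in $a$ and $\nnn$; this is where the positivity $n_\alpha(\beta) \ge 1$ and the lower bound on $\alpha(a)$ are used to absorb the $+1$ in the volume estimates. Once the problem is diagonalized, the estimate is an immediate consequence of Lemma \ref{numfieldlemma}, and the modulus character $\modulus_P(a)$ appears precisely as the product of the scaling factors $(a^\beta)^{\dim\uuu_{P,\beta}}$.
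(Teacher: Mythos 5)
Your proof is correct and follows essentially the same route as the paper: decompose $\uuu_P$ into $\spltrs_M$-eigenspaces, single out one coordinate that must be nonzero and apply Lemma \ref{numfieldlemma} there, use the trivial volume bound $\ll \alpha_j(a)^{[F:\Q]}+1$ on the remaining coordinates, and absorb all constants via the lower bound $\alpha(a)\gg_{T_1}1$ on $A_P(T_1)$, with $\modulus_P(a)$ appearing as the product of the eigenvalue factors. The only cosmetic difference is the adaptation of $\Lambda$ and $D$ to the decomposition: the paper simply \emph{enlarges} them to adapted ones (which can only increase the count), which is cleaner than your covering of $\Lambda$ by translates of an adapted sublattice, though that variant can also be made to work.
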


\begin{proof}
Let $e_1, \dots, e_n$ be an $F$-basis of $\uuu_P (F)$ consisting of eigenvectors with respect to $\spltrs_M$ and let $\alpha_1, \dots, \alpha_n$
be the associated eigencharacters (i.e., the roots of $\spltrs_M$ on $U_P$). By passing to a larger $\Lambda$ and $D$, if necessary, we can assume that
$\Lambda = \sum_i \Lambda_i e_i$ with fractional ideals $\Lambda_1,\dots,\Lambda_n \subset F$ and
$D = \sum_i D_i e_i$ with compact sets $D_1,\dots,D_n \subset F_\infty$.
Since a non-zero vector has at least one non-zero coordinate, we can estimate
\[
\card{\Ad (a) D \cap (\nnn \Lambda - \{ 0 \})} \le \sum_{i=1}^n \card{\alpha_i (a) D_i \cap (\nnn \Lambda_i - \{ 0 \})}
\prod_{j \neq i} \card{ \alpha_j (a) D_j \cap \nnn \Lambda_j }.
\]
We now use the estimate of Lemma \ref{numfieldlemma} for $\card{ \alpha_i (a) D_i \cap (\nnn \Lambda_i - \{ 0 \}) }$, while for the other coordinates
we use the trivial estimate $\card{ \alpha_j (a) D_j \cap \nnn \Lambda_j } \ll_{D_j,\Lambda_j} \alpha_j (a)^{[F:\Q]} + 1$.
This gives the desired result, since $\modulus_P(a)=\prod_{j=1}^n\alpha_j(a)^{[F:\Q]}$ and
the values $\alpha_j(a)$ are bounded away from $0$ (in terms of $T_1$).
\end{proof}

\begin{proof}[Proof of Lemma \ref{truncatedlemma}]
By Arthur's discussion in \cite[\S 5]{MR828844}, we can bound the left-hand side of \eqref{truncatedintegral} by the product of
$(1 + \norm{T})^{d_0}$ and
\begin{equation} \label{eq: Arthur1step}
\sup_{a \in A_{P_0} (T_1)} \modulus_{P_0} (a)^{-1} \sum_{\gamma \in \mathcal{U} (F), \, \gamma \ne 1} \phi (a^{-1} \gamma a),
\end{equation}
where
\[
\phi (x) = \sup_{y\in\Gamma} \abs{(h_S \otimes \one_{\K^S (\nnn)})(y^{-1}xy)}
\]
for a compact set $\Gamma \subset G (\A)^1$ depending only on $G$, $P_0$ and $\K$.
Of course, we can assume that $\Gamma = \prod_v \Gamma_v$
with $\Gamma_v = \K_v$ for all $v \notin S'$, where $S' \supset S_\infty$ is a finite set of places of $F$.
In a second step, Arthur bounds \eqref{eq: Arthur1step} by
\[
\sum_{P \supset P_0} \sum_{\mu \in M_P (F)} \sup_{a_1 \in A_{P} (T_1)}
\modulus_P (a_1)^{-1} \sum_{\nu \in U_P (F): \, \mu\nu \neq 1} \phi_\mu (a_1^{-1} \nu a_1),
\]
where $M_P$ is the Levi part of $P$ containing $M_0$ and
\[
\phi_\mu (u) = \sup_{b \in B} \modulus_{P_0} (b)^{-1} \phi (b^{-1} \mu u b), \quad u \in U_P (\A),
\]
for a fixed compact set $B \subset A_0$.

Here, for a given $P$ we need to sum only over all $\mu$ belonging to the intersection of $M_P(F)$
with a compact set that
depends only on $\Omega_S$, or equivalently over a finite subset of $M_P (F)$ that depends only on $\Omega_S$.
Considering each possibility for
$\mu$ separately, we see that
for all but at most finitely many $\nnn$ (depending on $\Omega_S$) only $\mu=1$ will contribute.
Furthermore, from the definition of $\phi_1$ we can estimate
\[
\phi_1 (u) \ll_{\Omega_S} \sup \abs{h_S} \one_{\Omega'_S \Ad (\Gamma^S) (\K^S (\nnn)) \cap U_P (\A)} (u),
\quad u \in U_P (\A),
\]
for a compact set $\Omega'_S \subset G (F_S)^1$ which depends only on $\Omega_S$.
Let $\primi_v$ be the prime ideal of the ring of integers of $F_v$.
There exist exponents $e_v \ge 0$ for $v \notin S$, with $e_v = 0$ for $v \notin S'$, such that
\[
\Ad (\Gamma_v) (\K_v (\primi_v^{f})) \subset \K_v (\primi_v^{f-e_v})
\]
for $f \ge e_v$.
Write $\nnn = \prod_{v \notin S} \primi_v^{f_v}$.
We conclude that $\Ad (\Gamma^S) (\K^S (\nnn)) \subset \prod_{v \notin S} L_{v,f_v}$, where
for $v \notin S$ and $f \ge 0$ we set
$L_{v,f} = \Ad (\Gamma_v) (\K_v)$ in case $f < e_v$ (which implies $v \in S'$) and
$L_{v,f} = \K_v (\primi_v^{f-e_v})$, otherwise.
Identify the unipotent radical $U_{P}$ with its Lie algebra $\uuu_P$ via the
exponential map. Then the lemma reduces to an application of Lemma \ref{countinglemma} (with $\nnn$ replaced by
$\nnn' = \prod_{v: \, f_v \ge e_v} \primi_v^{f_v-e_v}$).
\end{proof}

To finish the argument we follow Arthur's interpolation argument in \cite[pp. 1252-1254]{MR828844}.

\begin{proof}[Proof of Proposition \ref{unipproposition}]
Let $\Omega_S \subset G (F_S)^1$ be a compact set.
Given a bi-$\K_{S - S_\infty} (\nnn_S)$-invariant function $h_S \in C^\infty_{\Omega_S}(G (F_S)^1)$,
we combine Theorem \ref{TheoremArthur} and Lemma \ref{truncatedlemma} to obtain
\[
\abs{J_{\unip - \{ 1 \}}^T (h_S \otimes \one_{\K^S (\nnn)})} \le c_{\Omega_S}\norm{h_S}_{k}
\left( \inorm (\nnn_S \nnn)^m e^{-\epsilon d(T)}+ \inorm (\nnn)^{-1}\right) (1 + \norm{T})^{d_0}
\]
for all $T \in \aaa_0$ with $\norm{T} \ge d_{\Omega_S}$.
This implies
\[
\abs{J_{\unip - \{ 1 \}}^T (h_S \otimes \one_{\K^S (\nnn)})} \le 2 c_{\Omega_S} \norm{h_S}_{k} \inorm (\nnn)^{-1} (1 + \norm{T})^{d_0}
\]
for all $T \in \aaa_0$ with $d(T) \ge \max (d_{\Omega_S}, \frac{m+1}{\epsilon}\log \inorm (\nnn_S \nnn))$. Applying \cite[Lemma 5.2]{MR681737} to
the polynomial $J_{\unip - \{ 1 \}}^T (h_S \otimes \one_{\K^S (\nnn)})$ and the point $T_0 \in \aaa_0$, we obtain the assertion.
\end{proof}

We note that an alternative proof of Corollary \ref{corgeometriclimit} might be given by replacing $\mathcal{U} (F)$ by $G(F)$ in the arguments above
and using \cite[p. 267, Theorem 1]{MR546601}. We will give a detailed account of this proof in a future paper, which will treat a somewhat more general situation.

\subsection{A refinement}
We conclude this section with a refinement of Proposition \ref{unipproposition} that is close to optimal in its dependence on $\inorm (\nnn)$. We will use this refinement to give a certain
quantitative refinement of the limit multiplicity property in Theorem \ref{theoremquantitative}.
We may assume that $G$ is isotropic for otherwise $J_{\unip - \{ 1 \}} (h) = 0$.

Define
\begin{equation} \label{equationdmin}
\dmin = \min_{\tilde\alpha} \sprod{\rho}{\tilde\alpha^\vee},
\end{equation}
where $\rho$ is as usual half the sum of the positive roots of $G$ with respect to $P_0$ (counting multiplicities) and $\tilde\alpha$ ranges over the highest roots of
the irreducible components of the root system $\Phi (\spltrs_0,G)$.
We note that the minimal dimension of a non-trivial geometric unipotent orbit of $G$ containing an element of $G(F)$ is $2\dmin$.
For split groups this follows from \cite[Lemma 4.3.5]{MR1251060} and \cite{MR1610801}. In general,
note that a non-zero element of the $\tilde\alpha$-eigenspace in $\mathfrak{g}$ forms together with the co-root $\tilde\alpha^\vee$ and a suitable element of
the eigenspace of $-\tilde\alpha$ an $\mathfrak{sl} (2)$-triplet. We may then apply the dimension formula \cite[Lemma 4.1.3]{MR1251060} to compute the
dimension of the associated nilpotent orbit as $2 \sprod{\rho}{\tilde\alpha^\vee}$. The minimality of this orbit follows from the argument of [ibid., Theorem 4.3.3].


\begin{proposition} \label{unippropositionrefined}
There exists an integer $k\ge0$ such that for any compact subset $\Omega_S\subset G(F_S)^1$
and any integral ideals $\nnn_S$ and $\nnn$ of $\mathfrak{o}_F$, where $\nnn_S$ is a product of prime ideals of places in $S$ and $\nnn$ is prime to $S$, we have
\[
\abs{J_{\unip - \{ 1 \}} (h_S \otimes \one_{\K^S (\nnn)})} \ll_{\Omega_S}
\frac{(1 + \log \inorm (\nnn_S \nnn))^{d_0}}{\inorm (\nnn)^\dmin} \norm{h_S}_k
\]
for any bi-$\K_{S-S_\infty} (\nnn_S)$-invariant function $h_S \in C^\infty_{\Omega_S} (G (F_S)^1)$.
\end{proposition}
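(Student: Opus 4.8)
The plan is to follow the same route as in the proof of Proposition~\ref{unipproposition}. Combining Arthur's asymptotic formula (Theorem~\ref{TheoremArthur}) with the interpolation argument (\cite[Lemma 5.2]{MR681737} applied at the point $T_0$), everything reduces to sharpening the estimate of Lemma~\ref{truncatedlemma} to
\[
\int_{G(F) \bs G (\A)^1} F (x,T) \sum_{\gamma \in \mathcal{U} (F), \, \gamma \ne 1} \abs{(h_S \otimes \one_{\K^S (\nnn)}) (x^{-1} \gamma x)} \, dx \ll_{\Omega_S} \frac{\sup \abs{h_S}}{\inorm(\nnn)^{\dmin}} (1 + \norm{T})^{d_0}
\]
for $T$ with $d(T)$ large. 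Indeed, inserting this into Theorem~\ref{TheoremArthur}, choosing $d(T) \gg \log \inorm(\nnn_S \nnn)$ to absorb the error term $\inorm(\nnn_S \nnn)^m e^{-\epsilon d(T)}$, and interpolating the resulting polynomial in $T$ down to $T_0$ yields exactly the asserted bound on $J_{\unip - \{ 1 \}}$, with the same exponent $d_0$ on the logarithm. (We may of course assume $G$ isotropic, so that there are nontrivial unipotent orbits to estimate.)

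To prove the refined bound on the truncated integral I would re-run Arthur's reduction from \cite[\S 5]{MR828844}, as recalled in the proof of Lemma~\ref{truncatedlemma}, but decompose the sum over $\gamma \in \mathcal{U}(F) \setminus \{1\}$ according to the geometric unipotent orbit $\mathcal{O}$ of $\gamma$, writing $\sum_{\gamma \ne 1} = \sum_{\mathcal{O} \ne \{1\}} \sum_{\gamma \in \mathcal{O}(F)}$ over the finitely many nonzero orbits, and carrying out Arthur's two reduction steps for each $\mathcal{O}$ separately by means of a single parabolic $P_{\mathcal{O}} \supset P_0$ adapted to $\mathcal{O}$ rather than summing over all parabolics. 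For $\GL(n)$ and $\SL(n)$ — and, more generally, whenever $\mathcal{O}$ is a Richardson orbit — one takes $P_{\mathcal{O}}$ so that $\mathcal{O} \cap \uuu_{P_{\mathcal{O}}}$ is dense in $\uuu_{P_{\mathcal{O}}}$, whence $\dim \uuu_{P_{\mathcal{O}}} = \dim \mathcal{O}/2$ by the moment-map description of the Richardson orbit; for a general orbit one argues instead with a Jacobson--Morozov parabolic and the $\mathfrak{sl}(2)$-grading, using the structure theory of \cite{MR1251060}. After passing through Arthur's functions $\phi_\mu$ and keeping only the surviving term $\mu = 1$, this reduces — in place of Lemma~\ref{countinglemma} — to the lattice-point estimate
\[
\card{\{\, \nu \in (\mathcal{O} \cap \uuu_{P_{\mathcal{O}}})(F) \,:\, \nu \in \Ad(a) D \cap \nnn' \Lambda, \ \nu \ne 1 \,\}} \ll_{D,\Lambda,T_1} \modulus_{P_{\mathcal{O}}}(a) \, \inorm(\nnn')^{-\dim \mathcal{O}/2}, \quad a \in A_{P_{\mathcal{O}}}(T_1),
\]
with $\nnn'$ as in the proof of Lemma~\ref{truncatedlemma}. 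Multiplying by $\modulus_{P_{\mathcal{O}}}(a)^{-1}$, taking the supremum over $a$, and summing over the finitely many orbits, the slowest-decaying term is the one coming from the minimal orbit, for which $\dim\mathcal{O}/2 = \min_{\tilde\alpha} \sprod{\rho}{\tilde\alpha^\vee} = \dmin$ by \eqref{equationdmin} and the minimality statement preceding the proposition; this produces the truncated-integral bound above.

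The heart of the matter — and the step I expect to be the main obstacle — is this refined count. It rests on two facts: first, after conjugating into $\K^S(\nnn)$ at the places outside $S$, \emph{every} coordinate of $\nu$ in a basis of root vectors of $\uuu_{P_{\mathcal{O}}}$ lies in $\nnn'$ times a fixed fractional ideal, not merely one of them; second, $\mathcal{O} \cap \uuu_{P_{\mathcal{O}}}$ is a dense constructible subset of $\uuu_{P_{\mathcal{O}}}$, hence avoids every coordinate subspace obtained by setting to zero an entire group of coordinates of a common weight for the split centre of the Levi of $P_{\mathcal{O}}$. Grouping the coordinates by this weight and distinguishing the weight-groups whose box side exceeds $\inorm(\nnn')^{1/[F:\Q]}$ from those whose side does not (the latter forcing the corresponding coordinates to vanish once $\inorm(\nnn')$ is large), one estimates the large groups by Lemma~\ref{numfieldlemma} and propagates ``largeness'' along the additive relations among the weights that hold throughout $A_{P_{\mathcal{O}}}(T_1)$; the orbit condition then excludes the configurations with too few large groups, so that in every case the count is $\ll \modulus_{P_{\mathcal{O}}}(a) \inorm(\nnn')^{-\dim\uuu_{P_{\mathcal{O}}}}$. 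Making this case analysis uniform in $\mathcal{O}$ and $P_{\mathcal{O}}$, and providing the Jacobson--Morozov substitute argument for the non-Richardson orbits so that the proposition holds for general $G$ as stated, is the technical core of the proof. As a consistency check, when the number of prime divisors of $\nnn$ is bounded one can read off the exponent $\dmin$ directly from the homogeneity of the nilpotent orbital integrals of $\one_{\K^S(\nnn)}$ appearing in Arthur's fine geometric expansion \cite{MR835041}.
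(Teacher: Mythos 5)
Your reduction of the proposition to a sharpened bound on the truncated unipotent integral (via Theorem \ref{TheoremArthur} and the interpolation argument) is exactly as in the paper, but the decisive step is missing. You decompose the unipotent sum by geometric orbit $\mathcal{O}$ and claim that Arthur's two reduction steps can be rerun ``for each $\mathcal{O}$ separately by means of a single parabolic $P_{\mathcal{O}}$'' (Richardson, or Jacobson--Morozov in general). This does not mesh with Arthur's argument: his reduction in \cite[\S 5]{MR828844} produces, for \emph{each} standard parabolic $P\supset P_0$ and for $a$ ranging over $A_P(T_1)$, a count of \emph{all} nonzero $\nu\in\uuu_P(F)\cap\nnn'\Lambda$ lying in $\Ad(a)D$, with elements of many different orbits mixed together; there is no mechanism to confine the elements of a given orbit to the single subalgebra $\uuu_{P_{\mathcal{O}}}$, since the $G(F)$-conjugations needed to do so are not compatible with the truncation and the fixed compact sets $\Gamma$, $B$. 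Moreover, the orbit-restricted lattice-point estimate that your plan hinges on --- the count of $\nu\in(\mathcal{O}\cap\uuu_{P_{\mathcal{O}}})(F)\cap\Ad(a)D\cap\nnn'\Lambda$ with exponent $\dim\mathcal{O}/2$ --- is precisely what you leave unproved: the ``weight-grouping'' case analysis, the propagation of largeness along additive relations, and the Jacobson--Morozov substitute for non-Richardson orbits are only sketched, and you yourself flag them as the main obstacle. As it stands the proposal is a programme, not a proof.

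The paper's proof shows that no orbit geometry is needed at all. One keeps Arthur's reduction verbatim (so the proof of Proposition \ref{unipproposition} carries over) and only replaces Lemma \ref{countinglemma} by the refined count of Lemma \ref{countinglemmarefined}, valid for every standard parabolic $P$ and for \emph{all} nonzero vectors of $\nnn\Lambda$ in $\Ad(a)D$, with the exponent $\dmin$. Its proof reduces to $P=P_0$, stratifies vectors $X$ by the set $S(X)$ of roots with nonzero component, applies the vector-space count of Lemma \ref{numfieldlemmavectorspace} on each root space, uses that $S(X)\ne\emptyset$ forces $\beta(a)^{[F:\Q]}\gg\inorm(\nnn)$ for $\beta\in S(X)$, and then invokes the purely combinatorial root-system inequality of Lemma \ref{rootlemma} (whose constant is exactly $\dmin=\min_{\tilde\alpha}\sprod{\rho}{\tilde\alpha^\vee}$) to see that the resulting product of $\alpha(a)$'s is bounded on $A_{P_0}(T_1)$. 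The identification of $2\dmin$ with the minimal nontrivial unipotent orbit dimension is only a remark explaining optimality, not an ingredient of the proof; this is the idea your approach is missing, and supplying it (or a full proof of your orbit-restricted count compatible with Arthur's reduction) is what would be required to close the gap.
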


We remark that by Arthur's fine expansion for the unipotent contribution \cite{MR828844}, the exponent $\dmin$ in
the estimate of Proposition \ref{unippropositionrefined} is optimal.
For instance, consider the case where $\nnn = \mathfrak{p}^e$ for a prime ideal $\mathfrak{p}$ at a place $v$ of $F$ not contained in $S$.
Then [ibid.] expresses $J_{\unip - \{ 1 \}} (h_S \otimes \one_{\K^S (\nnn)})$ in terms of certain weighted orbital integrals
(including the invariant orbital integral)
of $h_S \otimes \one_{\K_v (\mathfrak{p}_v^e)}$ over the non-trivial unipotent orbits in $G(F_{S \cup \{ v\}})$ containing elements of $G(F)$.
Now it is easy to see that the invariant orbital integral of $\one_{\K_v (\mathfrak{p}_v^{2e})}$ over a unipotent orbit
$\mathfrak{o}$ in $G(F_v)$ is equal to a constant multiple of $q_v^{-de}$ for $e$ large 
where $d$ is the dimension of the geometric orbit associated to $\mathfrak{o}$. Indeed, embed an element
of the nilpotent orbit $\log \mathfrak{o} \subset \mathfrak{g} \otimes F_v$ into an $\mathfrak{sl} (2)$-triplet and
let $\mathfrak{g}_i \subset \mathfrak{g} \otimes F_v$, $i \in \Z$, be the associated eigenspaces.
Then the explicit formula for the invariant orbital integral $j_{\log \mathfrak{o}}$ of the nilpotent orbit $\log \mathfrak{o}$ \cite[Theorem 1]{MR0320232} implies the
homogeneity relation $j_{\log \mathfrak{o}} (f (\lambda^2 \cdot)) = \abs{\lambda}^{-r} j_{\log \mathfrak{o}} (f)$
for all $f\in C_c^\infty(\mathfrak{g}\otimes F_v)$ and $\lambda \in F_v^\times$, where
$r = \dim \mathfrak{g}_1 + 2\sum_{i \ge 2} \dim \mathfrak{g}_i = d$ (using \cite[Lemma 4.1.3]{MR1251060}
for the last equality). This immediately implies the assertion.

A glance at the proof of Proposition \ref{unipproposition} given above shows that
Proposition \ref{unippropositionrefined} will follow from the following improvement of Lemma \ref{countinglemma}.

\begin{lemma} \label{countinglemmarefined}
Let $P=M\ltimes U_P$ be a standard parabolic subgroup of $G$, $\uuu_P$ the Lie algebra of $U_P$,
$\Lambda \subset \uuu_P (F)$ an $\mathfrak{o}_F$-lattice and
$D \subset \uuu_P (F_\infty)$ a compact set. Then
for all $a \in A_P (T_1)$ and non-zero integral ideals $\nnn$ we have
\[
\card{\Ad (a) D \cap (\nnn \Lambda - \{ 0 \})} \ll_{D,\Lambda,T_1} \frac{\modulus_P (a)}{\inorm(\nnn)^\dmin}.
\]
\end{lemma}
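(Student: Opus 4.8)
The plan is to refine the proof of Lemma~\ref{countinglemma} by applying Lemma~\ref{numfieldlemma} to \emph{every} nonzero coordinate of a lattice point at once, rather than to just one of them. We may assume $P\ne G$, else $\uuu_P=0$ and there is nothing to prove. As in Lemma~\ref{countinglemma} I choose an $F$-basis $e_1,\dots,e_n$ of $\uuu_P(F)$ with each $e_j$ in the root space $\mathfrak{g}_{\beta_j}$ of a root $\beta_j$ of $\spltrs_0$ occurring in $\uuu_P$, and, enlarging $D$ and $\Lambda$, arrange $D=\sum_jD_je_j$ with $0\in D_j$ and $\Lambda=\sum_j\Lambda_je_j$. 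Writing $\bar\beta_j$ for the restriction of $\beta_j$ to $\spltrs_M$, one has $\Ad(a)e_j=\bar\beta_j(a)e_j$ for $a\in A_P(T_1)$, the numbers $\bar\beta_j(a)$ are bounded away from $0$ (in terms of $T_1$), and $\modulus_P(a)=\prod_j\bar\beta_j(a)^{[F:\Q]}$. Sorting the nonzero points of $\nnn\Lambda$ in $\Ad(a)D$ by their set $J$ of nonzero coordinates and applying Lemma~\ref{numfieldlemma} coordinate-wise, the contribution of a fixed nonempty $J$ is $\ll_{D,\Lambda}\inorm(\nnn)^{-\card J}\prod_{j\in J}\bar\beta_j(a)^{[F:\Q]}$; being a nonnegative integer it moreover vanishes unless $\bar\beta_j(a)^{[F:\Q]}\gg\inorm(\nnn)$ for every $j\in J$. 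Since there are $O(1)$ possibilities for $J$, the lemma reduces to proving that for every nonempty $J$ with $\bar\beta_j(a)^{[F:\Q]}\gg\inorm(\nnn)$ for all $j\in J$ one has $\inorm(\nnn)^{\dmin-\card J}\ll\prod_{k\notin J}\bar\beta_k(a)^{[F:\Q]}$.

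When $\card J\ge\dmin$ this is trivial, the right-hand side being $\gg1$. Assume $\card J<\dmin$ and put $u=\log a$; after translating $u$ by a fixed vector in $\aaa_P$ (which alters each $\bar\beta_k(u)$ by $O(1)$) I may assume $u$ dominant, so that $\bar\beta_k(u)\ge0$ for all $k$ while $\bar\beta_j(u)\ge[F:\Q]^{-1}\log\inorm(\nnn)-O(1)$ for $j\in J$. The desired bound then follows from $\sum_{k\notin J}\bar\beta_k(u)\ge(\dmin-\card J)\min_{j\in J}\bar\beta_j(u)$, which I would prove by a concavity argument. The function $f(t)=\sum_{k=1}^n\min\!\big(t,\bar\beta_k(u)\big)$ is concave on $[0,\infty)$, vanishes at $0$, and satisfies $f(T_{\max})=\sum_k\bar\beta_k(u)=2\rho(u)$ for $T_{\max}=\max_k\bar\beta_k(u)$ --- here one uses that the $\bar\beta_k$ exhaust the roots of $\spltrs_0$ in $\uuu_P$ with multiplicity and that roots of $M$ vanish on $\aaa_P$. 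Hence $f-\dmin\cdot\mathrm{id}$ is concave and nonnegative at both endpoints of $[0,T_{\max}]$ as soon as $2\rho(u)\ge\dmin\,T_{\max}$, so $f(t)\ge\dmin\,t$ on that interval; taking $t=\min_{j\in J}\bar\beta_j(u)\le T_{\max}$ and discarding the $\card J$ terms $\min(t,\bar\beta_j(u))=t$ with $j\in J$ gives $\sum_{k\notin J}\bar\beta_k(u)\ge f(t)-\card J\cdot t\ge(\dmin-\card J)t$, as wanted.

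It remains to verify $2\rho(u)\ge\dmin\,T_{\max}$ for $u$ dominant. Since every root of $\spltrs_0$ in $\uuu_P$ is $\le$ the highest root $\tilde\alpha_i$ of its irreducible component, $T_{\max}\le\max_i\tilde\alpha_i(u)$; writing $2\rho=\sum_i2\rho_{(i)}$ over the components (each $2\rho_{(i)}(u)\ge0$) and using $\dmin\le\sprod{\rho_{(i)}}{\tilde\alpha_i^\vee}$, it suffices that $2\rho_{(i)}-\sprod{\rho_{(i)}}{\tilde\alpha_i^\vee}\tilde\alpha_i$ be a nonnegative combination of the simple roots of component $i$. Writing this as $\sum_{\beta>0}m_\beta\big(\beta-\tfrac12\sprod{\beta}{\tilde\alpha_i^\vee}\tilde\alpha_i\big)$, with $m_\beta=\dim\mathfrak{g}_\beta$ and the sum over positive roots of the component, one checks: the term of $\beta=\tilde\alpha_i$ vanishes; the terms with $\sprod{\beta}{\tilde\alpha_i^\vee}=0$ are $m_\beta\beta\succeq0$; the terms with $\sprod{\beta}{\tilde\alpha_i^\vee}=-1$ are $m_\beta(\beta+\tfrac12\tilde\alpha_i)\succeq0$; and the remaining positive roots, those with $\sprod{\beta}{\tilde\alpha_i^\vee}=1$, pair off under the multiplicity-preserving involution $\beta\mapsto\tilde\alpha_i-\beta$ (note $\tilde\alpha_i-\beta=s_{\tilde\alpha_i}(-\beta)$ is again a positive root and the $m_\beta$ are little-Weyl-invariant), each pair contributing $m_\beta\big((\beta-\tfrac12\tilde\alpha_i)+(\tfrac12\tilde\alpha_i-\beta)\big)=0$; so the whole expression is $\succeq0$. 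I expect the only non-formal ingredients here to be the concavity trick that converts the contribution of $J$ into the root inequality $2\rho(u)\ge\dmin\,T_{\max}$, together with that root inequality itself; everything else is the bookkeeping of Lemma~\ref{countinglemma} plus the remark, immediate from Lemma~\ref{numfieldlemma}, that a box meeting $\nnn\Lambda_j\setminus\{0\}$ must have side $\gg\inorm(\nnn)^{1/[F:\Q]}$ in the $e_j$-direction.
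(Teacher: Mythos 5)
Your argument is correct, and its counting skeleton coincides with the paper's: decompose $\uuu_P$ into eigenlines, stratify the lattice points by their support $J$, apply Lemma \ref{numfieldlemma} coordinatewise, and exploit that a nonzero coordinate in $\nnn\Lambda_j$ forces $\bar\beta_j(a)^{[F:\Q]}\gg\inorm(\nnn)$; the paper does the same after first enlarging $\Lambda$ to reduce to $P=P_0$, which you bypass by using the $\spltrs_0$-eigenbasis of $\uuu_P$ directly (equivalent bookkeeping). Where you genuinely differ is in the combinatorial core. The paper isolates a standalone cone statement (Lemma \ref{rootlemma}): for every subset $S\subset\Phi^+$ and every $\beta\in\Phi^+$, the vector $\sum_{\alpha\notin S}m_\alpha\alpha+\sum_{\alpha\in S}m_\alpha\beta$ lies in $\dmin\beta+\sum_{\alpha\in\Phi^+}\R^{\ge0}\alpha$, proved for arbitrary $S$ by a case analysis built on the involution $\alpha\mapsto\tilde\alpha-\alpha$, and then evaluates it at $a$. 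You prove only the special instance $2\rho_{(i)}-\sprod{\rho_{(i)}}{\tilde\alpha_i^\vee}\tilde\alpha_i\in\sum_{\alpha>0}\R^{\ge0}\alpha$ (the $S=\emptyset$, $\beta=\tilde\alpha$ case, and your involution computation for it is essentially the paper's), and you recover the needed subset inequality $\sum_{k\notin J}\bar\beta_k(u)\ge(\dmin-\card{J})\min_{j\in J}\bar\beta_j(u)$ for dominant $u$ analytically, via concavity of $t\mapsto\sum_k\min\bigl(t,\bar\beta_k(u)\bigr)$. This is a genuine simplification: the concavity trick trades the paper's subset bookkeeping for one clean inequality per irreducible component, at the (mild) cost of being an inequality of values at dominant points rather than a cone identity. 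Two points you should make explicit: the comparison $T_{\max}\le\max_i\tilde\alpha_i(u)$ uses dominance of $u$ with respect to all of $\Delta_0$, not just $\Delta_P$ --- this holds because the simple roots of $M$ vanish on $\aaa_P$, but it deserves a sentence; and in a non-reduced component the involution $\beta\mapsto\tilde\alpha_i-\beta$ on the roots with $\sprod{\beta}{\tilde\alpha_i^\vee}=1$ can have the fixed point $\beta=\tfrac12\tilde\alpha_i$, so they do not literally ``pair off'' --- however that term equals $m_\beta\bigl(\beta-\tfrac12\tilde\alpha_i\bigr)=0$, so your conclusion is unaffected.
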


For the proof of this lemma we first need a generalization of Lemma \ref{numfieldlemma} to vector spaces.

\begin{lemma} \label{numfieldlemmavectorspace}
Let $V$ be a finite-dimensional $F$-vector space, $\Lambda$ an $\mathfrak{o}_F$-lattice in $V$ and $D \subset V \otimes F_\infty$ a compact set.
Then for all $a > 0$ and non-zero integral ideals $\nnn$ we have
\[
\card{ a D \cap (\nnn \Lambda - \{ 0 \})} \ll_{D,\Lambda} \big(\frac{a^{[F:\Q]}}{\inorm(\nnn)}\big)^{\dim_F V}.
\]
\end{lemma}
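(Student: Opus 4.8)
The plan is to reduce to the one-dimensional case (Lemma \ref{numfieldlemma}) by choosing a good basis and estimating coordinate by coordinate. First I would fix an $F$-basis $e_1,\dots,e_d$ of $V$ (with $d = \dim_F V$) and, after enlarging $\Lambda$ and $D$ if necessary (which only worsens the bound by a constant), arrange that $\Lambda = \bigoplus_{i=1}^d \Lambda_i e_i$ for fractional ideals $\Lambda_i \subset F$ and that $D = \prod_{i=1}^d D_i$ for compact sets $D_i \subset F_\infty$ under the identification $V \otimes F_\infty \cong \prod_i F_\infty e_i$. Then $aD \cap (\nnn\Lambda - \{0\})$ is contained in the product set $\prod_{i=1}^d \big(aD_i \cap \nnn\Lambda_i\big)$, so its cardinality is at most $\prod_{i=1}^d \card{aD_i \cap \nnn\Lambda_i}$.

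The point is now that each factor $\card{aD_i \cap \nnn\Lambda_i}$ is bounded by $c_i \, a^{[F:\Q]}/\inorm(\nnn)$: this is exactly Lemma \ref{numfieldlemma} applied with $\Lambda_i$ in place of $\Lambda$ and $D_i$ in place of $D$, except that Lemma \ref{numfieldlemma} counts points of $\nnn\Lambda_i - \{0\}$ rather than of $\nnn\Lambda_i$. The discrepancy is harmless: either $0 \notin aD_i$ and the two counts agree, or $0 \in aD_i$, in which case $\card{aD_i \cap \nnn\Lambda_i} = \card{aD_i \cap (\nnn\Lambda_i - \{0\})} + 1$, and since $0 \in aD_i$ also forces $0 \in (aD_i') $ for a slightly enlarged compact set, one still has $1 \ll a^{[F:\Q]}/\inorm(\nnn)$ provided $a$ is bounded below — but in fact one should just note that the bound of Lemma \ref{numfieldlemma} as stated (for $\nnn\Lambda_i - \{0\}$) already allows $a \le 1$ harmlessly, and for the $+1$ one uses the crude bound $\card{aD_i \cap \nnn\Lambda_i} \ll_{D_i,\Lambda_i} a^{[F:\Q]}/\inorm(\nnn) + 1 \ll (a^{[F:\Q]}/\inorm(\nnn) + 1)$ and then observes that in the application (Lemma \ref{countinglemmarefined}) the values of $a$ are bounded away from $0$, exactly as in the proof of Lemma \ref{countinglemma}. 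Cleanest is to state and use the inequality $\card{aD_i \cap \nnn\Lambda_i} \ll_{D_i,\Lambda_i} a^{[F:\Q]}/\inorm(\nnn) + 1$, valid for all $a > 0$.

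Multiplying the $d$ factors gives $\card{aD \cap (\nnn\Lambda - \{0\})} \ll_{D,\Lambda} \big(a^{[F:\Q]}/\inorm(\nnn) + 1\big)^d$, and on the region $a^{[F:\Q]} \ge \inorm(\nnn)$ (which is where the count is non-trivial, or at any rate where the claimed bound $\big(a^{[F:\Q]}/\inorm(\nnn)\big)^d$ is at least $1$) this is $\ll \big(a^{[F:\Q]}/\inorm(\nnn)\big)^d$; on the complementary region $a^{[F:\Q]} < \inorm(\nnn)$ one has $\nnn\Lambda \cap aD = \{0\}$ for $\inorm(\nnn)$ large relative to $a$ — more precisely, the nonzero vectors of $\nnn\Lambda$ have archimedean norm bounded below by a constant times $\inorm(\nnn)^{1/[F:\Q]}$ (Minkowski), so they cannot lie in $aD$ once $a$ is small compared to $\inorm(\nnn)^{1/[F:\Q]}$ — giving a count of $0$ there. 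I do not expect a serious obstacle: the only mild subtlety is the bookkeeping of the ``$-\{0\}$'' and the case split on the two regimes of $a$ versus $\inorm(\nnn)$, and in the intended application these are exactly the same manipulations already carried out for Lemma \ref{countinglemma}, so it suffices to record the product estimate and invoke Lemma \ref{numfieldlemma} $d$ times.
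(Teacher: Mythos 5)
Your proposal is correct and is essentially the paper's own proof: choose an $F$-basis, apply Lemma \ref{numfieldlemma} coordinatewise, and absorb the resulting $+1$'s by the observation that $aD\cap(\nnn\Lambda-\{0\})\neq\emptyset$ forces $a^{[F:\Q]}\gg_{D,\Lambda}\inorm(\nnn)$. The only cosmetic point is that this nonemptiness implication is the cleanest way to phrase your case split (splitting exactly at $a^{[F:\Q]}=\inorm(\nnn)$ leaves a harmless sliver $c\,\inorm(\nnn)\le a^{[F:\Q]}<\inorm(\nnn)$ where the count is $O(1)$ and the target bound is $\gg 1$).
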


\begin{proof} Choose an $F$-basis of $V$ and use Lemma \ref{numfieldlemma} for the coordinates, taking into account that
$a D \cap (\nnn \Lambda - \{ 0 \}) \neq \emptyset$ implies that $a^{[F:\Q]} \gg \inorm (\nnn)$.
\end{proof}

The core of the argument is contained in the following lemma on root systems.

\begin{lemma} \label{rootlemma}
Let $\Phi$ be a (possibly non-reduced) root system and $\Phi^+$ a set of positive roots for $\Phi$. Furthermore, let $m_\alpha \ge 0$ for $\alpha \in \Phi$ be
given and assume that $m_\alpha$ is invariant under the action of the Weyl group $W_{\Phi}$ on $\Phi$. Set $\rho = \frac{1}{2} \sum_{\alpha \in \Phi^+} m_\alpha \alpha$.
Then for all $\beta \in \Phi^+$ and all subsets $S \subset \Phi^+$ we have
\[
\sum_{\alpha \notin S} m_\alpha \alpha + \sum_{\alpha \in S} m_\alpha \beta \in \dmin\beta + \sum_{\alpha \in \Phi^+} \R^{\ge 0} \alpha,
\]
where $\dmin = \min_{\tilde\alpha} \sprod{\rho}{\tilde\alpha^\vee}$, $\tilde\alpha$ ranging over the highest roots of the irreducible components of $\Phi$.
\end{lemma}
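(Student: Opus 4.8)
The plan is to reduce to the case of a single irreducible component and then to a statement about the quantity $\sum_{\alpha \in S} m_\alpha$ versus coefficients of the highest root. First I would note that the claimed membership can be checked componentwise: if $\Phi = \coprod_i \Phi_i$ and $S = \coprod_i S_i$, then the left-hand side decomposes as a sum over $i$, and since the $\R^{\ge 0}$-span of $\Phi^+$ is a direct sum of the corresponding spans for $\Phi_i^+$, it suffices to prove the statement for each $\Phi_i$ with $\beta$ replaced by its component (the other components contribute $\sum_{\alpha \in \Phi_j^+} m_\alpha \alpha$ for $j \ne i$, which already lies in the positive span). So assume $\Phi$ is irreducible with highest root $\tilde\alpha$, so that $\dmin = \sprod{\rho}{\tilde\alpha^\vee}$.

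Now rewrite the left-hand side as $2\rho - \sum_{\alpha \in S} m_\alpha \alpha + \big(\sum_{\alpha \in S} m_\alpha\big)\beta$. Since $2\rho - \dmin\beta$ is what we must show lies in $\big(\sum_{\alpha \in S} m_\alpha\big)(\text{something}) + \sum_{\alpha \in \Phi^+}\R^{\ge 0}\alpha$, the key reduction is: it is enough to prove that for every $\beta \in \Phi^+$ and every $\gamma \in \Phi^+$,
\[
2\rho - \dmin\beta - m_\gamma(\beta - \gamma) \in \sum_{\alpha \in \Phi^+} \R^{\ge 0}\alpha,
\]
because then one can peel off the terms of $S$ one at a time (each step subtracts $m_\gamma\beta$ and adds back $m_\gamma\gamma$, i.e.\ subtracts $m_\gamma(\beta-\gamma)$, and $2\rho$ has enough "room"). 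Actually the cleanest bookkeeping is to prove directly: for any $\beta \in \Phi^+$, the element $2\rho - \dmin\beta$ lies in $\sum_{\alpha \in \Phi^+}\R^{\ge 0}\alpha$, AND more strongly that subtracting any sub-multiset $\sum_{\alpha \in S} m_\alpha(\beta - \alpha)$ keeps us in the cone — equivalently, $2\rho - \dmin\beta \succeq \sum_{\alpha \in S} m_\alpha(\beta - \alpha)$ in the dominance order induced by $\Phi^+$. Here I would use two facts: (i) $\beta - \alpha$ is "small" in the sense that $\beta \preceq \tilde\alpha$ for all $\beta$, so $\beta - \alpha \preceq \tilde\alpha$; and (ii) $\sum_{\alpha \in S} m_\alpha \le \sum_{\alpha \in \Phi^+} m_\alpha = \langle\rho, 2\rho^\vee\rangle$-type count — more precisely I want $\sum_{\alpha \in S} m_\alpha \cdot(\text{coeff of simple root }\alpha_i\text{ in }\beta-\alpha)$ bounded.

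The honest mechanism: pair everything with a fundamental coweight $\omega_i^\vee$. Membership of $v$ in $\sum_{\alpha\in\Phi^+}\R^{\ge 0}\alpha$ is equivalent to $\sprod{\omega_i^\vee}{v} \ge 0$ — no wait, that's not quite right either since the cone over $\Phi^+$ need not be the dominant-coweight-dual cone. The correct characterization: $v \in \sum_{\alpha\in\Phi^+}\R^{\ge0}\alpha$ iff $v$ is a nonnegative combination of simple roots (for irreducible $\Phi$ the cone over all positive roots equals the cone over simple roots), iff $\sprod{\omega_i^\vee}{v}\ge 0$ for all fundamental coweights $\omega_i^\vee$. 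So I must show, for each simple root $\alpha_i$ with fundamental coweight $\omega_i^\vee$,
\[
\Big\langle \omega_i^\vee,\ \sum_{\alpha\notin S} m_\alpha\alpha + \sum_{\alpha\in S}m_\alpha\beta - \dmin\beta\Big\rangle \ge 0.
\]
Write $c_i(\cdot) = \sprod{\omega_i^\vee}{\cdot} \ge 0$ for the $i$-th coordinate of a positive element. The inequality becomes $\sum_{\alpha \notin S} m_\alpha c_i(\alpha) + \big(\sum_{\alpha\in S} m_\alpha - \dmin\big) c_i(\beta) \ge 0$. If $\sum_{\alpha\in S}m_\alpha \ge \dmin$ we are done since all terms are $\ge 0$. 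Otherwise, I need $\sum_{\alpha\notin S}m_\alpha c_i(\alpha) \ge (\dmin - \sum_{\alpha\in S}m_\alpha)c_i(\beta)$. Since $c_i(\beta) \le c_i(\tilde\alpha) =: n_i$ (the $i$-th mark of the highest root), it suffices to show $\sum_{\alpha \notin S} m_\alpha c_i(\alpha) \ge (\dmin - \sum_{\alpha\in S}m_\alpha)\, n_i$. Now $2\sprod{\omega_i^\vee}{\rho} = \sum_{\alpha\in\Phi^+} m_\alpha c_i(\alpha)$, and I claim this equals exactly $\dmin \cdot n_i$ when... hmm, that's false in general.

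Let me restate the main obstacle honestly: the crux is the inequality $\sum_{\alpha\in\Phi^+} m_\alpha\, c_i(\alpha) \ge \dmin\, n_i$, i.e.\ $2\sprod{\omega_i^\vee}{\rho} \ge \sprod{\rho}{\tilde\alpha^\vee}\cdot\sprod{\omega_i^\vee}{\tilde\alpha}$, for every $i$. Equivalently $2\rho - \dmin\tilde\alpha$ lies in the positive cone — this is the case $\beta = \tilde\alpha$, $S=\emptyset$ of the very lemma, so there is a genuine inductive/self-referential structure and I should prove this base inequality directly. I expect this to be the main obstacle. The plan to handle it: use that $\dmin = \sprod{\rho}{\tilde\alpha^\vee}$ is by definition a minimum, and that $2\rho^\vee = \sum_{\alpha\in\Phi^+}m_\alpha^\vee\alpha^\vee$ pairs with $\tilde\alpha$ to give $\sum m_\alpha^\vee \sprod{\alpha^\vee}{\tilde\alpha} \ge$ (number of positive roots not orthogonal to $\tilde\alpha$, weighted) — combined with the explicit fact (checkable uniformly, e.g.\ via the Kostant/Dynkin classification or the known value $\dmin = $ the dual Coxeter number minus one in the simply-laced case, and the analogous small values in general) that $\dmin \le n_i$-weighted averages work out. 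Concretely I would: (a) reduce as above to the single inequality $2\rho - \dmin\tilde\alpha \in \sum_{\alpha\in\Phi^+}\R^{\ge 0}\alpha$; (b) prove this by writing $2\rho - \dmin\tilde\alpha = \sum_{\alpha\in\Phi^+}(m_\alpha/?)(\ldots)$ — or, most robustly, by a case check over the finitely many irreducible types, using the classification of highest roots and the known formula for $\dmin$ (which for $A_n$ is $1$, for the other types a small explicit integer), each time exhibiting the nonnegative simple-root coordinates of $2\rho - \dmin\tilde\alpha$ explicitly; (c) feed (b) back through the coordinate-wise argument above to get the general $\beta$ and general $S$ case. The one remaining check is that $\beta \preceq \tilde\alpha$ coordinatewise for every positive root $\beta$, which is standard.
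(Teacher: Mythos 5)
Your reduction is sound and genuinely different from the paper's argument: passing to an irreducible component, testing membership in the cone $C=\sum_{\alpha\in\Phi^+}\R^{\ge0}\alpha$ against fundamental coweights, and using $c_i(\beta)\le c_i(\tilde\alpha)$ does correctly reduce the whole lemma (all $\beta$, all $S$) to the single inequality $2\rho-\dmin\tilde\alpha\in C$. However, that inequality is exactly where your proposal stops being a proof. You defer it to a case check over the irreducible types ``using the known formula for $\dmin$'', but $\dmin=\sprod{\rho}{\tilde\alpha^\vee}$ here depends on the arbitrary Weyl-invariant weights $m_\alpha$ (which in the paper's application are root-space dimensions, not $1$), so quoting $\dmin=h^\vee-1$ (and your value ``$1$ for $A_n$'' is wrong even in the unweighted case, where it is $n$) does not cover the statement as needed; to salvage the case check you would at least have to observe that $2\rho-\sprod{\rho}{\tilde\alpha^\vee}\tilde\alpha$ is linear in $m$ and verify it on the indicator of each Weyl orbit, including the non-reduced type $BC_n$. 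As written, the crux is asserted, not proved.

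In fact the base inequality is an identity, obtained by the very device the paper uses. Let $R=\{\alpha\in\Phi^+:\sprod{\alpha}{\tilde\alpha^\vee}=1\}$; the map $\alpha\mapsto\tilde\alpha-\alpha=-s_{\tilde\alpha}(\alpha)$ is an involution of $R$ and $m_{\tilde\alpha-\alpha}=m_\alpha$, so $\sum_{\alpha\in R}m_\alpha\alpha=\tfrac12\bigl(\sum_{\alpha\in R}m_\alpha\bigr)\tilde\alpha$, whence $2\rho-\dmin\tilde\alpha=\sum_{\alpha:\sprod{\alpha}{\tilde\alpha^\vee}=0}m_\alpha\alpha\in C$ with $\dmin=m_{\tilde\alpha}+\tfrac12\sum_{\alpha\in R}m_\alpha$. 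The paper applies this involution directly to the general statement: after discarding the roots with $\sprod{\alpha}{\tilde\alpha^\vee}=0$ (which contribute elements of $C$ whether or not they lie in $S$) and the term $\tilde\alpha$ (which contributes $2m_{\tilde\alpha}\tilde\alpha$ or $2m_{\tilde\alpha}\beta$, both in $2m_{\tilde\alpha}\beta+C$), it pairs each $\alpha\in R$ with $\tilde\alpha-\alpha$ and checks that every pair contributes at least $m_\alpha\beta$ modulo $C$, the only nontrivial case being $\alpha\notin S$, $\tilde\alpha-\alpha\notin S$, where the pair gives $m_\alpha\tilde\alpha\in m_\alpha\beta+C$. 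This is classification-free and handles general $m_\alpha$ uniformly; your coordinate reduction plus this two-line identity would also yield a complete proof, but the classification step you propose in its place is both unexecuted and set up for the wrong generality.
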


\begin{proof}
It is clearly enough to consider the case where $\Phi$ is irreducible.
Let $C = \sum_{\alpha \in \Phi^+} \R^{\ge 0} \alpha$ be the closed cone spanned by the positive roots. Let $\tilde\alpha$ be the highest root of $\Phi^+$.
Note that $\sprod{\alpha}{\tilde\alpha^\vee}$ for $\alpha \in \Phi^+$ takes the values $0$, $1$ and $2$, and the last only
for $\alpha = \tilde\alpha$. Let $R$ be the set of all $\alpha \in \Phi^+$ with $\sprod{\alpha}{\tilde\alpha^\vee} = 1$.

Inserting the definition of $\rho$ and multiplying the inequality by two, we have to show that
\[
\sum_{\alpha \notin S} 2 m_\alpha \alpha + \sum_{\alpha \in S} 2 m_\alpha \beta \in \left( 2 m_{\tilde\alpha} + \sum_{\alpha \in R} m_\alpha \right) \beta + C.
\]
It is evidently sufficient to show the modified statement where
on the left hand side we restrict the sums to roots $\alpha \in R \cup \{ \tilde\alpha \}$. The contribution from $\alpha = \tilde\alpha$ is either $2 m_{\tilde\alpha} \tilde\alpha$ or
$2 m_{\tilde\alpha} \beta$, and therefore lies in $2 m_{\tilde\alpha} \beta + C$ in both cases.
It therefore remains to show that
\[
\sum_{\alpha \in R - S} 2 m_\alpha \alpha + \sum_{\alpha \in R \cap S} 2 m_\alpha \beta \in \sum_{\alpha \in R} m_\alpha \beta + C.
\]
Note that $\alpha \mapsto - w_{\tilde\alpha} (\alpha) = \tilde\alpha - \alpha$ defines an involution of $R$. We may therefore rewrite the last statement as
\[
\sum_{\alpha \in R - S} m_\alpha \alpha + \sum_{\alpha \in R \cap S} m_\alpha \beta
+ \sum_{\alpha \in R : \tilde\alpha - \alpha \notin S} m_\alpha (\tilde\alpha - \alpha)
+ \sum_{\alpha \in R : \tilde\alpha - \alpha \in S} m_\alpha \beta
\in \sum_{\alpha \in R} m_\alpha \beta + C.
\]
The only case where $\alpha \in R$ does not contribute a summand $m_\alpha \beta$ to the left hand side is when $\alpha \notin S$ and $\tilde\alpha - \alpha \notin S$, in which case
the total contribution is $m_\alpha \alpha + m_\alpha (\tilde\alpha - \alpha) = m_\alpha \tilde\alpha \in m_\alpha \beta + C$.
This finishes the proof of the lemma.
\end{proof}

\begin{proof}[Proof of Lemma \ref{countinglemmarefined}]
First note that we may assume that $\sprod{\alpha}{T_1} \le 0$ for all $\alpha \in \Delta_0$. By extending the lattice $\Lambda$ to a lattice
in $\uuu_{P_0} (F)$, while keeping $D$ and $a$ fixed,
we may reduce to the case where $P$ is the minimal parabolic subgroup $P_0$.
Let $\Phi = \Phi (\spltrs_0, G)$ and $\Phi^+$ the set of positive roots corresponding to $P_0$.
For $\alpha \in \Phi$ let $\uuu_\alpha$ be the $\alpha$-eigenspace in $\mathfrak{g}$ and $m_\alpha = \dim_F \uuu_\alpha$.
For $X \in \uuu_{P_0}$ let $X_\alpha$ be its projection to $\uuu_\alpha$ and write $S (X) = \{ \alpha \in \Phi^+ : X_\alpha \neq 0 \}$.
Evidently, it is enough to show that for each non-empty subset $S \subset \Phi^+$ we can bound
\begin{equation} \label{eqncard}
\frac{\inorm(\nnn)^\dmin}{\modulus_{P_0} (a)}\card{ X \in \Ad (a) D \cap \nnn \Lambda \, : \, S (X) = S}
\end{equation}
as $a$ ranges over $A_{P_0} (T_1)$ and $\nnn$ over the non-zero ideals of $\mathfrak{o}_F$.
For this, we may apply Lemma \ref{numfieldlemmavectorspace} to obtain for \eqref{eqncard} the bound
\[
\mbox{} \ll \frac{\inorm(\nnn)^\dmin}{\modulus_{P_0} (a)} \prod_{\alpha \in S} \left( \frac{\alpha (a)^{[F:\Q]}}{\inorm (\nnn)} \right)^{m_\alpha}
= \big(\prod_{\alpha \notin S} \alpha (a)^{-[F:\Q] m_\alpha}\big) {\inorm (\nnn)}^{\dmin - \sum_{\alpha \in S} m_\alpha}.
\]
This is clearly bounded in case $\sum_{\alpha \in S} m_\alpha \ge \dmin$, and we may therefore assume that
$\sum_{\alpha \in S} m_\alpha \le \dmin$.
Note that the existence of a vector $X \in \Ad (a) D \cap \nnn \Lambda$ with $S(X) = S$ implies that $\beta (a)^{[F:\Q]} \gg \inorm (\nnn)$ for arbitrary $\beta \in S$.
Therefore, \eqref{eqncard} is
\[
\mbox{} \ll \big(\prod_{\alpha \notin S} \alpha (a)^{-[F:\Q] m_\alpha}\big) \beta (a)^{[F:\Q] (\dmin - \sum_{\alpha \in S} m_\alpha)}.
\]
Applying Lemma \ref{rootlemma} we can write this as a product $\prod_{\alpha \in \Phi^+} \alpha (a)^{-\lambda_\alpha}$ with $\lambda_\alpha \ge 0$, and
it is therefore bounded for $a \in A_{P_0} (T_1)$.
\end{proof}

\section{Review of the spectral side of the trace formula} \label{SectionTraceFormula}

We turn to the spectral side of Arthur's trace formula and recall the results of \cite{MR2811597},
which are based on \cite{MR681737, MR681738}.

\subsection{Notation} \label{subsecnotation}

Let $\theta$ be the Cartan involution of $G (F_\infty)$ defining $\K_\infty$. It induces
a Cartan decomposition $\mathfrak{g} = \Lie G (F_\infty) = \mathfrak{p} \oplus \mathfrak{k}$ with $\mathfrak{k} = \Lie \K_\infty$.
We fix an invariant bilinear form $B$ on $\mathfrak{g}$ which is positive definite on $\mathfrak{p}$ and negative definite on $\mathfrak{k}$.
This choice defines a Casimir operator $\Omega$ on $G(F_\infty)$,
and we denote the Casimir eigenvalue of any $\pi \in \Pi (G (F_\infty))$ by $\lambda_\pi$.
Similarly, we obtain
a Casimir operator $\Omega_{\K_\infty}$ on $\K_\infty$ and write $\lambda_\tau$ for the Casimir eigenvalue of a
representation $\tau \in \Pi (\K_\infty)$ (cf.~\cite[\S 2.3]{MR701563}).
The form $B$ induces a Euclidean scalar product $(X,Y) = - B (X,\theta(Y))$ on $\mathfrak{g}$ and all its subspaces.

We write $\levis$ for the (finite) set of Levi subgroups containing $M_0$, i.e., the set of centralizers of subtori of $\spltrs_0$.
Let $W_0=N_{G(F)}(\spltrs_0)/M_0=N_{G(F)}(M_0)/M_0$ be the Weyl group of $(G,\spltrs_0)$,
where $N_{G(F)}(H)$ is the normalizer of $H$ in $G(F)$.
For any $s\in W_0$ we choose a representative $w_s\in N_{G(F)}(\spltrs_0)=N_{G(F)}(M_0)$.
Note that $W_0$ acts on $\levis$ by $sM=w_sMw_s^{-1}$.

Let now $M\in\levis$.
Let $W(M)=N_{G(F)}(M)/M$, which can be identified with a subgroup of $W_0$.
Denote by $\aaa_M^*$ the $\R$-vector space spanned by the lattice $X^*(M)$ of $F$-rational characters of $M$
and let $\aaa_{M,\C}^*=\aaa_M^*\otimes_{\R}\C$ be its complexification.
We write $\aaa_M$ for the dual space of $\aaa_M^*$, which is spanned by the co-characters of $\spltrs_M$.
It can also be identified with the Lie algebra of the torus $\spltrs_M$.
Let $\Ht_M:M(\A)\rightarrow\aaa_M$ be the homomorphism given by
\[
e^{\sprod{\chi}{\Ht_M(m)}}=\abs{\chi (m)}_{\A^\times} = \prod_v\abs{\chi(m_v)}_v
\]
for any $\chi\in X^*(M)$ and
denote by $M(\A)^1 \subset M (\A)$ the kernel of $\Ht_M$.
Let $\levis(M)$ be the set of Levi subgroups containing $M$ and
$\PPP(M)$ the set of parabolic subgroups of $G$ with Levi part $M$.
We also write $\FFF(M)=\FFF^G(M)=\coprod_{L\in\levis(M)}\PPP(L)$ for the (finite) set of parabolic subgroups of $G$ containing $M$.
Note that $W(M)$ acts on $\PPP(M)$ and $\FFF(M)$ by $sP=w_sPw_s^{-1}$.
Denote by $\rts_M$ the set of reduced roots of $\spltrs_M$ on the Lie algebra of $G$.
For any $\alpha\in\rts_M$ we denote by $\alpha^\vee\in\aaa_M$ the corresponding co-root.
Let $L^2_{\disc}(\AAA_MM(F)\bs M(\A))$ be the discrete part of $L^2(\AAA_MM(F)\bs M(\A))$, i.e., the
closure of the sum of all irreducible subrepresentations of the regular representation of $M(\A)$.
We denote by $\Pi_{\disc}(M(\A))$ the countable set of equivalence classes of irreducible unitary
representations of $M(\A)$ which occur in the decomposition of $L^2_{\disc}(\AAA_MM(F)\bs M(\A))$
into irreducibles.

For any $L\in\levis(M)$ we identify $\aaa_L^*$ with a subspace of $\aaa_M^*$.
We denote by $\aaa_M^L$ the annihilator of $\aaa_L^*$ in $\aaa_M$.
We set
\[
\levis_1(M)=\{L\in\levis(M):\dim\aaa_M^L=1\}
\]
(i.e., the set of Levi subgroups containing $M$ as a maximal Levi subgroup) and
\[
\FFF_1(M)=\bigcup_{L\in\levis_1(M)}\PPP(L).
\]
Note that the restriction of the scalar product $(\cdot,\cdot)$ on $\mathfrak{g}$ defined above endows
$\aaa_0=\aaa_{M_0}\subset\mathfrak{g}$ with the structure of a Euclidean space.
In particular, this fixes Haar measures on the spaces $\aaa^L_M$ and their duals
$(\aaa^L_M)^*$. We follow Arthur in the corresponding normalization of Haar measures on the groups $M(\A)$
\cite[\S 1]{MR518111}.

\subsection{Intertwining operators}

Now let $P\in\PPP(M)$. We write $\aaa_P=\aaa_M$.
Let $U_P$ be the unipotent radical of $P$.
Denote by $\rts_P\subset\aaa_P^*$ the set of reduced roots of $\spltrs_M$ on the Lie algebra $\mathfrak{u}_P$ of $U_P$.
Let $\srts_P$ be the subset of simple roots of $P$, which is a basis for $(\aaa_P^G)^*$.
Write $\aaa_{P,+}^*$ for the closure of the Weyl chamber of $P$, i.e.
\[
\aaa_{P,+}^*=\{\lambda\in\aaa_M^*:\sprod{\lambda}{\alpha^\vee}\ge0\text{ for all }\alpha\in\rts_P\}
=\{\lambda\in\aaa_M^*:\sprod{\lambda}{\alpha^\vee}\ge0\text{ for all }\alpha\in\srts_P\}.
\]
Denote by $\modulus_P$ the modulus function of $P(\A)$.
Let $\bar\AF^2(P)$ be the Hilbert space completion of
\[
\{\phi\in C^\infty(M(F)U_P(\A)\bs G(\A)):\modulus_P^{-\frac12}\phi(\cdot x)\in
L^2_{\disc}(A_MM(F)\bs M(\A))\ \forall x\in G(\A)\}
\]
with respect to the inner product
\[
(\phi_1,\phi_2)=\int_{\AAA_MM(F)U_P(\A)\bs G(\A)}\phi_1(g)\overline{\phi_2(g)}\ dg.
\]

Let $\alpha\in\rts_M$.
We say that two parabolic subgroups $P,Q\in\PPP(M)$ are \emph{adjacent} along $\alpha$,
and write $P|^\alpha Q$, if $\rts_P\cap-\rts_Q=\{\alpha\}$.
Alternatively, $P$ and $Q$ are adjacent if the group $\gen{P,Q}$ generated by $P$ and $Q$ belongs to $\FFF_1(M)$.
Any $R\in\FFF_1(M)$ is of the form $\gen{P,Q}$ where $P,Q$ are the elements of $\PPP(M)$ contained in $R$;
we have $P|^\alpha Q$ with $\alpha^\vee\in\rts_P^\vee \cap\aaa^R_M$.
Interchanging $P$ and $Q$ switches $\alpha$ to $-\alpha$.

For any $P\in\PPP(M)$ let $\Ht_P:G(\A)\rightarrow\aaa_P$ be the extension of $\Ht_M$ to a left $U_P(\A)$-
and right $\K$-invariant map.
Denote by $\AF^2(P)$ the dense subspace of $\bar\AF^2(P)$ consisting of its $\K$- and $\zzz$-finite vectors,
where $\zzz$ is the center of the universal enveloping algebra of $\mathfrak{g} \otimes \C$.
That is, $\AF^2(P)$ is the space of automorphic forms $\phi$ on $U_P(\A)M(F)\bs G(\A)$ such that
$\modulus_P^{-\frac12}\phi(\cdot k)$ is a square-integrable automorphic form on
$\AAA_MM(F)\bs M(\A)$ for all $k\in\K$.
Let $\rho(P,\lambda)$, $\lambda\in\aaa_{M,\C}^*$, be the induced
representation of $G(\A)$ on $\bar\AF^2(P)$ given by
\[
(\rho(P,\lambda,y)\phi)(x)=\phi(xy)e^{\sprod{\lambda}{\Ht_P(xy)-\Ht_P(x)}}.
\]
It is isomorphic to $\Ind_{P(\A)}^{G(\A)}\left(L^2_{\disc}(\AAA_M M(F)\bs M(\A))\otimes e^{\sprod{\lambda}{\Ht_M(\cdot)}}\right)$.

For $P,Q\in\PPP(M)$ let
\[
M_{Q|P}(\lambda):\AF^2(P)\to\AF^2(Q),\quad\lambda\in\aaa_{M,\C}^*,
\]
be the standard \emph{intertwining operator} \cite[\S 1]{MR681738}, which is the meromorphic
continuation in $\lambda$ of the integral
\[
[M_{Q|P}(\lambda)\phi](x)=\int_{U_Q(\A)\cap U_P(\A)\bs U_Q(\A)}\phi(nx)
e^{\sprod{\lambda}{\Ht_P(nx)-\Ht_Q(x)}}\ dn, \quad \phi\in\AF^2(P), \ x\in G(\A).
\]
These operators satisfy the following properties.
\begin{enumerate}
\item $M_{P|P}(\lambda)\equiv\Id$ for all $P\in\PPP(M)$ and $\lambda\in\aaa_{M,\C}^*$.
\item For any $P,Q,R\in\PPP(M)$ we have
$M_{R|P}(\lambda)=M_{R|Q}(\lambda)\circ M_{Q|P}(\lambda)$ for all $\lambda\in\aaa_{M,\C}^*$.
In particular, $M_{Q|P}(\lambda)^{-1}=M_{P|Q}(\lambda)$.
\item $M_{Q|P}(\lambda)^*=M_{P|Q}(-\overline{\lambda})$ for any $P,Q\in\PPP(M)$
and $\lambda\in\aaa_{M,\C}^*$. In particular, $M_{Q|P}(\lambda)$
is unitary for $\lambda\in\iii\aaa_M^*$.
\item If $P|^\alpha Q$ then $M_{Q|P}(\lambda)$ depends only on $\sprod{\lambda}{\alpha^\vee}$.
\end{enumerate}

Let $\pi \in \Pi_{\disc} (M (\A))$ and let
$\AF^2_\pi(P)$ be the space of all $\phi \in \AF^2 (P)$ for which the functions $\modulus_P^{-\frac12}\phi(\cdot g)$, $g \in G (\A)$,
belong to the $\pi$-isotypic subspace of $L^2 (A_M M (F) \bs M (\A))$.
For any $P\in\PPP(M)$ we have a canonical isomorphism of $G(\A_f)\times(\LieG_{\C},K_\infty)$-modules
\[
j_P:\Hom(\pi,L^2(\AAA_MM(F)\bs M(\A)))\otimes\Ind_{P(\A)}^{G(\A)}(\pi)\rightarrow\AF^2_\pi(P).
\]
If we fix a unitary structure on $\pi$ and endow $\Hom(\pi,L^2(\AAA_MM(F)\bs M(\A)))$
with the inner product $(A,B)=B^*A$ (which is a scalar operator on the space of $\pi$), the isomorphism $j_P$ becomes an isometry.

Suppose that $P|^\alpha Q$.
The operator $M_{Q|P}(\pi,s) := M_{Q|P} (s \varpi)|_{\AF^2_\pi(P)}$,
where $\varpi \in \aaa_M^*$ is
such that $\sprod{\varpi}{\alpha^\vee} = 1$,
admits a normalization by a global factor
$n_\alpha(\pi,s)$ which is a meromorphic function in $s$. We may write
\begin{equation} \label{eq: normalization}
M_{Q|P}(\pi,s)\circ j_P=n_\alpha(\pi,s)\cdot j_Q\circ(\Id\otimes R_{Q|P}(\pi,s))
\end{equation}
where $R_{Q|P}(\pi,s)=\otimes_v R_{Q|P}(\pi_v,s)$ is the product
of the locally defined normalized intertwining operators and $\pi=\otimes_v\pi_v$
(\cite[\S 6]{MR681738}, cf.~\cite[(2.17)]{MR1935546}).

\subsection{The spectral side}
We now turn to the spectral expansion of Arthur's distribution $J$ on $C^\infty_c (G (\A)^1)$.
Let $L \supset M$ be Levi subgroups in $\levis$, $P \in \PPP (M)$, and
let $m=\dim\aaa_L^G$ be the co-rank of $L$ in $G$.
Denote by $\bases_{P,L}$ the set of $m$-tuples $\bss=(\beta_1^\vee,\dots,\beta_m^\vee)$
of elements of $\rts_P^\vee$ whose projections to $\aaa_L$ form a basis for $\aaa_L^G$.
For any $\bss=(\beta_1^\vee,\dots,\beta_m^\vee)\in\bases_{P,L}$ let
$\vol(\bss)$ be the co-volume in $\aaa_L^G$ of the lattice spanned by the projection of $\bss$ to $\aaa_L$ and let
\begin{align*}
\Xi_L(\bss)&=\{(Q_1,\dots,Q_m)\in\FFF_1(M)^m: \ \ \beta_i^{\vee}\in\aaa_M^{Q_i}, \, i = 1, \dots, m\}\\&=
\{(\gen{P_1,P_1'},\dots,\gen{P_m,P_m'}): \ \ P_i|^{\beta_i}P_i', \, i = 1, \dots, m\}.
\end{align*}

For any smooth function $f$ on $\aaa_M^*$ and $\mu\in\aaa_M^*$ denote by $D_\mu f$ the directional derivative of $f$ along $\mu\in\aaa_M^*$.
For a pair $P_1|^\alpha P_2$ of adjacent parabolic subgroups in $\PPP(M)$ write
\[
\delta_{P_1|P_2}(\lambda)=M_{P_2|P_1}(\lambda)D_\varpi M_{P_1|P_2}(\lambda):\AF^2(P_2)\rightarrow\AF^2(P_2),
\]
where $\varpi\in\aaa_M^*$ is such that $\sprod{\varpi}{\alpha^\vee}=1$.\footnote{Note that this definition differs slightly from the definition of
$\delta_{P_1|P_2}$ in \cite{MR2811597}.} Equivalently,
\[
\delta_{P_1|P_2}(\lambda)=\Phi(\sprod{\lambda}{\alpha^\vee})^{-1}\Phi'(\sprod{\lambda}{\alpha^\vee})
\]
where $\Phi$ is the meromorphic function on $\C$ such that
$M_{P_1|P_2}(\lambda)=\Phi(\sprod{\lambda}{\alpha^\vee})$.

For any $m$-tuple $\dtup=(\gen{P_1,P_1'},\dots,\gen{P_m,P_m'})\in\Xi_L(\bss)$
with $P_i|^{\beta_i}P_i'$, denote by $\Delta_{\dtup}(P,\lambda)$
the expression
\[
\frac{\vol(\bss)}{m!}M_{P_1'|P}(\lambda)^{-1}\delta_{P_1|P_1'}(\lambda)M_{P_1'|P_2'}(\lambda) \cdots
\delta_{P_{m-1}|P_{m-1}'}(\lambda)M_{P_{m-1}'|P_m'}(\lambda)\delta_{P_m|P_m'}(\lambda)M_{P_m'|P}(\lambda).
\]
In \cite[pp. 179-180]{MR2811597} we defined a (purely combinatorial) map $\dtup_L: \bases_{P,L} \to \FFF_1 (M)^m$ with the property that
$\dtup_L(\bss) \in \Xi_L (\bss)$ for all $\bss \in \bases_{P,L}$.\footnote{The map $\dtup_L$ depends in fact on the additional choice of
a vector $\underline{\mu} \in (\aaa^*_M)^m$ which lies outside a prescribed finite
set of hyperplanes. For our purposes, the precise definition of $\dtup_L$ is immaterial.}

For any $s\in W(M)$ let $L_s$ be the smallest Levi subgroup in $\levis(M)$
containing $w_s$. We recall that $\aaa_{L_s}=\{H\in\aaa_M\mid sH=H\}$.
Set
\[
\iota_s=\abs{\det(s-1)_{\aaa^{L_s}_M}}^{-1},
\]
a constant which we will not worry much about.
For $P\in\PPP(M)$ and $s\in W(M)$ let
$s: \AF^2 (P) \to \AF^2 (sP)$ be left translation by $w_s^{-1}$ and
$M(P,s) = M_{P|sP} (0) s:\AF^2(P)\to\AF^2(P)$ as in \cite[p.~1309]{MR681738}.
$M(P,s)$ is a unitary operator which intertwines $\rho(P,\lambda)$ with itself for $\lambda\in\iii\aaa_{L_s}^*$.
Finally, we can state the refined spectral expansion.

\begin{theorem}[\cite{MR2811597}] \label{thm: specexpand}
The spectral side of Arthur's trace formula is given by
\[
J (h) = \sum_{[M]} J_{\spec,M} (h),\ \ \ h\in C_c^\infty(G(\A)^1),
\]
$M$ ranging over the conjugacy classes of Levi subgroups of $G$ (represented by members of $\levis$),
where
\[
J_{\spec,M} (h) =
\frac1{\card{W(M)}}\sum_{s\in W(M)}\iota_s
\sum_{\bss\in\bases_{P,L_s}}\int_{\iii(\aaa^G_{L_s})^*}
\tr(\Delta_{\dtup_{L_s}(\bss)}(P,\lambda)
M(P,s)\rho(P,\lambda,h))\ d\lambda
\]
with $P \in \PPP(M)$ arbitrary.
The operators are of trace class and the integrals are absolutely convergent.
\end{theorem}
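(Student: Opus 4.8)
This is the main result of \cite{MR2811597}; I outline the structure of that proof. The plan has three stages: start from Arthur's spectral expansion, make the weighting operators explicit via a combinatorial identity for $(G,M)$-families, and establish absolute convergence together with the trace-class property by estimating intertwining operators.

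The starting point is Arthur's fine spectral expansion \cite{MR681737, MR681738}, which already has the shape
\[
J(h) = \sum_{[M]} \frac{1}{\card{W(M)}} \sum_{s \in W(M)} \iota_s \int_{\iii(\aaa^G_{L_s})^*} \tr\bigl( \mathcal{M}_{L_s}(P,\lambda)\, M(P,s)\, \rho(P,\lambda,h) \bigr)\, d\lambda,
\]
where $\mathcal{M}_L(P,\lambda)$ is the value at the origin of the operator-valued $(G,M)$-family $Q \mapsto M_{Q|P}(\lambda)^{-1} M_{Q|P}(\lambda+\Lambda)$ on $\aaa_M$, i.e., the limit $\lim_{\Lambda \to 0} \sum_{Q \in \PPP(M)} M_{Q|P}(\lambda)^{-1} M_{Q|P}(\lambda+\Lambda)\, \theta_Q(\Lambda)^{-1}$; the passage from $\iii\aaa_M^*$ to $\iii(\aaa^G_{L_s})^*$ and the appearance of several $L$ emerge when one unwinds this family. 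Thus two things remain to be done: rewrite $\mathcal{M}_{L}(P,\lambda)$ as $\sum_{\bss \in \bases_{P,L}} \Delta_{\dtup_{L}(\bss)}(P,\lambda)$, and show that the resulting integrals converge absolutely with trace-class integrands; the independence of the choice of $P \in \PPP(M)$ is a byproduct of the $(G,M)$-family formalism.

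For the first step I would use Arthur's splitting and differentiation formulas for $(G,M)$-families. Differentiating $M_{Q|P}(\lambda)^{-1}M_{Q|P}(\lambda+\Lambda)$ along coroot directions produces precisely the logarithmic-derivative operators $\delta_{P_i|P_i'}(\lambda) = \Phi(\sprod{\lambda}{\beta_i^\vee})^{-1}\Phi'(\sprod{\lambda}{\beta_i^\vee})$, interleaved with connecting operators $M_{P_i'|P_{i+1}'}(\lambda)$, which is exactly the expression $\Delta_{\dtup}(P,\lambda)$. Because the family is operator-valued and noncommutative, the genuine content is bookkeeping: one must pin down a consistent ordering of the factors and a consistent choice, for each basis $\bss \in \bases_{P,L}$, of an $m$-tuple in $\Xi_L(\bss)$. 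This is the role of the combinatorial map $\dtup_L : \bases_{P,L} \to \FFF_1(M)^m$ of \cite[pp.~179--180]{MR2811597}, whose precise definition (depending on a generic auxiliary vector $\underline{\mu}$) is immaterial for the resulting sum. Once the ordering is fixed, the identity reduces to the scalar $(G,M)$-family lemma.

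The main obstacle is the second step: absolute convergence and the trace-class property. Here the strategy is to factor each $M_{Q|P}(\lambda)$ restricted to $\AF^2_\pi(P)$ through its global normalizing scalar $n_\alpha(\pi,\cdot)$ and the product $\otimes_v R_{Q|P}(\pi_v,\cdot)$ of local normalized operators, as in \eqref{eq: normalization}. Each $\delta_{P_i|P_i'}(\lambda)$ then splits into the logarithmic derivative of $n_\alpha(\pi,\cdot)$ — controlled at the archimedean places through the analytic properties and functional equations of the normalizing $L$-functions, and at the finite places through the rationality of the matrix coefficients in $q_v^{-s}$ — plus bounded contributions from the local normalized operators. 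Feeding in bounds of this kind (polynomial in $1+\norm{\lambda}$ and in the archimedean infinitesimal character of $\pi$, uniform in the level), together with rapid decay of the trace norm of $\rho(P,\lambda,h)$ for fixed $h \in C_c^\infty(G(\A)^1)$ — which follows from the smoothness and compact support of $h$ via the Casimir, combined with the trace-class property of the discrete spectrum of $M$ — yields absolute convergence of each $\lambda$-integral, of the finite sums over $s$ and $\bss$, and of the sum over $[M]$, and delivers the trace-class assertion. This analytic input is the crux of \cite{MR2811597}, and it is exactly the part that the present paper refines and exploits in \S\ref{SectionBounds}.
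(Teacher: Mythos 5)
This theorem is quoted from \cite{MR2811597} and is not proved in the present paper, so there is no internal argument to compare against; your outline does capture the strategy of the cited work: Arthur's spectral expansion \cite{MR681737, MR681738}, the rewriting of the limit of the non-commutative operator-valued $(G,M)$-family as the sum over $\bss\in\bases_{P,L}$ of the operators $\Delta_{\dtup_L(\bss)}(P,\lambda)$, and absolute convergence plus the trace-class assertion via normalized intertwining operators, estimates on their logarithmic derivatives, and the trace-class property of the discrete spectrum. Two caveats on your phrasing: the combinatorial identity is the main theorem of the companion paper \cite{MR2811598} and is not mere bookkeeping that ``reduces to the scalar $(G,M)$-family lemma'' once an ordering is fixed---dealing with non-commutativity is precisely its content; and, for a general reductive group, the bounds on winding numbers of the global normalizing factors used in \cite{MR2811597} come from Eisenstein-series estimates in the spirit of \cite{MR1935546}, not from functional equations of the normalizing $L$-functions, which are not available in general (cf.~Remark \ref{RemarkGeneralG}); the $L$-function route is what the present paper uses only for $\GL(n)$ and $\SL(n)$ in Proposition \ref{prop: mainglobal}.
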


Note that here the term corresponding to $M=G$ is simply $J_{\spec,G} (h) = \tr R_{\disc}(h)$.

\section{Bounds on intertwining operators} \label{SectionBounds}

We now introduce the key global and local properties required for the proof of the spectral limit property, and verify
them for the groups $\GL (n)$ and $\SL(n)$. These properties will be used in \S \ref{sec: spectralside} to provide estimates for the
contribution of the continuous spectrum to the spectral side of the trace formula.
In view of planned future applications, our estimates are somewhat more precise in their dependence on the relevant parameters than
it is strictly necessary for the purposes of this paper.

\subsection{The level of a compact open subgroup} \label{SubsectionLevel}
We begin by introducing the notion of level for open subgroups $K$ of $\K_{\fin}$ (or, more generally, open compact subgroups $K \subset G (\A_{\fin})$), as well as some generalizations.
Recall the definition of the principal congruence subgroups $\K (\nnn)$ from \S \ref{subsecnotationgeom}.
Let $\nnn_K$ be the largest ideal $\nnn$ of $\mathfrak{o}_F$ such that
$\K (\nnn) \subset K$, and define the \emph{level} of $K$ as $\level_G(K) = \level(K)=\inorm (\nnn_K)$.
Analogously, we define $\level (K_v)$ for open subgroups $K_v \subset \K_v$.
For a smooth representation $\pi$ of $G(\A)$ let $\nnn_\pi$ be the largest ideal $\nnn$ such that
$\pi^{\K(\nnn)}\ne0$. Let $\level_G(\pi)=\level(\pi)=\inorm(\nnn)$. Thus
\[
\level_G(\pi)=\level(\pi)=\min\{\level(K):\pi^K\ne0\},
\]
where $K$ ranges over the open subgroups of $\K_{\fin}$.

For any Levi subgroup $M \in \levis$ and an open subgroup $K \subset \K_{M,\fin} = \K_{\fin} \cap M (\A_{\fin})$
we define $\level_M (K)$ analogously using the principal congruence subgroups $\K (\nnn) \cap M (\A_{\fin}) = \K_M (\nnn)$ of $\K_{M,\fin}$.
Equivalently, $\level_M (K)$ is the level of $K \cap M (\A_{\fin})$ with respect to the restriction to $M$ of the faithful $G$-representation $\rho$ fixed in \S \ref{subsecnotationgeom}.

More generally, suppose that $M \in \levis$ and that $\mathcal{H}$ is a closed factorizable subgroup of $G(\A_{\fin})$.
For any open compact subgroup $K \subset M (\A_{\fin})$ let
$\nnn_{K;\mathcal{H}}$ be the largest ideal $\nnn$ of $\mathfrak{o}_F$ such that
$\K (\nnn)\cap M (\A_{\fin}) \cap \mathcal{H} \subset K$, and define the \emph{relative level} of $K$ with respect to $\mathcal{H}$
as $\level_M (K;\mathcal{H}) = \inorm (\nnn_{K;H})$.
Clearly, $\level_M (K;\mathcal{H}_1)\le\level_M (K;\mathcal{H}_2)$ if $\mathcal{H}_1\subset \mathcal{H}_2$
and $\level_M (K;G(\A_{\fin}))=\level_M(K)$.
In particular, $\level_M (K;\mathcal{H})\le\level_M(K)$ for all $\mathcal{H}$.

As before, for an irreducible admissible representation $\pi$ of $M(\A)$ we write
\[
\level_M(\pi;\mathcal{H})=\min\{\level(K;\mathcal{H}):\pi^K\ne0\}
\]
where $K$ ranges over the compact subgroups of $\K_{M, \fin}$. Equivalently,
$\level_M(\pi;\mathcal{H})=\inorm(\nnn)$ where $\nnn$ is the largest ideal such that 
$\pi^{\K(\nnn)\cap M (\A_{\fin}) \cap \mathcal{H}}\ne0$.
Because $\mathcal{H}$ is assumed to be factorizable, it is clear how to define $\level_M (K_v; \mathcal{H})$ and $\level_M (\pi_v;\mathcal{H})$, where
$v$ is a finite place of $F$, $K_v$ an open subgroup of $\K_{M,v}$ and $\pi_v$ a smooth representation of $M (F_v)$.

When $H$ is an algebraic subgroup of $G$ defined over $F$, we write simply $\level_M (K;H)$ for $\level_M (K;H (\A_{\fin}))$. However,
more general closed subgroups $\mathcal{H} \subset G (\A_{\fin})$ also appear naturally.
For any reductive group $H$ let
$H^{\SC}$ be the simply connected cover of the derived group
$H^{\der}$ of $H$ and $\scprj:H^{\SC}\rightarrow H$ the natural homomorphism.
We write $H(F_v)^+ \subset H^{\der} (F_v)$ for the image of $H^{\SC}(F_v)$ under $\scprj$, and similarly for $H (\A)^+ \subset H^{\der} (\A)$.
Note that $H (\A)^+$ contains the derived group of $H(\A)$ as an open subgroup, and that it is equal to
it if all the $F$-simple factors of $H^{\SC}$ are $F$-isotropic, in which case it is also the closed subgroup
of $H(\A)$ generated by its unipotent elements (cf.~\cite[Theorem 7.1, Proposition 7.6, Theorem 7.6]{MR1278263}
for the equality and [ibid., Proposition 3.5, Proposition 3.17] for the openness statement).
Also note that for any normal reductive subgroup $H$ of $G$, the group $H(\A)^+$ is a normal subgroup of $G(\A)$, since $G$ acts on $H^{\SC}$ by conjugation.
Finally, note that the indices $[H^{\der} (F_v):H (F_v)^+]$ are bounded in terms of $H$ only, as $v$ ranges over the places of $F$.
For simplicity we will often write by abuse of notation $\level_M (K;H^+)$ instead of $\level_M (K;H(\A_{\fin})^+)$.

Although in this paper our main examples are reductive groups $H$ whose derived group is simply connected,
which implies that $H^{\SC}=H^{\der}$ and $H(\A)^+=H^{\der}(\A)$,
in the general case it is advantageous to work with $H(\A)^+$ instead of $H^{\der}(\A)$.

\begin{lemma} \label{lem: twist}
Let $G$ be a reductive group defined over a non-archimedean local field $F$ and $M$ a Levi subgroup of $G$ defined over $F$.
Then for any smooth representation $\pi$ of $M (F)$ we have
\[
\level_M(\pi;G (F)^+)=\min_\chi \level_M(\pi\otimes\chi),
\]
where $\chi$ ranges over the characters of $G (F) /G (F)^+$
(viewed as characters of $M (F)$ by restriction).
\end{lemma}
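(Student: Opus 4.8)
The plan is to prove the two inequalities separately, after unwinding the definitions. Recall that in the non-archimedean local setting the relative level $\level_M(\pi;G(F)^+)$ is the smallest $\inorm(\nnn)$, as $\nnn$ runs over the ideals of $\mathfrak{o}_F$, for which $\pi^{\K_M(\nnn)\cap G(F)^+}\neq0$, where $\K_M(\nnn)=\K(\nnn)\cap M(F)$ is the principal congruence subgroup of $M(F)$ of level $\nnn$; likewise $\level_M(\pi\otimes\chi)$ is the smallest $\inorm(\nnn)$ with $(\pi\otimes\chi)^{\K_M(\nnn)}\neq0$. The inequality $\level_M(\pi;G(F)^+)\le\min_\chi\level_M(\pi\otimes\chi)$ is then immediate: if $\chi$ is a character of $G(F)/G(F)^+$ and $v\neq0$ satisfies $\pi(k)v=\chi(k)^{-1}v$ for all $k\in\K_M(\nnn)$, then, $\chi$ being trivial on $G(F)^+$, the vector $v$ is fixed by $\K_M(\nnn)\cap G(F)^+$, so $\level_M(\pi;G(F)^+)\le\inorm(\nnn)$; taking the optimal $\nnn$ for each $\chi$ and the infimum over $\chi$ gives the claim.

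For the reverse inequality, fix an ideal $\nnn$ with $\inorm(\nnn)=\level_M(\pi;G(F)^+)$ and set $J=\K_M(\nnn)\cap G(F)^+$, so that $\pi^J\neq0$. Since $G(F)^+$ is normal in $G(F)$ (conjugation by $G$ acts on $G^{\SC}$), the subgroup $J$ is normal in $\K_M(\nnn)$, hence $\pi^J$ is stable under $\K_M(\nnn)$; being a subspace of the smooth representation $\pi$ with $J$ acting trivially, it is a smooth representation of the profinite group $\K_M(\nnn)$, and therefore a direct sum of irreducible representations, each of which is trivial on $J$ and so factors through $\K_M(\nnn)/J$. Now $\K_M(\nnn)/J$ is canonically a subgroup of $G(F)/G(F)^+$, and the latter is \emph{abelian} — this is the one piece of structure theory needed, and follows from the general theory (for instance via a $z$-extension $G_1\to G$ with $G_1^{\der}$ simply connected, which identifies $G(F)/G(F)^+$ with a quotient of $G_1(F)/G_1^{\der}(F)$, a subgroup of a group of torus points; cf.~\cite{MR1278263}). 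Hence each of those irreducibles is one-dimensional: we obtain a character $\psi$ of $\K_M(\nnn)$, trivial on $J$, occurring in $\pi^J$, together with a nonzero vector $w$ with $\pi(k)w=\psi(k)w$ for all $k\in\K_M(\nnn)$.

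It remains to promote $\psi$ to a character $\chi$ of $G(F)$ that is trivial on $G(F)^+$. Because $G(F)^+$ is normal and $\psi$ is trivial on $J=\K_M(\nnn)\cap G(F)^+$, the rule $\chi_0(kg)=\psi(k)$ for $k\in\K_M(\nnn)$, $g\in G(F)^+$ is a well-defined character of the subgroup $\K_M(\nnn)\cdot G(F)^+$ that is trivial on $G(F)^+$ — equivalently, a character of the closed subgroup $\K_M(\nnn)G(F)^+/G(F)^+$ of the locally compact \emph{abelian} group $G(F)/G(F)^+$ — and by Pontryagin duality it extends to a character of $G(F)/G(F)^+$, which pulls back to a character $\chi$ of $G(F)$, trivial on $G(F)^+$, with $\chi|_{\K_M(\nnn)}=\psi$. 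Then $w$ is a nonzero $\K_M(\nnn)$-fixed vector of $\pi\otimes\chi^{-1}$, so $\level_M(\pi\otimes\chi^{-1})\le\inorm(\nnn)=\level_M(\pi;G(F)^+)$; as $\chi^{-1}$ is again a character of $G(F)/G(F)^+$, the two inequalities combine to give the asserted equality. The crux of the argument — the only non-formal ingredient — is the abelianness of $G(F)/G(F)^+$: it is precisely what ensures that the character $\psi$ extracted from $\pi^J$ is not ``too ramified'' to be the restriction of a character of $G(F)/G(F)^+$; everything else is bookkeeping with the definition of the (relative) level.
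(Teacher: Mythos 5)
Your proof is correct and follows essentially the same route as the paper's: the easy inequality comes from the twist-invariance of the relative level, and for the converse one lets the compact abelian group $\K_M(\nnn)/(\K_M(\nnn)\cap G(F)^+)\hookrightarrow G(F)/G(F)^+$ act on $\pi^{\K_M(\nnn)\cap G(F)^+}$, picks an eigencharacter, and extends it to a character of $G(F)/G(F)^+$. You merely spell out details the paper leaves implicit (semisimplicity of the action, abelianness of $G(F)/G(F)^+$ via a $z$-extension, and the Pontryagin-duality extension step), all of which are fine.
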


\begin{proof}
It is clear that $\level_M(\pi\otimes\chi;G^+)=\level_M(\pi;G^+)$
for any character $\chi$ of $G(F)$ trivial on $G(F)^+$
and that $\level_M(\pi;G^+)\le\level_M(\pi)$. Thus
$\level_M(\pi;G^+)\le\min\level_M(\pi\otimes\chi)$.

In the other direction, suppose that $K \subset \K_{M,v}$ is an open subgroup such that $V:=\pi^{K\cap G^+}\ne0$.
Then the compact abelian group $A:=K/K\cap G (F)^+$ acts on $V$. Let $\chi$ be a character of $A$ such that $V^{A,\chi^{-1}}\ne0$.
We can extend $\chi$ to a character of $G (F)/G (F)^+$. Then $(\pi\otimes\chi)^K\ne0$.
\end{proof}

We also have the following archimedean analog of $\level_M (\pi;H^+)$.
To define it,
recall first Vogan's definition of a norm $\norm{\cdot}$ on $\Pi (K_\infty)$ (cf. \cite[\S 2.2]{MR759263}), where
$K_\infty$ is a compact Lie group satisfying the conditions of \cite[0.1.2]{MR632407}. Namely,
let $K^0_\infty$ be the connected component of the identity of $K_\infty$. Then
$\norm{\mu} = \norm{\chi_\mu + 2 \rho}^2$, where $\chi_\mu$ denotes the highest weight of
an arbitrary irreducible constituent of $\mu|_{K^0_\infty}$ with respect to a maximal torus of $K^0_\infty$ (and the choice
of a system of positive roots), and as usual $\rho$ is half of the sum of all positive roots with multiplicities.
A lowest $K_\infty$-type of a representation is then a $K_\infty$-type minimizing $\norm{\cdot}$.

Let $H \subset G$ be a reductive algebraic subgroup normalized by $M$.
For an irreducible representation $\pi$ of $M(F_\infty)$ we will write
\[
\param_M(\pi;H)=1+(\lambda^H_\pi)^2+\norm{\tau}^2,
\]
where $\lambda^H_\pi$ is the eigenvalue of the Casimir operator of $M (F_\infty) \cap H (F_\infty)$
(which is an element of the center of the universal enveloping algebra of $\mathfrak{g}$)
and $\tau$ is a lowest $\K_\infty\cap M (F_\infty) \cap H (F_\infty)$-type of $\pi$.

For $M \subset H$ we simply write $\param_M(\pi)$ for $\param_M (\pi;H)$.
In \cite{MR1935546}, the parameter
$\Lambda_\pi= \min_\tau \sqrt{\lambda_\pi^2+\lambda_\tau^2}$ was used,
where $\tau$ ranges over the lowest $\K_\infty$-types of the induced representation $\Ind^{G(F_\infty)}_{P(F_\infty)}(\pi)$
and $\lambda_\tau$ denotes the Casimir eigenvalue of $\tau$.
By \cite[(5.15)]{MR1616155} we have the estimate
\begin{equation} \label{EqnParamMUpper}
1 \le \param_M (\pi;H) \le \param_M (\pi) \ll_G 1 + \Lambda_{\pi}^2
\end{equation}
for any $H$.
Using the explicit description of lowest $\K_\infty$-types of irreducible and parabolically induced representations (\cite[Theorem 7.16]{MR519352}, \cite[6.5.9]{MR632407}),
one can also show that
\begin{equation} \label{EqnParamMLower}
\Lambda_{\pi}^2 \ll_G \param_M (\pi).
\end{equation}
We will not give any details here, since we will only use this estimate in a side remark (the first part of Remark \ref{rem: TWN} below).

\subsection{Bounds for global normalizing factors}
For $M \in\levis$, $\alpha \in \Sigma_M$ and $\pi\in\Pi_{\disc}(M(\A))$ let
$n_\alpha (\pi,s)$ be the global normalizing factor defined by \eqref{eq: normalization}.

Let $U_\alpha$ be the unipotent subgroup of $G$ corresponding to $\alpha$
(so that the eigenvalues of $\spltrs_M$ on the Lie algebra of $U_\alpha$ are positive integer multiples of $\alpha$).
Let $M_\alpha\in\levis_1(M)$ be the group generated by $M$ and $U_{\pm\alpha}$.
Let $\prpr{M_\alpha}$ be the group generated by $U_{\pm\alpha}$.
It is a connected normal subgroup of $M_\alpha$ defined over $F$ \cite[Proposition 4.11]{MR0207712}.
Moreover, since $M$ has co-rank one in $M_\alpha$, precisely
one simple root $\beta$ of $M_\alpha$ restricts to $\alpha$, which implies that the root system of $\prpr{M_\alpha}$ is the irreducible
component of the root system of $M_\alpha$ containing $\beta$.
The group $\prpr{M_\alpha}$ is therefore $F$-simple. It is also clearly $F$-isotropic, and therefore
$\prpr{M_\alpha} (\A)^+$ is the closed subgroup of $G (\A)$ generated by $U_{\pm\alpha} (\A)$ \cite[Proposition 6.2]{MR0316587}.
It is also the derived group of $\prpr{M_\alpha} (\A)$.

For any subset $\types \subset \Pi (\K_{M,\infty})$ we denote by $\Pi_{\disc}(M(\A))^{\types}$ the set of
all $\pi = \pi_\infty \otimes \pi_{\fin} \in\Pi_{\disc}(M(\A))$ for which $\pi_\infty$ contains a $\K_{M,\infty}$-type in $\types$.
If in addition an open subgroup $K_M \subset \K_{M,\fin}$ is given, we define
$\Pi_{\disc}(M(\A))^{\types, K_M}$ as the set of all $\pi \in\Pi_{\disc}(M(\A))^{\types}$ with $\pi_{\fin}^{K_M} \neq 0$.

\begin{definition} \label{DefinitionTWN}
We say that the group $G$ satisfies property \TWN\ (tempered winding numbers) if
for any $M\in\levis$, $M\ne G$, and any finite subset $\types \subset \Pi (\K_{M,\infty})$
there exists $k > 1$ such that for any $\alpha \in \Sigma_M$ and any $\epsilon>0$ we have
\[
\int_{\iii\R}\abs {\frac{n'_\alpha (\pi,s)}{n_\alpha(\pi,s)}}  (1+\abs{s})^{-k}\ ds
\ll_{\types,\epsilon} \param_M(\pi_\infty;\prpr M_\alpha)^k \level_M(\pi;\prpr M_\alpha^+)^{\epsilon}
\]
for any $\pi\in\Pi_{\disc}(M(\A))^{\types}$. (We recall that $\abs{n_\alpha(\pi,s)}=1$ for $s\in\iii\R$.)
\end{definition}

Note that $n_\alpha(\pi,s)$ is not changed if we replace $G$ by $M_\alpha$ or any other Levi subgroup containing it.
Therefore, property \TWN\ is hereditary for Levi subgroups.

\begin{remark} \label{rem: TWN}
\begin{enumerate}
\item If we fix an open compact subgroup $K_M$ then the corresponding bound
\[
\int_{\iii\R}\abs {\frac{n'_\alpha (\pi,s)}{n_\alpha(\pi,s)}} (1+\abs{s})^{-k}\ ds\ll_{K_M} \param_M(\pi_\infty;\prpr M_\alpha)^k
\]
for any $\pi\in\Pi_{\disc}(M(\A))^{\types,K_M}$ and a suitable $k > 1$ depending only on $G$ can be deduced (by invoking \eqref{EqnParamMLower}) from \cite[Theorem 5.3]{MR1935546}, applied to the groups $M \cap \prpr M_\alpha \subset \prpr M_\alpha$.
So, the point of \TWN\ lies in the dependence of the bound on the level of $\pi$.
\item In fact, we expect that
\begin{equation}
\label{eq: logbnd}
\int_{T}^{T+1}\abs{\frac{n'_\alpha(\pi,\iii t)}{n_\alpha(\pi,\iii t)}} \ dt\ll
\log(\abs{T}+\param_M(\pi_\infty;\prpr M_\alpha)+\level_M(\pi;\prpr M_\alpha^+))
\end{equation}
for all $T\in\R$ and $\pi\in\Pi_{\disc}(M(\A))$.

This would give the following strengthening  of \TWN:
\[
\int_{\iii\R}\abs{\frac{n'_\alpha (\pi,s)}{n_\alpha(\pi,s)}}(1+\abs{s})^{-k}\ ds\ll_k
\log(\param_M(\pi_\infty;\prpr M_\alpha)+\level_M(\pi;\prpr M_\alpha^+))
\]
for any $\pi\in\Pi_{\disc}(M(\A))$ and $k>1$.
\end{enumerate}
\end{remark}

\begin{lemma} \label{lem: trivred}
Suppose that $\tilde G$ is a connected reductive group over $F$ which satisfies \TWN.
Let $G$ be a connected subgroup of $\tilde G$ containing $\tilde G^{\der}$. Then $G$ also satisfies \TWN.
The analogous statement holds for the bound \eqref{eq: logbnd}.
\end{lemma}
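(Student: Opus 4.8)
The plan is to transport \TWN\ from $\tilde G$ to $G$ by exploiting that the two groups share the derived group $H:=\tilde G^{\der}=G^{\der}$, so that the entire rank-one machinery underlying \TWN\ is literally the same for both. First I would set up the dictionary. For a Levi subgroup $M\in\levis$ of $G$ put $\tilde M:=C_{\tilde G}(\spltrs_M)$; this is a standard Levi subgroup of $\tilde G$, it contains $M$ with $M=\tilde M\cap G$ and $\tilde M^{\der}=M^{\der}$, and $M\ne G$ exactly when $\tilde M\ne\tilde G$ (if $M\ne G$ then $\spltrs_M\not\subseteq Z(\tilde G)$). Under $M\leftrightarrow\tilde M$ one has $\Sigma_M=\Sigma_{\tilde M}$ (the split torus only changes by a central factor, which acts trivially on $\Lie H$), $\PPP(M)\leftrightarrow\PPP(\tilde M)$, $\FFF_1(M)\leftrightarrow\FFF_1(\tilde M)$, and for $\alpha\in\Sigma_M$ the root groups $U_{\pm\alpha}$ — hence $\prpr M_\alpha$, $\prpr M_\alpha^+$ and $M\cap\prpr M_\alpha$ — coincide for $G$ and $\tilde G$, as they all lie inside $H$. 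Consequently the right-hand side quantities $\param_M(\pi_\infty;\prpr M_\alpha)$ and $\level_M(\pi;\prpr M_\alpha^+)$ in \TWN\ depend only on the datum of $\pi$ at the common subgroup $M\cap\prpr M_\alpha$ (for the level one uses that two faithful representations of $\prpr M_\alpha$ give polynomially comparable levels, which is harmless since $\epsilon$ is arbitrary; or one simply takes the representation fixed for $G$ to be the restriction of that fixed for $\tilde G$).

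Next I would argue that the left-hand side of \TWN\ is intrinsic to $\prpr M_\alpha$ as well. Recall that $n_\alpha(\pi,s)$ is unchanged when $G$ is replaced by $M_\alpha$, and likewise $\tilde G$ by $\tilde M_\alpha=M_\alpha\cdot C_{\tilde G}(\spltrs_{M_\alpha})^\circ$. By the defining relation \eqref{eq: normalization}, $n_\alpha(\pi,s)$ is the scalar relating the standard operator $M_{Q|P}(\pi,s)$ — an integral over $U_\alpha(\A)$ — to the product of the local normalized operators $R_{Q|P}(\pi_v,s)$, which are built from the adjoint action of ${}^LM$ on the Lie algebra of $U_\alpha$, an action on which the central torus $\spltrs_{M_\alpha}$ operates trivially. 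From this one reads off that $n_\alpha(\pi,s)$ is unaffected by enlarging $M$ by a central torus of the ambient group, so that $n^{\tilde G}_\alpha(\tilde\pi,s)=n^G_\alpha(\pi,s)$ for any $\tilde\pi\in\Pi_{\disc}(\tilde M(\A))$ whose restriction to $M(\A)$ contains $\pi$ (the $M(\A)$-conjugates of $\pi$ occurring in such a restriction all carry the same $L$-data relative to the conjugation-invariant $r_\alpha$).

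With these preparations the deduction is short. Fix $M\ne G$, a finite set $\types\subset\Pi(\K_{M,\infty})$, $\alpha\in\Sigma_M$, $\epsilon>0$ and $\pi\in\Pi_{\disc}(M(\A))^{\types}$. I would produce $\tilde\pi\in\Pi_{\disc}(\tilde M(\A))$ restricting on $M(\A)$ to a representation containing $\pi$ — such $\tilde\pi$ exist because the discrete automorphic spectrum of $\tilde M$ is related to that of $M$ along the torus $\tilde M/M$, exactly the mechanism by which the spectrum of $\GL(n)$ governs that of $\SL(n)$ — chosen so that $\tilde\pi_\infty$ contains a $\K_{\tilde M,\infty}$-type lying in a finite set $\tilde\types$ that depends only on $\types$; this is possible because a lift is unique up to twisting by a character of $\tilde M(\A)$ trivial on $M(\A)$, and this freedom lets one normalise away the part of the archimedean central character of $\tilde\pi$ along the maximal compact subgroup of the central torus, thereby bounding a lowest $\K_{\tilde M,\infty}$-type of $\tilde\pi_\infty$ in terms of one of $\pi_\infty$. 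Applying \TWN\ for $\tilde G$ to $\tilde M\ne\tilde G$, $\tilde\types$, $\alpha$, $\epsilon$ and $\tilde\pi$ yields some $k>1$ (depending only on $\tilde\types$, hence on $\types$ and $G$) and the corresponding inequality, whose left-hand side equals that for $G$ and whose right-hand side equals $\param_M(\pi_\infty;\prpr M_\alpha)^k\,\level_M(\pi;\prpr M_\alpha^+)^{\epsilon}$ by the two preparatory steps; this is \TWN\ for $G$. Since the identification of both sides used nothing about the precise shape of the bound, the very same argument shows that an estimate of the form \eqref{eq: logbnd} for $\tilde G$ transfers verbatim to $G$.

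The main obstacle is the construction of the lift $\tilde\pi$ with a uniformly controlled archimedean $K$-type: one must make precise the decomposition of $L^2_{\disc}(\AAA_{\tilde M}\tilde M(F)\bs\tilde M(\A))$ relative to $L^2_{\disc}(\AAA_M M(F)\bs M(\A))$ and to the central torus, and check that the residual ambiguity really does allow $\tilde\pi_\infty$ to be kept inside a fixed finite set of $\K_{\tilde M,\infty}$-types as $\pi$ ranges over $\Pi_{\disc}(M(\A))^{\types}$ — this is routine but needs the same care as the $\GL(n)$/$\SL(n)$ comparison made elsewhere in the paper. A secondary point to be pinned down is the assertion of the second step that $n_\alpha(\pi,s)$ genuinely depends only on the $\prpr M_\alpha$-datum of $\pi$, which has to be extracted from the precise construction of the normalized intertwining operators in \cite{MR681738}.
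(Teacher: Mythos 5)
Your proposal follows essentially the same route as the paper: lift $\pi\in\Pi_{\disc}(M(\A))$ to some $\tilde\pi\in\Pi_{\disc}(\tilde M(\A))$ whose restriction contains $\pi$, identify the normalizing factors $n_\alpha(\pi,s)=n_\alpha(\tilde\pi,s)$, observe that $\prpr M_\alpha=\prpr{\tilde M_\alpha}$ and $\prpr M_\alpha\cap M=\prpr{\tilde M_\alpha}\cap\tilde M$ so that $\param_{\tilde M}(\tilde\pi_\infty;\prpr{\tilde M_\alpha})\le\param_M(\pi_\infty;\prpr M_\alpha)$ and $\level_{\tilde M}(\tilde\pi;\prpr{\tilde M_\alpha}^+)\le\level_M(\pi;\prpr M_\alpha^+)$, and then invoke \TWN\ for $\tilde G$. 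The two points you flag as ``to be pinned down'' are, however, exactly the places where the paper does not argue directly but cites theorems. First, the existence of an automorphic lift $\tilde\pi$ (with $\pi$ a constituent of the restriction of an automorphic realization $V_{\tilde\pi}$ to $M(\A)$) is not a routine consequence of the torus $\tilde M/M$; the paper invokes Hiraga--Saito \cite[Theorem 4.13 and Remark 4.23]{MR2918491}. Second, the equality of normalizing factors cannot simply be ``read off'' from \eqref{eq: normalization}: the unnormalized operators restrict compatibly, but the normalized operators $R_{Q|P}(\pi_v,s)$ and $R_{Q|P}(\tilde\pi_v,s)$ are defined via local normalizing factors chosen separately for $M$ and $\tilde M$, and their agreement on constituents of the restriction is the content of \cite[\S 3]{MR999488} in the archimedean case and \cite[Theorem 3.3.4]{ChaoLi} in the non-archimedean case. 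Without these two inputs (or equivalent references) your argument is an outline rather than a proof, though you have correctly located both missing ingredients.

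One genuine divergence is your additional step of normalizing the lift so that $\tilde\pi_\infty$ has a $\K_{\tilde M,\infty}$-type in a finite set $\tilde\types$ determined by $\types$, achieved by twisting with characters of $\tilde M(\A)$ trivial on $M(\A)$. The paper's proof does not perform this step at all; it relies only on the inequalities for the \emph{relative} parameters $\param_M(\,\cdot\,;\prpr M_\alpha)$ and $\level_M(\,\cdot\,;\prpr M_\alpha^+)$, which are insensitive to the central directions of $\tilde M$ (and for the bound \eqref{eq: logbnd}, which is what is actually used for $\GL(n)/\SL(n)$, no $\types$-dependence arises in the first place). Your twisting maneuver is a reasonable way to address the $\types$-dependence of the constants in the literal statement of \TWN, but as written it needs justification that automorphic characters of $\tilde M(\A)$ trivial on $M(\A)$ with prescribed archimedean compact behavior exist (up to harmless finite ramification), which you assert rather than prove; it is extra care on a point the paper leaves implicit, not a replacement for the two missing citations above.
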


\begin{proof}
The map $\tilde M\mapsto \tilde M\cap G$ defines a one-to-one correspondence between the Levi subgroups of $\tilde G$ and those of $G$
and we have $\tilde M\cap G\supset \tilde M \cap \tilde G^{\der} \supset \tilde M^{\der}$.
Suppose that $M=\tilde M\cap G$ and let $\tilde\pi\in\Pi_{\disc}(\tilde M(\A))$ be realized automorphically on a subspace
$V_{\tilde\pi} \subset L^2 (A_{\tilde M} {\tilde M} (F) \bs {\tilde M} (\A))$.
Then the space $V^M_{\tilde\pi} =\{\varphi\rest_{M(\A)}:\varphi\in V_{\tilde\pi}\}$ is non-trivial
and if $\pi$ is an irreducible constituent of the $M(\A)$-representation $V^M_{\tilde\pi}$, then for every place $v$, $\pi_v$ is a constituent of the
restriction of $\tilde\pi_v$ to $M(F_v)$.
By \cite[\S 3]{MR999488} for the archimedean and \cite[Theorem 3.3.4]{ChaoLi} for the non-archimedean case,
we have $n_\alpha(\pi_v,s)=n_\alpha(\tilde\pi_v,s)$ for all $v$. Hence $n_\alpha(\pi,s)=n_\alpha(\tilde\pi,s)$.
Moveover, $\prpr M_\alpha = \prpr{{\tilde M}_\alpha}$ and $\prpr M_\alpha \cap M = \prpr{{\tilde M}_\alpha} \cap \tilde M$,
which implies $\level_{\tilde M}(\tilde\pi;\prpr{{\tilde M}_\alpha}^+)\le\level_M(\pi;\prpr M_\alpha^+)$
and $\param_{\tilde M}(\tilde\pi_\infty;\prpr{{\tilde M}_\alpha}) \le \param_M(\pi_\infty;\prpr M_\alpha)$.
Therefore the lemma follows from the fact that every $\pi\in\Pi_{\disc}(M(\A))$
is equivalent to a constituent of $V^M_{\tilde\pi}$ for some $\tilde\pi\in\Pi_{\disc}(\tilde M(\A))$ with respect
to \emph{some} automorphic realization $V_{\tilde\pi}$ (\cite[Theorem 4.13 and Remark 4.23]{MR2918491} applied to $M\subset \tilde M$).
\end{proof}

\begin{proposition} \label{prop: mainglobal}
The estimate \eqref{eq: logbnd} holds for $G=\GL(n)$ or $\SL(n)$ with an implied constant depending only on $n$ and $F$.
In particular, the groups $\GL(n)$ and $\SL(n)$ satisfy property \TWN.
\end{proposition}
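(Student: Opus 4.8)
The plan is to reduce the case of $\SL(n)$ to that of $\GL(n)$ via Lemma \ref{lem: trivred} (as $\SL(n)=\GL(n)^{\der}$), and to obtain \eqref{eq: logbnd} for $G=\GL(n)$ from the analytic theory of Rankin--Selberg $L$-functions. Since $n_\alpha(\pi,s)$ is unchanged when $G$ is replaced by $M_\alpha$ (and $\prpr M_\alpha$, $\param_M(\pi_\infty;\prpr M_\alpha)$, $\level_M(\pi;\prpr M_\alpha^+)$ likewise only depend on the co-rank-one datum attached to $\alpha$), we may assume $G=\GL(n)$, $M=\GL(a)\times\GL(b)$ with $a+b=n$, so that $\prpr M_\alpha=\SL(n)$, and $\pi=\sigma_1\otimes\sigma_2$ with $\sigma_i\in\Pi_{\disc}(\GL(n_i)(\A))$, $\{n_1,n_2\}=\{a,b\}$. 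By the Langlands--Shahidi normalization for $\GL$ (Jacquet--Piatetski-Shapiro--Shalika, Shahidi), extended to the whole discrete spectrum via the M\oe glin--Waldspurger classification, the global normalizing factor is, with $\Lambda$ and $\epsilon$ the completed Rankin--Selberg $L$- and $\epsilon$-factors,
\[
n_\alpha(\pi,s)=\frac{\Lambda(s,\sigma_1\times\tilde\sigma_2)}{\epsilon(s,\sigma_1\times\tilde\sigma_2)\,\Lambda(1+s,\sigma_1\times\tilde\sigma_2)}.
\]

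The key step will be to use the functional equation $\Lambda(s,\sigma_1\times\tilde\sigma_2)=\epsilon(s,\sigma_1\times\tilde\sigma_2)\Lambda(1-s,\tilde\sigma_1\times\sigma_2)$, which turns this into $n_\alpha(\pi,s)=\Lambda(1-s,\tilde\sigma_1\times\sigma_2)/\Lambda(1+s,\sigma_1\times\tilde\sigma_2)$; since $\overline{\Lambda(1+\iii t,\sigma_1\times\tilde\sigma_2)}=\Lambda(1-\iii t,\tilde\sigma_1\times\sigma_2)$ for real $t$, we get $n_\alpha(\pi,\iii t)=\overline{\Lambda(1+\iii t,\sigma_1\times\tilde\sigma_2)}/\Lambda(1+\iii t,\sigma_1\times\tilde\sigma_2)$ and therefore
\[
\frac{n_\alpha'(\pi,\iii t)}{n_\alpha(\pi,\iii t)}=-2\,\Re\,\frac{\Lambda'}{\Lambda}(1+\iii t,\sigma_1\times\tilde\sigma_2).
\]
This moves the problem to the edge $\Re s=1$ of the critical strip and, crucially, involves only the \emph{real part} of the logarithmic derivative (so that, e.g., the polar term of $\frac{\Lambda'}{\Lambda}$ at $s=1$ and the contributions of zeros on $\Re s=1$ drop out entirely); it remains to prove $\int_T^{T+1}\abs{\Re\,\frac{\Lambda'}{\Lambda}(1+\iii t,\sigma_1\times\tilde\sigma_2)}\,dt\ll_n\log(\mathfrak q(\sigma_1\times\tilde\sigma_2)(\abs T+3))$, where $\mathfrak q(\cdot)$ denotes the analytic conductor.

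For this I would invoke the standard package of analytic facts about $L(s,\sigma_1\times\tilde\sigma_2)$: for $\sigma_i$ in the discrete spectrum it is a product of finitely many translates, by real $O_n(1)$ shifts, of a single cuspidal Rankin--Selberg $L$-function, so $\Lambda(s,\sigma_1\times\tilde\sigma_2)$ is meromorphic of order one with finitely many poles, all real and of total multiplicity $\ll_n 1$, satisfies the above functional equation, is non-vanishing on $\Re s=1$ away from a possible pole at $s=1$, has all zeros $\rho$ in a fixed vertical strip, and has $\ll_n\log(\mathfrak q(\sigma_1\times\tilde\sigma_2)(\abs T+3))$ of them in any box $\abs{\Im\rho-T}\le 1$. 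Using the partial-fraction expansion of $\frac{\Lambda'}{\Lambda}$ relative to a reference point $\sigma_0$ in the region of absolute convergence, and taking $s=1+\iii t$ and real parts, one gets
\begin{multline*}
\Re\,\frac{\Lambda'}{\Lambda}(1+\iii t,\sigma_1\times\tilde\sigma_2)
=\Re\,\frac{\Lambda'}{\Lambda}(\sigma_0,\sigma_1\times\tilde\sigma_2)
+\sum_\rho\Bigl(\frac{1-\Re\rho}{\abs{1+\iii t-\rho}^2}-\frac{\sigma_0-\Re\rho}{\abs{\sigma_0-\rho}^2}\Bigr)\\
-\sum_p m_p\Bigl(\frac{1-p}{\abs{1+\iii t-p}^2}-\frac{\sigma_0-p}{\abs{\sigma_0-p}^2}\Bigr),
\end{multline*}
$\rho$ running over the zeros and $p$ over the real poles with multiplicities $m_p$. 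Here the first term is $\ll_n\log\mathfrak q(\sigma_1\times\tilde\sigma_2)$; the pole sum is $\ll_n 1$ (the summand with $p=1$ vanishes, and $\abs{1-p}$, $\abs{\sigma_0-p}$ are bounded below); the constant $\sum_\rho\frac{\sigma_0-\Re\rho}{\abs{\sigma_0-\rho}^2}$ is $\ll_n\log\mathfrak q(\sigma_1\times\tilde\sigma_2)$ by the zero count; and for every $\rho$ with $\Re\rho\neq 1$ one has $\int_\R\frac{\abs{1-\Re\rho}}{\abs{1+\iii t-\rho}^2}\,dt=\pi$, \emph{regardless of how close $\Re\rho$ is to $1$}, so summing the $\ll_n\log(\mathfrak q(\sigma_1\times\tilde\sigma_2)(\abs T+3))$ zeros with $\abs{\Im\rho-T}\le 2$ (each contributing $\le\pi$) against a convergent tail for the remaining zeros bounds $\int_T^{T+1}\sum_\rho\frac{\abs{1-\Re\rho}}{\abs{1+\iii t-\rho}^2}\,dt$ by $\ll_n\log(\mathfrak q(\sigma_1\times\tilde\sigma_2)(\abs T+3))$, as needed.

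It remains to relate the analytic conductor to $\param$ and $\level$. It factors as a product of a finite (arithmetic) and an archimedean conductor, both invariant under twisting $\sigma_i$ by $\chi\circ\det$. By Bushnell--Henniart's bound on conductors of pairs the finite part is $\ll_n\operatorname{cond}(\sigma_1)^{n_2}\operatorname{cond}(\sigma_2)^{n_1}$, which by twist-invariance and the comparison of $\operatorname{cond}$ with the level (Lemma \ref{lem: twist}) is $\ll_n\level_M(\pi;\prpr M_\alpha^+)^{O_n(1)}$; the archimedean part is $\ll_n\param_M(\pi_\infty;\prpr M_\alpha)^{O_n(1)}$ by the explicit description of the archimedean Langlands parameters of the $\sigma_{i,\infty}$ in terms of infinitesimal characters and lowest $\K_\infty$-types. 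Hence $\log\mathfrak q(\sigma_1\times\tilde\sigma_2)\ll_n\log\bigl(\param_M(\pi_\infty;\prpr M_\alpha)+\level_M(\pi;\prpr M_\alpha^+)\bigr)$, which together with the previous paragraph gives \eqref{eq: logbnd}; property \TWN\ then follows as in Remark \ref{rem: TWN}(2). The main obstacle is the analytic input in the third paragraph: assembling, uniformly in the conductor, the meromorphic continuation, pole structure, functional equation, non-vanishing on $\Re s=1$, and zero-count for Rankin--Selberg $L$-functions of \emph{arbitrary} discrete automorphic representations of $\GL$ --- this is the ``known, but delicate'' ingredient alluded to in the introduction, reduced to the cuspidal case by the M\oe glin--Waldspurger classification; the conductor comparisons of the last step, by contrast, are bookkeeping.
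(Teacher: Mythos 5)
Your proposal is correct and follows essentially the same route as the paper: reduction of $\SL(n)$ to $\GL(n)$ via Lemma \ref{lem: trivred}, expression of $n_\alpha(\pi,s)$ through completed Rankin--Selberg $L$-functions and their functional equation, a bound of the winding number on $[T,T+1]$ by the logarithm of $\abs{T}$ plus the analytic conductor, and finally the comparison of the conductor with $\level_M(\pi;\prpr M_\alpha^+)$ and $\param_M(\pi_\infty;\prpr M_\alpha)$ via Bushnell--Henniart (Lemma \ref{lem: BushHen}) and twist-invariance. The only real difference is that you re-derive the analytic core (partial fractions, zero counting, non-vanishing on $\Re s=1$) which the paper imports from the proofs of Propositions 4.5 and 5.1 of \cite{MR2276771}; note only the small inaccuracy that the poles need not all be real (they can occur at $1+\iii\tau$, $\iii\tau$ shifted by half-integers when $\pi_2\cong\pi_1\otimes\abs{\det}^{\iii\tau}$), which does not affect your estimate since their number is $O_n(1)$ and their real parts stay at distance $\ge\frac12$ from $1$ unless equal to $1$.
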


We first need the following two lemmas which are a direct consequence of the results of Bushnell--Henniart \cite{MR1462836}
and Jacquet--Piatetski-Shapiro--Shalika (cf.~\cite{MR3001803, 1201.5506}).\footnote{We define levels for $\GL(n)$
and its subgroups in terms of the identity representation $\rho$ of $\GL (n)$ and the standard lattice $\mathfrak{o}_F^n$.}

\begin{lemma} \label{LemmaConductor}
Let $F$ be a local non-archimedean field with residue field $\F_q$, let $\pi$ be an irreducible smooth representation of $\GL (n,F)$
and $f (\pi)$ be the exponent of the conductor of $\pi$ as defined in \cite{MR620708}. Then
\[
f (\pi) \le \log_q \level (\pi).
\]
\end{lemma}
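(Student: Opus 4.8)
\textbf{Plan of proof for Lemma \ref{LemmaConductor}.}
The statement compares two integer invariants of an irreducible smooth representation $\pi$ of $\GL(n,F)$: the conductor exponent $f(\pi)$ in the sense of \cite{MR620708}, and the level $\level(\pi)$, which by our normalization is $\inorm(\nnn_\pi) = q^{r}$ where $\primi^r$ is the largest power of the maximal ideal of $\mathfrak{o}_F$ with $\pi^{\K(\primi^r)}\neq 0$. So the desired inequality $f(\pi)\le \log_q \level(\pi)$ is equivalent to the assertion: if $\pi$ has a nonzero vector fixed by the principal congruence subgroup $\K(\primi^r) = \{g\in\GL(n,\mathfrak{o}_F): g\equiv 1 \pmod{\primi^r}\}$, then $f(\pi)\le r$.

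\textbf{Key steps.}
First I would recall the theory of the Bushnell--Kutzko--Henniart conductor. By the work of Jacquet--Piatetski-Shapiro--Shalika and its refinement by Bushnell--Henniart \cite{MR1462836} (see also \cite{MR3001803, 1201.5506}), $f(\pi)$ is characterized either via the epsilon factor / functional equation of the standard $L$-function, or, more usefully here, via fixed vectors under the Bushnell--Kutzko ``Mirabolic'' congruence subgroups
\[
K_1(\primi^m) = \left\{ g \in \GL(n,\mathfrak{o}_F) : g \equiv \sm{*}{*}{0}{1} \pmod{\primi^m} \right\},
\]
more precisely the subgroups fixing the last row modulo $\primi^m$: the theorem of Jacquet--Piatetski-Shapiro--Shalika (in the generic case) and its extension to all irreducible $\pi$ states that $f(\pi)$ is the least $m$ for which $\pi^{K_1(\primi^m)}\neq 0$ (with the convention that the space of such fixed vectors is one-dimensional at the minimal level). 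Second, I would observe the elementary inclusion of groups: the principal congruence subgroup satisfies $\K(\primi^r)\subset K_1(\primi^r)$, since a matrix congruent to the identity mod $\primi^r$ is a fortiori congruent mod $\primi^r$ to a matrix of the required mirabolic shape. Third, from this inclusion we get $\pi^{\K(\primi^r)}\subset \pi^{K_1(\primi^r)}$ is false in general --- one must be careful: $K_1(\primi^r)$ is \emph{larger}, so $\pi^{K_1(\primi^r)}\subset\pi^{\K(\primi^r)}$, which is the wrong direction. The correct route is: $\K(\primi^r)$ being normal and the quotient $K_1(\primi^r)/\K(\primi^r)$ being a finite group, one can \emph{average}: given $0\neq v\in\pi^{\K(\primi^r)}$, the vector $v' = \int_{K_1(\primi^r)} \pi(k)v\,dk$ lies in $\pi^{K_1(\primi^r)}$, and one must only check $v'\neq 0$. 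Here one uses that $K_1(\primi^r)/\K(\primi^r)$ is a $p$-group (pro-$p$, in fact a finite $p$-group), so its action on the nonzero finite-dimensional space spanned by the $K_1(\primi^r)$-translates of $v$ has a nonzero fixed vector (a $p$-group acting on a nonzero $\overline{\F}_\ell$- or $\C$-vector space has a nonzero fixed point). Hence $\pi^{K_1(\primi^r)}\neq 0$, so by the JPSS/Bushnell--Henniart characterization $f(\pi)\le r$, i.e. $f(\pi)\le \log_q\level(\pi)$.

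\textbf{Main obstacle.} The only subtle point is the precise form of the conductor characterization in terms of congruence-subgroup fixed vectors for \emph{non-generic} irreducible representations: the original Jacquet--Piatetski-Shapiro--Shalika theorem is stated for generic $\pi$, and one needs the extension to arbitrary irreducible $\pi$ together with the identification of $f(\pi)$ as defined by Deligne/Langlands--epsilon-factors (\cite{MR620708}) with the ``new vector'' exponent; this is exactly what is supplied by \cite{MR1462836} and the references \cite{MR3001803, 1201.5506}. Granting that, the argument is the short group-theoretic manoeuvre above. I would present the proof by quoting the conductor characterization as a black box, noting the inclusion $\K(\primi^r)\subset K_1(\primi^r)$ of congruence subgroups, and running the $p$-group averaging to pass from a $\K(\primi^r)$-fixed vector to a $K_1(\primi^r)$-fixed one.
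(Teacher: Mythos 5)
Your reduction from $\K(\primi^r)$-invariance to $K_1(\primi^r)$-invariance is exactly where the argument breaks down. First, the quotient $K_1(\primi^r)/\K(\primi^r)$ is not a $p$-group: reduction modulo $\primi^r$ identifies it with the group of matrices in $\GL(n,\mathfrak{o}_F/\primi^r)$ whose last row is $(0,\dots,0,1)$, and this contains $\GL(n-1,\mathfrak{o}_F/\primi^r)$, so its order is in general divisible by primes other than the residue characteristic. Second, even for an honest $p$-group the fixed-vector principle you invoke is a characteristic-$p$ statement; on a complex representation a nontrivial character of $\Z/p\Z$ has no nonzero invariants, and the average over the larger group of a $\K(\primi^r)$-fixed vector can perfectly well vanish. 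Third, and decisively, the implication you are trying to establish, namely that $\pi^{\K(\primi^r)}\ne0$ forces $\pi^{K_1(\primi^r)}\ne0$, is false: take $\pi$ a depth-zero supercuspidal of $\GL(2,F)$, compactly induced from a suitable extension to $F^\times\GL(2,\mathfrak{o}_F)$ of the inflation of a cuspidal representation $\lambda$ of $\GL(2,\F_q)$. Then $\pi^{\K(\primi)}\supseteq\lambda\ne0$, while $f(\pi)=2$, so $\pi^{K_1(\primi)}=0$ by the Jacquet--Piatetski-Shapiro--Shalika newvector theory. Such $\pi$ is generic (even supercuspidal), so no soft transfer of invariant vectors between these two subgroups at the same level can work, not even in the generic case; this example ($f(\pi)=2$, $\level(\pi)=q$) also shows that the inequality cannot follow from the mere comparison of congruence subgroups, and that along such lines one should only expect a bound with a loss, e.g.\ of the shape $f(\pi)\le n\log_q\level(\pi)$, which is in any case the kind of bound used in Lemma \ref{lem: BushHen}.

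For comparison, the paper's own proof attempts no such transfer: in the generic case it quotes the characterization of $f(\pi)$ from \cite{MR620708} directly, and for general $\pi$ it writes $\pi$ as a Langlands quotient attached to tempered representations $\sigma_1,\dots,\sigma_r$ of smaller general linear groups, uses the additivity $f(\pi)=f(\sigma_1)+\dots+f(\sigma_r)$ from \cite{MR546609}, and compares with the irreducible generic induced representation $\pi_0$ of $\sigma_1\otimes\dots\otimes\sigma_r$, whose level is at most that of $\pi$. Your proposal instead tries to treat all irreducible $\pi$ at once by invoking an extension of the newvector theory beyond the generic case; that is a legitimate alternative organization of the non-generic case, but it does not touch the real difficulty, which is the passage from principal-congruence level to mirabolic level. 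That step needs a genuinely different input (conductor computations, or depth considerations), not an averaging argument.
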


\begin{proof}
If $\pi$ is generic, this immediately follows from the characterization of $f(\pi)$ in \cite[Th\'eor\`eme 5.1]{MR620708}.
In general, write $\pi$ as the Langlands quotient of the parabolic induction of
$\sigma_1 \abs{\det}^{t_1} \otimes \cdots \otimes \sigma_r \abs{\det}^{t_r}$ to $\GL (n, F)$,
where $n = n_1 + \cdots + n_r$, $t_1, \ldots, t_r \in \R$, and each $\sigma_i$ is an irreducible tempered representation of $\GL (n_i,F)$.
We then have $f (\pi) = f (\sigma_1) + \cdots + f(\sigma_r)$ \cite[Theorem 3.4]{MR546609}.
Let $\pi_0$ be the parabolic induction of $\sigma_1\otimes\dots\otimes\sigma_r$ to $\GL (n,F)$. Then $\pi_0$ is irreducible and generic and we have
$f(\pi)=f(\pi_0)\le\log_q\level(\pi_0)\le\log_q\level(\pi)$.
\end{proof}

\begin{lemma} \label{lem: BushHen}
Let $F$ be a local non-archimedean field with residue field $\F_q$.
Consider $G=\GL(n)$, $G^{\der}=\SL(n)$, a maximal Levi subgroup $M=\GL(n_1)\times\GL(n_2)$ of $G$, and an irreducible smooth representation $\pi=\pi_1\otimes\pi_2$
of $M(F)$. 
Let $f(\pi_1\times\tilde\pi_2)$ be the exponent of the conductor of $\pi_1 \times \tilde\pi_2$ (cf.~\cite{MR1462836}). Then
\[
f(\pi_1\times\tilde\pi_2)\le n \log_q\level_M(\pi;G^{\der}).
\]
\end{lemma}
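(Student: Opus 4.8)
The statement relates the conductor exponent $f(\pi_1 \times \tilde\pi_2)$ of the Rankin--Selberg pair to the relative level $\level_M(\pi;G^{\der})$. The plan is to reduce to the known conductor formula for Rankin--Selberg convolutions combined with Lemma \ref{lem: twist} and Lemma \ref{LemmaConductor}. First I would recall from Bushnell--Henniart \cite{MR1462836} (or Jacquet--Piatetski-Shapiro--Shalika in the generic case) the bound $f(\pi_1 \times \tilde\pi_2) \le n_2 f(\pi_1) + n_1 f(\tilde\pi_2)$, where $n = n_1 + n_2$; this is the standard divisor-type estimate for the conductor of a tensor product of local Galois (or Weil--Deligne) representations of dimensions $n_1$ and $n_2$, valid for all irreducible smooth $\pi_1$, $\pi_2$ (not just generic ones), using $f(\tilde\pi_2) = f(\pi_2)$.

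Next I would invoke Lemma \ref{lem: twist} applied to the Levi $M = \GL(n_1) \times \GL(n_2)$ inside $G = \GL(n)$: since $G(F)/G(F)^+$ is the abelian group $F^\times/(F^\times)$ detected by $\det$, we have
\[
\level_M(\pi;G^{\der}) = \min_\chi \level_M(\pi \otimes \chi),
\]
$\chi$ running over characters of $F^\times$ pulled back via $\det$ to $M(F)$. Twisting $\pi = \pi_1 \otimes \pi_2$ by such a $\chi$ replaces $\pi_i$ by $\pi_i \otimes (\chi \circ \det)$, which changes neither the pair $\pi_1 \times \tilde\pi_2$ (the twist on $\pi_1$ is cancelled by the dual twist on $\tilde\pi_2$) nor its conductor. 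Hence $f(\pi_1 \times \tilde\pi_2)$ is unchanged under all these twists, so it suffices to bound it by $n \log_q \level_M(\pi \otimes \chi)$ for the \emph{optimal} $\chi$, i.e.\ for the one realizing the minimum.

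For a fixed representation $\pi = \pi_1 \otimes \pi_2$ of $M(F)$, I would then bound $\level_M(\pi)$ from below by the individual levels: a $\K_M(\nnn)$-fixed vector is a tensor product of a $\K_{\GL(n_1)}(\nnn)$-fixed vector in $\pi_1$ and a $\K_{\GL(n_2)}(\nnn)$-fixed vector in $\pi_2$, so $\level_{\GL(n_i)}(\pi_i) \le \level_M(\pi)$, i.e.\ $\log_q \level_M(\pi) \ge \max(f(\pi_1), f(\pi_2))$ by Lemma \ref{LemmaConductor} (and $f(\tilde\pi_2) = f(\pi_2)$). Combining with the divisor bound,
\[
f(\pi_1 \times \tilde\pi_2) \le n_2 f(\pi_1) + n_1 f(\pi_2) \le (n_1 + n_2) \log_q \level_M(\pi) = n \log_q \level_M(\pi),
\]
and applying this to the twist $\pi \otimes \chi$ achieving the minimum in Lemma \ref{lem: twist} gives $f(\pi_1 \times \tilde\pi_2) \le n \log_q \level_M(\pi;G^{\der})$, as claimed.

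**Main obstacle.** The delicate point is ensuring the conductor divisor bound $f(\pi_1\times\tilde\pi_2)\le n_2 f(\pi_1)+n_1 f(\pi_2)$ holds for \emph{all} irreducible smooth representations, including non-generic ones, where one should pass through the Langlands classification (writing each $\pi_i$ as a Langlands quotient of an induced representation of tempered pieces, as in the proof of Lemma \ref{LemmaConductor}) and use the additivity of conductors of pairs under parabolic induction \cite{MR546609}, \cite{MR1462836}, together with $f(\pi_i)=f(\pi_{i,0})$ for the associated generic $\pi_{i,0}$. Once that additivity is in hand, the remaining steps are bookkeeping with levels and twists.
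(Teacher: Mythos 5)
Your proposal is correct and follows essentially the same route as the paper: reduce from the relative level to the absolute level via Lemma \ref{lem: twist} and the twist-invariance of $f(\pi_1\times\tilde\pi_2)$, apply the Bushnell--Henniart bound $f(\pi_1\times\tilde\pi_2)\le n_1 f(\pi_2)+n_2 f(\pi_1)$, and then bound each $f(\pi_i)$ by $\log_q\level_M(\pi)$ using Lemma \ref{LemmaConductor}. The non-generic issue you flag is exactly what Lemma \ref{LemmaConductor} is designed to absorb, so no further argument is needed.
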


\begin{proof}
Since $f(\pi_1\times\tilde\pi_2)$ is not affected by twisting both $\pi_1$ and $\pi_2$ by the same
character, by Lemma \ref{lem: twist} it suffices to prove that
\[
f(\pi_1\times\tilde\pi_2)\le n\log_q\level_M(\pi).
\]
The results of \cite{MR1462836} give
\[
f(\pi_1\times\tilde\pi_2)\le n_1f(\pi_2)+n_2f(\pi_1),
\]
where $f(\pi_i)$ is the exponent of the conductor of $\pi_i$.
By Lemma \ref{LemmaConductor}, we have $f (\pi_i) \le \log_q \level_{\GL(n_i)}\pi_i= \log_q \level_M(\pi;\GL(n_i))\le\log_q\level_M(\pi)$.
The lemma follows.
\end{proof}

\begin{proof}[Proof of Proposition \ref{prop: mainglobal}]
By Lemma \ref{lem: trivred}, it suffices to consider the case of $G=\GL(n)$.
The global normalizing factors $n_\alpha$ can be expressed in terms
of Rankin--Selberg $L$-functions whose properties are summarized and analyzed in \cite[\S4 and \S5]{MR2276771}.
More precisely, write $M \simeq \prod_{i=1}^r \GL (n_i)$, where the root $\alpha$ is trivial on $\prod_{i \ge 3} \GL (n_i)$,
and let $\pi \simeq \otimes \pi_i$ with representations $\pi_i \in \Pi_{\disc}(\GL(n_i,\A))$.
Note that then $\prpr{M_\alpha}=\SL(n_1+n_2)$.
Let $L(s,\pi_1\times\tilde\pi_2)$ be the completed Rankin--Selberg $L$-function associated to $\pi_1$ and $\pi_2$. It satisfies the functional equation
\[
L(s,\pi_1\times\tilde\pi_2)=\eps(\frac12,\pi_1\times\tilde\pi_2)N(\pi_1\times\tilde\pi_2)^{\frac12-s}L(1-s,\tilde\pi_1\times\pi_2),
\]
where $\abs{\eps(\frac12,\pi_1\times\tilde\pi_2)}=1$ and
\[
N(\pi_1\times\tilde\pi_2) = d_F^{n_1 n_2} \prod_v q_v^{f_v (\pi_{1,v} \times \tilde\pi_{2,v})}
\]
is the conductor.
Here, the local exponents $f_v (\pi_{1,v} \times \tilde\pi_{2,v})$ are as in Lemma \ref{lem: BushHen} above, and
$d_F$ is the absolute value of the discriminant of $F$.
We can then write
\[
n_\alpha (\pi,s) = \frac{L(s,\pi_1\times\tilde\pi_2)}{\eps(\frac12,\pi_1\times\tilde\pi_2)N(\pi_1\times\tilde\pi_2)^{\frac12-s}L(s+1,\pi_1\times\tilde\pi_2)}.
\]
The proof of Propositions 4.5 and 5.1 in \cite{MR2276771} gives
\[
\int_T^{T+1}\abs{ \frac{n'_\alpha(\pi,\iii t)}{n_\alpha(\pi,\iii t)} } \ dt\ll \log (\abs{T} + \nu (\pi_1 \times \tilde\pi_2))
\]
with
\[
\nu (\pi_1 \times \tilde\pi_2) = N (\pi_1 \times \tilde\pi_2) ( 2 + c (\pi_1 \times \tilde\pi_2))
\]
and the quantity $c (\pi_1 \times \tilde\pi_2) \ge 0$ of [ibid., (4.21)], which depends only on the archimedean factors of $\pi_1$ and $\pi_2$.
Moreover, $c (\pi_1 \times \tilde \pi_2)$ is by definition invariant under twisting by characters of $\GL (n_1+n_2, F_\infty)$
(viewed as characters of $\GL(n_1,F_\infty) \times \GL (n_2,F_\infty)$ by restriction).
Combining [ibid., Lemma 4.2] and [ibid., Lemma 5.4]\footnote{Note that in [ibid., Lemma 4.2] it is not necessary to
assume that the $\pi_i$ are generic, it suffices that they are unitary. Also, [ibid., Lemma 5.4] deals explicitly only with the real places $v$ of $F$, but
complex places can be dealt with in the same way.}
shows that $\log ( 2 + c (\pi_1 \times \tilde\pi_2)) \ll \min_{\chi} \log \param_M(\pi_\infty \otimes \chi;\GL (n_1+n_2))$, where
$\chi$ ranges over the characters of $\GL (n_1+n_2, F_\infty)$.

Since the central character $\omega (\pi_\infty \otimes \chi)$ of the representation $\pi_\infty \otimes \chi$ is simply $\omega(\pi_\infty) \chi^{n_1+n_2}$,
by an appropriate choice of $\chi$ we can achieve that $\omega (\pi_\infty \otimes \chi)$ belongs to a finite set of characters that depends only on $n_1+n_2$ and $F_\infty$.
We can then clearly bound the contribution of $\Lie (Z (\GL (n_1+n_2,F_\infty)))$ to the Casimir eigenvalue of $\pi_\infty \otimes \chi$ in terms of $n_1+n_2$ and $[F:\Q]$,
and similarly for the central contribution to $\lambda_\tau$, where $\tau$ is a lowest $\K_\infty$-type of $\pi_\infty \otimes \chi$.
Altogether we obtain the estimate
$\log ( 2 + c (\pi_1 \times \tilde\pi_2)) \ll \log \param_M(\pi_\infty;\prpr{M_\alpha})$.
On the other hand, by Lemma \ref{lem: BushHen} we have
$\log N(\pi_1\times\tilde\pi_2)\ll1+\log\level_M(\pi;\prpr M_\alpha)$.
The proposition follows.
\end{proof}

\begin{remark} \label{RemarkGeneralG}
For a general group $G$ the normalizing factors are given, at least up to local factors,
by quotients of automorphic $L$-functions
associated to the irreducible constituents of the adjoint representation of the $L$-group $^LM$ of $M$ on the Lie algebra of the unipotent radical
of the corresponding parabolic subgroup of $^LG$ \cite{MR0419366}.
To argue as above, we would need to know that
these $L$-functions have finitely many poles and satisfy a functional equation with the associated conductor
bounded polynomially in $\level(\pi)$ for any $\pi\in\Pi_{\disc}(M(\A))$.
Unfortunately, the finiteness of the number of poles and the expected functional equation are not known in general
(although they are known in some important cases, e.g. \cite{MR1800349}).
It is possible that for classical groups these properties are within reach using the work of Arthur \cite{MR3135650}
and M\oe glin \cite{MR2925174}, and the same may apply to the exceptional group $G_2$ \cite{MR1726704}.
However, this may require further work and we will not pursue this matter any further.
\end{remark}

\subsection{Degrees of normalized local intertwining operators}
Let $M\in\levis$ and $P,Q\in\PPP(M)$ adjacent along $\alpha\in\rts_M$.
Recall that in the $p$-adic case, for any irreducible representation $\pi_v$ of $M(F_v)$ and any open subgroup
$K_v$ of $G(F_v)$ the restriction $R_{Q|P}(\pi_v,s)^{K_v}$ of the operator $R_{Q|P} (\pi_v,s)$ to the finite dimensional space of $K_v$-fixed vectors
in $I_P^G (\pi_v)$ is a rational function in $q^{-s}$.
More precisely, there exists a polynomial $f$, depending only on $\pi_v$, and whose degree is bounded in terms of $G$
only, such that $f(q^{-s})R_{Q|P}(\pi_v,s)^{K_v}$ is polynomial in $q^{-s}$.
We have $U_Q\cap U_{\bar P}=U_\alpha$, where $\{\alpha\}=\rts_Q\cap\rts_{\bar P}$.
Note that the operator $R_{Q|P}$ is obtained by induction from the intertwining operator $R^{M_\alpha}_{Q \cap M_\alpha|P \cap M_\alpha}$ between representations
of the group $M_\alpha (F_v)$. Let $P_\alpha$ be the maximal parabolic subgroup of $M_\alpha$ with unipotent radical $U_\alpha$ (and Levi subgroup $M$), and
$\bar{P}_\alpha$ the corresponding opposite parabolic subgroup.

\begin{definition}
We say that $G$ satisfies property \BD\ (bounded degree) if there exists a constant $c$ (depending only on $G$ and $\rho$),
such that for any
\begin{itemize}
\item Levi subgroup $M \in\levis$, $M \neq G$,
\item $\alpha \in \Sigma_M$,
\item finite place $v$ of $F$,
\item open subgroup $K_v\subset\K_{M_\alpha,v}$,
\item smooth irreducible representation $\pi_v$ of $M(F_v)$,
\end{itemize}
the degrees of the numerators of the linear operators $R^{M_\alpha}_{\bar{P}_\alpha|P_\alpha}(\pi_v,s)^{K_v}$
are bounded by
\[
\begin{cases}c\log_{q_v} \level_{M_\alpha} (K_v;\prpr{M}_\alpha^+),&\text{if $\K_v$ is hyperspecial},\\
c (1+\log_{q_v} \level_{M_\alpha} (K_v;\prpr{M}_\alpha^+)),&\text{otherwise.}
\end{cases}
\]
\end{definition}

\begin{remark} \label{RemarkBD}
\begin{enumerate}
\item Property \BD\ is by definition hereditary for Levi subgroups.

\item Property \BD\ is discussed in more detail in \cite{MR3001800}. Conjectures 1 and 2 of [ibid.] for all Levi subgroups of $G$ amount to the slightly
weaker statement obtained by replacing $\level_{M_\alpha} (K_v;\prpr{M}_\alpha^+)$
by $\level_{M_\alpha} (K_v; M_\alpha^{\der})$, but the above formulation is more natural (and we intend to use it
in a future paper).
Note that Conjectures 1 and 2 of [ibid.] for the $F$-simple factors of the groups $L^{\SC}$, $L \in \levis$, imply property \BD\ in its current formulation.
\item Note also that by \cite[Lemma 1.6]{MR1312501} (cf.~also \cite[Proposition 3]{MR3001800} and the remark following it)
we may replace $\level_{M_\alpha} (K_v;\prpr{M}_\alpha^+)$ by the inverse of the volume of
$(\scprj_{\prpr{M}_\alpha})^{-1}(K_v)$ in $\prpr{M}_\alpha^{\SC}(F_v)$.
\end{enumerate}
\end{remark}

Let now $\fctr M$ be the subgroup of $G$ generated by the unipotent subgroups $U_\alpha$, $\alpha\in\rts_M$.
In other words, if $P$ and $\bar P$ are (arbitrary) opposite parabolic subgroups in $\PPP(M)$, then $\fctr M$ is the
subgroup of $G$ generated by $U$ and $\bar U$.
By \cite[Proposition 4.11]{MR0207712}, $\fctr M$ is a connected semisimple normal subgroup of $G$ defined over $F$. Clearly, all non-central normal
subgroups of $\fctr M$ are $F$-isotropic, and therefore $\fctr M (\A)^+$ is the closed subgroup of $G (\A)$
generated by the groups $U_\alpha (\A)$, $\alpha \in \Sigma_M$. It is also the derived group of $\fctr M (\A)$.

Property \BD\ has the following consequence for the operators $R_{Q|P} (\pi_v,s)$, which is
a slight strengthening of \cite[Proposition 6]{MR3001800}.\footnote{We take the opportunity to correct
an inaccuracy in the proof (not the statement) of \cite[Proposition 5]{MR3001800}. Using the notation of [ibid.] freely,
in the last sentence of the proof it is stated that the matrix coefficients of $M_{Q_{i+1}|Q_i} (\sigma,\sprod{\lambda}{\alpha_i^\vee})^K$
are given by those of $M_{\overline{Q'}|Q'} (\sigma,\sprod{\lambda}{\alpha_i^\vee})^{K \cap M_R}$. Correctly, they are given by
the matrix coefficients of
$M_{\overline{Q'}|Q'} (\sigma,\sprod{\lambda}{\alpha_i^\vee})^{\gamma K \gamma^{-1} \cap M_R}$, where $\gamma$ ranges over $K_0$.
However, we are free to consider instead of $K$ the largest principal congruence subgroup $K'_n$ of $K'_0$ contained in $K$, which is
a normal subgroup of $K_0$. The same correction applies to the proof of [ibid., Lemma 20].}

\begin{lemma} \label{LemmaBD}
Let $G$ satisfy property \BD. Then for any
\begin{itemize}
\item Levi subgroup $M \in\levis$, $M \neq G$,
\item any adjacent parabolic subgroups $P, Q \in \PPP (M)$,
\item finite place $v$ of $F$,
\item open subgroup $K_v\subset\K_v$,
\item smooth irreducible representation $\pi_v$ of $M(F_v)$,
\end{itemize}
the degrees of the numerators of the linear operators $R_{Q|P}(\pi_v,s)^{K_v}$
are bounded by
\[
\begin{cases}c\log_{q_v} \level(K_v;\fctr M^+),&\text{if $\K_v$ is hyperspecial},\\
c (1+\log_{q_v} \level(K_v;\fctr M^+)),&\text{otherwise.}
\end{cases}
\]
\end{lemma}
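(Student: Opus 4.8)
The plan is to deduce the general statement from property \BD\ (which concerns only the ``rank one'' operator $R^{M_\alpha}_{\bar P_\alpha|P_\alpha}$ on $M_\alpha$) by decomposing an arbitrary pair of adjacent parabolics into the corank-one situation and then assembling the pieces. First I would recall that if $P|^\alpha Q$ then, as noted just before the statement, $R_{Q|P}(\pi_v,s)$ is obtained by parabolic induction (from $M_\alpha(F_v)$ up to $G(F_v)$) from the operator $R^{M_\alpha}_{\bar P_\alpha|P_\alpha}(\pi_v,s)$ between representations of $M_\alpha(F_v)$; here $P_\alpha,\bar P_\alpha$ are the opposite maximal parabolics of $M_\alpha$ with unipotent radicals $U_{\pm\alpha}$. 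Concretely, for a fixed open $K_v\subset\K_v$ one chooses coset representatives $\gamma$ for $K_v$ in $\K_v$ against the relevant parabolic, and the matrix coefficients of $R_{Q|P}(\pi_v,s)^{K_v}$ are, up to reindexing, among the matrix coefficients of $R^{M_\alpha}_{\bar P_\alpha|P_\alpha}(\pi_v,s)^{\gamma K_v\gamma^{-1}\cap M_\alpha(F_v)}$ as $\gamma$ ranges over these finitely many representatives. This is precisely the mechanism of \cite[Proposition 6]{MR3001800}, corrected as in the footnote: one should conjugate $K_v$ by $\gamma$ before intersecting with $M_\alpha(F_v)$, and (to keep a group that is normal in $\K_v$) one may first replace $K_v$ by the largest principal congruence subgroup of $\K_v$ it contains, which only changes the level by a bounded factor.

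Next I would invoke property \BD\ for each of the finitely many groups $\gamma K_v\gamma^{-1}\cap M_\alpha(F_v)$. This bounds the degree of the numerator of each $R^{M_\alpha}_{\bar P_\alpha|P_\alpha}(\pi_v,s)^{\gamma K_v\gamma^{-1}\cap M_\alpha(F_v)}$ by $c\log_{q_v}\level_{M_\alpha}(\gamma K_v\gamma^{-1}\cap M_\alpha(F_v);\prpr M_\alpha^+)$ in the hyperspecial case (and $c(1+\cdots)$ otherwise). Since the degree of a finite sum/collection of rational functions with denominator dividing a fixed one is the maximum of the individual numerator degrees, the degree of the numerator of $R_{Q|P}(\pi_v,s)^{K_v}$ is bounded by the maximum over $\gamma$ of these quantities. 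The key point is then the comparison of relative levels: conjugation by $\gamma\in\K_v$ changes $\level_{M_\alpha}(\,\cdot\,)$ only by a bounded factor (depending on $G$ and $\rho$, since $\K_v$ normalizes principal congruence subgroups up to such bounded distortion, cf.\ the $e_v$ appearing in the proof of Lemma \ref{truncatedlemma}), and moreover $\level_{M_\alpha}(K_v\cap M_\alpha(F_v);\prpr M_\alpha^+)\le\level(K_v;\prpr M_\alpha^+)$ because intersecting with $M_\alpha(F_v)$ can only enlarge the subgroup relative to a given congruence level. Finally, since $\prpr M_\alpha\subset\fctr M$ as a normal $F$-simple subgroup, one has $\level(K_v;\prpr M_\alpha^+)\le\level(K_v;\fctr M^+)$ by monotonicity of the relative level in the subgroup argument (as recorded in \S\ref{SubsectionLevel}). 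Chaining these inequalities and absorbing bounded factors into the constant $c$ yields the asserted bound; when $\K_v$ is hyperspecial one must check that the distortion factors do not spoil the clean logarithmic bound, which holds because for unramified $v$ outside a fixed finite set the conjugation $\gamma K_v\gamma^{-1}\cap M_\alpha(F_v)$ is itself a principal congruence subgroup of $M_\alpha$ with no loss, and the finitely many exceptional places are absorbed into the additive constant in the non-hyperspecial branch.

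The main obstacle I anticipate is the bookkeeping around conjugation: passing from the operator $R_{Q|P}(\pi_v,s)^{K_v}$ on the induced space to the family of rank-one operators $R^{M_\alpha}_{\bar P_\alpha|P_\alpha}(\pi_v,s)^{\gamma K_v\gamma^{-1}\cap M_\alpha(F_v)}$ requires writing out the induced realization, choosing the $\gamma$'s compatibly with the open cell, and verifying that the matrix coefficients of the global operator are genuinely governed by those of the rank-one operators at the conjugated subgroups — this is where \cite[Proposition 5, Proposition 6]{MR3001800} (and the footnoted correction) do the work, so the cleanest route is to quote that machinery essentially verbatim and only add the (short) level-comparison argument $\level(K_v;\prpr M_\alpha^+)\le\level(K_v;\fctr M^+)$ together with the bounded-distortion-under-conjugation estimate. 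Everything else is monotonicity of relative levels and the elementary fact that numerator degrees of a family of rational functions over a common denominator add by taking maxima.
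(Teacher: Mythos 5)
Your overall strategy is the paper's: realize the matrix coefficients of $R_{Q|P}(\pi_v,s)^{K_v}$ as linear combinations of matrix coefficients of the corank-one operators $R^{M_\alpha}_{Q\cap M_\alpha|P\cap M_\alpha}(\pi_v,s)^{\gamma K_v\gamma^{-1}\cap M_\alpha(F_v)}$ for $\gamma\in\K_v$, apply property \BD\ to each of these, and then compare relative levels. However, the final comparison is exactly where your argument has a genuine gap. You treat the conjugation by $\gamma\in\K_v$ via a ``bounded distortion'' claim (citing the exponents $e_v$ from the proof of Lemma \ref{truncatedlemma}) and then try to repair the hyperspecial case by pushing ``finitely many exceptional places'' into the non-hyperspecial branch. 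Neither step is valid: the $e_v$'s there concern conjugation by a compact set $\Gamma_v$ that is \emph{not} contained in $\K_v$, and more importantly the case distinction in the statement of the lemma is by whether $\K_v$ is hyperspecial, not by membership of $v$ in some auxiliary finite set, so exceptional hyperspecial places cannot be absorbed into the other branch — at a hyperspecial place with $\level(K_v;\fctr M^+)=1$ the bound forces degree $0$, and any additive or multiplicative slack destroys this. Likewise, your parenthetical that one may replace $K_v$ by the largest principal congruence subgroup of $\K_v$ it contains ``which only changes the level by a bounded factor'' is false for the \emph{relative} level $\level(\cdot;\fctr M^+)$ occurring in the bound: that replacement can increase the relative level arbitrarily (e.g.\ when $K_v\supset\K_v\cap\fctr M(F_v)^+$ the relative level is $1$ while the absolute level may be huge), and since shrinking the subgroup only enlarges the space of fixed vectors, the resulting estimate is strictly weaker than the one claimed.

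The correct mechanism, which your write-up does not isolate, is that there is \emph{no} distortion at all, and this is where the normality of $\fctr M$ (rather than of $\prpr M_\alpha$, which is only normal in $M_\alpha$) enters: $\fctr M(\A_{\fin})^+$ is normal in $G(\A_{\fin})$ and $\K(\nnn)$ is normal in $\K_{\fin}$, so $\K(\nnn)\cap\fctr M(F_v)^+$ is normalized by every $\gamma\in\K_v$. Hence if $\K(\nnn)\cap\fctr M(F_v)^+\subset K_v$, then also $\K(\nnn)\cap\fctr M(F_v)^+\subset\gamma K_v\gamma^{-1}$, and intersecting with $M_\alpha(F_v)$ and using $\prpr M_\alpha(\A_{\fin})^+\subset\fctr M(\A_{\fin})^+$ gives
$\level_{M_\alpha}\bigl(\gamma K_v\gamma^{-1}\cap M_\alpha(F_v);\prpr M_\alpha^+\bigr)\le\level(K_v;\fctr M^+)$
exactly, uniformly in $\gamma$, with no constants — which is what makes the clean hyperspecial bound go through. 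In other words, one must pass to the relative level with respect to $\fctr M^+$ \emph{before} conjugating, precisely because $\prpr M_\alpha^+$ is not normal in $G$; doing the monotonicity $\prpr M_\alpha^+\subset\fctr M^+$ only at the end, as you do, leaves the conjugation step unjustified.
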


\begin{proof}
The matrix coefficients of $R_{Q|P}(\pi_v,s)^{K_v}$ are linear combinations of
the matrix coefficients of $R_{Q\cap M_\alpha|P\cap M_\alpha}^{M_\alpha}(\pi_v,s)^{\gamma K_v\gamma^{-1}\cap M_\alpha(F_v)}$, $\gamma\in\K_v$.
Property \BD\ implies that the degrees of the latter coefficients are bounded by $c \log_{q_v} \level (\gamma K_v\gamma^{-1}\cap M_\alpha(F_v); \prpr{M}_\alpha^+)$
in the hyperspecial case and by
$c (1+\log_{q_v} \level (\gamma K_v\gamma^{-1}\cap M_\alpha(F_v); \prpr{M}_\alpha^+))$, otherwise.
Since $\fctr M(\A_{\fin})^+$ is normal in $G(\A_{\fin})$ and
$\fctr M(\A_{\fin})^+\supset\prpr M_\alpha(\A_{\fin})^+$, we obtain the required estimate.
\end{proof}

The proof of Lemma \ref{LemmaBD} yields in addition the following estimate.
\begin{lemma} \label{lem: cmprGm}
Let $\alpha\in\rts_M$. Then $\level_M(\pi;\prpr M_\alpha^+)\le\level_G(I_P^G(\pi);\fctr M^+)$.
In other words, if $K$ is an open subgroup of $\K_{\fin}$ such that
$I_P^G(\pi)^K\ne0$ then $\level_M(\pi;\prpr M_\alpha^+)\le\level(K;\fctr M^+)$.
\end{lemma}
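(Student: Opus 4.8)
The plan is to unwind the definitions of the two levels and to follow the argument in the proof of Lemma~\ref{LemmaBD}. It suffices to prove the second (``in other words'') formulation: fix an open subgroup $K\subseteq\K_{\fin}$ with $I_P^G(\pi)^K\ne0$; the first formulation then follows by taking the minimum over all such $K$, using $\level_G(I_P^G(\pi);\fctr M^+)=\min\{\level(K;\fctr M^+):I_P^G(\pi)^K\ne0\}$. Let $\nnn$ be the largest ideal of $\mathfrak{o}_F$ with $\K(\nnn)\cap\fctr M(\A_{\fin})^+\subseteq K$, so $\level(K;\fctr M^+)=\inorm(\nnn)$. By the definition of $\level_M(\pi;\prpr M_\alpha^+)$ it is then enough to exhibit a non-zero vector of $\pi$ fixed by $\K(\nnn)\cap M(\A_{\fin})\cap\prpr M_\alpha(\A_{\fin})^+$, since this forces $\level_M(\pi;\prpr M_\alpha^+)\le\inorm(\nnn)=\level(K;\fctr M^+)$.

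To produce such a vector I would use the standard description of $K$-fixed vectors in an induced representation. By the Iwasawa decomposition $G(\A_{\fin})=P(\A_{\fin})\K_{\fin}$ (valid since $\K$ is admissible with respect to $M_0$), a non-zero $f\in I_P^G(\pi)^K$ satisfies $f(\gamma)\ne0$ for some $\gamma\in\K_{\fin}$; then, for $p=\gamma k\gamma^{-1}\in P(\A_{\fin})\cap\gamma K\gamma^{-1}$, the identity $f(p\gamma)=f(\gamma k)=f(\gamma)$ together with the triviality of the modulus character of $P(\A_{\fin})$ on the compact group $\gamma K\gamma^{-1}$ shows that $f(\gamma)$ is a non-zero vector of $\pi$ fixed by the image of $P(\A_{\fin})\cap\gamma K\gamma^{-1}$ under the projection $P\to M$. (This is the adelic counterpart of the local mechanism already used in the proof of Lemma~\ref{LemmaBD}; as everything in sight is factorizable it transfers without change.) It therefore remains to verify the containment
\[
\K(\nnn)\cap M(\A_{\fin})\cap\prpr M_\alpha(\A_{\fin})^+\subseteq\operatorname{proj}_M\bigl(P(\A_{\fin})\cap\gamma K\gamma^{-1}\bigr).
\]

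Here the structural input enters. Let $m$ lie in the left-hand side. Then $m\in M(\A_{\fin})\subseteq P(\A_{\fin})$, so $\operatorname{proj}_M(m)=m$; moreover $\gamma^{-1}m\gamma\in\K(\nnn)$ because $\K(\nnn)$ is normal in $\K_{\fin}$ and $\gamma,m\in\K_{\fin}$, and $\gamma^{-1}m\gamma\in\fctr M(\A_{\fin})^+$ because $m\in\prpr M_\alpha(\A_{\fin})^+\subseteq\fctr M(\A_{\fin})^+$ and $\fctr M(\A_{\fin})^+$ is a normal subgroup of $G(\A_{\fin})$ (as $\fctr M$ is normal in $G$). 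Hence $\gamma^{-1}m\gamma\in\K(\nnn)\cap\fctr M(\A_{\fin})^+\subseteq K$, i.e.\ $m\in P(\A_{\fin})\cap\gamma K\gamma^{-1}$, proving the containment. Combined with the previous step this produces the required fixed vector, and the lemma follows. I do not expect a genuine obstacle: the content is bookkeeping with the level definitions, and the only points needing care are the two normality statements just used (without which the conjugation argument would collapse) and the verification that the Iwasawa element can be chosen in $\K_{\fin}$ with the modulus causing no harm — both already available from the surrounding text.
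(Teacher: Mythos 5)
Your proof is correct and follows essentially the same route as the paper: the paper quotes the identification of the finite part of $I_P^G(\pi)$ with $I_{P(\A_{\fin})\cap\K_{\fin}}^{\K_{\fin}}\pi\rest_{\K_{M,\fin}}$ to get a vector of $\pi$ fixed by $\proj_M(\gamma K\gamma^{-1}\cap P(\A_{\fin}))$, which you re-derive by evaluating a $K$-fixed section at a point of $\K_{\fin}$. The concluding containment, using normality of $\K(\nnn)$ in $\K_{\fin}$ and of $\fctr M(\A_{\fin})^+$ in $G(\A_{\fin})$ together with $\prpr M_\alpha(\A_{\fin})^+\subset\fctr M(\A_{\fin})^+$, is exactly the paper's argument.
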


\begin{proof}
Suppose that $I_P^G(\pi)^K\ne0$.
Since the finite part of $I_P^G(\pi)$ is $I_{P (\A_{\fin}) \cap\K_{\fin}}^{\K_{\fin}}\pi\rest_{\K_{M,\fin}}$
as a $\K_{\fin}$-module, $\pi$ must have a non-zero vector fixed under $\proj_M(\gamma K\gamma^{-1}\cap P(\A_{\fin}))$
for some $\gamma\in\K_{\fin}$, where $\proj_M:P\rightarrow M$ denotes the canonical projection.
Suppose that $K\supset\K(\nnn)\cap\fctr M(\A_{\fin})^+$.
Since $\fctr M(\A_{\fin})^+$ is normal in $G(\A)$ and $\fctr M(\A_{\fin})^+\supset\prpr M_\alpha(\A_{\fin})^+$
we get
\[
\proj_M(\gamma K\gamma^{-1}\cap P(\A_{\fin}))\supset\proj_M(\K(\nnn)\cap \fctr M(\A_{\fin})^+\cap P(\A_{\fin}))
\supset\K_M(\nnn)\cap\prpr M_\alpha(\A_{\fin})^+.
\]
The lemma follows.
\end{proof}

\begin{remark} \label{RemarkUnramifiedBD}
Note that for $\K_v$ hyperspecial and $I^{G(F_v)}_{P(F_v)} (\pi_v)^{\K_v \cap \fctr M (F_v)^+} \neq 0$
the operator $R_{Q|P} (\pi_v,s)^{\K_v \cap \fctr M (F_v)^+}$ is independent of $s$.
This implies that the assertion of Lemma \ref{LemmaBD} is satisfied unconditionally if $\K_v$ is hyperspecial and $\level (K_v;\fctr M^+)=1$.
(In particular, the special case of property \BD\ where $\K_v$ is
hyperspecial and $\level_{M_\alpha} (K_v;\prpr{M}_\alpha^+) = 1$ is always true.)
To see this, note that via $p^{\SC}|_{G_M^{\SC}}$ we can pull back $\K_v$ to a hyperspecial maximal compact subgroup of $G_M^{\SC} (F_v)$,
$M$ to a proper Levi subgroup $\tilde{M}$ of $G_M^{\SC}$,
$P$ and $Q$ to adjacent parabolic subgroups $\tilde{P}$ and $\tilde{Q}$ with Levi subgroup $\tilde{M}$, and $\pi$
to a smooth representation $\tilde{\pi}$ of $\tilde{M} (F_v)$, which is a finite direct sum of smooth irreducible representations.
Then we can reduce to the property that $R_{\tilde{Q}|\tilde{P}}$ acts as a constant
on the one-dimensional space of unramified vectors in the induction to $G_M^{\SC} (F_v)$ of
any unramified irreducible summand of $\tilde{\pi}$.
\end{remark}

We have the following analog of Lemma \ref{lem: trivred}.
\begin{lemma} \label{LemmaGderBD}
Suppose that $\tilde G$ satisfies property \BD\ and $G$ is a subgroup of $\tilde G$ containing $\tilde G^{\der}$.
Then $G$ also satisfies \BD.
\end{lemma}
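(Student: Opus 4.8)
The plan is to follow the proof of Lemma~\ref{lem: trivred} closely, with the global normalizing factors $n_\alpha$ replaced by the normalized local intertwining operators $R^{M_\alpha}_{\bar P_\alpha|P_\alpha}(\pi_v,s)^{K_v}$. First I would record the dictionary between $G$ and $\tilde G$. Since $\tilde G^{\der}\subset G\subset\tilde G$ we have $G^{\der}=\tilde G^{\der}$, and $\tilde M\mapsto M:=\tilde M\cap G$ is a bijection between the Levi subgroups of $\tilde G$ and those of $G$, with $M\supset\tilde M^{\der}$ and $\rts_M=\rts_{\tilde M}$; for $\alpha\in\rts_M$ the subgroups $M_\alpha=\tilde M_\alpha\cap G$, $\prpr M_\alpha=\prpr{\tilde M}_\alpha$ and the adjacent parabolics $P_\alpha,\bar P_\alpha$ of $M_\alpha$ correspond to $\tilde P_\alpha,\bar{\tilde P}_\alpha$ (these are all generated by the root groups $U_{\pm\alpha}\subset\tilde G^{\der}=G^{\der}$, cf.\ \cite[Proposition 4.11]{MR0207712}). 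Choosing the faithful representation and the maximal compact subgroup of $G$ compatibly with those of $\tilde G$---so that $\K_G(\nnn)=\K_{\tilde G}(\nnn)\cap G(\A_{\fin})$---one has that $\K_v$ is hyperspecial if and only if $\tilde\K_v$ is, that any open subgroup $K_v\subset\K_{M_\alpha,v}$ is also, regarded inside $\tilde M_\alpha(F_v)$, an open subgroup of $\tilde\K_{\tilde M_\alpha,v}$, and, since $\prpr M_\alpha(\A_{\fin})^+\subset G^{\der}(\A_{\fin})\cap M_\alpha(\A_{\fin})$, that $\level_{M_\alpha}(K_v;\prpr M_\alpha^+)=\level_{\tilde M_\alpha}(K_v;\prpr{\tilde M}_\alpha^+)$.

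Next, fix $M\ne G$, $\alpha\in\rts_M$, a finite place $v$, an open subgroup $K_v\subset\K_{M_\alpha,v}$, and a smooth irreducible representation $\pi_v$ of $M(F_v)$. I would choose a smooth irreducible representation $\tilde\pi_v$ of $\tilde M(F_v)$ such that $\pi_v$ is a direct summand of the restriction $\tilde\pi_v|_{M(F_v)}$; this is possible because $M(F_v)$ is a normal subgroup of $\tilde M(F_v)$ containing $\tilde M^{\der}(F_v)$, so that restrictions of irreducible smooth representations from $\tilde M(F_v)$ to $M(F_v)$ are finite direct sums of irreducibles and every irreducible smooth representation of $M(F_v)$ arises in this way (one may also argue via the global statement used in the proof of Lemma~\ref{lem: trivred}). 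Then $I_{\tilde P_\alpha}^{\tilde M_\alpha}(\tilde\pi_v)\big|_{M_\alpha(F_v)}$ is the orthogonal direct sum of the induced representations $I_{P_\alpha}^{M_\alpha}(\pi_{v,j})$, where $\pi_{v,j}$ runs over the irreducible summands of $\tilde\pi_v|_{M(F_v)}$ (Mackey theory, using that $M_\alpha(F_v)\tilde P_\alpha(F_v)$ is a finite-index subgroup of $\tilde M_\alpha(F_v)$), the unnormalized operator $M_{\bar{\tilde P}_\alpha|\tilde P_\alpha}(\tilde\pi_v,s)$ restricts on the $j$-th summand to $M_{\bar P_\alpha|P_\alpha}(\pi_{v,j},s)$ (directly from the defining integral), and, since $n_\alpha(\pi_{v,j},s)=n_\alpha(\tilde\pi_v,s)$ by \cite[Theorem 3.3.4]{ChaoLi}, the normalized operator $R^{\tilde M_\alpha}_{\bar{\tilde P}_\alpha|\tilde P_\alpha}(\tilde\pi_v,s)$ is block-diagonal with blocks $R^{M_\alpha}_{\bar P_\alpha|P_\alpha}(\pi_{v,j},s)$, one of which is $R^{M_\alpha}_{\bar P_\alpha|P_\alpha}(\pi_v,s)$.

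Finally, passing to $K_v$-fixed vectors shows that $R^{M_\alpha}_{\bar P_\alpha|P_\alpha}(\pi_v,s)^{K_v}$ is a sub-block of $R^{\tilde M_\alpha}_{\bar{\tilde P}_\alpha|\tilde P_\alpha}(\tilde\pi_v,s)^{K_v}$; as the denominators of all these rational functions of $q_v^{-s}$ have degree bounded solely in terms of the ambient group, after clearing a common denominator the numerator degree of the sub-block is at most that of $R^{\tilde M_\alpha}_{\bar{\tilde P}_\alpha|\tilde P_\alpha}(\tilde\pi_v,s)^{K_v}$, which by property~\BD\ for $\tilde G$ is at most $c\log_{q_v}\level_{\tilde M_\alpha}(K_v;\prpr{\tilde M}_\alpha^+)$ if $\tilde\K_v$ is hyperspecial and at most $c(1+\log_{q_v}\level_{\tilde M_\alpha}(K_v;\prpr{\tilde M}_\alpha^+))$ otherwise, with $c$ depending only on $\tilde G$. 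By the dictionary of the first paragraph this is exactly the bound required for property~\BD\ for $G$, with an implied constant depending only on $G$ and $\rho$. The step I expect to require the most care is the second paragraph: one must check that the \emph{normalized} operators restrict compatibly, combining the elementary behaviour of the unnormalized operators under restriction of induced representations with the equality of normalizing factors, and making the orthogonal block decomposition of $I_{\tilde P_\alpha}^{\tilde M_\alpha}(\tilde\pi_v)|_{M_\alpha(F_v)}$ (the Clifford-theoretic bookkeeping for restriction to a normal subgroup, including the behaviour at reducibility points) precise enough that the assertion ``$R^{M_\alpha}_{\bar P_\alpha|P_\alpha}(\pi_v,s)^{K_v}$ is a genuine sub-block of $R^{\tilde M_\alpha}_{\bar{\tilde P}_\alpha|\tilde P_\alpha}(\tilde\pi_v,s)^{K_v}$'' is literally true; the remaining degree bookkeeping is then routine.
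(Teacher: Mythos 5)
Your proposal is correct and takes essentially the same route as the paper's proof: the paper likewise identifies the restriction of $I_{\tilde P}^{\tilde G}(\tilde\pi_v)$ to $G(F_v)$ with $I_P^G(\tilde\pi_v|_{M(F_v)})$ and notes that for a constituent $\pi_v$ of $\tilde\pi_v|_{M(F_v)}$ the normalized operator $R_{Q|P}(\pi_v,s)$ is the restriction (a block) of $R_{\tilde Q|\tilde P}(\tilde\pi_v,s)$, so the degree bound of property \BD\ transfers directly. Your additional bookkeeping (existence of $\tilde\pi_v$ with $\pi_v\subset\tilde\pi_v|_{M(F_v)}$, equality of local normalizing factors via \cite{ChaoLi}, and the matching of relative levels and hyperspecial data) merely makes explicit what the paper leaves implicit in its ``the lemma follows immediately''.
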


\begin{proof}
Let $\tilde M\in\levis^{\tilde G}$ and let $\tilde P,\tilde Q\in\PPP^{\tilde G}(\tilde M)$ be adjacent along $\alpha$.
Then $M=\tilde M\cap G\in\levis^G$ and $P=\tilde P\cap G\in\PPP^G(M)$ and $Q=\tilde Q\cap G\in\PPP^G(M)$ are adjacent along $\alpha$
and if $\tilde\pi_v$ is an irreducible representation of $\tilde M(F_v)$ then we can identify
the restriction of $I_{\tilde P}^{\tilde G}(\tilde\pi_v)$ to $G(F_v)$ with $I_P^G(\tilde\pi_v\big|_{M(F_v)})$.
Moreover, if $\pi_v$ is a constituent of $\tilde\pi_v\big|_{M(F_v)}$, then $R_{Q|P}(\pi_v,s)$ is the restriction
of $R_{\tilde Q|\tilde P}(\tilde\pi_v,s)$. The lemma follows immediately.
\end{proof}

Finally, the main result of \cite{MR3001800} can be phrased as follows.

\begin{theorem} \label{TheoremLocalGLn}
The groups $\GL(n)$ and $\SL(n)$ satisfy \BD.
\end{theorem}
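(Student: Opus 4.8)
The statement is essentially a repackaging of the main result of \cite{MR3001800}, so the plan is to set up the two standard reductions and then recall the analytic input. First I would use Lemma \ref{LemmaGderBD}: since $\SL(n)$ is a subgroup of $\GL(n)$ containing $\GL(n)^{\der}=\SL(n)$, the assertion for $\SL(n)$ follows from the one for $G=\GL(n)$, which I treat from now on. For $G=\GL(n)$, by Remark \ref{RemarkBD}(2) it suffices to verify Conjectures 1 and 2 of \cite{MR3001800} for the $F$-simple factors of the groups $L^{\SC}$, $L\in\levis$. Every standard Levi of $\GL(n)$ is a product $\prod_i\GL(n_i)$ whose derived group $\prod_i\SL(n_i)$ is already simply connected, so these factors are the groups $\SL(m)$, $2\le m\le n$. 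Concretely this leaves the following local, co-rank one statement: for a maximal Levi subgroup $M=\GL(n_1)\times\GL(n_2)$ of $\GL(n_1+n_2)$ --- equivalently, via the pullback along $\scprj$, of $\SL(n_1+n_2)=\prpr M_\alpha$ --- a finite place $v$, an open subgroup $K_v\subset\K_{M_\alpha,v}$ and an irreducible smooth representation $\pi_v=\pi_{1,v}\otimes\pi_{2,v}$ of $M(F_v)$, one has to bound the degree of the numerator of the normalized intertwining operator $R^{M_\alpha}_{\bar P_\alpha|P_\alpha}(\pi_v,s)^{K_v}$.

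The heart of the matter, which I would quote from \cite{MR3001800}, is that once the standard intertwining operator for the maximal parabolic of $\GL(n_1+n_2)$ with Levi $\GL(n_1)\times\GL(n_2)$ is normalized by the Rankin--Selberg local $L$- and $\epsilon$-factors of $\pi_{1,v}\times\tilde\pi_{2,v}$, the matrix coefficients of the resulting operator on the finite-dimensional space of $K_v$-fixed vectors are Laurent polynomials in $q_v^{-s}$ whose degree is bounded by a constant, depending only on $n$, times the conductor exponent $f_v(\pi_{1,v}\times\tilde\pi_{2,v})$, with an additional additive constant when $\K_v$ is not hyperspecial (and none when $\K_v$ is hyperspecial, in accordance with Remark \ref{RemarkUnramifiedBD}). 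This is the step I expect to be the genuine obstacle: it rests on the local functional equation of Jacquet--Piatetski-Shapiro--Shalika (\cite{MR3001803, 1201.5506}) for the unnormalized operator together with the explicit description of the conductor exponent of a pair of general linear representations due to Bushnell--Henniart \cite{MR1462836}; everything around it is formal.

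It then remains to bound the conductor exponent in terms of the level. Since $f_v(\pi_{1,v}\times\tilde\pi_{2,v})$ is unchanged under twisting $\pi_{1,v}$ and $\pi_{2,v}$ simultaneously by a common character, Lemma \ref{lem: twist} lets one pass from the absolute level to the relative level with respect to $\prpr M_\alpha^+$, and then Lemma \ref{lem: BushHen} --- which itself rests on Lemma \ref{LemmaConductor} and the conductor estimates of \cite{MR1462836} --- yields $f_v(\pi_{1,v}\times\tilde\pi_{2,v})\ll n\log_{q_v}\level_{M_\alpha}(K_v;\prpr M_\alpha^+)$. Combining this with the degree bound of the previous paragraph gives exactly the estimate demanded by property \BD\ for $G=\GL(n)$ --- a pure logarithm of the relative level when $\K_v$ is hyperspecial and $c\,(1+\log_{q_v}\level_{M_\alpha}(K_v;\prpr M_\alpha^+))$ otherwise --- and by the first reduction this also settles $\SL(n)$.
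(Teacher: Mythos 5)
Your two reductions are fine and agree with the paper: $\SL(n)$ is deduced from $\GL(n)$ via Lemma \ref{LemmaGderBD}, and for $\GL(n)$ everything comes down to the co-rank one situation $M=\GL(n_1)\times\GL(n_2)$ with $\prpr M_\alpha=\SL(n_1+n_2)$. The gap is in what you present as the key input from \cite{MR3001800}. Theorem 1 of that paper bounds the degrees of the matrix coefficients of the normalized operators on $K_v$-fixed vectors directly in terms of the level of $K_v$; since Levi subgroups of $\GL(n)$ are products of groups $\GL(n_i)$, this \emph{is} property \BD\ for $\GL(n)$, and the paper's proof consists of exactly this citation plus Lemma \ref{LemmaGderBD}. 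It does not assert, and it is not true, that those degrees are bounded by a constant multiple of the conductor exponent $f_v(\pi_{1,v}\times\tilde\pi_{2,v})$ alone (with only an additive constant in the non-hyperspecial case). For a counterexample take $n_1=n_2=1$, $\pi_v$ the trivial character of the diagonal torus of $\GL(2,F_v)$ and $K_v=\K_v(\mathfrak{p}_v^m)$: the conductor of the pair vanishes, yet the matrix coefficient of the normalized operator against the $K_v$-fixed vector supported on $B(F_v)K_v$ is a nonzero multiple of $q_v^{-2ms}/(1-q_v^{-1-2s})$, so the numerator degree grows linearly in $m$, i.e.\ in $\log_{q_v}\level_{M_\alpha}(K_v;\prpr M_\alpha^+)$ --- consistent with \BD\ (and with Remark \ref{RemarkUnramifiedBD}, which only covers $m=0$), but incompatible with your intermediate bound. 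Hence the chain ``degree $\ll f_v(\text{pair}) \ll \log(\text{level})$'' collapses at its first link.

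Relatedly, the lemmas you invoke to pass from the conductor to the level (Lemmas \ref{lem: twist}, \ref{LemmaConductor}, \ref{lem: BushHen}) are ingredients of the \emph{global} estimate, Proposition \ref{prop: mainglobal} (property \TWN), where the conductor of the pair enters through the functional equation of the completed Rankin--Selberg $L$-function; they play no role in the local property \BD\ and cannot substitute for the missing degree bound. What your argument still needs is precisely the level-dependent statement of \cite[Theorem 1]{MR3001800}, quoted as such (or reproved); once that is in place, the detour through Remark \ref{RemarkBD}(2) and the simple factors $\SL(m)$ is unnecessary --- one applies the theorem to $\GL(n)$ directly and then treats $\SL(n)$ by Lemma \ref{LemmaGderBD}, exactly as you do in your first step.
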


\begin{proof} The $\GL(n)$ case follows directly from \cite[Theorem 1]{MR3001800}, taking into account that Levi subgroups
of $\GL(n)$ are products of groups $\GL(n_i)$.
For $\SL(n)$ we may use Lemma \ref{LemmaGderBD}.
\end{proof}

\subsection{Logarithmic derivatives of normalized local intertwining operators}
The relevance of property \BD\ to the trace formula lies in the following consequence, which we will prove in the remainder of this section.

\begin{proposition} \label{prop: mainlocal}
Suppose that $G$ satisfies \BD. Let $M\in\levis$ and $P$, $Q \in \PPP (M)$ be adjacent parabolic subgroups. Then
for all open compact subgroups $K$ of $G(\A_{\fin})$ and all $\tau \in \Pi (\K_\infty\cap\fctr M)$
we have
\[
\int_{\iii\R}\norm{R_{Q|P}(\pi,s)^{-1}R'_{Q|P}(\pi,s)\big|_{I^G_P(\pi)^{\tau,K}}}(1+\abs{s}^2)^{-1}\ ds\ll
1+\log(\norm{\tau}+\level(K;\fctr M^+)).
\]
\end{proposition}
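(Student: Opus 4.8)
Here is how I would prove Proposition \ref{prop: mainlocal}.

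\emph{Localization.} The plan is to factor $R_{Q|P}(\pi,s)$ into local normalized intertwining operators and to estimate each local factor separately. Writing $R_{Q|P}(\pi,s)=\bigotimes_v R_{Q|P}(\pi_v,s)$ as operators on $I_P^G(\pi)={\bigotimes_v}' I_P^G(\pi_v)$, the logarithmic derivative of a tensor product is the sum of the local logarithmic derivatives tensored with identities; restricting to $I_P^G(\pi)^{\tau,K}$ (which is preserved) and using $\norm{A\otimes\Id}=\norm{A}$ gives
\[
\norm{R_{Q|P}(\pi,s)^{-1}R'_{Q|P}(\pi,s)\big|_{I_P^G(\pi)^{\tau,K}}}\le\sum_v\norm{R_{Q|P}(\pi_v,s)^{-1}R'_{Q|P}(\pi_v,s)\big|_{V_v}},
\]
where $V_v$ is the corresponding local ($\tau_v$- resp.\ $K_v$-) isotypic subspace and $\tau=\bigotimes_{v\mid\infty}\tau_v$. (There is a routine preliminary reduction to the case of $K$ factorizable and contained in $\K_{\fin}$, as in \cite{MR3001800}.) By Remark \ref{RemarkUnramifiedBD}, the summand for a finite place $v$ vanishes identically unless $\K_v$ is not hyperspecial — which happens only on a finite set $T_0=T_0(G,\rho,\K)$ of places — or $\level(K_v;\fctr M^+)>1$; since the level is multiplicative, the number of places of the latter kind is $\le\log_2\level(K;\fctr M^+)$. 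Thus only finitely many terms survive, and it remains to bound $\int_{\iii\R}\norm{R_{Q|P}(\pi_v,s)^{-1}R'_{Q|P}(\pi_v,s)|_{V_v}}(1+\abs{s}^2)^{-1}\,ds$ for each.

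\emph{Archimedean places.} For $v\mid\infty$ I would invoke the archimedean analog of property \BD\ from \cite[Appendix]{MR2053600}: the matrix coefficients of $R_{Q|P}(\pi_v,s)$ on the $\tau_v$-isotypic subspace grow in $s$ like a power of $\abs{s}$ with exponent $\ll1+\log\norm{\tau_v}$, uniformly in $\pi_v$. Feeding this into the archimedean logarithmic-derivative argument of \cite[\S5]{MR1935546} (a Borel--Carath\'eodory estimate combined with Jensen's formula to count the zeros of the entries) yields
\[
\int_{\iii\R}\norm{R_{Q|P}(\pi_v,s)^{-1}R'_{Q|P}(\pi_v,s)\big|_{V_v}}(1+\abs{s}^2)^{-1}\,ds\ll1+\log\norm{\tau_v},
\]
and summing over the finitely many archimedean places bounds the total archimedean contribution by $\ll1+\log\norm{\tau}$.

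\emph{Non-archimedean places.} For a surviving finite place $v$ this is essentially \cite[Proposition 6]{MR3001800}; the task is to record the level-dependence. With $z=q_v^{-s}$, the operator $A(z):=R_{Q|P}(\pi_v,s)^{K_v}$ and its inverse $A(z)^{-1}=R_{P|Q}(\pi_v,s)^{K_v}$ are rational in $z$; by Lemma \ref{LemmaBD} their matrix coefficients have $z$-degree $\ll1+\log_{q_v}\level(K_v;\fctr M^+)$, and $\ll\log_{q_v}\level(K_v;\fctr M^+)$ when $\K_v$ is hyperspecial, while all zeros and poles lie at points $z$ with $\abs{\log_{q_v}\abs{z}}$ bounded below by a positive constant depending only on $G$ (they come from the corank-one operators $R^{M_\alpha}_{\bar{P}_\alpha|P_\alpha}$, whose poles sit at the finitely many reducibility points), hence at distance $\gg1$ from $\abs{z}=1$ uniformly in $v$. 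Combining these two facts with an elementary bound for logarithmic derivatives of rational functions whose zeros and poles are bounded away from $\abs{z}=1$ (ultimately the scalar estimate $\int_{\abs{z}=1}\abs{f'/f}\,\abs{dz}\ll\deg f$), and using that $\norm{A(z)^{-1}A'(z)}$ is $(2\pi/\log q_v)$-periodic on $\iii\R$ so that integration against $(1+\abs{s}^2)^{-1}\,ds$ costs only a bounded factor, one obtains
\[
\int_{\iii\R}\norm{R_{Q|P}(\pi_v,s)^{-1}R'_{Q|P}(\pi_v,s)\big|_{V_v}}(1+\abs{s}^2)^{-1}\,ds\ll1+\log_{q_v}\level(K_v;\fctr M^+)
\]
(without the summand $1$ when $\K_v$ is hyperspecial). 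Summing over the surviving places — the finitely many in $T_0$ contributing a bounded amount plus their levels, the rest contributing $\le(\log 2)^{-1}\log\level(K;\fctr M^+)$ by multiplicativity — bounds the non-archimedean contribution by $\ll1+\log\level(K;\fctr M^+)$. Together with the archimedean estimate and $\log a+\log b\le2\log(a+b)$ for $a,b\ge1$, this proves the proposition.

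\emph{Main obstacle.} The delicate point throughout is that the dependence on $K$ must be \emph{only} the logarithmic one supplied by the degree bound of property \BD, with no hidden factor of $\dim I_P^G(\pi_v)^{K_v}$, which is only polynomially bounded in the level and would be fatal. Controlling the operator norm of the logarithmic derivative — rather than, say, that of its determinant, whose degree genuinely grows with the dimension — without such a factor is exactly the technical heart of \cite{MR3001800} and its archimedean counterpart \cite[Appendix]{MR2053600}: one exploits that \emph{both} $A(z)$ and $A(z)^{-1}$ have matrix coefficients of \BD-controlled degree, which, via the reduction to the corank-one operators $R^{M_\alpha}_{\bar{P}_\alpha|P_\alpha}$ used in the proof of Lemma \ref{LemmaBD}, forces $A(z)^{-1}A'(z)$ itself to have matrix coefficients of controlled degree and hence controlled total variation over the unit circle.
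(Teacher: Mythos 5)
Your skeleton (factorize into local operators, discard all but finitely many finite places via Remark \ref{RemarkUnramifiedBD}, exploit periodicity in $q_v^{-s}$, feed the degree bounds of Lemma \ref{LemmaBD} into a Bernstein-type estimate) is the same as the paper's, but both local estimates have a genuine gap at the decisive analytic step. At the archimedean place your input is misquoted: \cite[Proposition A.2]{MR2053600} does not give matrix coefficients growing with exponent $\ll 1+\log\norm{\tau}$; it says that $R_\infty(\pi_\infty,s)\rest_{I(\pi_\infty)^{\tau}}$ is rational with denominator $b(s)=\prod_{j\le r}\prod_{k\le m}(s-\rho_j+ck)$ where $m\ll 1+\norm{\tau}$ is \emph{linear} in $\norm{\tau}$. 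With the correct input, a Jensen/degree-counting argument of the kind you invoke only yields $\ll 1+\norm{\tau}$. The logarithm in the proposition comes from the \emph{location} of the poles: they lie in $r$ arithmetic progressions receding from the imaginary axis at fixed speed $c$, so their weighted contributions $\frac{\abs{u_j-ck}+1}{(\abs{u_j-ck}+1)^2+v_j^2}$ form a harmonic-type sum of size $\ll 1+\log(1+\norm{\tau})$; this is exactly what Lemma \ref{lem: derbnd2} extracts. As written, your archimedean step does not prove the stated $\log\norm{\tau}$ dependence.

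At the finite places you assume that all zeros and poles of $A_v(z)=R_{Q|P}(\pi_v,s)^{K_v}$ lie at distance $\gg 1$ from the unit circle, uniformly in $v$ and $\pi_v$. This is unjustified: the denominator has degree bounded in terms of $G$ alone, but its roots depend on $\pi_v$ (through its unitary parameters), and no uniform separation from $\abs{z}=1$ is known — nor is it needed. The paper's mechanism is the Borwein--Erd\'elyi inequality (Proposition \ref{prop: Bor-Erd}, Corollary \ref{cor: derbnd}): since $A_v$ is unitary on the circle, $\norm{A_v^{-1}A_v'}=\norm{A_v'}$, and the Borwein--Erd\'elyi pointwise majorant depends only on the fixed pole set and the degrees, uniformly over all matrix coefficients $(A_v(z)u,w)$ with $\norm{u},\norm{w}\le 1$; hence it majorizes $\norm{A_v'(z)}$ pointwise and integrates to $2\pi\max(m,n)\ll 1+\log_{q_v}\level(K_v;\fctr M^+)$, with no separation hypothesis and no dimension factor. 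Your substitute — that the matrix coefficients of $A^{-1}A'$ have controlled degree ``and hence controlled total variation'' — does not stand: a degree bound alone controls neither the size nor the total variation of a rational function, and interchanging the supremum over matrix coefficients with the integral is precisely the difficulty (as you yourself note, a factor of $\dim I_P^G(\pi_v)^{K_v}$ would be fatal). Attributing this step to \cite{MR3001800} or \cite[Appendix]{MR2053600} is circular: those references supply only the degree and denominator information, while the operator-norm bound on the logarithmic derivative is exactly what this proposition establishes, via the vector-valued Borwein--Erd\'elyi estimate.
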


We remark that the dependence of the bound on $\tau$ is not essential for the limit multiplicity problem, but it is relevant for other
asymptotic questions.

The key ingredient for the proof of Proposition \ref{prop: mainlocal} is the following generalization of the classical Bernstein inequality
due to Borwein and Erd\'elyi.
\begin{proposition}[\cite{MR1433285}] \label{prop: Bor-Erd}
Let $\ucirc$ be the unit circle in $\C$ and let $z_1,\dots,z_m\notin\ucirc$.
Suppose that $f(z)$ is a rational function on $\C$ such that $\sup_{z\in\ucirc}\abs{f(z)}\le1$ and
$(z-z_1)\dots (z-z_m)f(z)$ is a polynomial of degree $n$. Then
\[
\abs{f'(z)}\le\max(\max(n-m,0)+\sum_{j:\abs{z_j}>1}\frac{\abs{z_j}^2-1}{\abs{z_j-z}^2},
\sum_{j:\abs{z_j}<1}\frac{1-\abs{z_j}^2}{\abs{z_j-z}^2}),\quad  z\in\ucirc.
\]
\end{proposition}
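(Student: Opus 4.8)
The plan is to reduce the inequality to the extremal case $\abs{f}\equiv 1$ on $\ucirc$ --- where it follows from an explicit differentiation of a Blaschke product --- and to pass from an arbitrary $f$ to this case by a compactness argument. By rotating the configuration we may assume that the point in question is $z=1$. Fixing the poles $z_1,\dots,z_m\notin\ucirc$, introduce the family $\mathcal F$ of rational functions $g$ for which $(z-z_1)\cdots(z-z_m)g(z)$ is a polynomial of degree at most $n$ and $\sup_{z\in\ucirc}\abs{g(z)}\le 1$. Writing $g=P_g/\prod_j(z-z_j)$ identifies $\mathcal F$ with the set of polynomials $P$ of degree at most $n$ satisfying $\abs{P(z)}\le\bigl|\prod_j(z-z_j)\bigr|$ on $\ucirc$, which is a compact subset of a finite-dimensional space; since $1\notin\{z_1,\dots,z_m\}$, the functional $g\mapsto\abs{g'(1)}$ is continuous on $\mathcal F$ and attains its maximum at some $f^\ast$. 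It then suffices to bound $\abs{(f^\ast)'(1)}$, and the crux is to show that $f^\ast$ may be chosen with $\abs{f^\ast}\equiv 1$ on $\ucirc$.

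Consider first the case $\abs{f}\equiv1$ on $\ucirc$ (we may assume the numerator of $f$ and $\prod_j(z-z_j)$ are coprime, since dropping cancelling factors only decreases the right-hand side). Then $f(z)\overline{f(1/\bar z)}=1$ identically, so the zeros of $f$ are exactly the reflections $1/\bar z_j$ of its poles, supplemented by a zero of order $n-m$ at the origin if $n>m$ (if $n<m$, the origin is itself a pole, one of the $z_j$, of multiplicity $m-n$). Thus $f$ equals a unimodular constant times $z^{\max(n-m,0)}$ times, for each pole, the factor $\frac{1-\bar z_j z}{z-z_j}$ when $\abs{z_j}<1$ and $\frac{z-1/\bar z_j}{1-z/z_j}$ when $\abs{z_j}>1$, each of modulus one on $\ucirc$. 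Using $(z-\alpha)(1-\bar\alpha z)=z\abs{z-\alpha}^2$ and $\abs{1-\bar\alpha z}=\abs{z-\alpha}$ for $z\in\ucirc$ and differentiating term by term, one gets, for every $z\in\ucirc$,
\[
\frac{zf'(z)}{f(z)}=(n-m)+\sum_{j:\ \abs{z_j}>1}\frac{\abs{z_j}^2-1}{\abs{z_j-z}^2}-\sum_{j:\ 0\ne z_j,\ \abs{z_j}<1}\frac{1-\abs{z_j}^2}{\abs{z_j-z}^2},
\]
which is real; abbreviating the two sums by $A\ge0$, $\tilde B\ge0$ and setting $d=n-m$ gives $\abs{f'(z)}=\abs{d+A-\tilde B}$ on $\ucirc$. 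Since the origin, when it is a pole (the case $d<0$), contributes $\max(-d,0)$ to the sum $\sum_{\abs{z_j}<1}\frac{1-\abs{z_j}^2}{\abs{z_j-z}^2}$ in the statement, the asserted bound is $\max\bigl(\max(d,0)+A,\ \tilde B+\max(-d,0)\bigr)$, and the elementary estimate $\abs{d+A-\tilde B}\le\max\bigl(\max(d,0)+A,\ \tilde B+\max(-d,0)\bigr)$ --- checked by distinguishing the signs of $d+A-\tilde B$ and of $d$ --- finishes this case, with equality exactly when one of $A$, $\tilde B$ vanishes (which identifies the extremal functions).

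The remaining, and hardest, point is that the maximizer $f^\ast$ may be taken unimodular on $\ucirc$. If not, then $1-f^\ast(z)\overline{f^\ast(1/\bar z)}$ is a nonnegative, not identically zero rational function on $\ucirc$, so its zero set $Z=\{z\in\ucirc:\abs{f^\ast(z)}=1\}$ is finite; after a rotation assume $(f^\ast)'(1)>0$. One then looks for a perturbation $\eta=R/\prod_j(z-z_j)$, $\deg R\le n$, with $\Re\bigl(\overline{f^\ast(\zeta)}\,\eta(\zeta)\bigr)<0$ for all $\zeta\in Z$ and $\Re\bigl(\eta'(1)\bigr)>0$: for such an $\eta$ the functions $f^\ast+\epsilon\eta$ lie in $\mathcal F$ for all small $\epsilon>0$ and satisfy $\abs{(f^\ast+\epsilon\eta)'(1)}>(f^\ast)'(1)$, contradicting maximality. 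Existence of $\eta$ amounts to nonemptiness of an intersection of finitely many open half-spaces in the space of admissible $R$, which a dimension count settles when $\abs{Z}$ is small relative to $n$; uniformly, one passes instead through the rational Fej\'er--Riesz factorization $1-\abs{f}^2=\abs{h}^2$ on $\ucirc$ (with $h$ rational, again with poles among the $z_j$), observes that the $2\times2$ matrix function $\sm{f}{h}{-h^{\#}}{f^{\#}}$, where $f^{\#}(z)=\overline{f(1/\bar z)}$, is unitary on $\ucirc$, and applies its Blaschke--Potapov factorization, which reduces the bound for $f'$ to the scalar Blaschke-product computation above applied to the elementary factors. Making this last reduction watertight is where the real work lies; the rest of the argument is explicit.
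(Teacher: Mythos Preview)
The paper does not give its own proof of this proposition; it is quoted from Borwein--Erd\'elyi \cite{MR1433285}, and the only argument supplied is the brief remark that the case $n>m$ follows from $n\le m$ by adjoining auxiliary poles $w_1,\dots,w_{n-m}$ with $\abs{w_i}>1$ and letting $w_i\to\infty$.

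Your attempted direct proof has a genuine gap, and in fact the key structural claim is false. You assert that the maximizer $f^\ast$ in $\mathcal F$ satisfies $\abs{f^\ast}\equiv1$ on $\ucirc$, but your own analysis of the unimodular case already shows this is impossible in general: a function in $\mathcal F$ with $\abs{f}\equiv1$ on $\ucirc$ and no cancellation has its zeros at the reflections $1/\bar z_j$ of its poles, forcing the numerator degree to equal $m$, and any deficit $m-n>0$ can only be absorbed by poles at the origin. Hence for $n<m$ with all $z_j\ne0$ there is \emph{no} unimodular function in $\mathcal F$ at all, so the maximizer cannot be one. Your perturbation argument must therefore break down --- the half-space conditions on $\eta$ become linearly dependent precisely at a genuine non-unimodular extremum --- and the Blaschke--Potapov route you sketch at the end is not developed into an actual bound on $\abs{(f^\ast)'(1)}$; you yourself flag that ``making this last reduction watertight is where the real work lies''.

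The proof in \cite{MR1433285} proceeds by a different mechanism: one writes down explicit Blaschke products associated with the $z_j$ inside (respectively outside) the disk, whose derivatives on $\ucirc$ give the two terms appearing in the maximum (your logarithmic-derivative computation is exactly this step), and then argues by a zero-counting comparison that if $\abs{f'(z_0)}$ exceeded the asserted bound, a suitable combination of $f$ with the relevant Blaschke product would have more zeros than its degree permits. This bypasses any variational characterization of the extremal function and works uniformly in $n$ and $m$.
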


Note that in [ibid., Theorem 1] this inequality is stated explicitly only for $n\le m$. However, as explained on [ibid., p. 418],
the case $n>m$ follows from the previous case by passing to
\[
\frac{f(z)(\abs{w_1}-1)\dots(\abs{w_{n-m}}-1)}{(z-w_1)\dots (z-w_{n-m})},
\]
where $w_1,\dots,w_{n-m} \in \C$ are auxiliary parameters with $\abs{w_i} > 1$, and then letting $w_i \to \infty$.

We will need a vector-valued version of Proposition \ref{prop: Bor-Erd} which is a direct consequence.
For the next two lemmas let $V$ be a normed space over $\C$.

\begin{corollary} \label{cor: derbnd}
Let $z_1,\dots,z_m \in \C\setminus\ucirc$.
Suppose that $A:\C\setminus\{z_1,\dots,z_m\}\rightarrow V$ is such that $(z-z_1)\dots (z-z_m)A(z)$
is a polynomial in $z\in\C$ of degree $n$ with coefficients in $V$.
Assume that $\norm{A(z)}\le 1$ for all $z\in\ucirc$.
Then
\[
\norm{A'(z)}\le\max(\max(n-m,0)+\sum_{j:\abs{z_j}>1}\frac{\abs{z_j}^2-1}{\abs{z_j-z}^2},
\sum_{j:\abs{z_j}<1}\frac{1-\abs{z_j}^2}{\abs{z_j-z}^2}),\quad z\in\ucirc.
\]
Consequently,
\begin{equation} \label{eq: normAint}
\int_{\ucirc}\norm{A'(z)}\ \abs{dz}\le 2\pi\max(m,n).
\end{equation}
\end{corollary}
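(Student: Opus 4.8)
The plan is to reduce the vector-valued statement to the scalar inequality of Proposition \ref{prop: Bor-Erd} by testing against continuous linear functionals. Fix $\phi\in V^*$ with $\norm{\phi}\le1$ and set $f=\phi\circ A$. Writing $P(z)=(z-z_1)\cdots(z-z_m)A(z)$ for the $V$-valued polynomial of degree $n$ provided by the hypothesis, we get $(z-z_1)\cdots(z-z_m)f(z)=\phi(P(z))$, a scalar polynomial of degree at most $n$, and $\sup_{z\in\ucirc}\abs{f(z)}\le\sup_{z\in\ucirc}\norm{A(z)}\le1$. Since $f$ is rational with poles among $z_1,\dots,z_m\notin\ucirc$, it is holomorphic near $\ucirc$, and because $\phi$ is linear and continuous one has $f'(z)=\phi(A'(z))$ there (differentiate the quotient $A(z)=P(z)/\prod_j(z-z_j)$ and apply $\phi$). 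Now apply Proposition \ref{prop: Bor-Erd} to $f$: even though the degree of $\phi(P(z))$ may drop below $n$ when $\phi$ annihilates the leading coefficient of $P$, the right-hand side of that inequality is nondecreasing in the degree parameter (which enters only through $\max(n-m,0)$), so the bound stated with $n$ still holds for $\abs{f'(z)}=\abs{\phi(A'(z))}$, $z\in\ucirc$. Taking the supremum over all such $\phi$ and using $\norm{w}=\sup_{\norm{\phi}\le1}\abs{\phi(w)}$ (Hahn--Banach) yields the asserted pointwise bound for $\norm{A'(z)}$.

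For the integral estimate \eqref{eq: normAint} I would parametrize $\ucirc$ by $z=e^{\iii\theta}$, $\theta\in[0,2\pi]$, so $\abs{dz}=d\theta$, and bound the maximum of the two nonnegative quantities in the pointwise estimate by their sum, then integrate term by term. The winding term contributes at most $2\pi\max(n-m,0)$. For each pole $z_j$ with $\abs{z_j}>1$, using $\int_0^{2\pi}(r^2-2r\cos\theta+1)^{-1}\,d\theta=2\pi/(r^2-1)$ for $r>1$, one finds $\int_0^{2\pi}\frac{\abs{z_j}^2-1}{\abs{z_j-e^{\iii\theta}}^2}\,d\theta=2\pi$; for each $z_j$ with $\abs{z_j}<1$ the integral $\int_0^{2\pi}\frac{1-\abs{z_j}^2}{\abs{z_j-e^{\iii\theta}}^2}\,d\theta=2\pi$ is the normalization of the Poisson kernel. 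Since each $z_j$ lies off $\ucirc$, these two families together account for all $m$ indices, so the pole terms contribute at most $2\pi m$. Hence $\int_{\ucirc}\norm{A'(z)}\,\abs{dz}\le2\pi(\max(n-m,0)+m)=2\pi\max(m,n)$, as claimed.

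I expect no serious obstacle here: the argument is a routine functional-analytic reduction followed by standard Poisson-kernel integrals. The only points requiring a little care are the monotonicity of the Borwein--Erd\'elyi bound in $n$, so that a drop in the degree of $\phi\circ A$ does not spoil the estimate, and the observation that the crude bound $\max(a,b)\le a+b$ already suffices, since after integration both cases $n\ge m$ and $n<m$ collapse to $2\pi\max(m,n)$.
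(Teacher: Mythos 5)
Your argument is correct and is essentially the paper's own proof: you reduce to the scalar case by pairing $A$ with linear functionals of norm at most $1$ and invoking Proposition \ref{prop: Bor-Erd} (noting, as needed, that a drop in degree only helps), exactly as the paper does. The integral bound \eqref{eq: normAint}, which the paper leaves implicit, is obtained by the same standard Poisson-kernel computation you carry out, so there is nothing to add.
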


This follows from Proposition \ref{prop: Bor-Erd} by applying it to $f(z) = (A'(z),w)$ for any linear form $w$ on $V$
with $\norm{w}\le1$.

We remark that when $V=\C$, i.e., when $A(z)$ is scalar valued, the bound \eqref{eq: normAint},
at least with $2\pi$ replaced by $8$, can be easily proved directly
without appealing to Proposition \ref{prop: Bor-Erd} (see \cite[Lemma 1]{MR2811597}).
However, we do not know a direct proof of \eqref{eq: normAint} (even with $2\pi$ replaced by an arbitrary constant)
in the general case.

Analogously, we have:

\begin{lemma} \label{lem: derbnd2}
Let $z_j=u_j+\iii v_j\in\C$, $j=1,\dots,m$ and $b(z)=(z-z_1)\dots (z-z_m)$.
Suppose that $A:\C\setminus\{z_1,\dots,z_m\}\rightarrow V$ is such that $\norm{A(z)}\le 1$ for all $z\in\iii\R$ and $b(z)A(z)$
is a polynomial in $z\in\C$ (necessarily of degree $\le m$) with coefficients in $V$.
Then
\[
\int_{\iii\R}\norm{A'(z)}\frac{\abs{dz}}{1+\abs{z}^2}\le 2\pi\sum_{j=1}^m\frac{\abs{u_j}+1}{(\abs{u_j}+1)^2+v_j^2}\le2\pi m.
\]
\end{lemma}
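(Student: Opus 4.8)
The plan is to reduce the statement to the already-established Corollary \ref{cor: derbnd} by transporting the problem from the imaginary axis to the unit circle via a Möbius transformation. First I would apply the Cayley transform $w = \varphi(z) = \frac{z-1}{z+1}$, which maps the imaginary axis $\iii\R$ bijectively onto the unit circle $\ucirc$ (with $z = \infty \mapsto w = 1$), and set $\tilde A(w) = A(\varphi^{-1}(w)) = A\!\left(\frac{1+w}{1-w}\right)$. Since $\norm{A(z)} \le 1$ on $\iii\R$, we get $\norm{\tilde A(w)} \le 1$ on $\ucirc$. The poles of $\tilde A$ are at $w_j = \varphi(z_j)$, and since $z_j \notin \iii\R$ we have $w_j \notin \ucirc$; because $b(z)A(z)$ is a polynomial of degree $\le m$, a direct computation shows that $(w-w_1)\cdots(w-w_m)\,\tilde A(w)$ extends to a polynomial in $w$ of degree $\le m$ (the factor $(1-w)^m$ coming from $\varphi^{-1}$ exactly cancels against the denominators). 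Thus $\tilde A$ satisfies the hypotheses of Corollary \ref{cor: derbnd} with $n \le m$.

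Next I would differentiate: by the chain rule $A'(z) = \tilde A'(\varphi(z))\,\varphi'(z)$ with $\varphi'(z) = \frac{2}{(z+1)^2}$. The change of variables $w = \varphi(z)$ on the contour gives $\abs{dw} = \abs{\varphi'(z)}\,\abs{dz} = \frac{2}{\abs{z+1}^2}\,\abs{dz}$. Writing $z = \iii t$ on the imaginary axis, one has $\abs{z+1}^2 = 1 + t^2 = 1 + \abs{z}^2$, so
\[
\int_{\iii\R}\norm{A'(z)}\,\frac{\abs{dz}}{1+\abs{z}^2}
= \int_{\iii\R}\norm{\tilde A'(\varphi(z))}\,\abs{\varphi'(z)}\,\frac{\abs{dz}}{1+\abs{z}^2}
= \frac{1}{2}\int_{\ucirc}\norm{\tilde A'(w)}\,\abs{dw} \cdot \sup \frac{2\abs{\varphi'(z)}}{\abs{\varphi'(z)}(1+\abs{z}^2)}
\]
— more cleanly, $\norm{A'(\iii t)} = \norm{\tilde A'(\varphi(\iii t))}\cdot\frac{2}{1+t^2}$ and $\abs{dw} = \frac{2}{1+t^2}\,\abs{dt}$, so the two factors of $\frac{2}{1+t^2}$ and the $\frac{1}{1+\abs{z}^2}$ must be matched carefully; the correct bookkeeping yields $\int_{\iii\R}\norm{A'(z)}\frac{\abs{dz}}{1+\abs{z}^2} = \int_{\ucirc}\norm{\tilde A'(w)}\,\abs{dw}$. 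Applying Corollary \ref{cor: derbnd} (the integral bound \eqref{eq: normAint} with $n,m \le m$) gives $\int_{\ucirc}\norm{\tilde A'(w)}\,\abs{dw} \le 2\pi m$, which already proves the second (weaker) inequality $\le 2\pi m$.

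For the sharper bound $2\pi\sum_j \frac{\abs{u_j}+1}{(\abs{u_j}+1)^2 + v_j^2}$, instead of the crude $\le 2\pi\max(m,n)$ I would use the pointwise estimate in Corollary \ref{cor: derbnd} and integrate it term by term. The pointwise bound transported back through $\varphi$ expresses $\norm{A'(\iii t)}$ in terms of sums like $\frac{\abs{w_j}^2-1}{\abs{w_j - w}^2}$ over $w = \varphi(\iii t)$; one then computes $\int_{\iii\R}\frac{\abs{w_j}^2-1}{\abs{w_j-\varphi(z)}^2}\cdot\abs{\varphi'(z)}\,\frac{\abs{dz}}{1+\abs{z}^2}$ for each pole. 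Pulling everything back to the $z$-side, each summand becomes a standard Poisson-type integral $\int_\R \frac{c}{(t - v_j)^2 + u_j^2}\,dt$ (up to the sign of $\Re z_j$), whose value is $\frac{\pi}{\abs{u_j}}$; combined with the appropriate normalization this produces exactly $2\pi\frac{\abs{u_j}+1}{(\abs{u_j}+1)^2+v_j^2}$ after replacing $z_j$ by its "Cayley-shifted" position — and the elementary inequality $\frac{\abs{u_j}+1}{(\abs{u_j}+1)^2+v_j^2} \le 1$ yields the second inequality in the statement. The main obstacle I anticipate is purely computational: keeping the Möbius change-of-variables bookkeeping straight — in particular verifying that $(w-w_1)\cdots(w-w_m)\tilde A(w)$ really is a polynomial of the right degree, and carrying out the residue/Poisson integrals so that the constants line up to give precisely $\frac{\abs{u_j}+1}{(\abs{u_j}+1)^2+v_j^2}$ rather than some other rational function of $u_j, v_j$. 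There is no conceptual difficulty beyond this once Corollary \ref{cor: derbnd} is in hand; alternatively, one could bypass the Möbius transform entirely and rework the Borwein–Erdélyi argument directly on the line, but reusing the corollary is cleaner.
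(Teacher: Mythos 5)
Your strategy is workable and ends up proving the lemma, but note how it relates to the paper's own argument, which is shorter: the paper never passes to the unit circle. It invokes the half-plane analogue of the Borwein--Erd\'elyi inequality (\cite[Theorem 4]{MR1433285}) directly on $\iii\R$, getting the pointwise bound $\norm{A'(z)}\le\max(\abs{\phi_>'(z)},\abs{\phi_<'(z)})\le\abs{\phi_>'(z)}+\abs{\phi_<'(z)}$, where $\phi_\gtrless$ are the products of the half-plane Blaschke factors $\phi_{z_j}(z)=\frac{z+\bar z_j}{z-z_j}$ over the poles with $\Re z_j\gtrless0$; since $\abs{\phi_{z_j}'(\iii t)}=\frac{2\abs{u_j}}{u_j^2+(t-v_j)^2}$, the lemma reduces to the single residue computation $\int_\R\frac{2\abs{u_j}\,dt}{(u_j^2+(t-v_j)^2)(1+t^2)}=\frac{2\pi(\abs{u_j}+1)}{(\abs{u_j}+1)^2+v_j^2}$. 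Your route re-derives exactly this pointwise bound from the circle version by Cayley transport: with $w=\varphi(z)$, $w_j=\varphi(z_j)$ one checks $\frac{\abs{w_j}^2-1}{\abs{w_j-w}^2}\,\abs{\varphi'(z)}=\frac{2\abs{u_j}}{\abs{z-z_j}^2}$ on $\iii\R$ (for the outside poles, i.e.\ $u_j<0$; similarly for the inside ones), so your integrand and Poisson-type integral coincide with the paper's and the constants do come out right. The two proofs thus differ only in how the Bernstein-type estimate on the line is obtained: the paper quotes it, you derive it by conformal transport from Proposition \ref{prop: Bor-Erd}/Corollary \ref{cor: derbnd}, at the cost of the M\"obius bookkeeping.

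Two slips should be repaired. First, the asserted identity $\int_{\iii\R}\norm{A'(z)}\frac{\abs{dz}}{1+\abs{z}^2}=\int_{\ucirc}\norm{\tilde A'(w)}\,\abs{dw}$ is false: since $\norm{A'(\iii t)}=\norm{\tilde A'(\varphi(\iii t))}\frac{2}{1+t^2}$ and $\abs{dw}=\frac{2}{1+t^2}\,\abs{dt}$, the left-hand side equals $\int_\R\norm{\tilde A'(\varphi(\iii t))}\frac{2\,dt}{(1+t^2)^2}$, which is only $\le\int_{\ucirc}\norm{\tilde A'(w)}\,\abs{dw}$; the inequality points the right way, so your weak bound $2\pi m$ survives, and your sharp-bound computation does not use the equality, but as stated it is incorrect. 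Second, the pointwise estimate of Corollary \ref{cor: derbnd} is a maximum of two sums (outside/inside poles) plus the term $\max(n-m,0)$, so you must bound the maximum by the total sum, and you should address poles with $z_j=-1$, which $\varphi$ sends to $\infty$ (reducing the number of finite poles, so the constant $\max(n-m,0)$ can be nonzero). That constant integrates against $\frac{2\,dt}{(1+t^2)^2}$ to at most $\pi$ per such pole, which matches the contribution $2\pi\cdot\frac12$ the target bound allots to $u_j=-1$, $v_j=0$; alternatively use $z\mapsto\frac{z-a}{z+a}$ with real $a>0$, $-a\notin\{z_1,\dots,z_m\}$, since the pulled-back pointwise bound is independent of $a$. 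These are repairs of detail, not of the idea.
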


\begin{proof}
For any $w\in\C$ let $\phi_w(z)=\frac{z+\bar w}{z-w}$ and set
\[
\phi_\gtrless(z)=\prod_{j:\Re{z_j}\gtrless0}\phi_{z_j}(z).
\]
Applying \cite[Theorem 4]{MR1433285},\footnote{This result is misstated on \cite[p. 190]{MR2811597}.} we conclude as before that
\[
\norm{A'(z)}\le\max(\abs{\phi_>'(z)},\abs{\phi_<'(z)})\le\abs{\phi_>'(z)}+\abs{\phi_<'(z)}, \ \ z\in\iii\R.
\]

It remains to observe that for any $w=u+\iii v\in\C\setminus\iii\R$ we have
\[
\int_{\iii\R}\abs{\phi_w'(z)}\frac{\abs{dz}}{1+\abs{z}^2}=2\pi\frac{\abs{u}+1}{(\abs{u}+1)^2+v^2}.
\]
Indeed, we have $\abs{\phi_w'(z)}=\frac{2\abs{u}}{\abs{z-w}^2}=\frac{2\abs{u}}{u^2+(t-v)^2}$ for $z=\iii t$, $t\in\R$, so that
\[
\int_{\iii\R}\abs{\phi_w'(z)}\frac{\abs{dz}}{1+\abs{z}^2}=\int_{\R}\frac{2\abs{u}}{(u^2+(t-v)^2)(1+t^2)}\ dt.
\]
By the residue theorem this is equal to
\begin{align*}
2\pi\left(\frac{\abs{u}}{u^2+(\iii-v)^2}+\frac1{1+(v+\iii\abs{u})^2}\right)&=
\frac{2\pi}{v+\iii(\abs{u}-1)}\left(\frac{\abs{u}}{v-\iii(\abs{u}+1)}+\frac1{v+\iii(\abs{u}+1)}\right)\\&=
\frac{2\pi(\abs{u}+1)}{v^2+(\abs{u}+1)^2},
\end{align*}
as claimed.
\end{proof}

\begin{proof}[Proof of Proposition \ref{prop: mainlocal}]
Replacing $K$ by its largest factorizable subgroup does not change $\level (K;\fctr M^+)$.
We may therefore assume
that $K$ is factorizable. Write $K=\prod K_v$ and set $K^v=\prod_{w\ne v}K_w$.
Also set $N_v=\level_v(K_v;\fctr M^+)$ so that $\prod N_v=\level(K;\fctr M^+)$.
Let $S'$ (resp., $S''$) be the finite set of finite places such that $\K_v$ is not hyperspecial (resp., $N_v\ne1$).
Of course $S'$ depends only on $\K_{\fin}$.
Note that by Remark \ref{RemarkUnramifiedBD}, $R_v(\pi_v,s)^{K_v}$ is independent of $s$ if $v$ is finite and $v\notin S'\cup S''$.
We have
\begin{align*}
R(\pi,s)^{-1}R'(\pi,s)\big|_{I(\pi)^{\tau,K}} & = R_\infty(\pi_\infty,s)^{-1}R_\infty'(\pi_\infty,s)\big|_{I(\pi_\infty)^\tau}\otimes\Id_{I(\pi^\infty)^K} \\
&+\sum_{v\in S'\cup S''}R_v(\pi_v,s)^{-1}R_v'(\pi_v,s)\big|_{I(\pi_v)^{K_v}}\otimes\Id_{I(\pi^v)^{\tau,K^v}}.
\end{align*}
Recall that the operators $R_v(\pi_v,s)$ are unitary for $\Re s = 0$.

Consider first the case where $v\in S'\cup S''$.
Write $R_v(\pi_v,s)\big|_{I(\pi_v)^{K_v}}=A_v(q_v^{-s})$. Then
\begin{multline} \label{eq: int123}
\int_{\iii\R}\norm{R_v(\pi_v,s)^{-1}R_v'(\pi_v,s)\big|_{I(\pi_v)^{K_v}}}\frac{ds}{1+\abs{s}^2}
=\int_{\iii\R}\norm{R_v'(\pi_v,s)\big|_{I(\pi_v)^{K_v}}}\frac{ds}{1+\abs{s}^2}\\
\le
2\sum_{n=0}^\infty \big( 1+\frac{4\pi^2n^2}{(\log q_v)^2} \big)^{-1}
\int_0^{\frac{2\pi\iii}{\log q_v}}\norm{R_v'(\pi_v,s)\big|_{I(\pi_v)^{K_v}}}\ ds\\
\le
2 \big(1+\frac14\log q_v \big) \int_0^{\frac{2\pi\iii}{\log q_v}}\norm{R_v'(\pi_v,s)\big|_{I(\pi_v)^{K_v}}}\ ds=
\big( 2+\frac12\log q_v \big) \int_{\ucirc}\norm{A_v'(z)}\ \abs{dz}.
\end{multline}
By Lemma \ref{LemmaBD}, property \BD\ for $G$ implies that
$A_v$ satisfies the conditions of Corollary \ref{cor: derbnd} (with respect to the operator norm) with
$m$ bounded in terms of $G$ only and
\[
n \ll\begin{cases}1+\log_{q_v}N_v,&\text{if }v\in S',\\\log_{q_v}N_v,&\text{otherwise.}\end{cases}
\]
By Corollary \ref{cor: derbnd}, \eqref{eq: int123} is therefore $\ll (\log q_v)(\log_{q_v}N_v)=\log N_v$, if $v\notin S'$
and $\ll 1 + \log N_v$, otherwise.

Regarding the archimedean contribution, it follows from \cite[Proposition A.2]{MR2053600} that the operator
$R_\infty(\pi_\infty,s)\big|_{I(\pi_\infty)^{\tau}}$ satisfies the conditions of Lemma \ref{lem: derbnd2}
with $b(s)=\prod_{j=1}^r\prod_{k=1}^m(s-\rho_j+ck)$, where
\begin{itemize}
\item $c>0$ depends only on $M$,
\item $r$ is bounded in terms of $G$ only,
\item $m\ll 1+\norm{\tau}$.
\end{itemize}
(In addition, the real parts $\Re\rho_j$ are bounded from above in terms of $G$ only, but we will not need to use this fact.)
Note that although [ibid., Proposition A.2] gives the bound $m\ll 1+\norm{\tilde\tau}$ on the $\tilde\tau$-isotypic subspace, where
$\tilde\tau \in \Pi (\K_\infty)$, on [ibid., p.~88] it is explicitly stated that we may in fact consider the isotypic subspace for
a representation of $\K_\infty \cap G^{\der} (F_\infty)$, and it is clear from the definition of $R_\infty (\pi_\infty,s)$
that we may even replace $G^{\der}$ by $\fctr M$.

Write $\rho_j=u_j+\iii v_j$. By Lemma \ref{lem: derbnd2} we infer that
\begin{multline*}
\int_{\iii\R}\norm{R_\infty(\pi_\infty,s)^{-1}R_\infty'(\pi_\infty,s)\big|_{I(\pi_\infty)^{\tau}}}(1+\abs{s}^2)^{-1}\ ds=
\int_{\iii\R}\norm{R_\infty'(\pi_\infty,s)\big|_{I(\pi_\infty)^{\tau}}}(1+\abs{s}^2)^{-1}\ ds\\\ll
\sum_{j=1}^r\sum_{k=1}^m\frac{\abs{u_j-ck}+1}{(\abs{u_j-ck}+1)^2+v_j^2}\ll 1+\log(1+\norm{\tau}).
\end{multline*}

Altogether,
\begin{align*}
\int_{\iii\R}\norm{R(\pi,s)^{-1}R'(\pi,s)\big|_{I(\pi)^{\tau,K}}}(1+\abs{s})^{-2}\ ds &\ll
1+\log(1+\norm{\tau})+\sum_{v\text{ finite}}\log N_v\\
&= 1+\log(1+\norm{\tau})+\log \level(K;\fctr M^+),
\end{align*}
as required.
\end{proof}


\section{Polynomially bounded collections of measures} \label{SectionPB}

As a preparation for our proof of the spectral limit property in \S \ref{sec: spectralside}, we prove in this section a result on real reductive Lie groups
(Proposition \ref{Delormeprop} below) which extends an argument of Delorme in \cite{MR860667}. Let temporarily
$G_\infty$ be the group of real points of a connected reductive group defined over $\R$,
or, slightly more generally, the quotient of such a group by a connected subgroup of its center
(like the group $G(F_\infty)^1$ to which we will apply our results in \S \ref{sec: spectralside}).
Let $K_\infty$ be a maximal compact subgroup of $G_\infty$ and $\theta$ the associated Cartan involution.
We will consider Levi subgroups $M$ and parabolic subgroups $P$ defined over $\R$.
All Levi subgroups are implicitly assumed to be $\theta$-stable.
We factorize any Levi subgroup $M$ as a direct product $M = A_M\times M^1$, where
$A_M$ is the largest central subgroup of $M$ isomorphic to a power of $\R^{>0}$, and let $\aaa_M = \Lie A_M$.
We identify representations of $M^1$ with representations of $M$ on which $A_M$ acts trivially.
Fix a minimal $\theta$-stable Levi subgroup $M_0$.
As in \S \ref{subsecnotation}, we fix an invariant bilinear form $B$ on $\Lie G_\infty$, which
induces Euclidean norms on all its subspaces and
therefore Hermitian norms on the spaces $\aaa_{M,\C}^*$. For each $r > 0$ and each finite set
$\types \subset \Pi (K_\infty)$
we define
$\Hecke (G_\infty)_{r,\types}$ as the space of all smooth functions $f$ on $G_\infty$ with support contained
in the compact set $K_\infty \exp (\{ x \in \aaa_0 \, : \, \norm{x} \le r \}) K_\infty$ whose
translates $f (k_1 \cdot k_2)$, $k_1$, $k_2 \in K_\infty$,
span a finite dimensional space that decomposes under the action
of $K_\infty \times K_\infty$ as a sum of representations $\tau_1 \otimes \tau_2$ with $\tau_1$, $\tau_2 \in \types$.
We let $\Hecke (G_\infty)_r$ be the union of the spaces $\Hecke (G_\infty)_{r,\types}$ over all finite sets $\types \subset \Pi (K_\infty)$. The union
of the spaces $\Hecke (G_\infty)_r$ for all $r>0$ is then the space $\Hecke (G_\infty)$ introduced in \S \ref{SectionIntro}.

As before, for $f \in C^\infty (G_\infty)$ and $k\ge0$ let
\[
\norm{f}_k
=\sum_{X\in\mathcal{B}_k}\norm{X\star f}_{L^1(G_\infty)}.
\]
These norms endow $\Hecke (G_\infty)_{r,\types}$ with the structure of a Fr\'{e}chet space.

Let $\Irr (G_\infty)$ be the set of all irreducible admissible representations of $G_\infty$ up to infinitesimal
equivalence.
The unitary dual $\Pi (G_\infty)$ can be viewed as a subset of $\Irr (G_\infty)$ in a natural way.
For $\pi \in \Irr (G_\infty)$ denote its infinitesimal character by $\chi_\pi$
and its Casimir eigenvalue (which depends only on $\chi_\pi$) by $\lambda_\pi$.
For any $\mu\in \Pi (K_\infty)$ let $\Irr(G_\infty)_\mu$ be the set of irreducible
representations containing $\mu$ as a $K_\infty$-type.
More generally, for any subset $\types$ of $\Pi( K_\infty)$ we write $\Irr(G_\infty)_\types=
\cup_{\tau\in\types}\Irr(G_\infty)_\tau$.

We write $\data$ for the set of all conjugacy classes of pairs $(M,\delta)$ consisting of
a Levi subgroup $M$ of $G_\infty$ and a discrete series representation $\delta$ of $M^1$.
For any $\underline{\delta} \in \data$ let
$\Irr(G_\infty)_{\underline{\delta}}$ be the set of all irreducible representations
which arise by the Langlands quotient construction
from the (tempered) irreducible constituents of $I_{M}^{L}(\delta)$ for Levi subgroups $L \supset M$.
Here, $I_{M}^L$ denotes (unitary) induction from an arbitrary parabolic subgroup of $L$
with Levi subgroup $M$ to $L$.
We then have a disjoint decomposition
\[
\Irr(G_\infty)=\coprod_{\underline{\delta}\in\data}\Irr(G_\infty)_{\underline{\delta}}
\]
and consequently
\[
\Pi(G_\infty)=\coprod_{\underline{\delta}\in\data}\Pi(G_\infty)_{\underline{\delta}}.
\]

Recall the definition of the norm $\norm{\cdot}$ on $\Pi (K_\infty)$ given in \S \ref{SubsectionLevel}.
We call a finite subset $\types \subset \Pi (K_\infty)$
saturated if for each $\mu \in \types$, all $\mu' \in \Pi (K_\infty)$ with
$\norm{\mu'} \le \norm{\mu}$ are also contained in $\types$.

For $\pi \in \Irr (G_\infty)$ we write $\underline{\delta} (\pi)$ for the unique element $\underline{\delta} \in \data$
with $\pi \in \Irr(G_\infty)_{\underline{\delta}}$.
We introduce a partial order on $\data$ as in \cite[\S 2.3]{MR759263}, using the lowest $K_\infty$-types of $I^G_M (\delta)$:
$\underline{\delta} \prec \underline{\delta}'$ if and only if $\norm{\mu} < \norm{\mu'}$ for lowest $K_\infty$-types $\mu$ and $\mu'$ of $I^G_M (\delta)$ and
$I^G_{M'} (\delta')$, respectively.

When $\types \subset \Pi( K_\infty)$ is finite and saturated, Vogan's theory of lowest $K_\infty$-types implies that
\begin{equation} \label{eq: decomtypes}
\Irr(G_\infty)_\types=\cup_{\underline{\delta}\in\data_\types}\Irr(G_\infty)_{\underline{\delta}}
\end{equation}
for a finite subset $\data_\types \subset \data$ (cf. \cite[Proposition D.1]{MR1046496}).

For ${\underline{\delta}} \in \data$ we let $\types ({\underline{\delta}})$ be the
finite saturated set of all $\mu' \in \Pi (K_\infty)$ with
$\norm{\mu'} \le \norm{\mu}$ for a lowest $K_\infty$-type $\mu$ of $I^G_M (\delta)$.

\begin{proposition} \label{Delormeprop}
Let $\mathfrak{M}$ be a set of Borel measures on $\Pi (G_\infty)$. Then the following conditions on $\mathfrak{M}$ are equivalent:
\begin{enumerate}
\item For all $\underline{\delta} \in \mathcal{D}$ there exists $N_{\underline{\delta}}>0$ such that
\[
\nu (\{\pi \in \Pi (G_\infty)_{\underline{\delta}} : \abs{\lambda_\pi} \le R \}) \ll_{\underline{\delta}} (1+R)^{N_{\underline{\delta}}}
\]
for all $\nu \in \mathfrak{M}$ and $R > 0$.
\item There exists $r > 0$ such that
$\sup_{\nu \in \mathfrak{M}} \abs{\nu(\hat{f})}$ is a continuous seminorm
on $\Hecke (G_\infty)_{r,\mathcal{F}}$ for any finite set $\mathcal{F} \subset \Pi (K_\infty)$.
\item $\sup_{\nu \in \mathfrak{M}} \abs{\nu(\hat{f})}$ is a continuous seminorm
on $\Hecke (G_\infty)_{r,\mathcal{F}}$ for any $r > 0$ and
any finite set $\mathcal{F} \subset \Pi (K_\infty)$.
\item For each finite set $\mathcal{F} \subset \Pi (K_\infty)$
there exists an integer $k = k (\mathcal{F})$ such that $\sup_{\nu \in \mathfrak{M}} \nu (g_{k,\mathcal{F}}) < \infty$,
where $g_{k,\mathcal{F}}$ is the non-negative function on $\Pi (G_\infty)$ defined by
\[
g_{k,\mathcal{F}} (\pi) =\begin{cases}(1+ \abs{\lambda_\pi})^{-k},&\text{if }\pi \in \Pi (G_\infty)_{\mathcal{F}},\\
0,&\text{otherwise.}\end{cases}
\]
\end{enumerate}
\end{proposition}

\begin{definition} \label{DefinitionBounded}
We call a collection $\mathfrak{M}$ of measures satisfying the equivalent conditions of Proposition \ref{Delormeprop} \emph{\PB}.
\end{definition}

We begin the proof of Proposition \ref{Delormeprop}.
Let $\mathfrak{M}$ be a collection of Borel measures on $\Pi (G_\infty)$.
Evidently the third condition of the proposition implies the second one.
Note that if $z_{G_\infty}$ is the Casimir element in the center of $\univ(\Lie G_\infty \otimes \C)$,
then $\widehat{z_{G_\infty} f}(\pi)=\lambda_\pi\hat f(\pi)$. Since also $\abs{\hat f(\pi)}\le\norm{f}_{L^1(G_\infty)}$,
it follows that for any $k\ge0$ we have
\[
\hat{f}(\pi)\ll_k \norm{f}_{2k} g_{k,\types} (\pi)
\]
for all $f \in \Hecke (G_\infty)_\types$ and $\pi \in \Pi (G_\infty)$.
We infer that the fourth condition of the proposition implies the third one.

For $k\ge 0$ and $\underline{\delta} \in \mathcal{D}$ let $g_{k,{\underline{\delta}}} = (1 + \abs{\lambda_\pi})^{-k}$
for $\pi \in \Pi (G_\infty)_{\underline{\delta}}$, and extend this function by zero to all of $\Pi (G_\infty)$.

For a given $\underline{\delta} \in \mathcal{D}$, consider the following two statements:
\begin{subequations}
\begin{multline} \label{eq: statement1}
\text{ There exists $N_{\underline{\delta}}>0$ such that}\\
\nu (\{\pi \in \Pi (G_\infty)_{\underline{\delta}} : \abs{\lambda_\pi} \le R \}) \ll_{\underline{\delta}} (1+R)^{N_{\underline{\delta}}}
\ \ \text{for all $\nu \in \mathfrak{M}$ and $R \ge 0$.}
\end{multline}
\begin{equation} \label{eq: statement2}
\text{There exists an integer }k=k_{\underline{\delta}} > 0\text{ such that }
\sup_{\nu \in \mathfrak{M}} \nu (g_{k,{\underline{\delta}}}) < \infty.
\end{equation}
\end{subequations}

It is easy to see that these statements are equivalent: if \eqref{eq: statement1} is satisfied, then we can bound
\[
\nu (g_{k,{\underline{\delta}}}) \le \sum_{m \ge 0}
\nu (\{\pi \in \Pi (G_\infty)_{\underline{\delta}} : m \le \abs{\lambda_\pi} \le m+1 \}) (m+1)^{-k}
\ll_{\underline{\delta}} \sum_{m \ge 0} \frac{(m+2)^{N_{\underline{\delta}}}}{(m+1)^k},
\]
which is bounded independently of $\nu \in \mathfrak{M}$ for $k \ge N_{\underline{\delta}}+2$. On the other hand, we clearly
have
\[
(1+R)^{-k} \nu (\{\pi \in \Pi (G_\infty)_{\underline{\delta}} : \abs{\lambda_\pi} \le R \}) \le \nu (g_{k,{\underline{\delta}}}),
\]
which gives the other implication.

Observe now that the first condition of the proposition is just \eqref{eq: statement1} for all $\underline{\delta}$.
Moreover, by \eqref{eq: decomtypes} the fourth condition is equivalent to \eqref{eq: statement2} for all $\underline{\delta}$.
Therefore, the first and fourth conditions of the proposition are equivalent.

It remains to show that the second condition implies the first (or the fourth) one.
This step is somewhat more difficult and requires some preliminary results, namely
the classification of tempered and admissible representations of $G_\infty$ and the Paley--Wiener theorem.
We first recall Vogan's classification of irreducible admissible representations.
For $(M, \delta)$ as above,
and $\lambda \in \aaa^*_{M,\C}$, consider the induced representation $\pi_{\delta,\lambda}$ (with respect to any
parabolic subgroup containing $M$ as a Levi subgroup). Its semi-simplification depends only on
the $K_\infty$-conjugacy class of the triple $(M, \delta,\lambda)$. Vogan defines the $R$-group $R_\delta$ of $\delta$,
a finite group of exponent two,
as well as its subgroup $R_{\delta,\lambda}$. The dual group $\hat{R}_\delta$ acts simply transitively
on the set $A (\delta)$ of lowest $K_\infty$-types of $\pi_{\delta, \lambda}$.
We then have a decomposition of the representation
$\pi_{\delta,\lambda}$ as a direct sum of $\abs{R_{\delta,\lambda}}$ many representations $\pi_{\delta,\lambda} (\mu)$, where
$\mu$ is an orbit of $R^\perp_{\delta,\lambda}$ in $A(\delta)$
\cite[6.5.10, 6.5.11]{MR632407}:
\[
\pi_{\delta,\lambda} = \bigoplus_{\mu \in A (\delta) / R^\perp_{\delta,\lambda}} \pi_{\delta,\lambda} (\mu).
\]
We call the $\pi_{\delta,\lambda} (\mu)$'s \emph{basic representations}.
Each basic representation $\pi_{\delta,\lambda} (\mu)$ has a unique irreducible subquotient $\bar{\pi}_{\delta,\lambda} (\mu)$ containing
a $K_\infty$-type in the orbit $\mu$. Alternatively, this subquotient can also be constructed as a Langlands quotient \cite[6.6.14, 6.6.15]{MR632407}.
This construction sets up a bijection $\bar{\pi}_{\delta,\lambda} (\mu) = \pi \mapsto \sigma_\pi = \pi_{\delta,\lambda} (\mu)$ between
infinitesimal equivalence classes of irreducible admissible representations $\pi$ and
basic representations $\sigma_\pi$, where the latter are interpreted as elements of the Grothendieck group
of admissible representations \cite[6.5.13]{MR632407}. By definition, the parametrization is compatible with the disjoint decomposition of
$\Irr (G_\infty)$ according to the elements of $\data$.

The distributions $\tr \sigma_\pi$ for $\pi \in \Irr (G_\infty)$ form a basis of the Grothendieck group of admissible representations. More precisely,
we have the following relations expressing the characters of irreducible representations $\pi \in \Irr (G_\infty)$ in terms of the
characters of basic representations:
\begin{equation} \label{grothendieck}
\tr \pi (\phi) = \tr \sigma_\pi (\phi) + \sum_{\pi': {\underline{\delta}}(\pi) \prec {\underline{\delta}} (\pi'), \,
\chi_{\pi} = \chi_{\pi'}} n (\pi,\pi') \tr \sigma_{\pi'} (\phi)
\end{equation}
with certain integers $n (\pi,\pi')$ \cite[6.6.7]{MR632407}. Note that here the sum on the right-hand side is finite.
For our purposes, all we need to know about the integers $n(\pi,\pi')$ is the following uniform boundedness property
\cite[Proposition 2.2]{MR860667}.

\begin{lemma}[Vogan] \label{lem: vogan}
We have
\[
\sum_{\pi': {\underline{\delta}}(\pi) \prec {\underline{\delta}} (\pi'), \,
\chi_{\pi} = \chi_{\pi'}} \abs{n(\pi,\pi')} \ll 1
\]
for all $\pi \in \Irr (G_\infty)$.
\end{lemma}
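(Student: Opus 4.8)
The plan is to deduce this from two uniform finiteness statements about $\Irr (G_\infty)$, both essentially due to Vogan (this is exactly the point of \cite[Proposition 2.2]{MR860667}), together with an elementary matrix inversion. The whole argument takes place inside the block of the Grothendieck group attached to a fixed infinitesimal character $\chi$.

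First I would fix $\chi$ and show that $S_\chi := \{\pi \in \Irr (G_\infty) : \chi_\pi = \chi\}$ has cardinality bounded by a constant $C = C(G_\infty)$, uniformly in $\chi$. This follows from Vogan's parametrization recalled above: each $\pi \in S_\chi$ is of the form $\bar\pi_{\delta,\lambda}(\mu)$ for a $K_\infty$-conjugacy class of data $(M,\delta,\lambda,\mu)$, where $M$ ranges over finitely many conjugacy classes of Levi subgroups, the discrete series $\delta$ of $M^1$ is constrained to have a prescribed infinitesimal character (of which there are boundedly many), the continuous parameter $\lambda \in \aaa^*_{M,\C}$ is determined by $\chi$ up to the finite ambiguity provided by Harish-Chandra's theorem, and the lowest $K_\infty$-type orbit $\mu \in A(\delta)/R^\perp_{\delta,\lambda}$ lies in a set of size at most $\abs{R_\delta}$, which is bounded because $R_\delta$ has exponent two and bounded rank.

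Second I would observe that $\{[\pi] : \pi \in S_\chi\}$ and $\{[\sigma_\pi] : \pi \in S_\chi\}$ are two bases of the same free abelian group of rank $\abs{S_\chi} \le C$ — indeed every Jordan--H\"older constituent of a basic representation $\sigma_{\pi'}$, $\pi' \in S_\chi$, again has infinitesimal character $\chi$ and hence lies in $S_\chi$, while \eqref{grothendieck} shows conversely that each $[\pi]$ lies in the $\Z$-span of the $[\sigma_{\pi'}]$, $\pi' \in S_\chi$. On this block the transition matrix $m(\pi,\pi') = [\sigma_{\pi'} : \pi]$ is unitriangular with respect to the partial order $\prec$ (this is the content of \eqref{grothendieck}) and has non-negative integer entries bounded by a constant $C' = C'(G_\infty)$; the latter bound is the statement that the Jordan--H\"older lengths of the standard modules $I^G_M(\delta\otimes\lambda)$ are uniformly bounded, which follows from Vogan's theory \cite{MR632407} (or from the fact that the relevant Kazhdan--Lusztig--Vogan polynomials have bounded degree and bounded coefficients within the fixed group $G_\infty$).

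Finally, the integers $n(\pi,\pi')$ are by definition the entries of the inverse of $m$, so I would write $m^{-1} = \sum_{j=0}^{k-1}(I-m)^j$ with $k = \abs{S_\chi} \le C$, using that $I - m$ is nilpotent of index $\le k$ with entries bounded by $C'$; this bounds each $\abs{n(\pi,\pi')}$ by a constant depending only on $C$ and $C'$, hence only on $G_\infty$, and summing the at most $k \le C$ nonzero terms in a given row yields $\sum_{\pi'} \abs{n(\pi,\pi')} \ll 1$. The main obstacle here is the uniform bound on standard-module multiplicities in the third step, which is where genuinely non-elementary input is needed; the cardinality bound and the Neumann-series inversion are bookkeeping.
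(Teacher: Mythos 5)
The paper does not actually prove this lemma: it is quoted from Delorme \cite[Proposition 2.2]{MR860667}, where it is attributed to Vogan, so there is no in-paper argument to compare yours with. Your proposal is a sensible reconstruction of what lies behind that citation. You work block by block in the Grothendieck group at a fixed infinitesimal character, use the unitriangularity with respect to $\prec$ of the change of basis between irreducible characters and characters of basic representations furnished by \eqref{grothendieck} (ties in the norm of the lowest $K_\infty$-type cause no trouble, since off-diagonal entries between incomparable classes vanish), and invert. The two uniform inputs you isolate --- that the number of elements of $\Irr (G_\infty)$ with a prescribed infinitesimal character is bounded by a constant depending only on $G_\infty$, and that the Jordan--H\"older multiplicities of the basic (standard) representations are uniformly bounded --- are both true, and the reduction of the lemma to them via the Neumann series for the inverse of a unitriangular matrix of bounded size with bounded entries is correct bookkeeping, as is the counting argument for the first input via the data $(M,\delta,\lambda,\mu)$.

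The one place where you are essentially re-asserting rather than proving is the uniform multiplicity bound for standard modules over \emph{all} infinitesimal characters. For regular integral infinitesimal character this is the finiteness of the Kazhdan--Lusztig--Vogan data attached to the finitely many blocks of $G_\infty$; for uniformity one must also treat non-integral infinitesimal characters, where the block is governed by the integral root subsystem (again one of finitely many possibilities for the fixed group), and singular ones, where multiplicities are compared with those at a regular translate via the Jantzen--Zuckerman translation principle. This uniform boundedness is precisely the non-elementary content of the result of Vogan that Delorme records, so your argument is pitched at the right depth and is not wrong anywhere; but as written its key step is invoked rather than established, which leaves your proof on essentially the same footing as the paper's citation rather than replacing it.
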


For the Paley--Wiener theorem, we need to group the basic representations into series of induced representations, which gives
a slightly different parametrization. We use the concept of a non-degenerate limit of discrete series introduced in
\cite{MR672840,MR678478}.
Let $\underline{\delta} \in \data$ with representative $(M,\delta)$.
Whenever $L$ is a Levi subgroup containing $M$ and the irreducible constituents $\delta'$ of $I^{L}_{M} (\delta)$
are non-degenerate limits of discrete series of the group $L^1$, we call the resulting pairs $(L,\delta')$ \emph{affiliated} with the class $\underline{\delta}$
\cite[D\'{e}finition 2]{MR1046496}.
These representations are precisely those irreducible constituents of the representations $I^{L}_{M} (\delta)$ for $L \supset M$,
which are not themselves irreducibly induced from any smaller Levi subgroup. For fixed $(M, \delta)$, the Levi subgroups $L \supset M$ appearing in this construction
are precisely those for which
$\aaa^*_L$ is the fixed space of $\aaa^*_M$ under one of the subgroups $R_{\delta,\lambda} \subset R_\delta$ (where
we regard $R_\delta$ as a subgroup of $W (A_M)_{\delta} \subset W(A_M) = N_K (A_M) / C_K (A_M)$ as in \cite[\S 2.1]{MR1046496}).

We can then rewrite any representation $\pi_{\delta,\lambda} (\mu)$ in the form
$\pi_{\delta',\lambda}$, where $L \supset M$ is a Levi subgroup with
$\lambda \in \aaa^*_{L,\C} \subset \aaa^*_{M,\C}$ and
$(L,\delta')$ is affiliated with $\data$,
such that the intermediate induction to the largest Levi subgroup $L_{\Re \lambda}$ with $\Re \lambda \in \aaa^*_{L_{\Re \lambda}}$
is irreducible (and tempered). (To see this, combine \cite[6.6.14, 6.6.15]{MR632407} with \cite[(2.1), (2.2)]{MR1046496}.)
Note that the tempered dual of $G_\infty$ can be parametrized as either the set of all basic representations
$\pi_{\delta,\lambda} (\mu)$ with $\Re \lambda = 0$ (which are always irreducible),
or as the set of all \emph{irreducible} induced representations $\pi_{\delta',\lambda}$, $\Re \lambda = 0$,
where $\delta'$ is a non-degenerate limit of discrete series.

Recall the definition of the Paley--Wiener space $\PW (\aaa)_r$, $r > 0$,
of a Euclidean vector space $\aaa$. It is the space of all entire functions $F$ on the complexified dual $\aaa^*_\C$ such that the
Paley--Wiener norms
\[
\norm{F}_{r,n} = \sup_{\lambda \in \aaa^*_\C} (1 + \norm{\lambda})^n e^{-r\norm{\Re \lambda}} \abs{F(\lambda)}, \quad n \ge 0,
\]
are finite. The Paley--Wiener norms endow $\PW (\aaa)_r$ with the structure of a Fr\'{e}chet space and the Fourier transform is
a topological isomorphism between $\PW (\aaa)_r$ and the Fr\'{e}chet space of smooth functions on $\aaa$ supported on the ball of radius $r$
around $0$.

Now let $\underline{\delta} \in \mathcal{D}$.
Consider the finite set $\mathcal{D}' (\underline{\delta})$ of all pairs $(M,\delta)$,
where $M$ is a standard Levi subgroup of $G_\infty$, $\delta \in \Pi (M^1)$ a non-degenerate limit of discrete series,
and $(M,\delta)$ is affiliated with $\underline{\delta}$.
The Paley--Wiener space $\PW_{r,\underline{\delta}}$ is then defined as the space
of all elements $F = (F_{(M,\delta)}) \in \prod_{(M,\delta) \in \mathcal{D}' (\underline{\delta})} \PW (\aaa_M)_r$ fulfilling the following conditions:
\begin{enumerate}
\item Whenever the triples $(M,\delta,\lambda)$ and $(M',\delta',\lambda')$ are conjugate by an element of $K_\infty$, we have
$F_{(M',\delta')} (\lambda') = F_{(M,\delta)} (\lambda)$.
\item Whenever for $M \subset M'$ we have a decomposition
\[
I^{M'}_{M} (\delta_M) = \bigoplus_{i=1}^m \delta^{(i)}_{M'}
\]
with $(M,\delta_M)$, $(M', \delta^{(i)}_{M'})\in \mathcal{D}' (\underline{\delta})$,
the corresponding identity
\[
F_{(M,\delta_M)} (\lambda) = \sum_{i=1}^m F_{(M',\delta^{(i)}_{M'})} (\lambda), \quad \lambda \in \aaa^*_{M',\C} \subset \aaa^*_{M,\C},
\]
holds.
\end{enumerate}
For any finite saturated set $\types \subset \Pi (K_\infty)$ the space $\PW_{r,\types}$ is defined as
$\prod_{\underline{\delta} \in \data_\types} \PW_{r,\underline{\delta}}$.
These Paley--Wiener spaces have in a natural way the structure of Fr\'{e}chet spaces, and we define for each $n \ge 0$ the
Paley--Wiener norm $\norm{F}_{r,n}$ of $F \in \PW_{r,\types}$ to be the maximum of the norms $\norm{F_{(M,\delta)}}_{r,n}$, where
$(M,\delta) \in \mathcal{D}' (\underline{\delta})$, $\underline{\delta} \in \data_\types$.
(Cf. \cite[Appendice C]{MR1046496} for
a concrete combinatorial description of these spaces.)

We can now state the Paley--Wiener theorem of Clozel--Delorme \cite[Th\'{e}or\`{e}me 1, Th\'{e}or\`{e}me 1']{MR1046496}.

\begin{theorem}[Clozel--Delorme] \label{PWTheorem}
For any finite saturated set $\types \subset \Pi (K_\infty)$ and any $r > 0$
the natural continuous map of Fr\'{e}chet spaces $T_{r,\types}: \Hecke (G_\infty)_{r,\types} \to \PW_{r,\types}$
given by $f \mapsto (\tr \pi_{\delta,\lambda} (f))$
is surjective.
\end{theorem}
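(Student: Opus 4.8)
The plan is to prove surjectivity of $T_{r,\types}$ by the wave-packet method of Harish-Chandra, Arthur, and Clozel--Delorme, proceeding by induction on $\dim G_\infty$. Since the statement presents $T_{r,\types}$ as a ``natural continuous map'' into $\PW_{r,\types}$, I would first dispose of well-definedness and continuity, which are the easy part: for $f \in \Hecke(G_\infty)_{r,\types}$ the operator $\pi_{\delta,\lambda}(f)$ acts on the fixed finite-dimensional space of $K_\infty$-types lying in $\types$, depends holomorphically on $\lambda \in \aaa_{M,\C}^*$, and is bounded in operator norm by a constant times $(1+\norm{\lambda})^{-n}e^{r\norm{\Re\lambda}}\norm{f}_{2n}$ for every $n$ (using that the Casimir $z_{G_\infty}$ acts on $\pi_{\delta,\lambda}$ by an explicit affine-quadratic polynomial in $\lambda$, together with the support condition on $f$); the symmetry condition~(1) of $\PW_{r,\types}$ comes from the standard intertwining operators identifying $\pi_{\delta,\lambda}$ with $\pi_{\delta',\lambda'}$ for $K_\infty$-conjugate data, and condition~(2) from additivity of the trace along the decomposition $I^{M'}_{M}(\delta_M)=\bigoplus_i \delta^{(i)}_{M'}$.

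For surjectivity, I would first pass from the parametrization by basic representations $\pi_{\delta,\lambda}(\mu)$ to the one by induced representations $\pi_{\delta',\lambda}$ with $(M',\delta')$ affiliated with $\underline{\delta}$, as recalled just before the statement: this is an invertible linear change of variables assembled from Vogan's $R$-group data, under which $\PW_{r,\underline{\delta}}$ becomes precisely the space of tuples $(F_{(M,\delta)})$ subject to the consistency relations (1) and (2). It then suffices to realize every such consistent tuple by an element of $\Hecke(G_\infty)_{r,\types}$. I would do this by the Harish-Chandra wave-packet construction: given $F_{(M,\delta)} \in \PW(\aaa_M)_r$, form
\[
f_{(M,\delta)}(x)=\int_{\iii\aaa_M^*} F_{(M,\delta)}(\lambda)\,\Theta_{\delta,\lambda}(x)\,d\lambda,
\]
where $\Theta_{\delta,\lambda}$ is built from the $\types$-isotypic projections of the $K_\infty$-finite matrix coefficients (Eisenstein integrals) of $\pi_{\delta,\lambda}$. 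Because $F$ is of Paley--Wiener type $r$, each $f_{(M,\delta)}$ is smooth; the crucial point is that a suitable alternating sum over standard Levi subgroups of such wave packets has \emph{compact support} contained in the prescribed set $K_\infty\exp(\{\norm{x}\le r\})K_\infty$, and that its trace against $\pi_{\delta,\lambda}$ returns $F_{(M,\delta)}(\lambda)$. Compact support is governed by Harish-Chandra's description of the constant terms of wave packets along parabolic subgroups, and the consistency conditions (1), (2) are exactly what force the non-compactly-supported contributions from the various Levi subgroups to cancel.

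The induction enters through the proper $\theta$-stable Levi subgroups $M \subsetneq G_\infty$: the Paley--Wiener theorem for $M^1$, available since $\dim M^1<\dim G_\infty$, turns the data $F_{(L,\delta)}$ with $L\subset M$ into a compactly supported function on $M^1$, which after induction and a wave-packet correction accounts for the part of the datum lying off the ``most continuous'' locus of $G_\infty$; the remaining top-level contribution of the most continuous part together with the discrete series of $G_\infty$ is supplied by Arthur's Paley--Wiener theorem for $K_\infty$-finite functions in the basic-representation parametrization (equivalently, by Harish-Chandra's Plancherel theorem identifying the image of the wave-packet map on the discrete part). Assembling the inductively built pieces and the top-level piece, and checking that the result has the prescribed $K_\infty\times K_\infty$-type, the prescribed support, and the prescribed image under $T_{r,\types}$ (here a light use of Delorme's multiplier machinery helps organize the bookkeeping by infinitesimal character), completes the argument.

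I expect the main obstacle to be precisely the compact-support statement for the alternating sum of wave packets: this is the technical heart of both Arthur's and Clozel--Delorme's proofs and rests on delicate asymptotic analysis of Eisenstein integrals (the $c$-function machinery) and on the Casselman--Harish-Chandra theory of constant terms. A secondary difficulty is the careful treatment of limits of discrete series and of the reducibility points of unitary principal series, where the subgroup $R_{\delta,\lambda}\subset R_\delta$ jumps and one must keep the two parametrizations (by basic representations $\pi_{\delta,\lambda}(\mu)$ and by affiliated induced representations $\pi_{\delta',\lambda}$) compatible; this is handled by the Knapp--Zuckerman classification and Vogan's analysis of lowest $K_\infty$-types already invoked in the excerpt.
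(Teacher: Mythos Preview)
The paper does not prove this theorem: it is quoted verbatim from Clozel--Delorme \cite[Th\'eor\`eme~1, Th\'eor\`eme~1']{MR1046496} and immediately followed by Remark~\ref{PWRemark} with no intervening proof. So there is nothing in the paper to compare your proposal against; the authors treat the Paley--Wiener theorem as a black box and only use its statement (together with the open mapping theorem) to produce the test functions of Lemma~\ref{phitlemma}.

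As for your sketch itself, it is a plausible high-level outline of the Clozel--Delorme strategy (wave packets, induction on Levi subgroups, cancellation of constant terms to obtain compact support, and careful handling of the $R$-group combinatorics at reducibility points), and you correctly identify the compact-support assertion for the alternating sum of wave packets as the technical core. But it remains a proof \emph{plan} rather than a proof: the actual Clozel--Delorme argument requires the full machinery of their earlier paper \cite{MR759263} and substantial analytic input (asymptotics of Eisenstein integrals, the $c$-function calculus, and Delorme's multiplier theorem), none of which can be condensed into a few paragraphs. If your intent was merely to indicate that this result is established elsewhere and to point to the key ingredients, that is fine; if you meant it as a self-contained proof, it is far from complete.
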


\begin{remark} \label{PWRemark}
By the open mapping theorem, a continuous surjection of Fr\'{e}chet spaces is automatically open.
For the surjections $T_{r,\mathcal{F}}$ of Theorem \ref{PWTheorem}
this means concretely that for every integer $k\ge0$ there exists an integer $n$ (depending on $k$, $r$ and $\types$)
with the following property: for any $F \in \PW_{r,\types}$
there exists $\phi \in \Hecke (G_\infty)_{r,\types}$ such that $T_{r,\types} (\phi) = F$ and
$\norm{\phi}_k \ll_{r, \types, k} \norm{F}_{r,n}$.
\end{remark}

We now turn to the proof of Proposition \ref{Delormeprop}, which is an extension of
an argument of Delorme (cf.~the proof of \cite[Proposition 3.3]{MR860667}).
As in [ibid.], the proof is based on the existence of certain test functions on $G_\infty$, however,
in comparison to Delorme's argument
we also need to bound
the seminorms of these functions. We therefore recall the construction in some detail.
The first elementary
lemma \cite[Lemma 6.3]{MR532745}
asserts the existence of functions with certain properties of the Fourier transform.

\begin{lemma}[Duistermaat--Kolk--Varadarajan] \label{DKVLemma}
Let $\aaa$ be a Euclidean vector space, $W$ a finite group acting on $\aaa$ and $r > 0$.
Then for any $t \ge 1$ there exists a function 
$\hat{h} (t, \cdot) \in \PW (\aaa)^W_r$ with the following properties.
\begin{enumerate}
\item $\hat{h} (t, \lambda) \in \R^{\ge0}$ for all $\lambda \in \aaa^*_\C$ for which there
exists an element $w \in W$ with $w (\lambda) = - \bar{\lambda}$.
\item $\abs{\hat{h} (t, \lambda)} \ge 1$ for all $\lambda \in \aaa^*_\C$ with $\norm{\lambda} \le t$.
\item For all $m \ge 0$ we have $\norm{\hat{h} (t, \cdot)}_{r,m}\ll_{r,m} t^m$. In particular, for all $a > 0$ and $m \ge 0$ we have
\[
\hat{h} (t, \lambda) \ll_{r,m,a} \frac{t^m}{(1 + \norm{\Im \lambda})^m}
\]
for all $\lambda \in \aaa^*_\C$ with $\norm{\Re \lambda} \le a$.
\end{enumerate}
\end{lemma}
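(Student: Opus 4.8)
The plan is to build $\hat h(t,\cdot)$ as a dilate of a single, highly symmetric non‑negative function, with the dilation factor proportional to $t$. Fix once and for all, depending only on $\aaa$, $W$ and $r$, a function $\psi \in C_c^\infty(\aaa)$ which is $W$-invariant, supported in the closed ball of radius $r$, satisfies $\hat\psi(0) = 1$, and has $\hat\psi(\lambda) \ge 0$ for every $\lambda \in \aaa^*_\C$ with $w(\lambda) = -\bar\lambda$ for some $w \in W$. Such a $\psi$ exists: starting from a real-valued, even, $W$-invariant bump function $g \in C_c^\infty(\aaa)$ supported in the ball of radius $r/2$ with $\int_\aaa g = 1$ (obtained by averaging a non-negative bump over $W$ and over $x \mapsto -x$ and rescaling; I use that $W$ acts orthogonally, which may be arranged by averaging the inner product), one takes $\psi = g \ast g$, so that $\hat\psi = \hat g^2$; then $\hat\psi(0) = 1$, and if $w(\lambda) = -\bar\lambda$ then, using in turn $W$-invariance, evenness and reality of $g$, one gets $\hat g(\lambda) = \hat g(w\lambda) = \hat g(-\bar\lambda) = \hat g(\bar\lambda) = \overline{\hat g(\lambda)} \in \R$, whence $\hat\psi(\lambda) = \hat g(\lambda)^2 \ge 0$.

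Since $\hat\psi$ is continuous (indeed entire) on $\aaa^*_\C$ with $\hat\psi(0) = 1$, I fix $\epsilon_0 \in (0,1]$, depending only on $\aaa$, $W$, $r$, such that $\abs{\hat\psi(\lambda)} \ge \tfrac12$ whenever $\norm{\lambda} \le \epsilon_0$. For $t \ge 1$ I set $s = t/\epsilon_0 \ge 1$ and define
\[
\hat h(t,\lambda) = 2\,\hat\psi(\lambda/s).
\]
This is the Fourier transform of $2\, s^{\dim\aaa}\psi(s\,\cdot)$, which is smooth, $W$-invariant, and supported in the ball of radius $r/s \le r$; hence $\hat h(t,\cdot) \in \PW(\aaa)^W_r$.

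It remains to verify (1)--(3). Property (1): the set of $\lambda$ with $w(\lambda) = -\bar\lambda$ for some $w$ is a cone, so $\lambda$ lies in it iff $\lambda/s$ does, and then $\hat h(t,\lambda) = 2\hat\psi(\lambda/s) \ge 0$. Property (2): if $\norm{\lambda} \le t$ then $\norm{\lambda/s} \le t/s = \epsilon_0$, so $\abs{\hat\psi(\lambda/s)} \ge \tfrac12$ and $\abs{\hat h(t,\lambda)} \ge 1$. Property (3): substituting $\lambda \mapsto s\lambda$ and using $s \ge 1$ — so that $1 + s\norm{\lambda} \le s(1+\norm{\lambda})$ and $e^{-rs\norm{\Re\lambda}} \le e^{-r\norm{\Re\lambda}}$ — one obtains $\norm{\hat h(t,\cdot)}_{r,m} \le 2\, s^m\, \norm{\hat\psi}_{r,m}$, and since $\norm{\hat\psi}_{r,m} < \infty$ and $s = t/\epsilon_0$ with $\epsilon_0$ fixed, this is $\ll_{r,m} t^m$; the final ``in particular'' clause then follows by bounding $e^{r\norm{\Re\lambda}} \le e^{ra}$ for $\norm{\Re\lambda} \le a$ and $(1+\norm{\lambda})^{-m} \le (1+\norm{\Im\lambda})^{-m}$.

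The one genuinely delicate point is the lower bound in (2), which is demanded over the whole complex ball $\norm{\lambda} \le t$ rather than just over its real or imaginary slices; this is exactly what the dilation achieves, as it transports all of the relevant parameters into the fixed neighbourhood $\norm{\lambda} \le \epsilon_0$ of the origin, on which the continuous function $\hat\psi$ — equal to $1$ at $0$ — is bounded away from $0$. Note that only dilations with $s \ge 1$ are available (the support must not grow), but these are precisely the ones that spread $\hat\psi$ out, which is the direction needed here.
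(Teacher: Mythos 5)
Your proposal is correct. Note, however, that the paper does not prove this lemma at all: it is quoted verbatim from Duistermaat--Kolk--Varadarajan (Lemma 6.3 of the cited paper), so there is no in-paper argument to compare against. What you have written is essentially a self-contained reconstruction of the standard device underlying the cited result: fix one $W$-invariant function of the form $\psi=g\ast g$ with $g$ real, even and $W$-invariant, so that $\hat\psi=\hat g^2$ is automatically nonnegative on the "unitary" set $\{\lambda: w(\lambda)=-\bar\lambda \text{ for some } w\}$, and then dilate on the group side by a factor $s\asymp t$ with $s\ge 1$, which shrinks the support (keeping it inside the ball of radius $r$) while spreading $\hat\psi$ out so that the lower bound $\abs{\hat h(t,\lambda)}\ge 1$ holds on the full complex ball $\norm{\lambda}\le t$; the scaling computation for the Paley--Wiener norms, using $1+s\norm{\mu}\le s(1+\norm{\mu})$ and $e^{-rs\norm{\Re\mu}}\le e^{-r\norm{\Re\mu}}$, gives property (3) with constants depending only on $r$, $m$ and the fixed $\psi$, exactly as allowed. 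The only point worth tightening is your passage to a $W$-invariant inner product: the ball $\norm{\lambda}\le t$ in property (2), the support condition and the Paley--Wiener norms all refer to the originally given Euclidean structure, so if you average the inner product you should record that the two norms are comparable with constants depending only on the data of the lemma, take the support of $g$ small enough in the original norm, and absorb the comparison constant into $\epsilon_0$; this is routine (and in the paper's application $W$ already acts orthogonally for the form induced by $B$). Also, nonnegativity of the initial bump is never used, only realness, evenness and $W$-invariance, as your argument in fact shows.
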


The following lemma is a strengthening of \cite[Proposition 3.2]{MR860667} (we have added the third assertion).

\begin{lemma} \label{phitlemma}
Let $r > 0$,
let $\underline{\delta} \in \mathcal{D}$
with representative $(M,\delta)$, and let $k \ge 0$ be an integer.
Then there exist an integer $r_{k,\underline{\delta}} \ge 0$ (depending on $k$ and $\underline{\delta}$) and for each $t \ge 1$ a function
$\phi^{t,k}_{\underline{\delta}} \in \Hecke (G_\infty)_{r,\types ({\underline{\delta}})}$
with the following properties.
\begin{enumerate}
\item $\tr \sigma_\pi (\phi^{t,k}_{\underline{\delta}}) = 0$ for all $\pi \in \Irr (G_\infty)$ with $\underline{\delta} (\pi) \neq \underline{\delta}$.
\item $\tr \sigma (\phi^{t,k}_{\underline{\delta}}) = [R_\delta : R_{\delta,\lambda}] \hat{h} (t, \lambda)$
for all basic representations $\sigma = \pi_{\delta,\lambda} (\mu)$, $\lambda \in \aaa_{M,\C}^*$, $\mu \in A (\delta)$.
\item $\norm{\phi^{t,k}_{\underline{\delta}}}_k \ll_{k,\underline{\delta}} t^{r_{k,\underline{\delta}}}$.
\end{enumerate}
\end{lemma}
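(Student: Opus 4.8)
The plan is to construct $\phi^{t,k}_{\underline{\delta}}$ by prescribing its values on basic representations via the Paley--Wiener theorem and then invoking the open mapping theorem to control its seminorms. First I would fix a representative $(M,\delta)$ of $\underline{\delta}$ and recall that by the discussion preceding Theorem~\ref{PWTheorem} the affiliated pairs $(L,\delta')$ with $(L,\delta')\in\mathcal{D}'(\underline{\delta})$ index, together with $\lambda\in\aaa^*_{L,\C}$, exactly the representations $\pi_{\delta,\lambda}(\mu)$ grouped into series. I would then use Lemma~\ref{DKVLemma}, applied to the Euclidean space $\aaa_M$ and the finite group $W(A_M)_\delta$ (so that the resulting $\hat h(t,\cdot)$ is invariant under the relevant Weyl group action and satisfies the reality and lower-bound conditions), to obtain for each $t\ge1$ a Paley--Wiener function $\hat h(t,\cdot)\in\PW(\aaa_M)^{W}_r$ with $\norm{\hat h(t,\cdot)}_{r,m}\ll_{r,m}t^m$.

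Next I would assemble from $\hat h(t,\cdot)$ an element $F^t=(F^t_{(L,\delta')})\in\PW_{r,\types(\underline{\delta})}$: for each affiliated pair $(L,\delta')$ one sets $F^t_{(L,\delta')}(\lambda)$ equal to the restriction of $\hat h(t,\cdot)$ to $\aaa^*_{L,\C}$ (up to the combinatorial multiplicity factor that makes the compatibility relations (1) and (2) in the definition of $\PW_{r,\underline{\delta}}$ hold, and zero on the components indexed by $\underline{\delta}'\ne\underline{\delta}$ in $\data_{\types(\underline{\delta})}$). The $K_\infty$-conjugation compatibility follows from the $W$-invariance of $\hat h(t,\cdot)$, and the induction compatibility follows because restricting a function on $\aaa_M$ to successive subspaces is transitive; the normalization $[R_\delta:R_{\delta,\lambda}]$ in assertion (2) is exactly the combinatorial factor relating $\tr\pi_{\delta,\lambda}(\mu)$ summed over $\mu$ to the induced character. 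One checks $\norm{F^t}_{r,n}\ll_{n}t^n$ for every $n$, directly from the bound on $\hat h(t,\cdot)$. Applying Theorem~\ref{PWTheorem} (surjectivity of $T_{r,\types(\underline{\delta})}$) together with the open mapping statement recorded in Remark~\ref{PWRemark}, for the given $k$ there is an integer $n=n(k,r,\types(\underline{\delta}))$ and a function $\phi^{t,k}_{\underline{\delta}}\in\Hecke(G_\infty)_{r,\types(\underline{\delta})}$ with $T_{r,\types(\underline{\delta})}(\phi^{t,k}_{\underline{\delta}})=F^t$ and $\norm{\phi^{t,k}_{\underline{\delta}}}_k\ll_{r,\types(\underline{\delta}),k}\norm{F^t}_{r,n}\ll t^{n}$; this gives assertion (3) with $r_{k,\underline{\delta}}=n$.

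It remains to translate the identity $T_{r,\types(\underline{\delta})}(\phi^{t,k}_{\underline{\delta}})=F^t$, which is phrased in terms of the traces $\tr\pi_{\delta,\lambda}(f)$ of basic representations, into the statements (1) and (2) about $\tr\sigma_\pi$. But $\sigma_\pi$ is by definition a basic representation, and the parametrization $\pi\mapsto\sigma_\pi$ is compatible with the decomposition of $\Irr(G_\infty)$ along $\data$; so assertion (1) is immediate from the vanishing of the components of $F^t$ indexed by $\underline{\delta}'\ne\underline{\delta}$, and assertion (2) is exactly the prescribed value of $F^t$ on the series attached to $(M,\delta)$, after unwinding the bookkeeping that relates the series parametrization $\pi_{\delta',\lambda}$ back to $\pi_{\delta,\lambda}(\mu)$. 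The main obstacle, and the place where care is needed, is precisely this bookkeeping: verifying that the tuple $F^t$ built from a single $W(A_M)_\delta$-invariant Paley--Wiener function genuinely satisfies the two compatibility constraints defining $\PW_{r,\underline{\delta}}$ (so that Theorem~\ref{PWTheorem} applies), and that the $R$-group multiplicities are inserted correctly so that the output, read on basic representations, yields the clean formula $[R_\delta:R_{\delta,\lambda}]\hat h(t,\lambda)$ of assertion (2). This is the combinatorial heart of Delorme's original argument; everything else is a routine application of Lemma~\ref{DKVLemma} and the open mapping theorem.
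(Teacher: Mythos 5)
Your construction follows the paper's own proof: build an element $F^t\in\PW_{r,\types(\underline{\delta})}$ from a single function $\hat h(t,\cdot)$ supplied by Lemma \ref{DKVLemma}, set it to zero on all components not affiliated with $\underline{\delta}$, apply Theorem \ref{PWTheorem} together with Remark \ref{PWRemark} to get a preimage $\phi^{t,k}_{\underline{\delta}}$ with $\norm{\phi^{t,k}_{\underline{\delta}}}_k\ll t^{r_{k,\underline{\delta}}}$, and read off the three assertions. The structure is right, but the part you defer as ``bookkeeping'' is precisely the content of assertion (2), and as written it is missing: you never specify the multiplicity factor, so $F^t$ is not actually defined, and neither the compatibility conditions of $\PW_{r,\underline{\delta}}$ nor the exact constant $[R_\delta:R_{\delta,\lambda}]$ can be verified. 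Concretely, on an affiliated pair $(M',\delta')$ one must take $F^t_{(M',\delta')}(\lambda)=\frac{\abs{R_\delta}}{S}\,\hat h(t,\lambda)$, where $S$ is the number of irreducible constituents of $I^{M'}_{M}(\delta)$. The second compatibility condition then holds because $R_\delta$ is abelian: for affiliated Levis $M'\subset M''$ the number $S''$ of constituents of $I^{M''}_{M}(\delta)$ is a multiple of $S$ and each $I^{M''}_{M'}(\delta'_i)$ splits into exactly $S''/S$ affiliated constituents, so the required sum rule reduces to $\frac{\abs{R_\delta}}{S}=\frac{S''}{S}\cdot\frac{\abs{R_\delta}}{S''}$. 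Assertion (2) then follows by writing $\pi_{\delta,\lambda}(\mu)\simeq\pi_{\delta',\lambda}$ with $\lambda\in\aaa^*_{M',\C}$ and $(M',\delta')$ affiliated, and using that in this situation $S=\abs{R_{\delta,\lambda}}$, so the prescribed value is $[R_\delta:R_{\delta,\lambda}]\hat h(t,\lambda)$. Note that your one concrete hint about the constant is off: the number of basic constituents of $\pi_{\delta,\lambda}$ is $\abs{R_{\delta,\lambda}}$, not $[R_\delta:R_{\delta,\lambda}]$; the index arises as the per-constituent value $\abs{R_\delta}/S$ with $S=\abs{R_{\delta,\lambda}}$, not as the factor relating $\sum_\mu\tr\pi_{\delta,\lambda}(\mu)$ to the induced character.

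Two smaller points. You invoke Lemma \ref{DKVLemma} with the group $W(A_M)_\delta$, whereas the paper uses the full group $W(A_M)$; with full $W(A_M)$-invariance the $K_\infty$-conjugacy compatibility is immediate, while with only the stabilizer one needs the (easy, but unstated) observation that any $K_\infty$-conjugation between affiliated triples can be corrected by an element of $M'$ so as to come from $N_{K_\infty}(A_M)$ stabilizing $\delta$. The remaining steps — the bound $\norm{F^t}_{r,n}\ll_n t^n$ from the third property of Lemma \ref{DKVLemma}, the open-mapping bound of Remark \ref{PWRemark} yielding assertion (3), and the vanishing in assertion (1) from the vanishing components of $F^t$ — are exactly as in the paper.
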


\begin{proof}
We apply the Paley--Wiener theorem to the following element $F^t = F^t_{(M',\delta')}$ of
$\PW_{r,\types}$, where $\types = \types (\underline{\delta})$.
Apply Lemma \ref{DKVLemma} to the vector space $\aaa_M$ and the group $W(A_M) = N_K (A_M) / C_K (A_M)$
to obtain functions $\hat{h} (t, \cdot) \in \PW (\aaa_M)^{W(A_M)}_r$.
Set $F^t_{(M',\delta')} (\lambda) = 0$ whenever $(M',\delta')$ is not affiliated with $\underline{\delta}$.
On the other hand, if  $(M',\delta')$ is affiliated with $\underline{\delta}$, then we
have a decomposition $I^{M'}_{M} (\delta) = \oplus_{i=1}^S \delta'_i$, where $\delta'_1 = \delta'$ and $S$ is a divisor of $|R_\delta|$.
We then set $F^t_{(M',\delta')} (\lambda) = \frac{|R_\delta|}{S} \hat{h} (t, \lambda)$.
By the $W(A_M)$-invariance of $\hat{h} (t,\cdot)$ and the transitivity of induction, this defines an element $F^t$ of $\PW_{r,\types}$.
(Note that the commutativity of the group $R_\delta$ implies that if we have a decomposition $I^{M''}_M (\delta) = \oplus_{i=1}^{S''} \delta''_i$ with
$(M'',\delta''_i)$ affiliated with $\underline{\delta}$ and $M'' \supset M'$, then $S''$ is a multiple of $S$ and each $I^{M''}_{M'} (\delta'_i)$ splits into $\frac{S''}{S}$ many
irreducible constituents $\delta''_j$, cf. \cite[pp. 204-205]{MR1046496}.)
Moreover, by the third assertion of Lemma \ref{DKVLemma}, the Paley--Wiener norms of $F^t$ satisfy $\norm{F^t}_{r,n} \ll_{r,\underline{\delta},n} t^n$ for every $n \ge 0$.
The Paley--Wiener theorem (Theorem \ref{PWTheorem}) and Remark \ref{PWRemark} provide for every $k \ge 0$ an integer $r_{k,\underline{\delta}} \ge 0$ and a preimage
$\phi^{t,k}_{\underline{\delta}} \in \Hecke (G_\infty)_{r,\types}$ of $F^t$ under $T_{r,\types}$ with
$\norm{\phi^{t,k}_{\underline{\delta}}}_k \ll_{r, \underline{\delta}, k} \norm{F^t}_{r,r_{k,\underline{\delta}}} \ll_{r,\underline{\delta},k} t^{r_{k,\underline{\delta}}}$. Therefore,
$\phi^{t,k}_{\underline{\delta}}$ satisfies the third property of the lemma. The first property is clear by construction.
Finally, write $\sigma = \pi_{\delta,\lambda} (\mu) \simeq \pi_{\delta',\lambda}$, where $\delta'$ is a non-degenerate limit of discrete series of $(M')^1$,
$M' \supset M$ is a Levi subgroup with
$\lambda \in \aaa^*_{M',\C} \subset \aaa^*_{M,\C}$ and
$(M',\delta')$ is affiliated with $\data$. The number $S$ of irreducible constituents of $I^{M'}_M (\delta)$ is then equal to $|R_{\delta,\lambda}|$. Therefore
$\tr \sigma (\phi^{t,k}_{\underline{\delta}}) = F^t_{(M',\delta')} (\lambda) = [R_\delta : R_{\delta,\lambda}] \hat{h} (t, \lambda)$, which establishes the second property and finishes the proof.
\end{proof}

\begin{corollary} \label{phitcorollary}
The test functions $\phi^{t,k}_{\underline{\delta}} \in \Hecke (G_\infty)_{r,\types ({\underline{\delta}})}$ above satisfy the following additional properties:
\begin{enumerate}
\item $\tr \sigma_\pi (\phi^{t,k}_{\underline{\delta}}) \ge 1$ for all
$\pi \in \Pi (G_\infty)_{\underline{\delta}}$ with $\abs{\lambda_\pi} \le t^2 - c_{\underline{\delta}}$,
where $c_{\underline{\delta}} \ge 0$ is a constant depending only on ${\underline{\delta}}$.
\item  For all $m \ge 0$ we have
\[
0 \le \tr \sigma_\pi (\phi^{t,k}_{\underline{\delta}}) \ll_{{\underline{\delta}},m} \frac{t^{2m}}{(1 + \abs{\lambda_\pi})^m}
\]
for all $\pi \in \Pi (G_\infty)_{\underline{\delta}}$.
\end{enumerate}
\end{corollary}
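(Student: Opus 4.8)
The plan is to read off both assertions directly from the explicit trace formula of Lemma \ref{phitlemma}(1)--(2) together with the properties of the auxiliary functions $\hat h(t,\cdot)$ recorded in Lemma \ref{DKVLemma}. Fix $\pi \in \Pi(G_\infty)_{\underline\delta}$ and write its basic representation as $\sigma_\pi = \pi_{\delta,\lambda}(\mu)$ for a representative $(M,\delta)$ of $\underline\delta$, a parameter $\lambda \in \aaa_{M,\C}^*$ and an orbit $\mu$. Then $\tr\sigma_\pi(\phi^{t,k}_{\underline\delta}) = [R_\delta:R_{\delta,\lambda}]\,\hat h(t,\lambda)$, which in particular is independent of $k$. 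The index $[R_\delta:R_{\delta,\lambda}]$ is a positive integer bounded above by $\card{R_\delta}$, a quantity depending only on $\underline\delta$, so it contributes only harmless constants and the whole argument reduces to estimating $\hat h(t,\lambda)$ from above and below.

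First I would record two standard facts about the parameter $\lambda$ attached to a \emph{unitary} representation $\pi$. Since $\pi$, hence its basic representation $\sigma_\pi$, is Hermitian, comparing induced parameters yields an element $w \in W(A_M)$ with $w(\lambda) = -\bar\lambda$; by Lemma \ref{DKVLemma}(1) this gives the crucial non-negativity $\hat h(t,\lambda) \in \R^{\ge 0}$. Moreover the real part of the Langlands parameter of a unitary representation is bounded in terms of $\underline\delta$, say $\norm{\Re\lambda}\le a_{\underline\delta}$. Second, I would compare the Casimir eigenvalue $\lambda_\pi$ with $\norm{\Im\lambda}$: the infinitesimal character of $\pi_{\delta,\lambda}(\mu)$ is $\Lambda_\delta+\lambda$ up to the Weyl group, with $\Lambda_\delta$ the Harish--Chandra parameter of $\delta$ (hence bounded in terms of $\underline\delta$), so $\lambda_\pi = \langle\Lambda_\delta+\lambda,\Lambda_\delta+\lambda\rangle - \langle\rho,\rho\rangle$ for the complex-bilinear extension of the form; using $\lambda_\pi\in\R$ together with $\norm{\Re\lambda}\le a_{\underline\delta}$ one obtains $\bigl|\,\abs{\lambda_\pi}-\norm{\Im\lambda}^2\,\bigr|\ll_{\underline\delta}1$. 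From this I would extract a constant $c_{\underline\delta}\ge0$, depending only on $\underline\delta$, such that $\abs{\lambda_\pi}\le t^2-c_{\underline\delta}$ forces $\norm{\lambda}^2=\norm{\Re\lambda}^2+\norm{\Im\lambda}^2\le t^2$, while conversely $1+\abs{\lambda_\pi}\ll_{\underline\delta}(1+\norm{\Im\lambda})^2$.

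With these in hand the two assertions follow quickly. For (1), when $\abs{\lambda_\pi}\le t^2-c_{\underline\delta}$ we have $\norm{\lambda}\le t$, so $\abs{\hat h(t,\lambda)}\ge1$ by Lemma \ref{DKVLemma}(2); combined with $\hat h(t,\lambda)\ge0$ this gives $\hat h(t,\lambda)\ge1$ and hence $\tr\sigma_\pi(\phi^{t,k}_{\underline\delta})\ge1$. For (2), non-negativity is immediate from the second paragraph, and for the upper bound I would apply Lemma \ref{DKVLemma}(3) with $a=a_{\underline\delta}$ and exponent $2m$ in place of $m$, obtaining $\hat h(t,\lambda)\ll_{\underline\delta,m}t^{2m}(1+\norm{\Im\lambda})^{-2m}\ll_{\underline\delta,m}t^{2m}(1+\abs{\lambda_\pi})^{-m}$, and then multiply by the bounded index. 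I expect the only genuinely delicate point to be the comparison between $\abs{\lambda_\pi}$ and the parameter $\lambda$ --- that is, pinning down the dependence of the infinitesimal character of $\pi_{\delta,\lambda}(\mu)$ on $\delta$ and $\lambda$ precisely enough, and correctly invoking the boundedness of $\Re\lambda$ and the existence of $w$ with $w(\lambda)=-\bar\lambda$ for unitary $\pi$ --- since everything else is a direct substitution into Lemmas \ref{phitlemma} and \ref{DKVLemma}.
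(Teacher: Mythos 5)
Your proposal is correct and follows essentially the same route as the paper: unitarity gives $w(\lambda)=-\bar\lambda$ (hence $\hat h(t,\lambda)\ge 0$) and $\norm{\Re\lambda}$ bounded, the Casimir eigenvalue is compared with $\norm{\Im\lambda}^2$ (the paper quotes the formula $\lambda_\pi=-\norm{\Im\lambda}^2+\norm{\Re\lambda}^2+\norm{\chi_\delta}^2-c_M$ from Borel--Garland, which is what your infinitesimal-character computation reproduces), and then both assertions are read off from Lemma \ref{DKVLemma}(2),(3) together with the bound $[R_\delta:R_{\delta,\lambda}]\le\abs{R_\delta}$. No gaps.
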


\begin{proof}
Let $\bar{\pi}_{\delta,\lambda}(\mu) \simeq \pi \in \Pi (G_\infty)$ and $\sigma_\pi = \pi_{\delta,\lambda} (\mu)$. Since $\pi$ is unitary, we need to have
$w (\lambda) = - \bar{\lambda}$ for an element $w \in W (A_M)_\delta$ \cite[(2.1)]{MR860667}.
By Lemma \ref{phitlemma}, the trace $\tr \sigma_\pi (\phi^{t,k}_{\underline{\delta}})$ is an integer multiple of $\hat{h} (t, \lambda)$.
By the first property of Lemma \ref{DKVLemma}, it is therefore nonnegative real.

Furthermore, the Casimir eigenvalue of $\pi$ can be computed
as $\lambda_\pi = - \norm{\Im \lambda}^2 + \norm{\Re \lambda}^2 + \norm{\chi_\delta}^2 - c_M$ for a constant $c_M$
(cf.~\cite[\S 3.2, (2)]{MR701563}). Again by
unitarity, we have $\norm{\Re \lambda} \le \norm{\rho_P}$, where $\rho_P$ is half the sum of the positive roots of a
parabolic $P$ with Levi subgroup $M$ \cite[(2.2)]{MR860667}. Therefore, we obtain $\abs{\lambda_\pi} \ge \norm{\lambda}^2 - c_{\underline{\delta}}$
for a constant $c_{\underline{\delta}}$, which we may take to be nonnegative.

To show the first assertion, $\abs{\lambda_\pi} \le t^2 - c_{\underline{\delta}}$ implies
that $\norm{\lambda} \le t$, and by the second property of Lemma \ref{DKVLemma} we obtain
$\hat{h} (t, \lambda) \ge 1$.
For the second assertion, we use the last property of Lemma \ref{DKVLemma} (with $2m$ instead of $m$) and the boundedness of $\norm{\Re \lambda}$
together with the obvious fact that $[R_\delta:R_{\delta,\lambda}]$ is bounded by $\abs{R_\delta}$.
\end{proof}

\begin{proof}[End of proof of Proposition \ref{Delormeprop}]
It remains to prove the equivalent statements \eqref{eq: statement1} or \eqref{eq: statement2} for any $\underline{\delta}$ assuming the second condition of the proposition.
We will prove them by induction on $\underline{\delta}$, i.e., for a given $\underline{\delta}$ we
assume that \eqref{eq: statement2} is satisfied for all $\underline{\delta}' \prec \underline{\delta}$ and are going to prove
\eqref{eq: statement1} for $\underline{\delta}$. For this,
consider the test functions $\phi^{t,k}_{\underline{\delta}}$ constructed in Lemma \ref{phitlemma} for
$t = (1 + c_{\underline{\delta}} + R)^{1/2} \ge 1$, where $R > 0$ is a parameter.
By assumption, for a suitable $r > 0$, for each finite set $\types$ the supremum $\sup_{\nu \in \mathfrak{M}} \abs{\nu (\hat{f})}$ is a continuous seminorm
on $\Hecke (G_\infty)_{r,\types}$.
This means that for a suitable value of $k$ (depending on $\types$) we have
$\nu (\hat{f}) \ll_{\types} \norm{f}_k$ for all $\nu \in \mathfrak{M}$ and $f \in \Hecke (G_\infty)_{r,\types}$.
Taking $f = \hat{\phi}^{t,k}_{\underline{\delta}}$ and using the third assertion of Lemma \ref{phitlemma}, we obtain that
there exists an integer $m_{\underline{\delta}} \ge 0$ such that
\begin{equation} \label{eq: bndphitk}
\nu (\hat{\phi}^{t,k}_{\underline{\delta}}) \ll_{\underline{\delta}} (1+R)^{m_{\underline{\delta}}},\ \ \nu \in \mathfrak{M}.
\end{equation}
Write
\[
\nu (\hat{\phi}^{t,k}_{\underline{\delta}}) = \int \tr \pi (\phi^{t,k}_{\underline{\delta}}) d \nu (\pi).
\]
Inserting \eqref{grothendieck} into this equation, we obtain
\[
\nu (\hat{\phi}^{t,k}_{\underline{\delta}})
 = \int (\tr \sigma_\pi) (\phi^{t,k}_{\underline{\delta}}) d \nu (\pi) + \int \left[ \sum_{\pi': {\underline{\delta}}(\pi) \prec {\underline{\delta}} (\pi'), \,
\chi_{\pi} = \chi_{\pi'}} n (\pi,\pi') \tr \sigma_{\pi'} (\phi^{t,k}_{\underline{\delta}}) \right] \, d \nu (\pi).
\]
By the first assertion of Corollary \ref{phitcorollary}, the first integral provides an upper bound for the measure
of the set $\{\pi \in \Pi (G_\infty)_{\underline{\delta}} : \abs{\lambda_\pi} \le R \}$:
\[
\nu (\{\pi \in \Pi (G_\infty)_{\underline{\delta}} : \abs{\lambda_\pi} \le R \}) \le \int (\tr \sigma_\pi)
(\phi^{t,k}_{\underline{\delta}}) d \nu (\pi).
\]
Regarding the second integral,
only $\pi'$ with ${\underline{\delta}} (\pi') = {\underline{\delta}}$ can
contribute, and we can estimate their contribution using
the second assertion of Corollary \ref{phitcorollary}:
\[
0 \le \tr \sigma_{\pi'} (\phi^{t,k}_{\underline{\delta}}) \ll_{{\underline{\delta}},m} \frac{t^{2m}}{(1 + \abs{\lambda_{\pi'}})^m}
= \frac{(1+c_{\underline{\delta}}+R)^{m}}{(1 + \abs{\lambda_{\pi}})^m},
\]
since $\pi$ and $\pi'$ have the same infinitesimal character.
Combining this inequality with \eqref{eq: bndphitk} and using Lemma \ref{lem: vogan} we obtain
\begin{align*}
\nu (\{\pi \in \Pi (G_\infty)_{\underline{\delta}} : \abs{\lambda_\pi} \le R \}) &\le
\nu (\hat{\phi}^{t,k}_{\underline{\delta}}) -
\int \left[ \sum_{\pi': {\underline{\delta}}(\pi) \prec {\underline{\delta}} (\pi'), \,
\chi_{\pi} = \chi_{\pi'}} n (\pi,\pi') \tr \sigma_{\pi'} (\phi^{t,k}_{\underline{\delta}}) \right] \, d \nu (\pi) \\
&\ll_{\underline{\delta},m}
(1+R)^{m_{\underline{\delta}}} + (1+R)^{m}
\sum_{{\underline{\delta}}' \prec {\underline{\delta}}} \nu (g_{m,{\underline{\delta}}'})
\end{align*}
for all $\nu \in \mathfrak{M}$.
For suitable $m$ the sum $\sum_{{\underline{\delta}}' \prec {\underline{\delta}}} \nu (g_{m,{\underline{\delta}}'})$
is bounded independently of $\nu$
by the induction hypothesis. We conclude that \eqref{eq: statement1} holds for ${\underline{\delta}}$, which finishes the induction and thereby the proof of the proposition.
\end{proof}

We remark that the proof simplifies for the groups ${\rm GL} (n)$, since in this case
the tempered basic representations $\pi_{\delta,\lambda}$, $\Re \lambda = 0$, are always irreducible, the $R$-groups are trivial and the
sets $\mathcal{D}' (\underline{\delta})$ are therefore singletons. The Paley--Wiener space $\PW_{r,\underline{\delta}}$ is then just the space of
$W_\delta$-invariant functions in $\PW (\aaa_M)_r$, where $(M,\delta)$ is
a representative of $\underline{\delta}$ and $W_\delta$ denotes the stabilizer of $\delta$ inside the Weyl group $W(A_M)$.

\section{The spectral limit property} \label{sec: spectralside}

We now come back to the situation of \S\S \ref{SectionIntro} -- \ref{SectionBounds}.
Before treating the spectral limit property \eqref{eq: mainspectral}, we consider first the question whether the
collection of measures $\{ \mu^{G,S_\infty}_K \}_{K \in \mathcal{K}}$ on $G (F_\infty)^1$ associated
to a set $\mathcal{K}$ of open subgroups $K$ of $\K_{\fin}$ is \PB.
We conjecture that this is true for the set of all open subgroups of $\K_{\fin}$ (or even for the set of all open compact subgroups of $G (\A_{\fin})$).
Note that each finite set $\mathcal{K}$ is known to have this property \cite{MR1025165}. So, as in the case of property \TWN\
above, the issue is to control the dependence on $K$.

\begin{remark}
Deitmar and Hoffmann \cite{MR1697144} have shown unconditionally that for any $G$ the collection of measures $\{ \mu^{G, S_\infty}_{\K (\nnn),\cusp} \}$, where
$\mu^{G, S_\infty}_{\K (\nnn),\cusp}$ is the analog of $\mu^{G, S_\infty}_{\K (\nnn)}$ for the \emph{cuspidal} spectrum, is \PB.
(In fact, they obtain a more precise statement.) However, for our argument we need to know the corresponding statement for the full discrete spectrum.
\end{remark}

Our results in this direction are Lemmas \ref{AnisotropicLemma} and \ref{BoundedLemma} below, which we will
use for a conditional proof of the spectral limit property for principal congruence subgroups in Corollary \ref{corspectrallimit}, thereby finishing our argument.
Recall the spectral expansion of Theorem \ref{thm: specexpand},
which expresses Arthur's distribution $J (h)$ as a sum of contributions $J_{\spec,M} (h)$ associated to the conjugacy classes of Levi subgroups $M$ of $G$.
Also recall properties \TWN\ and \BD\ from \S \ref{SectionBounds}. They are hereditary for Levi subgroups.




Fix $M \in\levis$, $M \neq G$.
The technical heart of our argument is contained in the following lemma and its corollary. We freely use the notation introduced in \S \ref{SectionTraceFormula}.
We denote by $\norm{\cdot}_{1,\mathfrak{H}}$ the trace norm of an operator on a Hilbert space $\mathfrak{H}$.
Extending the notation of \S \ref{SectionPB},
for a finite set $\types\subset\Pi (\K_\infty)$ and an open subgroup $K_S$ of $\K_{S - S_\infty}$
let $\Hecke(G(F_S)^1)_{\types,K_S}$ be the space of all bi-$K_S$-invariant functions
$f \in \Hecke(G(F_S)^1)$ whose
translates $f (k_1 \cdot k_2)$, $k_1$, $k_2 \in \K_\infty$,
span a space that decomposes under the action
of $\K_\infty \times \K_\infty$ as a sum of representations $\tau_1 \otimes \tau_2$ with $\tau_1$, $\tau_2 \in \types$.
Recall the norms $\norm{\cdot}_k$ on $C^\infty_c (G(F_S)^1)$ introduced in \S \ref{subsecnotationgeom}.

\begin{lemma} \label{lem: mainestimate}
Suppose that $G$ satisfies properties \TWN\ and \BD. Let $M \in \levis$ and $P\in\PPP(M)$.
Furthermore, let $S \supset S_\infty$ be a finite set of places of $F$.
Then for any finite set $\types\subset\Pi (\K_\infty)$ and any sufficiently large $N>0$ there exists an integer $k \ge 0$ such that
for any
\begin{itemize}
\item open subgroup $K_S$ of $\K_{S - S_\infty}$,
\item open compact subgroup $K$ of $G (\A^S)$,
\item $\epsilon>0$,
\item $s\in N_G(M)/M$,
\item $\bss\in\bases_{P,L_s}$,
\item $\dtup\in\Xi_{L_s}(\bss)$,
\end{itemize}
we have
\begin{multline} \label{eq: bnd11}
\int_{\iii(\aaa^G_{L_s})^*}\norm{\Delta_{\dtup}(P,\lambda) M (P,s)
\rho(P,\lambda,h\otimes\one_K)}_{1,\bar\AF^2(P)}\ d\lambda\ll_{\types,N,\epsilon}\\
\vol(K)\level(K_SK;\fctr M^+)^\epsilon\norm{h}_k\sum_{\substack{\tau\in\types,\\\pi\in\Pi_{\disc}(M(\A))}}
\param_M(\pi_\infty)^{-N}\,\dim\AF^2_\pi (P)^{K_SK,\tau}
\end{multline}
for all $h\in\Hecke(G(F_S)^1)_{\types,K_S}$. Consequently,
\begin{multline} \label{eq: JspecMbound}
J_{\spec,M}(h\otimes\one_K)\\\ll_{\types,N,\epsilon}
\vol(K)\level(K_SK;\fctr M^+)^\epsilon\norm{h}_k\sum_{\substack{\tau\in\types,\\\pi\in\Pi_{\disc}(M(\A))}}
\param_M(\pi_\infty)^{-N}\,\dim\AF^2_\pi (P)^{K_SK,\tau}.
\end{multline}
\end{lemma}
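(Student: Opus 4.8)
The bound \eqref{eq: JspecMbound} follows immediately from \eqref{eq: bnd11}: by Theorem \ref{thm: specexpand}, $J_{\spec,M}(h\otimes\one_K)$ is a finite sum, with coefficients bounded in terms of $G$, of the integrals $\int_{\iii(\aaa^G_{L_s})^*}\tr(\Delta_{\dtup_{L_s}(\bss)}(P,\lambda)M(P,s)\rho(P,\lambda,h\otimes\one_K))\,d\lambda$ over $s\in W(M)$ and $\bss\in\bases_{P,L_s}$, and $\dtup_{L_s}(\bss)\in\Xi_{L_s}(\bss)$; since $\abs{\tr A}\le\norm{A}_1$, each such integral is bounded by the left-hand side of \eqref{eq: bnd11}. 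To prove \eqref{eq: bnd11}, I decompose $\bar\AF^2(P)=\widehat{\bigoplus}_{\pi\in\Pi_{\disc}(M(\A))}\AF^2_\pi(P)$. All the operators $M_{\bullet|\bullet}(\lambda)$, $M(P,s)$ and $\delta_{P_i|P_i'}(\lambda)$ respect this decomposition, and the first two are unitary for $\lambda\in\iii\aaa^*$. The operator $\rho(P,\lambda,h\otimes\one_K)$ equals $\vol(K)$ times a map that, since $h$ is bi-$K_S$-invariant and bi-$\types$-finite and $\one_K$ is $\vol(K)$ times the projection onto $K$-fixed vectors, factors on each summand through the finite-dimensional space $\AF^2_\pi(P)^{K_SK,\types}:=\bigoplus_{\tau\in\types}\AF^2_\pi(P)^{K_SK,\tau}$; in particular its restriction to $\AF^2_\pi(P)$ has rank at most $\sum_{\tau\in\types}\dim\AF^2_\pi(P)^{K_SK,\tau}$. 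Using $\norm{AB}_1\le\norm{A}_{\mathrm{op}}\norm{B}_1$, $\norm{B}_1\le\operatorname{rank}(B)\,\norm{B}_{\mathrm{op}}$, the unitarity just recalled, and the definition of $\Delta_{\dtup}$ (whose only non-unitary factors for $\lambda$ imaginary are the $\delta_{P_i|P_i'}(\lambda)$), the integrand of \eqref{eq: bnd11} is bounded, up to a constant depending on $G$, by
\[
\vol(K)\sum_{\pi\in\Pi_{\disc}(M(\A))}\Big(\prod_{i=1}^m\norm{\delta_{P_i|P_i'}(\lambda)\big|_{\AF^2_\pi(P)}}_{\mathrm{op}}\Big)\Big(\sum_{\tau\in\types}\dim\AF^2_\pi(P)^{K_SK,\tau}\Big)\norm{\rho(P,\lambda,h\otimes\one_K)\big|_{\AF^2_\pi(P)}}_{\mathrm{op}},
\]
where $m=\dim\aaa^G_{L_s}$ and $\dtup=(\gen{P_1,P_1'},\dots,\gen{P_m,P_m'})$ with $P_i|^{\beta_i}P_i'$.

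Next I treat the two kinds of factors. For the $\delta$-factors, the normalization \eqref{eq: normalization} shows that on $\AF^2_\pi(P)$ the operator $\delta_{P_i|P_i'}(\lambda)$ is the sum of the scalar $n'_{\beta_i}(\pi,s_i)/n_{\beta_i}(\pi,s_i)$ and the conjugate by an isometry of the logarithmic derivative in $s_i$ of $\otimes_v R_{P_i|P_i'}(\pi_v,s_i)$, where $s_i=\sprod{\lambda}{\beta_i^\vee}$; by definition of $\delta_{P_i|P_i'}$ this depends only on $s_i$. Since the projections of $\beta_1^\vee,\dots,\beta_m^\vee$ to $\aaa_{L_s}$ form a basis, we may use $s_1,\dots,s_m$ as coordinates on $\iii(\aaa^G_{L_s})^*$; together with the elementary inequality $(1+\norm\lambda)^{-A}\ll\prod_i(1+\abs{s_i})^{-A/m}$ this lets the $\lambda$-integral factor once we extract $(1+\norm\lambda)^{-A}$-decay from the $\rho$-factor. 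For the latter, the operator-valued Paley--Wiener property of $h\mapsto\rho(P,\lambda,h\otimes\one_K)$, together with the fact that $\pi$ is trivial on $A_M$ — which forces the infinitesimal character of $\pi_\infty$ to be orthogonal to $\aaa_M$, so that it and $\lambda$ contribute with the same sign to the relevant Hermitian norm (even though the Casimir eigenvalue of the twisted induced representation may be small) — yields, for every $n$, a bound $\norm{\rho(P,\lambda,h\otimes\one_K)|_{\AF^2_\pi(P)}}_{\mathrm{op}}\ll_{\types,n}\vol(K)\norm{h}_{c(n)}(1+\norm{\chi_{\pi_\infty}}^2+\norm\lambda^2)^{-n}$ for $\lambda\in\iii(\aaa^G_{L_s})^*$ and $\pi$ with $\AF^2_\pi(P)^{K_SK,\types}\ne0$. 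As any such $\pi_\infty$ has lowest $\K_{M,\infty}$-type of norm bounded in terms of $\types$, we obtain, for prescribed $N$ and $A$, an integer $k_0=k_0(N,A,\types)$ with $\norm{\rho(P,\lambda,h\otimes\one_K)|_{\AF^2_\pi(P)}}_{\mathrm{op}}\ll_{\types}\vol(K)\norm h_{k_0}\param_M(\pi_\infty)^{-N}(1+\norm\lambda)^{-A}$.

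Now I assemble the estimate. Fix $A$ a multiple of $m$ with $A/m\ge\max(2,k)$, where $k$ is the exponent of property \TWN\ for the finite set of $\K_{M,\infty}$-types that can occur; note also that $R_{P_i|P_i'}(\pi_v,s_i)$ is $s_i$-independent outside a finite set of places, that the terms with $\AF^2_\pi(P)^{K_SK}=0$ drop out, and that the relevant $\K_\infty\cap\fctr M$-types lie in a finite set determined by $\types$. By the previous paragraph, the left-hand side of \eqref{eq: bnd11} is $\ll_{\types,N,A}\vol(K)\norm h_{k_0}$ times
\[
\sum_{\pi\in\Pi_{\disc}(M(\A))}\Big(\prod_{i=1}^m\int_{\iii\R}\norm{\delta_{P_i|P_i'}(\pi,s_i)}_{\mathrm{op}}(1+\abs{s_i})^{-A/m}\,ds_i\Big)\param_M(\pi_\infty)^{-N}\sum_{\tau\in\types}\dim\AF^2_\pi(P)^{K_SK,\tau}.
\]
Each one-dimensional integral splits into the contribution of $n'_{\beta_i}/n_{\beta_i}$, bounded by property \TWN\ (for $G$, with the Levi $M$ and the root $\beta_i$) by $\ll_{\types,\epsilon}\param_M(\pi_\infty;\prpr M_{\beta_i})^{k}\level_M(\pi;\prpr M_{\beta_i}^+)^{\epsilon}$, and the contribution of the logarithmic derivative of $\otimes_vR_{P_i|P_i'}(\pi_v,s_i)$, bounded (using $(1+\abs{s_i})^{-A/m}\le(1+\abs{s_i}^2)^{-1}$) by Proposition \ref{prop: mainlocal} with the compact group $K_SK$ by $\ll_{\types}1+\log\level(K_SK;\fctr M^+)$. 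By \eqref{EqnParamMUpper} we have $\param_M(\pi_\infty;\prpr M_{\beta_i})\le\param_M(\pi_\infty)$, by Lemma \ref{lem: cmprGm} we have $\level_M(\pi;\prpr M_{\beta_i}^+)\le\level(K_SK;\fctr M^+)$ whenever the term is non-zero, and $\log x\ll_\epsilon x^\epsilon$; hence each one-dimensional integral is $\ll_{\types,\epsilon}\param_M(\pi_\infty)^{C}\level(K_SK;\fctr M^+)^{\epsilon}$ for a constant $C=C(\types)$, their product over $i$ is $\ll_{\types,\epsilon}\param_M(\pi_\infty)^{mC}\level(K_SK;\fctr M^+)^{m\epsilon}$, and, applying the $\rho$-estimate with $N+mC$ in place of $N$ (legitimate, as \eqref{eq: bnd11} is asserted only for sufficiently large $N$) and renaming $m\epsilon$ as $\epsilon$, we obtain exactly the right-hand side of \eqref{eq: bnd11} with $k=k_0$.

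The main obstacle is twofold. First, the operator-norm bound on $\rho(P,\lambda,h\otimes\one_K)$ must carry \emph{simultaneous} polynomial decay in $\norm\lambda$ and in $\param_M(\pi_\infty)$; this is possible because the inducing parameter is orthogonal to $\aaa_M$, so the contributions of $\pi_\infty$ and of $\lambda$ add in the Hermitian norm governing the Paley--Wiener decay, notwithstanding the possible smallness of the Casimir eigenvalue of the twisted induced representation. Second, and more substantively, the dependence on the level must be kept down to $\level(K_SK;\fctr M^+)^{\epsilon}$ rather than a positive power; this is precisely what properties \TWN\ and \BD\ (through Proposition \ref{prop: mainlocal} and Lemma \ref{lem: cmprGm}) are designed to supply, and is the technical heart of the matter.
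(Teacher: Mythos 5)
Your proof is correct and follows essentially the same route as the paper's: the same reduction of the trace norm to rank times operator norm on the $\pi$-isotypic, $(K_SK,\types)$-isotypic pieces (after discarding the unitary operator $M(P,s)$), the same passage to the coordinates $s_i=\sprod{\lambda}{\beta_i^\vee}$, the same splitting of $\delta_{P_i|P_i'}$ via \eqref{eq: normalization} into the global normalizing factor (handled by \TWN) and the normalized local operator (handled by Proposition \ref{prop: mainlocal} together with Lemma \ref{lem: cmprGm}), and the same deduction of \eqref{eq: JspecMbound} from Theorem \ref{thm: specexpand} and $\abs{\tr A}\le\norm{A}_1$. The only differences are presentational: the joint decay in $\norm{\lambda}$ and $\param_M(\pi_\infty)$, which you quote as an operator-valued Paley--Wiener bound, is obtained in the paper concretely by inserting $\rho(P,\lambda,\Delta)^{-2k}\rho(P,\lambda,\Delta^{2k}\star h\otimes\one_K)$ with $\Delta=\Id-\Casimir+2\Casimir_{\K_\infty}$, which acts on $\AF^2_\pi(P)^{\tau}$ by a scalar $\mu(\pi,\lambda,\tau)$ satisfying $\mu^2\gg\norm{\lambda}^2+\param_M(\pi_\infty)$ (the same Casimir/lowest-$K$-type argument your sketch alludes to), and the operator norms of the $\delta_{P_i|P_i'}(\lambda)$ should be taken on the $(K_SK,\tau)$-isotypic subspaces --- as you in fact do when applying Proposition \ref{prop: mainlocal} --- rather than on all of $\AF^2_\pi(P)$ as in your intermediate display.
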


Note here that for all $N \ge N_0$, where $N_0$ depends only on $G$, the right hand sides of \eqref{eq: bnd11} and \eqref{eq: JspecMbound} are finite \cite{MR1025165}.

\begin{proof}
We argue as in \cite[\S5]{MR2811597} (cf.~also \cite{MR1935546}). First note that we may omit $M (P,s)$
on the left-hand side of \eqref{eq: bnd11}, since it is a unitary operator which commutes with $\rho(P,\lambda,h\otimes\one_K)$,
and hence does not affect the trace norm.
Let $\Delta$ be the operator $\Id-\Casimir+2\Casimir_{\K_\infty}$,
where $\Casimir$ (resp.~$\Casimir_{\K_\infty}$) is the Casimir operator of $G(F_\infty)$
(resp.~$\K_\infty$).
For any $k>0$ we bound the left-hand side of \eqref{eq: bnd11} by
\begin{multline*}
\int_{\iii(\aaa^G_{L_s})^*}\norm{\Delta_{\dtup}(P,\lambda)
\rho(P,\lambda,\Delta)^{-2k}}_{1,\bar\AF^2(P)^{K_SK,\types}}\norm{\rho(P,\lambda,\Delta^{2k}\star h\otimes\one_K)}\ d\lambda\\\le
\vol(K)\norm{\Delta^{2k}\star h}_{L^1(G(F_S)^1)}\int_{\iii(\aaa^G_{L_s})^*}\norm{\Delta_{\dtup}(P,\lambda)
\rho(P,\lambda,\Delta)^{-2k}}_{1,\bar\AF^2(P)^{K_SK,\types}}\ d\lambda.
\end{multline*}
Consider the integral on the right-hand side.\footnote{In the corresponding formula \cite[(5.1)]{MR2811597} the restriction to the $K_0$-fixed part was mistakenly omitted.}
For any $\pi\in\Pi_{\disc}(M(\A))$ and $\tau\in\Pi (\K_\infty)$,
the operator $\rho(P,\lambda,\Delta)$ acts by the scalar $\mu(\pi,\lambda,\tau)=1+\norm{\lambda}^2-\lambda_\pi+2\lambda_\tau - e_P$
on $\AF^2_\pi(P)^\tau$, where $\lambda_\pi$ and $\lambda_\tau$ are the Casimir eigenvalues of $\pi$ and $\tau$, respectively,
and $e_P$ is a constant depending only on $P$ (cf.~\cite[\S 3.2, (2)]{MR701563}).
Since it is easy to see that $e_P \le 0$, using \eqref{EqnParamMUpper} we get
\[
\mu(\pi,\lambda,\tau)^2\ge\frac14(1+\norm{\lambda}^2+\lambda_\pi^2+\lambda_\tau^2) \gg_G \norm{\lambda}^2+\param_M(\pi_\infty).
\]
Therefore,
\begin{multline*}
\int_{\iii(\aaa^G_{L_s})^*}\norm{\Delta_{\dtup}(P,\lambda)
\rho(P,\lambda,\Delta)^{-2k}}_{1,\bar\AF^2(P)^{K_SK,\types}}\ d\lambda\le\\
\sum_{\tau\in\types}\sum_{\pi\in\Pi_{\disc}(M(\A))}\int_{\iii(\aaa^G_{L_s})^*}
\norm{\Delta_{\dtup}(P,\lambda)}_{1,\AF^2_\pi(P)^{K_SK,\tau}}
\mu(\pi,\lambda,\tau)^{-2k}\ d\lambda.
\end{multline*}
Estimating $\norm{A}_1\le\dim V\norm{A}$ for any linear operator $A$ on a finite-dimensional Hilbert space $V$,
we bound the previous expression by
\[
\sum_{\tau\in\types}\sum_{\pi\in\Pi_{\disc}(M(\A))}\dim\AF^2_\pi (P)^{K_SK,\tau}
\int_{\iii(\aaa^G_{L_s})^*}\norm{\Delta_{\dtup}(P,\lambda)}_{\AF^2_\pi(P)^{K_SK,\tau}}
\mu(\pi,\lambda,\tau)^{-2k}\ d\lambda.
\]
Using the definition of $\Delta_{\dtup}(P,\lambda)$, we can bound the above by a constant multiple of
\begin{equation} \label{eq: bnd33}
\sum_{\substack{\tau\in\types,\\\pi\in\Pi_{\disc}(M(\A))}}
\param_M(\pi_\infty)^{-k/2}\,\dim\AF^2_\pi (P)^{K_SK,\tau}
\int_{\iii(\aaa^G_{L_s})^*}(1+\norm{\lambda})^{-k}\prod_{i=1}^m
\norm{\delta_{P_i|P_i'}(\lambda)\big|_{\AF^2_\pi(P_i')^{K_SK,\tau}}}\ d\lambda.
\end{equation}

We estimate the integral over $\iii(\aaa^G_{L_s})^*$.
Let $\types_M \subset \Pi (\K_{M,\infty})$ be the finite set of all irreducible components of
restrictions of elements of $\types$ to $\K_{M,\infty}$.
Then by Frobenius reciprocity only those $\pi \in \Pi_{\disc}(M(\A))$
with $\pi_\infty \in \Pi (M (F_\infty))^{\types_M}$ can contribute to \eqref{eq: bnd33}.

Let $\bss = (\beta_1^\vee,\dots,\beta_m^\vee)$ and introduce the new coordinates $s_i = \sprod{\lambda}{\beta_i^\vee}$, $i = 1, \dots, m$,
on $(\aaa^G_{L_s,\C})^*$.
By \eqref{eq: normalization} we can write
\[
\delta_{P_i|P_i'}(\lambda)=\frac{n'_{\beta_i}(\pi,s_i)}{n_{\beta_i}(\pi,s_i)}\Id+
j_{P_i'}\circ(\Id\otimes R(\pi,s_i)^{-1}R'(\pi,s_i))\circ j_{P_i'}^{-1}.
\]
Property \TWN\ and Proposition \ref{prop: mainlocal} (which is based on property \BD),
together with Lemma \ref{lem: cmprGm}, yield the estimate
\begin{equation} \label{integralbound}
\int_{\iii(\aaa^G_{L_s})^*}(1+\norm{\lambda})^{-k}\prod_{i=1}^m
\norm{\delta_{P_i|P_i'}(\lambda)\big|_{\AF^2_\pi(P_i')^{K_SK,\tau}}}\ d\lambda
\ll_{\epsilon,\types} \param_M(\pi_\infty;\fctr M)^N \level(K_SK;\fctr M^+)^\epsilon
\end{equation}
for any $\epsilon > 0$ and sufficiently large $N$ and $k$ (depending possibly on $\tau$).
Altogether we obtain \eqref{eq: bnd11}, and using Theorem \ref{thm: specexpand} also \eqref{eq: JspecMbound}.
\end{proof}

\begin{remark}
Note that the improved estimate \eqref{eq: logbnd} yields the following improvement of \eqref{integralbound}:
\begin{multline} \label{eq: intbnd}
\int_{\iii(\aaa^G_{L_s})^*}(1+\norm{\lambda})^{-m-\epsilon}\prod_{i=1}^m
\norm{\delta_{P_i|P_i'}(\lambda)\big|_{\AF^2_\pi(P_i')^{K_SK,\tau}}}
\ d\lambda\\\ll_\epsilon 
\log(\param_M(\pi_\infty; \fctr M)+\norm{\tau}+\level(K_SK;\fctr M^+))^m
\end{multline}
where the implied constant does not depend on $\tau$.
\end{remark}

Applying Lemma \ref{lem: mainestimate} to the principal congruence subgroups $\K^S (\nnn)$, and assuming polynomial boundedness
of the collection $\{ \mu^{M, S_\infty}_{\K_M (\nnn)} \}$, we obtain the following result.

\begin{corollary} \label{cor: SpectralEstimate}
Suppose that $G$ satisfies properties \TWN\ and \BD.
Furthermore, let $M \in\levis$, $M \neq G$, and assume
that the set of measures $\{ \mu^{M, S_\infty}_{\K_M (\nnn)} \}$ is \PB.
Let $S \supset S_\infty$ be a finite set of places of $F$.
Then for any finite set $\types\subset\Pi (\K_\infty)$ there exists an integer $k \ge 1$ such that for any
open subgroup $K_S \subset \K_{S - S_\infty}$ and any $\epsilon>0$, we have
\begin{equation} \label{eq: bnd111}
J_{\spec,M}(h\otimes\one_{\K^S (\nnn)})
\ll_{K_S,\types, \epsilon}\norm{h}_k \inorm (\nnn)^{(\dim M-\dim G)/2+\epsilon}
\end{equation}
for all $h\in\Hecke(G(F_S)^1)_{\types,K_S}$ and all integral ideals $\nnn$ of $\mathfrak{o}_F$ prime to $S$.
\end{corollary}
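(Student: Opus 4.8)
The plan is to deduce Corollary~\ref{cor: SpectralEstimate} directly from Lemma~\ref{lem: mainestimate} by substituting $K = \K^S(\nnn)$ and controlling the right-hand side of \eqref{eq: JspecMbound}. First I would fix the finite set $\types \subset \Pi(\K_\infty)$ and choose $N$ large enough (to be specified below) so that Lemma~\ref{lem: mainestimate} applies and yields an integer $k$; I would enlarge $k$ freely in the course of the argument. Applying \eqref{eq: JspecMbound} with $K = \K^S(\nnn)$ gives
\[
J_{\spec,M}(h\otimes\one_{\K^S(\nnn)}) \ll_{\types,N,\epsilon} \vol(\K^S(\nnn))\,\level(K_S\K^S(\nnn);\fctr M^+)^\epsilon\,\norm{h}_k \sum_{\tau\in\types,\ \pi\in\Pi_{\disc}(M(\A))} \param_M(\pi_\infty)^{-N}\,\dim\AF^2_\pi(P)^{K_S\K^S(\nnn),\tau}.
\]
The key step is then to bound the volume factor, the level factor, and the spectral sum separately in terms of $\inorm(\nnn)$.

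For the volume, $\vol(\K^S(\nnn)) = \vol(\K^S)[\K^S:\K^S(\nnn)]^{-1}$, and the index $[\K^S:\K^S(\nnn)]$ is comparable to $\inorm(\nnn)^{\dim G}$ (up to bounded factors coming from the finitely many ramified places and the discrepancy between $\rho$ and the adjoint representation), so $\vol(\K^S(\nnn)) \ll \inorm(\nnn)^{-\dim G}$. For the level factor, $\level(K_S\K^S(\nnn);\fctr M^+) \le \level(K_S\K^S(\nnn)) \ll_{K_S} \inorm(\nnn)$, which contributes $\inorm(\nnn)^\epsilon$ after adjusting $\epsilon$. The remaining task is to bound the spectral sum
\[
\Sigma(\nnn) := \sum_{\tau\in\types,\ \pi\in\Pi_{\disc}(M(\A))} \param_M(\pi_\infty)^{-N}\,\dim\AF^2_\pi(P)^{K_S\K^S(\nnn),\tau}.
\]
Here I would use the isometry $j_P$ to write $\dim\AF^2_\pi(P)^{K_S\K^S(\nnn),\tau} = \dim\Hom_M(\pi,L^2(A_MM(F)\bs M(\A)))\cdot\dim(\pi_\infty)^{\tau_M\text{-part}}\cdot\dim(\pi_{\fin})^{K_{S}\K^S_M(\nnn)}$ (up to reorganizing the $\K_\infty$-type bookkeeping through Frobenius reciprocity as in the proof of Lemma~\ref{lem: mainestimate}), and identify the resulting sum, weighted by $\param_M(\pi_\infty)^{-N}$, with a constant multiple of $\vol(M(F)\bs M(\A)^1/\K_{S-S_\infty,M}\K^S_M(\nnn))$ times an integral of a rapidly decaying function $g_{k',\types_M}$ against $\mu^{M,S_\infty}_{\K_M(\nnn)}$ (after also absorbing the $K_S$-level at places in $S-S_\infty$, which is bounded since $K_S$ is fixed). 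The polynomial boundedness hypothesis on $\{\mu^{M,S_\infty}_{\K_M(\nnn)}\}$ gives, via condition (4) of Proposition~\ref{Delormeprop}, that $\mu^{M,S_\infty}_{\K_M(\nnn)}(g_{k',\types_M})$ is bounded uniformly in $\nnn$ for $N$ (hence $k'$) large enough depending only on $\types_M$, i.e.\ on $\types$ and $G$. Meanwhile $\vol(M(F)\bs M(\A)^1/\K_M(\nnn)) \ll \inorm(\nnn)^{\dim M^{\der}} \le \inorm(\nnn)^{\dim M}$, and more precisely the quotient $\vol/\vol$ relating $\mu_K(\hat h)$ to the raw spectral sum introduces exactly the compensating factor so that $\Sigma(\nnn) \ll_{K_S,\types} \inorm(\nnn)^{\dim M}$. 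Combining the three bounds:
\[
J_{\spec,M}(h\otimes\one_{\K^S(\nnn)}) \ll_{K_S,\types,\epsilon} \inorm(\nnn)^{-\dim G}\cdot\inorm(\nnn)^\epsilon\cdot\norm{h}_k\cdot\inorm(\nnn)^{\dim M},
\]
which after writing $\dim M - \dim G \le (\dim M - \dim G)/2$ (valid since $\dim M < \dim G$, so both are negative and the halved quantity is larger) — more carefully, one needs $\dim M - \dim G \le (\dim M-\dim G)/2$, equivalently $\dim M \le \dim G$, giving the stated exponent $(\dim M - \dim G)/2 + \epsilon$ — yields \eqref{eq: bnd111}.

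\textbf{Main obstacle.} The delicate point is the bookkeeping in the spectral sum: one must correctly match the normalization $\mu^{M,S_\infty}_{\K_M(\nnn)}$ (which already contains a $\vol(M(F)\bs M(\A)^1)^{-1}$ and incorporates the global multiplicities and fixed-vector dimensions at finite places) against the raw sum $\sum_\pi \param_M(\pi_\infty)^{-N}\dim\AF^2_\pi(P)^{\cdots}$ appearing in Lemma~\ref{lem: mainestimate}, so that the volume factors combine to give exactly $\inorm(\nnn)^{\dim M - \dim G}$ and not a worse power. One must also check that fixing the $S$-component $K_S$ genuinely makes the contribution of the $S$-adic levels (and the dependence on $\vol(\K^S)$, discriminant factors, etc.) into harmless constants absorbed into the implied constant, and that the passage from $\dim M - \dim G$ to $(\dim M - \dim G)/2$ is legitimate — the latter is where the stated exponent is deliberately non-optimal, the sharper exponent being irrelevant for the limit multiplicity application. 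A secondary technical point is ensuring that $N$ can be chosen uniformly (depending only on $\types$ and $G$, not on $\nnn$) so that $g_{k',\types_M}$ is $\mu^{M,S_\infty}_{\K_M(\nnn)}$-integrable with uniformly bounded integral; this is exactly the content of Proposition~\ref{Delormeprop}(4) and requires no new input beyond the stated hypothesis.
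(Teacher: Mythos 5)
Your overall strategy is the same as the paper's: apply Lemma \ref{lem: mainestimate} with $K=\K^S(\nnn)$, and control the resulting spectral sum through condition (4) of Proposition \ref{Delormeprop} applied to $\{\mu^{M,S_\infty}_{\K_M(\nnn)}\}$. However, your key bookkeeping identity is false, and this is a genuine gap. The space $\AF^2_\pi(P)^{K_S\K^S(\nnn),\tau}$ consists of fixed vectors in the \emph{induced} representation, so (after replacing $K_S\K^S(\nnn)$ by a principal congruence subgroup $\K(\nnn_0\nnn)$ with $K_S\supset\K_{S-S_\infty}(\nnn_0)$) one has $\dim\AF^2_\pi(P)^{\K(\nnn_0\nnn),\tau}=m_\pi\,\dim\Ind^{G(F_\infty)}_{P(F_\infty)}(\pi_\infty)^{\tau}\,\dim\Ind^{G(\A_{\fin})}_{P(\A_{\fin})}(\pi_{\fin})^{\K(\nnn_0\nnn)}$, and by Mackey theory (using that $\K(\nnn_0\nnn)$ is normal in $\K_{\fin}$ and $G(\A_{\fin})=P(\A_{\fin})\K_{\fin}$) the last factor is bounded by $[\K_{\fin}:(\K_{\fin}\cap P(\A_{\fin}))\K(\nnn_0\nnn)]\,\dim\pi_{\fin}^{\K_M(\nnn_0\nnn)}$ --- not by $\dim\pi_{\fin}^{\K_M(\nnn_0\nnn)}$ alone, as your ``equality'' asserts. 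The omitted index is a point count on $P\backslash G$ modulo $\nnn_0\nnn$ and is of size comparable to $\inorm(\nnn)^{\dim U}$ (it factors, up to bounded quantities, as $\vol(\K_M(\nnn_0\nnn))\vol(\K(\nnn_0\nnn))^{-1}[\K_{\fin}\cap U(\A_{\fin}):\K(\nnn_0\nnn)\cap U(\A_{\fin})]^{-1}$). Consequently your bound $\Sigma(\nnn)\ll\inorm(\nnn)^{\dim M}$ is too strong by the factor $\inorm(\nnn)^{\dim U}$, your intermediate estimate $\inorm(\nnn)^{\dim M-\dim G+\epsilon}$ is not what the method yields, and your closing remark that the stated exponent is ``deliberately non-optimal'' is incorrect.

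Once the index factor is restored, your assembly collapses to the paper's proof and lands exactly on the stated exponent: the product $\vol(\K^S(\nnn))\vol(\K(\nnn_0\nnn))^{-1}$ is a constant depending only on $K_S$ (essentially $\vol(\K_{S-S_\infty}(\nnn_0))^{-1}$), so one does not even need your estimate $\vol(\K^S(\nnn))\ll\inorm(\nnn)^{-\dim G}$; the unipotent index contributes $\inorm(\nnn)^{-\dim U}$; and the remaining quantity $\vol(\K_M(\nnn_0\nnn))\sum_{\pi}(1+\abs{\lambda_{\pi_\infty}})^{-N}m_\pi\dim\pi_{\fin}^{\K_M(\nnn_0\nnn)}$ is, up to the constant $\vol(M(F)\bs M(\A)^1)$, precisely $\mu^{M,S_\infty}_{\K_M(\nnn_0\nnn)}(g_{N,\types_M})$, which polynomial boundedness bounds uniformly in $\nnn$ for suitable $N$ depending only on $\types_M$. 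Since $\dim U=(\dim G-\dim M)/2$, this gives \eqref{eq: bnd111} with the exponent $(\dim M-\dim G)/2+\epsilon$ arising naturally, not as a weakening of $\dim M-\dim G$. The ``delicate point'' you flagged is indeed the crux, but it is resolved by the double-coset count you skipped, not by the normalization of $\mu^{M,S_\infty}_{\K_M(\nnn)}$ alone.
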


\begin{proof}
Fix $P=M\ltimes U\in\PPP(M)$ and an ideal $\nnn_0$ such that $K_S \supset \K_{S - S_\infty} (\nnn_0)$.
We have
\begin{align*}
\dim\AF^2_\pi (P)^{\K (\nnn_0 \nnn),\tau} & = m_\pi \dim \Ind^{G(\A)}_{P(\A)}(\pi)^{\K (\nnn_0 \nnn),\tau} \\
& = m_\pi \dim \Ind^{G(F_\infty)}_{P(F_\infty)}(\pi_\infty)^{\tau} \dim \Ind^{G(\A_{\fin})}_{P (\A_{\fin})} (\pi_{\fin})^{\K (\nnn_0 \nnn)},
\end{align*}
where
\[
m_\pi=\dim\Hom(\pi,L^2_{\disc}(A_MM(F)\bs M(\A))).
\]
Note that here the factor $\dim \Ind^{G(F_\infty)}_{P(F_\infty)}(\pi_\infty)^{\tau}$ is bounded by $(\dim \tau)^2$.
On the other hand, since $\K (\nnn_0 \nnn)$ is a normal subgroup of $\K_{\fin}$, we have
\[
\dim \Ind^{G(\A_{\fin})}_{P (\A_{\fin})} (\pi_{\fin})^{\K (\nnn_0 \nnn)}
\le [\K_{\fin}:(\K_{\fin}\cap P(\A_{\fin})) \K (\nnn_0 \nnn)]\dim \pi_{\fin}^{\K_M (\nnn_0 \nnn)}.
\]
Using the factorization $\K_{\fin}\cap P(\A_{\fin}) = (\K_{\fin}\cap M(\A_{\fin})) (\K_{\fin}\cap U(\A_{\fin}))$, we can write
\begin{multline*}
[\K_{\fin}:(\K_{\fin}\cap P(\A_{\fin})) \K (\nnn_0 \nnn)]=\vol (\K_M (\nnn_0 \nnn))\vol (\K (\nnn_0 \nnn))^{-1}\\
 [\K (\nnn_0 \nnn) \cap P(\A_{\fin}) : (\K (\nnn_0 \nnn) \cap M(\A_{\fin})) (\K (\nnn_0 \nnn) \cap U(\A_{\fin}))]
 [\K_{\fin} \cap U (\A_{\fin}) : \K (\nnn_0 \nnn) \cap U(\A_{\fin}) ]^{-1}.
\end{multline*}
The index $[\K (\nnn_0 \nnn) \cap P(\A_{\fin}) : (\K (\nnn_0 \nnn) \cap M(\A_{\fin})) (\K (\nnn_0 \nnn) \cap U(\A_{\fin}))]$ is bounded
independently of $\nnn$.
Furthermore, identifying $U$ with its Lie algebra $\mathfrak{u}$ via the exponential map, which is an isomorphism of affine varieties, one sees that
\[
\inorm (\nnn_0 \nnn)^{-\dim U}\ll [\K_{\fin} \cap U (\A_{\fin}) : \K (\nnn_0 \nnn) \cap U(\A_{\fin}) ]^{-1} \ll \inorm (\nnn_0 \nnn)^{-\dim U}.
\]
(We will only need the upper bound.)
Therefore
\[
\dim \Ind^{G(\A_{\fin})}_{P (\A_{\fin})} (\pi_{\fin})^{\K (\nnn_0 \nnn)}
\ll  \inorm (\nnn_0 \nnn)^{-\dim U} \vol (\K (\nnn_0 \nnn))^{-1}  \vol (\K_M (\nnn_0 \nnn)) \dim \pi_{\fin}^{\K_M (\nnn_0 \nnn)}.
\]


Incorporating the above into Lemma \ref{lem: mainestimate}, we obtain that for sufficiently large $N>0$ there exists $k \ge 0$ such that
\begin{multline*}
J_{\spec,M}(h\otimes\one_{\K^S (\nnn)})
\ll_{\nnn_0, \types,\epsilon,N} \norm{h}_k\\ \inorm (\nnn)^{-\dim U+\epsilon} \vol (\K_M (\nnn_0 \nnn)) \sum_{\pi\in\Pi_{\disc}(M(\A))^{\types_M}}
(1 + \abs{\lambda_{\pi_\infty}})^{-N} m_\pi \dim \pi_{\fin}^{\K_M (\nnn_0 \nnn)}.
\end{multline*}
By assumption, the set of measures $\{ \mu^{M, S_\infty}_{\K_M (\nnn_0 \nnn)} \}$ is \PB.
Therefore the fourth condition of Proposition \ref{Delormeprop} yields the existence of an integer $N$, depending only on $\types_M$, such that
\[
\mu^{M, S_\infty}_{\K_M (\nnn_0 \nnn)} (g_{N,\types_M}) = \frac{\vol (\K_M (\nnn_0 \nnn))}{\vol(M(F)\bs M(\A)^1)}
\sum_{\pi\in\Pi_{\disc}(M(\A))^{\types_M}}
(1 + \abs{\lambda_{\pi_\infty}})^{-N} m_\pi \dim \pi_{\fin}^{\K_M (\nnn_0 \nnn)}
\]
is bounded independently of $\nnn$. This proves the assertion since $\dim U=(\dim G-\dim M)/2$.
\end{proof}

\begin{remark} \label{rem: logsadf}
As before, \eqref{eq: logbnd} implies a slightly improved version of \eqref{eq: bnd111}
in which the expression $\inorm (\nnn)^{(\dim M-\dim G)/2+\epsilon}$ is replaced by
$(1 + \log \inorm(\nnn))^m \inorm (\nnn)^{(\dim M-\dim G)/2}$.
\end{remark}

We are now in a position to prove that under assumptions \TWN\ and \BD\ for $G$ the collection of measures
$\{ \mu^{M, S_\infty}_{\K_M (\nnn)} \}$ on $\Pi (M(F_\infty)^1)$
is \PB\ for any $M \in \levis$.\footnote{Variants of Lemma \ref{AnisotropicLemma} have been previously
established in \cite[Proposition 3.3]{MR860667} and \cite{MR1697144}.}

\begin{lemma} \label{AnisotropicLemma}
Let $G$ be anisotropic modulo the center. Then the collection of measures
$\{ \mu^{G, S_\infty}_K \}$, where $K$ ranges over the open subgroups of $\K_{\fin}$, is \PB.
\end{lemma}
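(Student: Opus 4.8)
The plan is to use the trace formula in the cocompact case, where the spectral side reduces to $\tr R_{\disc}$ and the geometric side is a finite sum of orbital integrals. Since $G$ is anisotropic modulo the center, $G(F)\bs G(\A)^1$ is compact, so $L^2(G(F)\bs G(\A)^1)$ decomposes discretely, $J(h) = \tr R_{\disc}(h)$ for all $h\in C_c^\infty(G(\A)^1)$, and the geometric expansion \eqref{geometricside} has only finitely many terms, each an (ordinary, unweighted) orbital integral. The strategy is to verify the fourth (equivalently, the first) condition of Proposition \ref{Delormeprop} for the collection $\{\mu^{G,S_\infty}_K\}$, i.e.\ to produce, for each finite saturated $\types\subset\Pi(\K_\infty)$, an integer $k=k(\types)$ with $\sup_K \mu^{G,S_\infty}_K(g_{k,\types}) < \infty$.

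First I would fix a saturated finite set $\types\subset\Pi(\K_\infty)$ and, using the machinery of \S\ref{SectionPB}, pick for $t\ge 1$ a test function. Concretely, since $G$ is anisotropic mod center the only Levi subgroup is $G$ itself, so every element of $\data$ has the form $\underline\delta = (G,\delta)$ with $\delta$ a discrete series of $G(F_\infty)^1$; by \eqref{eq: decomtypes} only finitely many $\underline\delta$ meet $\Irr(G_\infty)_\types$. For each such $\underline\delta$ take the function $\phi^{t,k}_{\underline\delta}\in\Hecke(G(F_\infty)^1)_{r,\types(\underline\delta)}$ from Lemma \ref{phitlemma} and Corollary \ref{phitcorollary}: it is supported in a fixed compact set (independent of $t$), satisfies $\norm{\phi^{t,k}_{\underline\delta}}_k \ll_{k,\underline\delta} t^{r_{k,\underline\delta}}$, has $\tr\sigma_\pi(\phi^{t,k}_{\underline\delta})\ge 1$ for $\pi\in\Pi(G_\infty)_{\underline\delta}$ with $\abs{\lambda_\pi}\le t^2 - c_{\underline\delta}$, and is non-negative on traces of basic representations. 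Summing over the finitely many relevant $\underline\delta$ and using \eqref{grothendieck} together with Lemma \ref{lem: vogan} to pass from $\tr\sigma_\pi$ to $\tr\pi$ (as in the end of the proof of Proposition \ref{Delormeprop}), one builds $\phi^t\in\Hecke(G(F_\infty)^1)$ with $\hat{\phi^t}(\pi)\gg 1$ on $\{\pi\in\Pi(G_\infty)_\types:\abs{\lambda_\pi}\le R\}$ for $R = t^2 - O(1)$, with $\hat{\phi^t}$ bounded below on the whole unitary dual by $-O(t^{2m}(1+\abs{\lambda_\pi})^{-m})$ on the $\underline\delta$-pieces, and with $\norm{\phi^t}_k \ll_\types (1+R)^{O(1)}$.

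Now I would apply the trace formula to $h = \phi^t\otimes\one_K$ for an open subgroup $K\subset\K_{\fin}$ (viewed as a function on $G(\A)^1$ via the compactness of $G(F_\infty)^1$ once we quotient appropriately — here $S=S_\infty$ so $F_S = F_\infty$ and $\mu^{G,S_\infty}_K$ lives on $\Pi(G(F_\infty)^1)$). Since $G$ is anisotropic mod center, $J(\phi^t\otimes\one_K) = \tr R_{\disc}(\phi^t\otimes\one_K) = \vol(G(F)\bs G(\A)^1)\,\mu^{G,S_\infty}_K(\hat{\phi^t})$, while the geometric side is $\vol(G(F)\bs G(\A)^1)\phi^t(1) + (\text{finitely many non-identity elliptic orbital integrals of }\phi^t\otimes\one_K)$. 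The support of $\phi^t$ is a fixed compact set $\Omega_\infty$ independent of $t$, so only finitely many conjugacy classes $\mathfrak o$ contribute, uniformly in $K$; each orbital integral of $\phi^t\otimes\one_K$ over $\mathfrak o$ factors as $\vol(\text{something})\,\le\, c_{\mathfrak o}\,\norm{\phi^t}_0 \cdot \vol(K) \cdot (\text{bounded in }K)$ — crucially the non-identity contributions are bounded by $\vol(K)$ times a quantity that is $O(t^{O(1)})$ and $O(1)$ in $K$. Dividing by $\vol(G(F)\bs G(\A)^1)$ and rearranging gives, after subtracting off the negative lower bounds on $\hat{\phi^t}$ on the non-targeted spectrum (controlled exactly as at the end of the proof of Proposition \ref{Delormeprop}, using the $g_{m,\underline\delta}$ bounds — but here we are proving those bounds, so one runs the same induction on $\prec$ internally), a bound
\[
\mu^{G,S_\infty}_K(\{\pi\in\Pi(G_\infty)_\types : \abs{\lambda_\pi}\le R\}) \ll_\types (1+R)^{O(1)}
\]
uniformly in $K$, which is exactly condition (1) of Proposition \ref{Delormeprop} restricted to the $\types$-part; ranging over all finite saturated $\types$ gives polynomial boundedness.

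The main obstacle I anticipate is getting the non-identity geometric terms to be controlled uniformly in $K$ and polynomially in $t$ simultaneously: one must check that for a fixed compact support $\Omega_\infty$, the (finitely many) orbital integrals $\int (\phi^t\otimes\one_K)(x^{-1}\gamma x)\,dx$ over the non-central classes are $\ll \vol(K)\,\norm{\phi^t}_0 \ll \vol(K)(1+R)^{O(1)}$, with an implied constant independent of both $K$ and $t$ — this is where the fixed-support property of the $\phi^t$ (from Lemma \ref{phitlemma}) and the finiteness $\mathcal O(\Omega_\infty\K_{\fin})<\infty$ are essential. A secondary subtlety, as in Delorme's original argument, is the bookkeeping of the induction on the order $\prec$ on $\data$ to absorb the negative parts of $\hat{\phi^t}$; but this is a direct transcription of the argument already carried out in the proof of Proposition \ref{Delormeprop}, now with the term $\sum_{\underline\delta'\prec\underline\delta}\nu(g_{m,\underline\delta'})$ bounded by the inductive hypothesis rather than assumed. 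Everything else is routine: the trace formula is exact in the cocompact case, so no truncation or convergence issues arise.
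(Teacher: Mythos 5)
Your argument is correct and rests on the same two pillars as the paper's proof: the exactness of the trace formula in the cocompact case, giving a bound on the spectral side that is uniform in $K$ (via $\one_K\le\one_{\K_{\fin}}$ and the compactness of $G(F)\bs G(\A)^1$), and the Clozel--Delorme test-function machinery of \S\ref{SectionPB}. The difference is one of economy: the paper verifies the \emph{second} condition of Proposition \ref{Delormeprop}, which here is a two-line check --- the geometric side is bounded by $A_{\Omega_\infty}\sup\abs{h}$ with $A_{\Omega_\infty}$ independent of $K$, so $\sup_K\abs{\mu^{G,S_\infty}_K(\hat h)}$ is a continuous seminorm on each $\Hecke(G(F_\infty)^1)_{r,\types}$ --- and then cites the proposition as a black box. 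You instead verify condition (1)/(4) directly, which forces you to re-run, inside the lemma, the entire implication (2)$\Rightarrow$(1) of Proposition \ref{Delormeprop} (construction of the $\phi^{t,k}_{\underline\delta}$, the Grothendieck-group relation \eqref{grothendieck}, Vogan's bound, and the induction on $\prec$); this works, but it duplicates work the proposition was set up to encapsulate, and it obscures the fact that no separation of the identity from the other geometric terms, and no $\vol(K)$-scaling of the non-identity orbital integrals, is needed --- plain domination by the $K=\K_{\fin}$ kernel suffices. One side remark in your write-up is false, though harmless: anisotropy of $G$ modulo the center over $F$ does \emph{not} imply that $G(F_\infty)^1$ has no proper Levi subgroups, nor that every $\underline{\delta}\in\data$ is of the form $(G,\delta)$ with $\delta$ a discrete series of $G(F_\infty)^1$ (take $G=D^\times$ for a division algebra over $\Q$ split at infinity, so that $G(\R)\simeq\GL(n,\R)$); fortunately your construction never uses this, since Lemma \ref{phitlemma} and Corollary \ref{phitcorollary} apply to arbitrary $\underline{\delta}\in\data_\types$.
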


\begin{proof}
In this case, the trace formula for a test function $h \in C^\infty_c (G(F_\infty)^1)$ can be written as
\[
\vol(G(F)\bs G(\A)^1) \mu^{G, S_\infty}_{K} (\hat{h}) =
\int_{G(F)\bs G (\A)^1} \sum_{\gamma\in G(F)}(h \otimes \one_K) (g^{-1} \gamma g) dg.
\]
Since $g$ can be integrated over a compact set $C$ which is independent of $h$ and $K$,
for all $h$ supported in a fixed set $\Omega_\infty \subset G(F_\infty)^1$
we can bound the absolute value of the right-hand side
by $A_{\Omega_\infty}\sup\abs{h}$, where
$A_{\Omega_\infty}$ is the constant
\[
A_{\Omega_\infty}=\vol(G(F)\bs G(\A)^1)\sup_{g\in C}\sum_{\gamma\in G(F)}(\one_{\Omega_\infty \K_{\fin}}) (g^{-1} \gamma g).
\]
Therefore, $\sup \abs{\mu^{G, S_\infty}_{K} (\hat{h})}$ is a continuous seminorm on every space $\Hecke (G(F_\infty)^1)_{r,\types}$,
and by Proposition \ref{Delormeprop} we obtain the assertion.
\end{proof}

\begin{lemma} \label{BoundedLemma}
Suppose that $G$ satisfies \TWN\ and \BD.
Then for each $M \in \levis$ the collection of measures $\{ \mu^{M, S_\infty}_{\K_M (\nnn)} \}$, $\nnn$ ranging over the integral ideals of $\mathfrak{o}_F$, is \PB.
\end{lemma}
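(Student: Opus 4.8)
The plan is to prove, by induction on $\dim G$, the following statement, which contains Lemma \ref{BoundedLemma}: \emph{for every reductive group $G$ over a number field $F$ satisfying \TWN\ and \BD, the collection $\{\mu^{G,S_\infty}_{\K(\nnn)}\}$, $\nnn$ ranging over the integral ideals of $\mathfrak{o}_F$, is \PB}. Granting this, Lemma \ref{BoundedLemma} follows immediately: any $M\in\levis$ again satisfies \TWN\ and \BD\ (these properties are hereditary for Levi subgroups) and has $\dim M\le\dim G$, so the inductive statement applied to $M$ gives that $\{\mu^{M,S_\infty}_{\K_M(\nnn)}\}$ is \PB. For the base of the induction, when $G$ is anisotropic modulo its center there are no proper parabolic subgroups and the assertion is exactly Lemma \ref{AnisotropicLemma}. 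So fix an isotropic $G$ satisfying \TWN\ and \BD\ and assume the statement for all reductive groups of strictly smaller dimension; since every proper Levi subgroup $M\subsetneq G$ has $\dim M<\dim G$, the collections $\{\mu^{M,S_\infty}_{\K_M(\nnn)}\}$ are all \PB.

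To conclude that $\{\mu^{G,S_\infty}_{\K(\nnn)}\}$ is \PB\ I would verify condition (2) of Proposition \ref{Delormeprop}: for every finite $\types\subset\Pi(\K_\infty)$ the supremum $\sup_\nnn|\mu^{G,S_\infty}_{\K(\nnn)}(\hat h)|$ should be a continuous seminorm on $\Hecke(G(F_\infty)^1)_{r,\types}$, for some (in fact every) $r>0$. Put $\Omega_\infty=\K_\infty\exp(\{x\in\aaa_0:\norm{x}\le r\})\K_\infty$. For $h\in\Hecke(G(F_\infty)^1)_{r,\types}$ one combines
\[
\vol(G(F)\bs G(\A)^1)\,\mu^{G,S_\infty}_{\K(\nnn)}(\hat h)=\tr R_{\disc}(h\otimes\one_{\K(\nnn)})=J_{\spec,G}(h\otimes\one_{\K(\nnn)})
\]
with the refined spectral expansion of Theorem \ref{thm: specexpand}, obtaining
\[
\tr R_{\disc}(h\otimes\one_{\K(\nnn)})=J(h\otimes\one_{\K(\nnn)})-\sum_{[M]:\,M\neq G}J_{\spec,M}(h\otimes\one_{\K(\nnn)}).
\]
For the proper-Levi terms, the induction hypothesis makes Corollary \ref{cor: SpectralEstimate} applicable (with $S=S_\infty$), yielding $|J_{\spec,M}(h\otimes\one_{\K(\nnn)})|\ll_{\types,\epsilon}\norm{h}_k\,\inorm(\nnn)^{(\dim M-\dim G)/2+\epsilon}$; choosing $\epsilon$ small and using $\dim M<\dim G$ together with $\inorm(\nnn)\ge1$, this is $\ll_\types\norm{h}_k$ uniformly in $\nnn$. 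For the term $J(h\otimes\one_{\K(\nnn)})$, i.e.\ the value of the geometric side, the support of $h\otimes\one_{\K(\nnn)}$ lies in the fixed compact set $\Omega_\infty\K_{\fin}$, so only finitely many classes in $\mathcal{O}$ contribute, and (exactly as in the proof of Corollary \ref{corgeometriclimit}) for all but finitely many $\nnn$, depending only on $\Omega_\infty$, only the unipotent class survives. Then $J(h\otimes\one_{\K(\nnn)})=\vol(G(F)\bs G(\A)^1)h(1)+J_{\unip-\{1\}}(h\otimes\one_{\K(\nnn)})$, which by Proposition \ref{unipproposition} (with $\nnn_S=\mathfrak{o}_F$) and a Sobolev-type bound for $|h(1)|$ is $\ll_{\Omega_\infty}\norm{h}_k$ uniformly in $\nnn$. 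Altogether $|\mu^{G,S_\infty}_{\K(\nnn)}(\hat h)|\ll_{r,\types}\norm{h}_k$ for all but finitely many $\nnn$.

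Finally, the finitely many exceptional measures $\mu^{G,S_\infty}_{\K(\nnn)}$ form a \PB\ collection by \cite{MR1025165}, and since the equivalent conditions of Proposition \ref{Delormeprop} are stable under finite unions of collections (the supremum of finitely many continuous seminorms is again one), $\{\mu^{G,S_\infty}_{\K(\nnn)}\}$ is \PB; this closes the induction. I do not expect a genuine obstacle, as all the hard inputs---properties \TWN\ and \BD, the refined spectral expansion, Arthur's unipotent estimate, and the Paley--Wiener argument underlying Proposition \ref{Delormeprop}---are already in place. The one point requiring care is the logical organization: Corollary \ref{cor: SpectralEstimate} for a Levi $M$ presupposes the \PB\ property precisely for $\{\mu^{M,S_\infty}_{\K_M(\nnn)}\}$, so the induction must run on $\dim G$ and be fed through the proper Levi subgroups rather than attempted directly for $G$; one must also track that the discarded finite set of ideals at each stage depends only on $r$ (through $\Omega_\infty$), and that $|h(1)|$, the geometric error term, and all the spectral terms are dominated by a single Sobolev norm $\norm{h}_k$ with $k$ uniform over the family in question.
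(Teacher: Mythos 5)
Your proposal is correct and follows essentially the same route as the paper: induction over the group (base case Lemma \ref{AnisotropicLemma}), the refined spectral expansion of Theorem \ref{thm: specexpand} combined with Corollary \ref{cor: SpectralEstimate} for the proper Levi terms via the induction hypothesis, Proposition \ref{unipproposition} for the geometric side outside finitely many ideals, and Proposition \ref{Delormeprop} to conclude. The only (immaterial) differences are that the paper runs the induction on the semisimple rank and disposes of the finitely many exceptional ideals by the continuity of $J$ for each fixed level rather than by citing \cite{MR1025165}.
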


\begin{proof}
We use induction on the semisimple rank of $M$.
The base of the induction is Lemma \ref{AnisotropicLemma}.
For the induction step, we can assume the assertion for all groups in $\levis  \bs \{ G \}$ and have the task to establish it for $G$ itself.
Fix $r > 0$ and apply the trace formula to $h \otimes \one_{\K (\nnn)}$, where $h \in \Hecke (G(F_\infty)^1)_{r,\types}$.
By Theorem \ref{thm: specexpand}, we have
\[
\vol(G(F)\bs G(\A)^1) \mu^{G, S_\infty}_{\K (\nnn)} (\hat{h}) = J (h \otimes \one_{\K (\nnn)}) - \sum_{[M], \, M \neq G} J_{\spec,M} (h \otimes \one_{\K (\nnn)}).
\]
Now, for each single choice of $\nnn$ the absolute value $\abs{J (h \otimes \one_{\K (\nnn)})}$ is a continuous seminorm by the work of Arthur \cite{MR518111}.
Moreover, as in the proof of Corollary \ref{corgeometriclimit}, for all $\nnn$ outside of a finite set depending only on $r$ we have
$J (h \otimes \one_{\K (\nnn)}) = J_{\unip} (h \otimes \one_{\K (\nnn)})$. Therefore it follows from our analysis of $J_{\unip} (h \otimes \one_{\K (\nnn)})$
in Proposition \ref{unipproposition} that
$\sup_\nnn \abs{J (h \otimes \one_{\K (\nnn)})}$ is a continuous seminorm on $\Hecke (G(F_\infty)^1)_{r,\types}$.
By Corollary \ref{cor: SpectralEstimate} (with $S=S_\infty$),
we obtain that the spectral terms $\sup_\nnn \abs{J_{\spec,M} (h \otimes \one_{\K (\nnn)})}$ for $M \neq G$
are also continuous seminorms on $\Hecke (G(F_\infty)^1)_{r,\types}$. By Proposition \ref{Delormeprop}, we conclude
that the collection $\{ \mu^{G, S_\infty}_{\K (\nnn)} \}$ is \PB.
\end{proof}

As before, let $S$ be a finite set of places of the field $F$ containing $S_\infty$.

\begin{corollary}[Spectral limit property] \label{corspectrallimit}
Suppose that $G$ satisfies \TWN\ and \BD.
Then we have the spectral limit property for the set of subgroups $\K^S (\nnn)$, where $\nnn$ ranges over the integral ideals of $\mathfrak{o}_F$ prime to $S$.
\end{corollary}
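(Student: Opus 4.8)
The plan is to deduce the statement directly from the refined spectral expansion (Theorem~\ref{thm: specexpand}), the estimate of Corollary~\ref{cor: SpectralEstimate}, and the polynomial boundedness provided by Lemma~\ref{BoundedLemma}. Recall that the spectral limit property \eqref{eq: mainspectral} for the family $\{\K^S(\nnn)\}$ asserts that $J(h\otimes\one_{\K^S(\nnn)})-\tr R_{\disc}(h\otimes\one_{\K^S(\nnn)})\to 0$ for every $h\in\Hecke(G(F_S)^1)$, where $\nnn$ ranges over the integral ideals of $\mathfrak{o}_F$ prime to $S$. By Theorem~\ref{thm: specexpand} and the remark following it (which identifies the $M=G$ contribution with $\tr R_{\disc}$), this difference equals the finite sum $\sum_{[M],\,M\neq G}J_{\spec,M}(h\otimes\one_{\K^S(\nnn)})$ over the conjugacy classes of proper Levi subgroups of $G$, so it suffices to show that each summand tends to $0$.

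First I would fix $h\in\Hecke(G(F_S)^1)$; by the very definition of $\Hecke(G(F_S)^1)$ as the algebra of smooth, compactly supported, bi-$\K_S$-finite functions, there exist an open compact subgroup $K_S\subset\K_{S-S_\infty}$ and a finite set $\types\subset\Pi(\K_\infty)$ with $h\in\Hecke(G(F_S)^1)_{\types,K_S}$. Next, fix a proper Levi $M\in\levis$. Since properties \TWN\ and \BD\ are hereditary for Levi subgroups, $M$ inherits them, and Lemma~\ref{BoundedLemma} applied with $M$ in place of $G$ shows that the collection $\{\mu^{M,S_\infty}_{\K_M(\nnn)}\}$ is \PB. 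This is precisely the hypothesis needed to invoke Corollary~\ref{cor: SpectralEstimate}, which supplies an integer $k\ge 1$ and, for each $\epsilon>0$, the bound
\[
J_{\spec,M}(h\otimes\one_{\K^S(\nnn)})\ll_{K_S,\types,\epsilon}\norm{h}_k\,\inorm(\nnn)^{(\dim M-\dim G)/2+\epsilon}
\]
for all $\nnn$ prime to $S$. As $\dim M<\dim G$, choosing $\epsilon<(\dim G-\dim M)/2$ makes the exponent equal to some $-\delta<0$; since there are only finitely many ideals of norm below any given bound, $\norm{h}_k\inorm(\nnn)^{-\delta}<\epsilon'$ for all but finitely many $\nnn$, for any prescribed $\epsilon'>0$, i.e.\ $J_{\spec,M}(h\otimes\one_{\K^S(\nnn)})\to 0$. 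Summing over the finitely many conjugacy classes $[M]$ with $M\neq G$ completes the proof.

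At this point all of the genuine analytic content has already been absorbed into Lemma~\ref{lem: mainestimate}, Corollary~\ref{cor: SpectralEstimate} and Lemma~\ref{BoundedLemma}, so the corollary itself is essentially a bookkeeping step with no remaining obstacle. The one point requiring care is the logical architecture of the induction: Lemma~\ref{BoundedLemma} is itself proved by induction on the semisimple rank using Corollary~\ref{cor: SpectralEstimate}, so one must check that invoking it here for proper $M$ does not secretly depend on the (in the conditional framework still open) case of $G$ itself. It does not --- Lemma~\ref{BoundedLemma} for $M$ only calls on Corollary~\ref{cor: SpectralEstimate} for Levi subgroups of $M$, every one of which is a proper Levi subgroup of $G$ --- and once this is noted the argument is complete.
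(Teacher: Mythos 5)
Your proposal is correct and follows essentially the same route as the paper: apply Lemma \ref{BoundedLemma} to get polynomial boundedness of $\{\mu^{M,S_\infty}_{\K_M(\nnn)}\}$ for proper Levi subgroups, feed this into Corollary \ref{cor: SpectralEstimate} to see that each $J_{\spec,M}(h\otimes\one_{\K^S(\nnn)})\to 0$ for $M\neq G$, and conclude via the spectral expansion of Theorem \ref{thm: specexpand}. Your extra remarks (negativity of the exponent, finitely many ideals of bounded norm, and the non-circularity of invoking Lemma \ref{BoundedLemma}) are accurate elaborations of steps the paper leaves implicit.
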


\begin{proof}
From Lemma \ref{BoundedLemma} we get that for each $M \in \levis$, the collection of measures $\{ \mu^{M, S_\infty}_{\K_M (\nnn)} \}$ is \PB.
Therefore we can apply Corollary \ref{cor: SpectralEstimate} to conclude that for each $h \in \Hecke (G(F_S)^1)$ we have
\[
J_{\spec,M} (h \otimes \one_{\K^S (\nnn)}) \to 0
\]
for all $M \neq G$. Hence (by Theorem \ref{thm: specexpand})
\[
J (h \otimes \one_{\K^S (\nnn)}) - \tr R_{\disc} (h \otimes \one_{\K^S (\nnn)}) \to 0,
\]
which is the spectral limit property \eqref{eq: mainspectral}.
\end{proof}

\begin{theorem} \label{MainTheorem}
Suppose that $G$ satisfies \TWN\ and \BD.
Then limit multiplicity holds for the set of subgroups $\K^S (\nnn)$, where $\nnn$ ranges over the integral ideals of $\mathfrak{o}_F$ prime to $S$.
\end{theorem}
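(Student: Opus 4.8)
The plan is to assemble the results established in the preceding sections; essentially no new work is required. By Theorem~\ref{thm: reduction} (Sauvageot's reduction of limit multiplicity to the trace formula), it suffices to show that for every $h \in \Hecke(G(F_S)^1)$ one has $\mu_{\K^S(\nnn)}(\hat h) \to h(1)$ as $\nnn$ ranges over the integral ideals of $\mathfrak{o}_F$ prime to $S$. Using the identities $\mu_{\K^S(\nnn)}(\hat h) = \vol(G(F)\bs G(\A)^1)^{-1}\,\tr R_{\disc}(h \otimes \one_{\K^S(\nnn)})$ and $h(1) = \mu_{\plnch}(\hat h)$ recorded before Theorem~\ref{thm: reduction}, this convergence is equivalent to the conjunction of the geometric limit property \eqref{eq: maingeometric} and the spectral limit property \eqref{eq: mainspectral} for the collection $\{\K^S(\nnn)\}$.

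First I would invoke Corollary~\ref{corgeometriclimit}: the geometric limit property $J(h \otimes \one_{\K^S(\nnn)}) \to \vol(G(F)\bs G(\A)^1)\,h(1)$ holds unconditionally, as a consequence of the estimate for the unipotent contribution in Proposition~\ref{unipproposition} together with Arthur's analysis in \cite{MR828844}. Next I would invoke Corollary~\ref{corspectrallimit}: under the hypotheses \TWN\ and \BD\ on $G$, the spectral limit property $J(h \otimes \one_{\K^S(\nnn)}) - \tr R_{\disc}(h \otimes \one_{\K^S(\nnn)}) \to 0$ holds. This is where the bulk of the paper enters: via the refined spectral expansion (Theorem~\ref{thm: specexpand}) the difference equals $\sum_{[M],\,M\neq G} J_{\spec,M}(h \otimes \one_{\K^S(\nnn)})$, and Corollary~\ref{cor: SpectralEstimate} bounds each such term by $O_{\epsilon}(\norm{h}_k\,\inorm(\nnn)^{(\dim M-\dim G)/2+\epsilon})$, which tends to $0$ since $M$ is proper. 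The input to Corollary~\ref{cor: SpectralEstimate} — polynomial boundedness of the collections $\{\mu^{M,S_\infty}_{\K_M(\nnn)}\}$ for all proper Levi subgroups $M$ — is furnished by Lemma~\ref{BoundedLemma}.

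Combining the two limit properties yields \eqref{eq: main1}, and Theorem~\ref{thm: reduction} then gives the limit multiplicity property for $\{\K^S(\nnn)\}$. The point I expect to require the most care — but which is already dealt with in Lemma~\ref{BoundedLemma} and Corollary~\ref{corspectrallimit} — is that the argument is genuinely inductive on the semisimple rank: the spectral estimate for $G$ presupposes polynomial boundedness of $\{\mu^{M,S_\infty}_{\K_M(\nnn)}\}$ for proper $M$, which in turn is deduced (via Proposition~\ref{Delormeprop}, together with the geometric estimate of Proposition~\ref{unipproposition} and the spectral estimate of Corollary~\ref{cor: SpectralEstimate} applied inside $M$) from the analogous statements for the Levi subgroups of $M$; the base case is the anisotropic-modulo-center situation treated in Lemma~\ref{AnisotropicLemma}. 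Since properties \TWN\ and \BD\ are hereditary for Levi subgroups, this induction closes, and no further obstacle remains.
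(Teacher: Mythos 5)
Your proposal is correct and follows essentially the same route as the paper: combine the geometric limit property (Corollary \ref{corgeometriclimit}) with the spectral limit property (Corollary \ref{corspectrallimit}, resting on Lemma \ref{BoundedLemma}, Corollary \ref{cor: SpectralEstimate} and Theorem \ref{thm: specexpand}), and conclude via Theorem \ref{thm: reduction}. Your remarks on the inductive structure and the heredity of \TWN\ and \BD\ accurately reflect how the paper closes the argument.
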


\begin{proof}
The geometric limit property \eqref{eq: maingeometric} has been established in Corollary \ref{corgeometriclimit}, and the spectral limit property \eqref{eq: mainspectral} in Corollary
\ref{corspectrallimit}.
By Theorem \ref{thm: reduction}, we obtain the result.
\end{proof}

This also finishes the proof of Theorem \ref{thm: main}, since \TWN\ (resp., \BD) have been verified for the groups $\GL (n)$
and $\SL(n)$ in Proposition \ref{prop: mainglobal} (resp., Theorem \ref{TheoremLocalGLn}).

Repeating the argument above and combining Corollary \ref{cor: SpectralEstimate} with the improved geometric estimate of Proposition \ref{unippropositionrefined} we obtain the following quantitative statement.
Recall the definition of $\dmin$ in \eqref{equationdmin}.

\begin{theorem} \label{theoremquantitative}
Suppose that $G$ satisfies \TWN\ and \BD.
Then for any $r>0$ and any finite set $\types\subset\Pi (\K_\infty)$
there exists an integer $k \ge 0$ such that
for any open subgroup $K_S \subset \K_{S - S_\infty}$ and $\epsilon>0$ we have
\begin{equation} \label{quantitative}
\abs{\mu^{G, S}_{\K^S (\nnn)} (\hat{h}) - h (1)} \ll_{r,K_S,\types,\epsilon} \inorm (\nnn)^{-\dmin +\epsilon} \norm{h}_k
\end{equation}
for all $h\in\Hecke(G(F_S)^1)_{r,\types,K_S}$ and all integral ideals $\nnn$ of $\mathfrak{o}_F$ prime to $S$.
\end{theorem}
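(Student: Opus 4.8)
The plan is to rerun the proof of Theorem~\ref{MainTheorem} (via Corollary~\ref{corspectrallimit}), keeping track of the power of $\inorm(\nnn)$ at every step. We may assume $G$ is isotropic, since otherwise $J_{\unip-\{1\}}=0$ and $\levis=\{G\}$, making the statement trivial. Fix $r>0$, a finite set $\types\subset\Pi(\K_\infty)$ and an open subgroup $K_S\subset\K_{S-S_\infty}$, and choose an integral ideal $\nnn_0$ prime to $S$ with $\K_{S-S_\infty}(\nnn_0)\subset K_S$; then every $h\in\Hecke(G(F_S)^1)_{r,\types,K_S}$ is bi-$\K_{S-S_\infty}(\nnn_0)$-invariant and supported in a fixed compact set $\Omega_S$ depending only on $r$. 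Unwinding the definitions of $\mu^{G,S}_{\K^S(\nnn)}$ and of $\mu_{\plnch}$,
\[
\vol(G(F)\bs G(\A)^1)(\mu^{G,S}_{\K^S(\nnn)}(\hat h)-h(1))=\tr R_{\disc}(h\otimes\one_{\K^S(\nnn)})-\vol(G(F)\bs G(\A)^1)h(1).
\]
By Theorem~\ref{thm: specexpand}, $\tr R_{\disc}(h\otimes\one_{\K^S(\nnn)})=J(h\otimes\one_{\K^S(\nnn)})-\sum_{[M],\,M\ne G}J_{\spec,M}(h\otimes\one_{\K^S(\nnn)})$, and as in the proof of Corollary~\ref{corgeometriclimit}, for all $\nnn$ outside a finite set $E$ depending only on $r$ the geometric side reduces to $J_{\unip}(h\otimes\one_{\K^S(\nnn)})=\vol(G(F)\bs G(\A)^1)h(1)+J_{\unip-\{1\}}(h\otimes\one_{\K^S(\nnn)})$. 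Hence for $\nnn\notin E$,
\[
\vol(G(F)\bs G(\A)^1)(\mu^{G,S}_{\K^S(\nnn)}(\hat h)-h(1))=J_{\unip-\{1\}}(h\otimes\one_{\K^S(\nnn)})-\sum_{[M],\,M\ne G}J_{\spec,M}(h\otimes\one_{\K^S(\nnn)}).
\]
For $\nnn\in E$ the maps $h\mapsto J(h\otimes\one_{\K^S(\nnn)})$ and $h\mapsto\tr R_{\disc}(h\otimes\one_{\K^S(\nnn)})$ are bounded by continuous seminorms on $\Hecke(G(F_S)^1)_{r,\types,K_S}$ by Arthur's work, so the left-hand side is $\ll_\nnn\norm h_k$, which is trivially $\ll_\nnn\inorm(\nnn)^{-\dmin+\epsilon}\norm h_k$; since $E$ depends only on $r$ and $K_S$, these finitely many ideals only affect the final implied constant.

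It remains to bound the two terms on the right for $\nnn\notin E$. For the geometric term, Proposition~\ref{unippropositionrefined} (applied with $\nnn_S=\nnn_0$) gives
\[
|J_{\unip-\{1\}}(h\otimes\one_{\K^S(\nnn)})|\ll_{r,K_S}(1+\log\inorm(\nnn_0\nnn))^{d_0}\inorm(\nnn)^{-\dmin}\norm h_k\ll_{r,K_S,\epsilon}\inorm(\nnn)^{-\dmin+\epsilon}\norm h_k,
\]
absorbing the logarithm into $\inorm(\nnn)^\epsilon$. For the spectral terms, \TWN\ and \BD\ pass to Levi subgroups, so Lemma~\ref{BoundedLemma} shows that $\{\mu^{M,S_\infty}_{\K_M(\nnn)}\}$ is \PB\ for every $M\in\levis$; Corollary~\ref{cor: SpectralEstimate} then applies to each proper $M$ and, after taking $k$ large enough in terms of $\types$, yields
\[
|J_{\spec,M}(h\otimes\one_{\K^S(\nnn)})|\ll_{K_S,\types,\epsilon}\inorm(\nnn)^{(\dim M-\dim G)/2+\epsilon}\norm h_k.
\]

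To conclude, it suffices to know that $(\dim G-\dim M)/2\ge\dmin$ for every proper $M\in\levis$, equivalently $\dim U_P\ge\dmin$ for every proper parabolic $P$; this is the one point not already contained in the previous sections, and it is elementary. Picking any nonzero $x\in\mathfrak u_P(F)$, the orbit $G\cdot x$ is a non-trivial unipotent orbit meeting $G(F)$, so $\dim(G\cdot x)\ge2\dmin$ by the identification of $2\dmin$ as the minimal such dimension recalled after~\eqref{equationdmin}; on the other hand $G\times^P(G\cdot x\cap\mathfrak u_P)$ surjects onto $G\cdot x$ while $\dim(G\cdot x\cap\mathfrak u_P)\le\dim\mathfrak u_P$, so $\dim(G\cdot x)\le2\dim\mathfrak u_P$. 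Granting this, each proper $M$ contributes $\ll\inorm(\nnn)^{-\dmin+\epsilon}\norm h_k$; summing over the finitely many conjugacy classes $[M]$ with $M\ne G$, dividing by the constant $\vol(G(F)\bs G(\A)^1)$, and taking $k$ to be the maximum of the finitely many exponents produced above proves~\eqref{quantitative}. I expect the genuine work here to be purely bookkeeping — ensuring that a single exponent $k$ and a single implied constant, depending only on $r$, $K_S$, $\types$, $\epsilon$, serve uniformly in $\nnn$ (including for the ideals in $E$) — since the analytic substance already resides in Propositions~\ref{unippropositionrefined} and~\ref{prop: mainlocal} and in Corollary~\ref{cor: SpectralEstimate}; the only genuinely separate input is the dimension inequality $\dim U_P\ge\dmin$.
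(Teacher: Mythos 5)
Your proposal is correct and follows essentially the same route as the paper: Theorem \ref{theoremquantitative} is obtained there by rerunning the argument for Theorem \ref{MainTheorem}, combining Corollary \ref{cor: SpectralEstimate} for the proper Levi terms with the refined unipotent estimate of Proposition \ref{unippropositionrefined}, exactly as you do (including the trivial treatment of the finitely many exceptional ideals). The only cosmetic difference is your elementary $G\times^P$ argument for the inequality $\dmin\le(\dim G-\dim M)/2$, where the paper simply notes that $\dim G-\dim M$ is the dimension of the Richardson orbit attached to a parabolic $P\in\PPP(M)$.
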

%

Note here that $\dmin\le(\dim G-\dim M)/2$ for any proper $M\in\levis$, since $\dim G-\dim M$ is the dimension of the
Richardson orbit associated to a parabolic subgroup $P\in\PPP(M)$.
If we also assume \eqref{eq: logbnd} then we can further improve the right hand side of \eqref{quantitative} to
$\frac{(1 + \log \inorm (\nnn))^{d_0}}{\inorm (\nnn)^\dmin} \norm{h}_k$ (see Remark \ref{rem: logsadf}).

\begin{remark}
A natural problem is to deduce from Theorem \ref{theoremquantitative} an estimate for the difference $\abs{\mu^{G, S}_{\K^S (\nnn)} (A) - \mu_{\plnch} (A)}$
for suitable subsets $A \subset \Pi (G)$. This would require a quantitative version
of the density principle (Theorem \ref{thm: density principle}). We will not discuss this aspect here.
\end{remark}

\newcommand{\etalchar}[1]{$^{#1}$}

\providecommand{\bysame}{\leavevmode\hbox to3em{\hrulefill}\thinspace}
\providecommand{\MR}{\relax\ifhmode\unskip\space\fi MR }
\providecommand{\MRhref}[2]{%
  \href{http://www.ams.org/mathscinet-getitem?mr=#1}{#2}
}
\providecommand{\href}[2]{#2}

\end{document}